\newtheorem{thm}{Theorem}[section]
\newtheorem{prop}[thm]{Proposition}
\newtheorem{lem}[thm]{Lemma}
\newtheorem{cor}[thm]{Corollary}
\newtheorem{rem}[thm]{Remark}
\newtheorem{exa}[thm]{Example}
\numberwithin{equation}{section}
\DeclareMathOperator{\Ker}{Ker}
\DeclareMathOperator{\Hom}{Hom}
\DeclareMathOperator{\End}{End}
\DeclareMathOperator{\Ext}{Ext}
\DeclareMathOperator{\Tor}{Tor}
\DeclareMathOperator{\Aut}{Aut}
\DeclareMathOperator{\modcat}{mod}
\DeclareMathOperator{\Mod}{Mod}
\DeclareMathOperator{\fd}{fd}
\DeclareMathOperator{\gen}{gen}
\DeclareMathOperator{\nilp}{nilp}
\DeclareMathOperator{\HOM}{HOM}
\DeclareMathOperator{\EXT}{EXT}
\DeclareMathOperator{\LL}{LL}
\DeclareMathOperator{\Der}{Der}
\DeclareMathOperator{\add}{add}
\DeclareMathOperator{\id}{id}
\DeclareMathOperator{\gldim}{gl.dim}
\DeclareMathOperator{\soc}{soc}
\DeclareMathOperator{\rad}{rad}
\newcommand{\bZ}{\mathbb Z}
\newcommand{\gr}{\mathrm{gr}}
\newcommand{\hyp}{\textrm{-}}
\newcommand{\op}{\mathrm{op}}
\newcommand{\iso}{\xrightarrow{\raisebox{-.5ex}[0ex][0ex]{$\scriptstyle{\sim}$}}}
\newcommand{\leftiso}{\xleftarrow{\raisebox{-.5ex}[0ex][0ex]{$\scriptstyle{\sim}$}}}
\newcommand*{\intref}[2]{\hyperref[#2]{#1~\ref*{#2}}}
\newcommand{\mat}[1]{\mathsf{#1}}
\newcommand{\rmod}[1]{\vv{#1}}
\newcommand{\lmod}[1]{\reflectbox{$\rmod{\reflectbox{$#1$}}$}}
\title[Global dimension and Koszul dual]{On the global dimension and Koszul property of preprojective algebras}
\author{Andrew Hubery}
\address{Bielefeld University\\33501 Bielefeld\\Germany}
\email{hubery@math.uni-bielefeld.de}
\subjclass{MSC-class: 16E05, 16S37 (Primary), 16W50, 20F55 (Secondary)}
\begin{document}

\begin{abstract}
We study preprojective algebras associated to either finite dimensional hereditary algebras, or locally finite hereditary tensor algebras, and in particular show that they have global dimension two in non-Dynkin type. Moreover, starting from a locally finite hereditary tensor algebra of non-Dynkin type, we show that the corresponding preprojective algebra is Koszul, and compute both its Hilbert polynomial and its Koszul dual. We finish by looking at preprojective algebras of Dynkin type, and show how certain properties of the Weyl group arise in the structure of the preprojective algebra.
\end{abstract}

\maketitle

\section{Introduction}

The preprojective algebras of finite quivers form an interesting class of algebras having connections to many areas of mathematics. These include, the MacKay correspondence and Kleinian singularities (for affine quivers) \cite{RVdB,CBH}, Lusztig's semicanonical basis and cluster theory \cite{Lusztig,GLS} (see also \cite{Leclerc} for a nice overview), Nakajima's quiver varieties \cite{Nakajima} (see also \cite{Nakajima2}). More recently there has been emphasis on the fact that they are 2-Calabi-Yau algebras (in the non-Dynkin case), their Hochschild (co)homology has been computed in the Dynkin case \cite{EE2}, and their Ksozul properties have been studied \cite{BBK,EE,MV}. In terms of non-commutative geometry, the preprojective algebra can be regarded as the Hamiltonian reduction of the non-commutative cotangent bundle associated to the (smooth) quiver path algebra \cite{CBEG}. Preprojective algebras were also used to solve the additive Deligne--Simpson Problem \cite{CB2}, and certain deformations of them played a central role in the solution to the original, multiplicative, Deligne--Simpson Problem \cite{CBHu,Hubery}.

All of these properties have been shown when working with an acyclic quiver over a field, usually of characteristic zero, and possibly algebraically closed. Their study for general quivers is more limited, and if we work instead with an arbitrary locally finite hereditary tensor algebra then very little has been proved. On the other hand, the original definition of the preprojective algebra in \cite{BGL} was for a general finite dimensional hereditary algebra, and such algebras are not necessarily tensor algebras. In this case very little is known.

In this first of a series of papers we investigate some of the basic properties of preprojective algebras attached to hereditary algebras which are either finite dimensional, or else locally finite tensor algebras.

We will first show that the preprojective algebra has two natural constructions, as a tensor algebra $T_\Lambda(\Pi_1)$ over the hereditary algebra, and as the quotient of an hereditary tensor algebra $T_\Lambda(\mho)$ by a single element $\bar c$. If $\Lambda$ is finite dimensional, then $\Pi_1\cong\tau^-\Lambda$ and we recover the original definition of \cite{BGL}; if instead $\Lambda=kQ$ is the path algebra of a quiver, then $T_\Lambda(\mho)\cong k\bar Q$ is isomorphic to the path algebra of the double quiver, $\bar c$ corresponds to the sum of commutators $\sum[a,a^\ast]$ over all arrows $a\in Q$ and their duals, and we recover the usual definition.

We then proceed to construct in \intref{Theorem}{thm:standard-proj-pi} a standard (functorial) projective presentation of length three for each module over the preprojective algebra, and show in \intref{Theorem}{thm:gl-dim-2} that when we are non-Dynkin type, then this is in fact a projective resolution. It follows that the preprojective algebra always has global dimension two. For quivers this result seems to be known, but an explicit reference has been hard to track down. In \cite[Proposition 4.2]{BBK} they show that, for an acyclic quiver of non-Dynkin type, each graded simple module over the preprojective algebra has projective dimension at most two; in \cite{EE} they generalise this to cover all non-Dynkin quivers, and further show that the preprojective algebra is Koszul. In \cite{BGL} they show that the preprojective algebra of a finite dimensional tame hereditary algebra has global dimension two. For finite dimensional wild hereditary algebras and general hereditary tensor algebras, though, nothing is known.

The proof in the setting of a finite dimensional hereditary algebra is actually quite straightforward. The left hand map of the standard projective presentation of $\Pi$, which we need to show is a monomorphism, can be composed with another natural map such that the kernel, viewed as a $\Lambda$-module, is a direct sum of homomorphisms from postinjective modules to preprojective ones, so vanishes. In the setting of an hereditary tensor algebra, the aim is therefore to generalise this result, which we do by analysing the category of graded modules.

Having shown that the preprojective algebra has global dimension two, we can then use the standard projective presentation, in the setting of an hereditary tensor algebra, to show that there is a natural isomorphism of bifunctors
\[ \Ext^2_\Pi(X,Y) \cong D\Hom_\Pi(Y,X) \]
for all finite dimensional modules $X$ and $Y$. Again, this generalises the known result for path algebras. In the setting of a finite dimensional hereditary algebra we have the same result under an invariance assumption on the Auslander--Reiten formula.

We then turn our attention to the Koszul property, so necessarily restricted to the setting of a locally finite hereditary tensor algebra. It is clear that, in the non-Dynkin case, our standard projective resolution of the semisimple quotient $A$ is a linear resolution, and hence that the preprojective algebra is always Koszul. We also compute its Hilbert series. For quivers this recovers the results in \cite{EE}, but the general case seems to be new. We then proceed to compute the Koszul dual, and offer alternative descriptions as either a trivial extension algebra of the radical-square quotient $\Lambda/\Lambda_+^2$, or as an explicit quadratic algebra. For a radical-squared zero path algebra $kQ$ over an algebraically closed field $k$, the Koszul dual of the preprojective algebra was computed in \cite{MV} as a trivial extension algebra of $kQ$. Comparing the two descriptions we see that Mart\'inez-Villa description as $(kQ)\ltimes D(kQ)$ is slightly misleading, as in general we should really add a sign-twist the bimodule $D(kQ)$ on one side. Of course, this only makes a difference when we start with a path algebra $kQ$ which is not radical-squared zero.

For completeness we finish with a discussion of preprojective algebras of Dynkin type. In this case the preprojective algebra is finite dimensional, self-injective, and the third syzygy functor is naturally isomorphic to the inverse Nakayama functor. Again these results are well-known, but references are hard to find. For quivers there is a proof in \cite{BBK}, whereas Ringel and Schofield have an unpublished proof in general using hammocks. Recently Grant \cite{Grant} gave an alternative proof of self-injectivity, similar to ours. We also show that the preprojective algebra has Loewy length $h-1$, where $h$ is the associated Coxeter number, and that, in the language of \cite{BBK}, the preprojective algebra is $(h-2,2)$-Koszul. This was already shown in \cite{BBK} in the simply-laced cases, and was recently proved by S\"oderberg in general \cite{Sb}, though his techniques are quite different.

Moreover, we can relate the matrix of dimensions of each graded piece $\Pi_n$ to the Vieta--Fibonacci polynomials, so in particular we see that $V_{h-1}(\bar{\mat B})=0$. Such connections between symmetrisable generalised Cartan matrices, Vieta--Fibonacci polynomials, and representation theory have been investigated previously by several authors, including for example \cite{LdlP} and more recently \cite{ES}. In particular, the matrices $V_r(\bar{\mat B})$ were computed in \cite{ES} in the simply-laced finite and affine cases. Thus our results, which follow from the Koszul property of the preprojective algebra, provide a representation-theoretic proof of several of their computations, and at the same time extend them to the non-simply laced cases.

Similarly, we obtain a representation-theoretic proof that the action of a Coxeter element on the set of roots has precisely $n$ orbits, each of size $h$, where $n$ is the rank. In particular, the number of roots is always $nh$. Interestingly, the proof of this fact in Bourbaki's classic text \cite[VI \S1.11 Proposition 33]{Bourbaki} relies on computing the eigenvalues of the Coxeter transformation \cite[V \S6.2]{Bourbaki} which in turn is done by exhibiting the the Dynkin diagram as a bipartite graph. This is completely analogous to our proof, which relies on choosing an alternating orientation of the Dynkin diagram, so in some sense we have use the preprojective algebra to categorify the original, more combinatorial, proof.

\section{Hereditary algebras}

We will concentrate on two types of hereditary $k$-algebras --- those which are finite dimensional, and those which are locally finite tensor algebras. There is of course some overlap between these classes, and if the base field $k$ is perfect, then every finite dimensional hereditary algebra is a tensor algebra. In general, though, there are hereditary algebras which are finite dimensional but not tensor algebras, and conversely every quiver containing an oriented cycle yields a locally finite hereditary tensor algebra which is not finite dimensional.

We will often be working with $\Lambda$-bimodules for some $k$-algebra $\Lambda$, in which case we will always assume that $k$ acts centrally. Thus for us a $\Lambda$-bimodule will be the same as a right module over the enveloping algebra $\Lambda^e=\Lambda^\op\otimes_k\Lambda$. We will write $\Hom_{\Lambda\hyp\Lambda}(X,Y)$ for the space of bimodule homomorphisms, as well as $\Hom_{\Lambda\hyp}(X,Y)$ and $\Hom_{\,\hyp\Lambda}(X,Y)$ when we consider just one sided module homomorphisms. If the context is clear, then we will simply write $\Hom_\Lambda(X,Y)$.

\subsection{Tensor algebras}

Let $\Gamma$ be a finite dimensional $k$-algebra, and $B$ a finite dimensional $\Gamma$-bimodule. We may then form the tensor algebra
\[ T = T_\Gamma(B) = \Gamma \oplus B \oplus (B\otimes_\Gamma B) \oplus (B\otimes_\Gamma B\otimes_\Gamma B) \oplus \cdots \]
and observe that this is a locally finite graded algebra. The multiplication on $T$ is induced by concatenation of tensors.

The universal property of the tensor algebra tells us that the category of right $T$-modules is equivalent to the category whose objects are pairs consisting of a right $\Gamma$-module $X$ and a $\Gamma$-linear map $X\otimes_\Gamma B\to X$.

We now have the following well-known result.

\begin{prop}\label{prop:tensor-alg-seq}
Let $T=T_\Gamma(B)$. Then there is the fundamental exact sequence of $T$-bimodules
\[ \begin{tikzcd}
0 \arrow[r] & T\otimes_\Gamma B\otimes_\Gamma T \arrow[r] & T\otimes_\Gamma T \arrow[r] & T \arrow[r] & 0
\end{tikzcd} \]
where the right hand map is multiplication, and the left hand map sends $1\otimes b\otimes 1$ to $1\otimes b-b\otimes 1$.
\end{prop}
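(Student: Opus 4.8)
The plan is to verify directly that the three maps are well-defined $T$-bimodule homomorphisms, that the composition of the two maps is zero, and then to check exactness at each of the three nonzero terms. First I would record that all three objects are $T$-bimodules in the obvious way: $T\otimes_\Gamma T$ via left multiplication on the first factor and right multiplication on the second, and similarly $T\otimes_\Gamma B\otimes_\Gamma T$ via the outer two factors. The multiplication map $\mu\colon T\otimes_\Gamma T\to T$ is then plainly a bimodule map and is surjective since $1\otimes 1\mapsto 1$. For the left hand map $\delta\colon T\otimes_\Gamma B\otimes_\Gamma T\to T\otimes_\Gamma T$ determined by $1\otimes b\otimes 1\mapsto 1\otimes b - b\otimes 1$, one extends bimodule-linearly, so $x\otimes b\otimes y\mapsto x\otimes by - xb\otimes y$; I would check this is well-defined over $\Gamma$ (both sides are balanced in each $\Gamma$-slot) and that $\mu\circ\delta=0$ is immediate since $x(by) - (xb)y = 0$ in $T$. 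Injectivity of $\delta$ on the left is clear once we identify the image, so the real content is exactness in the middle.

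The key step is exactness at $T\otimes_\Gamma T$, i.e. that $\Ker\mu = \Ima\delta$. Here I would use the grading: writing $T_{\ge 1} = B\oplus B^{\otimes 2}\oplus\cdots$, I claim every element of $T\otimes_\Gamma T$ is congruent modulo $\Ima\delta$ to an element of $T\otimes_\Gamma\Gamma \cong T$ (the "move all tensors to the left" reduction). Indeed, for a pure tensor $x\otimes (b_1\cdots b_n y_0)$ with $b_i\in B$, $y_0\in\Gamma$, one repeatedly applies the relation $x\otimes b w = xb\otimes w + \delta(x\otimes b\otimes w)$ to push the $B$-factors across the tensor symbol one at a time, reducing to $x b_1\cdots b_n\otimes y_0$. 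Since $\mu$ restricted to $T\otimes_\Gamma\Gamma$ is an isomorphism onto $T$, any element of $\Ker\mu$ must then lie in $\Ima\delta$. This simultaneously gives exactness at $T$ (surjectivity of $\mu$, already noted) and, combined with a dimension or a direct-summand count in each degree, exactness at the left term.

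For injectivity of $\delta$ I would exhibit a $\Gamma$-bimodule splitting of the inclusion after passing to the associated graded pieces: in degree $n$, $(T\otimes_\Gamma T)_n = \bigoplus_{i+j=n} B^{\otimes i}\otimes_\Gamma B^{\otimes j}$ and $(T\otimes_\Gamma B\otimes_\Gamma T)_{n-1} = \bigoplus_{i+j=n-1} B^{\otimes i}\otimes_\Gamma B\otimes_\Gamma B^{\otimes j}\cong\bigoplus_{i+j=n,\,j\ge1}B^{\otimes i}\otimes_\Gamma B^{\otimes j}$, and $\delta$ sends the summand indexed by $(i,j)$ with $j\ge1$ into that summand plus the one indexed by $(i+1,j-1)$, upper-triangularly with identity on the diagonal; hence $\delta$ is injective (indeed split injective) and its cokernel is $T$. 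The main obstacle is bookkeeping: making the "push tensors left" induction precise and checking that $\delta$ is genuinely well-defined over each $\Gamma$ requires care with the balanced tensor products, but no step is deep. Alternatively, and perhaps more cleanly, I would deduce the whole sequence from the standard bimodule resolution of $\Gamma$ together with the fact that $T$ is free as a left and as a right $\Gamma$-module, applying $T\otimes_\Gamma(-)\otimes_\Gamma T$; this avoids the explicit reduction at the cost of invoking the free-module base change, which is the approach I expect to be shortest to write.
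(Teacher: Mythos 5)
Your main argument is correct, and at the level of computation it is the same as the paper's, but it is organised quite differently. The paper decomposes $T=\Gamma\oplus(B\otimes_\Gamma T)$, defines the locally nilpotent right-$\Gamma$-linear endomorphism $\theta$ of $T\otimes_\Gamma T$ by $t\otimes(\gamma'+b\otimes\gamma)\mapsto tb\otimes\gamma$, and obtains the fundamental sequence by twisting the obvious split short exact sequence $0\to T\otimes_\Gamma B\otimes_\Gamma T\to T\otimes_\Gamma T\to T\otimes_\Gamma\Gamma\to 0$ in the middle by the automorphism $1-\theta$: this converts the inclusion into the map $1\otimes b\otimes1\mapsto 1\otimes b-b\otimes1$ and the projection into multiplication, and exactness is inherited for free from the split sequence. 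Your ``push the $B$-factors across'' reduction is exactly iterating $\theta$, i.e.\ applying $(1-\theta)^{-1}=\sum_{k\ge0}\theta^k$, and your unitriangular matrix on graded pieces is $(1-\theta)$ composed with the natural inclusion; so your three separate exactness checks are an unpacking of the single observation that $1-\theta$ is bijective. The paper's packaging buys brevity; yours has the virtue that no operator needs to be produced out of a hat. Your closing alternative --- deducing the sequence by applying $T\otimes_\Gamma(-)\otimes_\Gamma T$ to ``the standard bimodule resolution of $\Gamma$'' --- does not work as written: the resolution of $\Gamma$ as a $\Gamma$-bimodule is trivial and carries no information, and the object one would actually need to resolve here is $T$, which is circular. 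That sentence should be dropped; the rest stands.
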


\begin{proof}
Decomposing $T=\Gamma\oplus(B\otimes_\Gamma T)$ as $\Gamma$-bimodules, there is a locally nilpotent endomorphism $\theta$ of $T\otimes_\Gamma T$ as a right $\Gamma$-module sending $t\otimes(\gamma'+b\otimes\gamma)$ to $tb\otimes\gamma$. The desired short exact sequence can then be realised by taking this split exact sequence of right $\Gamma$-modules and then twisting in the middle by the automorphism $1-\theta$.
\end{proof}

\begin{cor}\label{co:standard-proj-tensor}
Assume $A$ is semisimple. Then $\Lambda=T_A(M)$ is hereditary, and every right module $X$ admits a standard projective resolution
\[ \begin{tikzcd}
0 \arrow[r] & X\otimes_AM\otimes_A\Lambda \arrow[r] & X\otimes_A\Lambda \arrow[r] & X \arrow[r] & 0.
\end{tikzcd} \]
\end{cor}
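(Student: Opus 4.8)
The plan is to derive everything by applying the base-change functor $X\otimes_\Lambda-$ to the fundamental exact sequence of \intref{Proposition}{prop:tensor-alg-seq}. Taking $\Gamma=A$, $B=M$ and $T=\Lambda=T_A(M)$ there (note that $\Lambda$ is hereditary will actually drop out at the end, but the proposition itself makes no such assumption), we have the short exact sequence of $\Lambda$-bimodules
\[ 0 \to \Lambda\otimes_A M\otimes_A\Lambda \to \Lambda\otimes_A\Lambda \to \Lambda \to 0, \]
whose right-hand map is multiplication. Applying $X\otimes_\Lambda-$ and using the canonical isomorphisms $X\otimes_\Lambda(\Lambda\otimes_A N)\cong X\otimes_A N$ (for $N=M\otimes_A\Lambda$ and for $N=\Lambda$) together with $X\otimes_\Lambda\Lambda\cong X$, one obtains the three-term complex of right $\Lambda$-modules
\[ X\otimes_A M\otimes_A\Lambda \to X\otimes_A\Lambda \to X \to 0, \]
with induced differentials $x\otimes m\otimes\lambda\mapsto x\otimes m\lambda-xm\otimes\lambda$ and $x\otimes\lambda\mapsto x\lambda$. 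It then remains to check that this complex is exact, including injectivity of the left-hand map, and that its first two terms are projective.

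For exactness, the point is that the displayed bimodule sequence is split exact \emph{as a sequence of left $\Lambda$-modules}, a splitting of the multiplication map being $\lambda\mapsto\lambda\otimes1$; equivalently $\Tor_1^\Lambda(X,{}_\Lambda\Lambda)=0$ because $\Lambda$ is free as a left module over itself. Hence applying the additive functor $X\otimes_\Lambda-$ preserves exactness, so in particular the left-hand map $X\otimes_A M\otimes_A\Lambda\to X\otimes_A\Lambda$ is injective; right exactness of $\otimes_\Lambda$ takes care of the other two positions.

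For projectivity, since $A$ is semisimple every right $A$-module is projective, hence a direct summand of a free module $A^{(I)}$. Therefore $X\otimes_A\Lambda$ is a direct summand of $A^{(I)}\otimes_A\Lambda\cong\Lambda^{(I)}$ and so is a projective right $\Lambda$-module; likewise $X\otimes_A M$ is a projective right $A$-module, whence $(X\otimes_A M)\otimes_A\Lambda$ is a projective right $\Lambda$-module. This shows the displayed sequence is the asserted projective resolution of $X$, and since every right $\Lambda$-module therefore has projective dimension at most one, $\Lambda$ is hereditary.

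I do not expect any real obstacle here; the only point requiring a little care is the bookkeeping about which one-sided module structure is being used where — the fundamental sequence must be split over the \emph{left}, since $X\otimes_\Lambda-$ consumes the left $\Lambda$-structure, so that exactness is preserved — together with the routine identification of the induced differentials.
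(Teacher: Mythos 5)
Your proof is correct and follows the same approach as the paper: tensor the fundamental exact sequence of \intref{Proposition}{prop:tensor-alg-seq} with $X$ over $\Lambda$ and use semisimplicity of $A$ to conclude projectivity of the first two terms. The paper leaves the exactness-preservation implicit; your observation that the fundamental sequence splits as left $\Lambda$-modules (via $\lambda\mapsto\lambda\otimes1$) is the right justification, though it is arguably worth noting that the splitting as right $\Gamma$-modules already appears in the proof of \intref{Proposition}{prop:tensor-alg-seq} itself.
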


\begin{proof}
The sequence arises by tensoring the fundamental exact sequence for $\Lambda$ with $X$. As $A$ is semisimple, the first and second terms are projective $\Lambda$-modules.
\end{proof}

\begin{exa}
If $A=k^n$, then every $A$-bimodule is a direct sum of one dimensional bimodules. In this case $\Lambda$ is isomorphic to the path algebra of a quiver $kQ$, where the arrows yield a basis for the simple direct summands of $M$.

Furthermore, an $A$-module is just a tuple of vector spaces, and consequently the category of pairs introduced above is equivalent to the category of quiver representations.
\end{exa}

\subsection{Finite dimensional algebras}

Let $\Lambda$ be a finite dimensional hereditary algebra, and write $J$ for its Jacobson radical. We recall the following structural result from \cite{Chase,Zaks2}. 

\begin{prop}
The natural map $\Lambda\to\Lambda/J$ splits, so we can write $\Lambda=A\oplus J$ with $A$ a semisimple subalgebra. If the map $J\to J/J^2$ has a splitting as $A$-bimodules, then $\Lambda\cong T_A(J/J^2)$ is a tensor algebra. \qed
\end{prop}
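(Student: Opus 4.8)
The plan is to treat the two assertions separately, since they are of a rather different character. The first claim --- that the quotient map $\Lambda \to \Lambda/J$ splits as a map of $k$-algebras --- is the genuinely substantial part, and this is exactly the content of the Wedderburn--Malcev principal theorem in the form needed for (possibly non-separable) finite dimensional algebras; here one invokes the hypothesis that $\Lambda$ is hereditary to guarantee that the relevant obstruction, which lives in a Hochschild cohomology group $H^2(\Lambda/J, J/J^2)$ computed via the Hochschild complex of the separable-modulo-radical algebra, actually vanishes. Concretely, I would recall (citing \cite{Chase,Zaks2}) that for a hereditary algebra the radical $J$ is projective as a bimodule, so that the higher Hochschild cohomology of $\Lambda$ with any coefficients vanishes, and then lift idempotents and a semisimple subalgebra inductively up the radical filtration. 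This yields a semisimple subalgebra $A \subseteq \Lambda$ with $\Lambda = A \oplus J$ as $A$-bimodules, where we regard $A \cong \Lambda/J$.

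For the second assertion, suppose in addition that the surjection $\pi : J \to J/J^2$ of $A$-bimodules admits an $A$-bimodule splitting $\sigma : J/J^2 \to J$. Write $M := J/J^2$ and set $B := \sigma(M) \subseteq J$, an $A$-sub-bimodule of $J$ with $B \oplus J^2 = J$. The universal property of the tensor algebra gives a canonical $k$-algebra homomorphism
\[
\varphi : T_A(M) \longrightarrow \Lambda
\]
extending the inclusions $A \hookrightarrow \Lambda$ and $\sigma : M = B \hookrightarrow \Lambda$; explicitly $\varphi$ sends a tensor $m_1 \otimes \cdots \otimes m_r$ to the product $\sigma(m_1)\cdots\sigma(m_r)$ in $\Lambda$. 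It remains to show $\varphi$ is bijective.

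For surjectivity, observe that the image of $\varphi$ contains $A$ and $B$, hence contains $A + B + B^2 + \cdots$, and one checks by induction on $n$ that $A + B + \cdots + B^n$ surjects onto $\Lambda/J^{n+1}$: indeed $B + J^2 = J$ forces $B^n + J^{n+1} = J^n$ by multiplying out, so $A + B + \cdots + B^n \to \Lambda/J^{n+1}$ is onto, and since $J$ is nilpotent ($\Lambda$ being finite dimensional) we conclude $\varphi$ is surjective. For injectivity, I would use the standard projective resolution of \intref{Corollary}{co:standard-proj-tensor} together with the fact that $\Lambda$ is hereditary. More directly: $\varphi$ is a graded-filtered map (sending the degree-$\le n$ part $A \oplus M \oplus \cdots \oplus M^{\otimes n}$ into the radical filtration piece adapted appropriately), and the associated graded map $\gr\varphi : T_A(M) = \bigoplus_n M^{\otimes n} \to \bigoplus_n J^n/J^{n+1}$ is an isomorphism precisely when $J$ is "quadratic-free" in the sense that each multiplication map $M^{\otimes n} \to J^n/J^{n+1}$ is bijective; but for a hereditary algebra $J$ is projective as a bimodule and the fundamental exact sequence shows $J^n/J^{n+1} \cong M^{\otimes_A n}$ canonically. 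Since $\gr\varphi$ is an isomorphism and both filtrations are exhaustive and separated (again by nilpotence of $J$ and local finiteness of $T_A(M)$ in each degree), $\varphi$ itself is an isomorphism. I expect the main obstacle to be the clean verification that the associated graded of $\Lambda$ along its radical filtration is indeed the tensor algebra $T_A(M)$ --- equivalently, that for hereditary $\Lambda$ there are no "hidden relations" in degree $\ge 2$; this is where heredity is used decisively, via the projectivity of $J$ as a $\Lambda$-bimodule (or of $J/J^2$ over $A$ combined with the vanishing $\Tor^\Lambda_1(\Lambda/J, \Lambda/J)$ controlling relations), and I would pin it down by the argument just sketched rather than by a direct manipulation of bases.
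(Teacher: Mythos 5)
The paper offers no proof of this proposition; it is recorded as a known structural result with a reference to Chase and Zaks, so there is no internal argument to compare against. Your proposal does attempt a genuine proof, and the overall strategy for the second assertion (build $\varphi\colon T_A(M)\to\Lambda$ from the splitting, check surjectivity via nilpotence, check injectivity by passing to associated gradeds of a filtration) is reasonable. However, the proposal rests on a claim that is false and, worse, is precisely the claim the paper is at pains to refute.

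You assert twice that for a finite dimensional hereditary algebra the radical $J$ is projective as a $\Lambda$-bimodule. Hereditary means $J$ is projective as a \emph{one-sided} module; bimodule projectivity is much stronger and fails in general. The paper makes this point explicitly in Remark~\ref{rem:omega-not-proj-bimod}: if the bimodule of $1$-forms $\Omega$ (and the same reasoning applies to $J$) were projective over $\Lambda^e$, one would automatically get a section $M\rightarrowtail J$, so $\Lambda$ would be a tensor algebra and the hypothesis on $J\to J/J^2$ in the second part of the proposition would be vacuous. The whole reason the paper maintains two parallel settings (finite dimensional hereditary algebras versus hereditary tensor algebras) is that this bimodule projectivity does not hold. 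Consequently your Hochschild-cohomology argument for the first assertion does not go through as sketched: the vanishing $H^{\geq 2}(\Lambda,-)=0$ you want is simply not available. The Chase/Zaks splitting is proved by a different, structural argument (realising $\Lambda$ as a generalised triangular matrix algebra), not by obstruction theory.

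For the injectivity of $\varphi$, the intermediate claim you need is $J^n/J^{n+1}\cong M^{\otimes_A n}$, and this \emph{is} true for hereditary $\Lambda$, but again not for your stated reason. (Your parenthetical appeal to $\Tor^\Lambda_1(\Lambda/J,\Lambda/J)=0$ is also incorrect: applying $A\otimes_\Lambda-$ to $0\to J\to\Lambda\to A\to 0$ gives $\Tor^\Lambda_1(A,A)\cong J/J^2=M$, which is nonzero unless $\Lambda=A$.) The correct derivation uses only one-sided projectivity: since $J_\Lambda$ is projective, tensoring the inclusion $J^{n-1}\hookrightarrow\Lambda$ on the left by $J$ gives $J\otimes_\Lambda J^{n-1}\cong J^n$; then
\[
J^n\otimes_\Lambda A \;\cong\; J\otimes_\Lambda\big(J^{n-1}\otimes_\Lambda A\big)\;\cong\; (J\otimes_\Lambda A)\otimes_A(J^{n-1}/J^n)\;=\;M\otimes_A(J^{n-1}/J^n),
\]
where the middle step uses that $J^{n-1}/J^n$ is an $A$-module. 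Induction gives $J^n/J^{n+1}\cong M^{\otimes_A n}$, the map being the multiplication map, so $\gr\varphi$ is the canonical isomorphism and the filtration argument closes. If you replace the bimodule-projectivity claims by this one-sided argument (and either cite Chase/Zaks outright for the splitting, or supply their structural proof), the second part of your proposal becomes correct.
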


\begin{rem}
The latter condition holds in the following situations.
\begin{enumerate}
\item The ext-quiver of $\Lambda$ is a tree. This includes all hereditary algebras of finite representation type. See \cite[Proposition 10.2]{DR1}.
\item More generally, if there are no indecomposable projective modules $P,P'$ with $\rad(P,P')\neq0$ as well as $\rad^n(P,P')\neq0$ for some $n\geq2$. See \cite[Section 5]{DR3}.
\item The $k$-algebra $A$ is separable \cite{Wedderburn}.
\end{enumerate}

On the other hand, there are finite dimensional hereditary algebras which are not tensor algebras, see \cite{Zaks1} or \cite{DR3}.
\end{rem}

Consider next the fundamental exact sequence
\[ \begin{tikzcd}
0 \arrow[r] & \Omega \arrow[r] & \Lambda\otimes_A\Lambda \arrow[r] & \Lambda \arrow[r] & 0
\end{tikzcd} \]
where the map on the right is just the multiplication map $\lambda\otimes\mu\mapsto\lambda\mu$. The kernel $\Omega$ is sometimes called the $\Lambda$-bimodule of noncommutative 1-forms on $\Lambda$ over $A$. As when constructing the bar resolution, we can identify $\Omega$ with the image of the bimodule homomorphism
\[ \Lambda\otimes_A\Lambda\otimes_A\Lambda \to \Lambda\otimes_A\Lambda, \quad \lambda\otimes\mu\otimes\nu \mapsto \lambda\mu\otimes\nu - \lambda\otimes\mu\nu. \]
Thus $\Omega\leq\Lambda\otimes_A\Lambda$ is generated as a bimodule by the elements $d\lambda\coloneqq 1\otimes\lambda-\lambda\otimes1$.

If $X$ is any $\Lambda$-bimodule, then $\Hom_{\Lambda\hyp\Lambda}(\Omega,X)\cong\Der_A(\Lambda,X)$ is the space of $A$-derivations $\Lambda\to X$, so the $A$-bimodule homomorphisms $d\colon\Lambda\to X$ satisfying $d(\lambda\mu)=\lambda d(\mu)+d(\lambda)\mu$, equivalently those derivations satisfying $d(a)=0$ for all $a\in A$ (cf. \cite[Section 10]{Ginzburg}, though the relative case is not considered there).

\begin{exa}
If $\Lambda=T_A(M)$ is a finite dimensional hereditary algebra, then \intref{Corollary}{co:standard-proj-tensor} tells us that $\Omega=\Lambda\otimes_AM\otimes_A\Lambda$ is a projective bimodule, and hence that $\Der_A(\Lambda,X)\cong\Hom_{A\hyp A}(M,X)$.
\end{exa}

\begin{prop}\label{prop:omega}
Every right $\Lambda$-module $X$ admits a standard projective resolution
\[ \begin{tikzcd}
0 \arrow[r] & X\otimes_\Lambda\Omega \arrow[r] & X\otimes_A\Lambda \arrow[r] & X \arrow[r] & 0
\end{tikzcd} \]
and a standard injective coresolution
\[ \begin{tikzcd}
0 \arrow[r] & X \arrow[r] & \Hom_A(\Lambda,X) \arrow[r] & \Hom_\Lambda(\Omega,X) \arrow[r] & 0.
\end{tikzcd} \]
\end{prop}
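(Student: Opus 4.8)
The plan is to obtain both sequences by applying the functors $X\otimes_\Lambda(-)$ and $\Hom_{\,\hyp\Lambda}(-,X)$ to the fundamental exact sequence
\[ 0 \to \Omega \to \Lambda\otimes_A\Lambda \to \Lambda \to 0 \]
of $\Lambda$-bimodules. The observation to record at the outset is that this sequence is split \emph{as a sequence of left $\Lambda$-modules}, via the section $\lambda\mapsto\lambda\otimes1$ of the multiplication map, and also split \emph{as a sequence of right $\Lambda$-modules}, via the section $\lambda\mapsto1\otimes\lambda$. In particular it remains (split) exact after applying either of the two functors above, and both resulting sequences are visibly functorial in $X$, which is what ``standard'' refers to.

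For the projective resolution, apply $X\otimes_\Lambda(-)$, regarding the fundamental sequence via its left $\Lambda$-module structures. Using the canonical isomorphisms $X\otimes_\Lambda\Lambda\cong X$ and $X\otimes_\Lambda(\Lambda\otimes_A\Lambda)\cong X\otimes_A\Lambda$ identifies the result with
\[ 0 \to X\otimes_\Lambda\Omega \to X\otimes_A\Lambda \to X \to 0, \]
the right-hand map being the action map $x\otimes\lambda\mapsto x\lambda$. Since $A$ is semisimple, $X$ is projective as a right $A$-module, hence a direct summand of a free one, so $X\otimes_A\Lambda$ is a direct summand of a free right $\Lambda$-module and in particular projective. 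Finally $X\otimes_\Lambda\Omega$, being a submodule of the projective module $X\otimes_A\Lambda$, is projective because $\Lambda$ is hereditary.

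For the injective coresolution, apply $\Hom_{\,\hyp\Lambda}(-,X)$, now regarding the fundamental sequence via its right $\Lambda$-module structures. Tensor--hom adjunction gives $\Hom_{\,\hyp\Lambda}(\Lambda,X)\cong X$ and $\Hom_{\,\hyp\Lambda}(\Lambda\otimes_A\Lambda,X)\cong\Hom_A(\Lambda,X)$, so the result becomes
\[ 0 \to X \to \Hom_A(\Lambda,X) \to \Hom_\Lambda(\Omega,X) \to 0, \]
the left-hand map being $x\mapsto(\lambda\mapsto x\lambda)$. Here $\Hom_A(\Lambda,X)$ is the module coinduced from the right $A$-module $X$; since restriction along $A\hookrightarrow\Lambda$ is exact, coinduction preserves injectives, and as $A$ is semisimple \emph{every} right $A$-module is injective, so $\Hom_A(\Lambda,X)$ is an injective right $\Lambda$-module. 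Dually, $\Hom_\Lambda(\Omega,X)$ is a quotient of the injective module $\Hom_A(\Lambda,X)$, hence injective, again because $\Lambda$ is hereditary.

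There is no essential difficulty here; the only points requiring care are (i) keeping track of which one-sided module structure each functor exploits, together with the corresponding one-sided splitting, and (ii) the two appeals to hereditariness of $\Lambda$ --- that submodules of projective modules are projective and, dually, that quotients of injective modules are injective --- which are exactly what force the outer terms of the two short exact sequences to be projective, respectively injective.
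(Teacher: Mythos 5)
Your proof is correct and takes essentially the same route as the paper: apply $X\otimes_\Lambda(-)$ and $\Hom_{\hyp\Lambda}(-,X)$ to the fundamental exact sequence, use semisimplicity of $A$ to get projectivity/injectivity of the middle term, and use hereditariness of $\Lambda$ to pass this to the outer term. The only cosmetic difference is in the middle term of the coresolution: you argue $\Hom_A(\Lambda,X)$ is injective because it is coinduced from the injective $A$-module $X$ and coinduction preserves injectives, whereas the paper identifies $\Hom_A(\Lambda,X)\cong X\otimes_AD\Lambda$ and observes that this is injective; your extra remarks on the one-sided splittings of the fundamental sequence, which the paper leaves implicit, are a helpful clarification of why exactness survives the two functors.
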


\begin{proof}
We tensor the fundamental exact sequence with $X$ to obtain the exact sequence
\[ \begin{tikzcd}
0 \arrow[r] & X\otimes_\Lambda\Omega \arrow[r] & X\otimes_A\Lambda \arrow[r] & X \arrow[r] & 0
\end{tikzcd} \]
with the map $X\otimes_A\Lambda\to X$ being $x\otimes\lambda\mapsto x\lambda$. As $A$ is semisimple we know that $X\otimes_A\Lambda$ is projective, and as $\Lambda$ is hereditary it follows that $X\otimes_\Lambda\Omega$ is also projective.

Similarly, we can apply $\Hom_\Lambda(-,X)$ to the fundamental exact sequence. Here we can identify $\Hom_\Lambda(\Lambda,X)\cong X$ and
\[ \Hom_\Lambda(\Lambda\otimes_A\Lambda,X) \cong \Hom_A(\Lambda,X) \cong X\otimes_AD\Lambda, \]
where $D=\Hom_k(-,k)$ is the usual vector space duality. As $X\otimes_AD\Lambda$ is an injective $\Lambda$-module, so too is $\Hom_\Lambda(\Omega,X)$.
\end{proof}

\begin{rem}
The dual result for left $\Lambda$-modules holds.
\end{rem}

\begin{rem}\label{rem:omega-not-proj-bimod}
Taking $X=A$ shows that $J\cong A\otimes_\Lambda\Omega\cong\Omega/J\Omega$ as $A$-$\Lambda$-bimodules, and hence that $M=J/J^2$ is isomorphic to $\Omega/(J\Omega+\Omega J)$ as $A$-bimodules.

Thus, if $\Omega$ is projective as a $\Lambda$-bimodule, then $\Omega\cong\Lambda\otimes_AM\otimes_A\Lambda$, in which case $J\cong M\otimes_A\Lambda$ as $A$-$\Lambda$-bimodules. Using that $\Lambda=A\oplus J$ we thus obtain a section $M\rightarrowtail J$, showing that $\Lambda=T_A(M)$ is a tensor algebra. 
\end{rem}

\subsection{Ext quiver, Euler form and representation type}\label{sec:ext-quiver-hered}

Let $\Lambda$ be an hereditary algebra in one of our classes, so either finite dimensional or else a locally finite tensor algebra. In both cases we can write $\Lambda=A\oplus\Lambda_+$ with $A$ a (maximal) semisimple subalgebra, and $\Lambda_+$ either the Jacobson radical in the first case, or the graded radical in the second case. Up to Morita equivalence we may further assume that $A$ is basic, so $A=A_1\times\cdots\times A_n$ is a product of division algebras, with corresponding idempotent decomposition $1=e_1+\cdots+e_n$.

Inside the category $\fd\Lambda$ of finite dimensional $\Lambda$-modules we have the Serre subcategory $\nilp\Lambda$ generated by the simples $S_i\coloneqq e_iA$, equivalently the subcategory of modules on which $\Lambda_+$ acts nilpotently. Of course, if $\Lambda$ is finite dimensional, then $\Lambda_+$ is itself nilpotent and so $\nilp\Lambda=\fd\Lambda$; for locally finite tensor algebras, though, the inclusion $\nilp\Lambda\subseteq\fd\Lambda$ may be strict.

We also have the restriction of scalars functor along $A\rightarrowtail\Lambda$, and hence the map of Grothendieck groups $K_0(\fd\Lambda)\to K_0(\modcat A)\cong\bZ^n$. Note that the restriction to $\nilp\Lambda$ yields a bijection $K_0(\nilp\Lambda)\iso K_0(\modcat A)$.

\begin{prop}\label{prop:ext-quiver-lambda}
The Euler form on $\fd\Lambda$
\[ \langle X,Y\rangle \coloneqq \dim\Hom_\Lambda(X,Y) - \dim\Ext^1_\Lambda(X,Y) \]
can be computed using the bilinear form on $K_0(A)$ given by
\[ \langle S_i,S_j\rangle = \delta_{ij}\dim A_i - \dim e_iMe_j, \quad\textrm{where }M\coloneqq\Lambda_+/\Lambda_+^2. \]
\end{prop}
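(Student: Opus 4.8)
The plan is to read the Euler form off the standard projective resolution of Proposition~\ref{prop:omega} (which, for a tensor algebra, is the resolution of Corollary~\ref{co:standard-proj-tensor}). Fix $X,Y\in\fd\Lambda$ and consider
\[ 0 \to X\otimes_\Lambda\Omega \to X\otimes_A\Lambda \to X \to 0. \]
Both non-trivial terms are finitely generated projective right $\Lambda$-modules: $X\otimes_A\Lambda$ is induced from a finite dimensional module over the semisimple algebra $A$, hence a direct summand of a free module, and $X\otimes_\Lambda\Omega$ is then a submodule of it, so projective because $\Lambda$ is hereditary. Hence $\Ext^{\geq2}_\Lambda(X,-)=0$, the spaces $\Hom_\Lambda(X,Y)$ and $\Ext^1_\Lambda(X,Y)$ are finite dimensional, the Euler form is biadditive (so factors through $K_0(\fd\Lambda)\times K_0(\fd\Lambda)$), and
\[ \langle X,Y\rangle = \dim_k\Hom_\Lambda(X\otimes_A\Lambda,Y) - \dim_k\Hom_\Lambda(X\otimes_\Lambda\Omega,Y). \]
By the adjunction between extension and restriction of scalars along $A\rightarrowtail\Lambda$, the first term is $\dim_k\Hom_A(X,Y)$. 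Writing $(c_1,\dots,c_n)$ and $(c'_1,\dots,c'_n)$ for the images of $[X]$ and $[Y]$ in $K_0(\modcat A)\cong\bZ^n$, so that $Xe_i\cong S_i^{c_i}$ as right $A$-modules and similarly for $Y$, and using that $\Hom_A(S_i,S_j)=0$ for $i\neq j$ while $\End_A(S_i)\cong A_i$, this first term equals $\sum_i c_ic'_i\dim_k A_i$, which depends only on the dimension vectors.

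It remains to compute $\dim_k\Hom_\Lambda(X\otimes_\Lambda\Omega,Y)$ in each of the two families. If $\Lambda=T_A(M)$ is a tensor algebra, then the fundamental exact sequence (Proposition~\ref{prop:tensor-alg-seq}) identifies $\Omega$ with $\Lambda\otimes_AM\otimes_A\Lambda$, so $X\otimes_\Lambda\Omega\cong(X\otimes_AM)\otimes_A\Lambda$ and, by adjunction again, $\Hom_\Lambda(X\otimes_\Lambda\Omega,Y)\cong\Hom_A(X\otimes_AM,Y)$. A short computation with the $A$-bimodule decomposition $M=\bigoplus_{i,j}e_iMe_j$ shows that the multiplicity of $S_j$ in $X\otimes_AM$ is $\sum_i c_i\dim_{A_j}(e_iMe_j)$, so $\dim_k\Hom_\Lambda(X\otimes_\Lambda\Omega,Y)=\sum_{i,j}c_ic'_j\dim_k(e_iMe_j)$. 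If instead $\Lambda$ is finite dimensional, then $\fd\Lambda=\nilp\Lambda$ and $K_0(\fd\Lambda)$ is spanned by the classes $[S_i]$; since $\dim_k\Hom_\Lambda(X\otimes_\Lambda\Omega,Y)$ is biadditive in $(X,Y)$ by the first paragraph, it is enough to evaluate it at $(S_i,S_j)$. There $S_i\otimes_A\Lambda\cong e_i\Lambda$, so the resolution identifies $S_i\otimes_\Lambda\Omega$ with the kernel $e_iJ$ of $e_i\Lambda\twoheadrightarrow S_i$; since $e_iJ/e_iJ^2\cong e_i(J/J^2)=e_iM$ and $S_j$ is semisimple, $\Hom_\Lambda(e_iJ,S_j)\cong\Hom_A(e_iM,S_j)\cong\Hom_{A_j}(e_iMe_j,S_j)$ has dimension $\dim_k(e_iMe_j)$, and we again arrive at $\sum_{i,j}c_ic'_j\dim_k(e_iMe_j)$.

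Combining, in both cases
\[ \langle X,Y\rangle = \sum_i c_ic'_i\dim_k A_i - \sum_{i,j}c_ic'_j\dim_k(e_iMe_j), \]
which is exactly the value, on the dimension vectors of $X$ and $Y$, of the bilinear form on $K_0(A)\cong\bZ^n$ determined by $\langle S_i,S_j\rangle=\delta_{ij}\dim_k A_i-\dim_k(e_iMe_j)$. I expect the main obstacle to be conceptual rather than computational: one must check that the Euler form on all of $\fd\Lambda$---which for a tensor algebra strictly contains $\nilp\Lambda$, and whose Grothendieck group need not be generated by the simples---nonetheless factors through the dimension vector. The standard projective resolution is precisely what makes this work; the remaining bookkeeping with the dimensions of the division algebras $A_i$ is then routine.
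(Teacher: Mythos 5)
Your proof is correct and follows essentially the same route as the paper: read the Euler form from the standard resolution $0\to X\otimes_\Lambda\Omega\to X\otimes_A\Lambda\to X\to 0$, reduce to $\dim\Hom_A(X,Y)-\dim\Hom_\Lambda(X\otimes_\Lambda\Omega,Y)$, then handle the tensor-algebra case via $\Omega\cong\Lambda\otimes_AM\otimes_A\Lambda$ and the finite-dimensional case by bilinearity plus a computation at the simples. The only cosmetic difference in the finite-dimensional step is that the paper identifies $\Ext^1_\Lambda(S_i,S_j)$ directly from the resolution $0\to\Lambda_+\to\Lambda\to A\to 0$ as $D(e_iMe_j)$, whereas you compute $\Hom_\Lambda(e_iJ,S_j)$; these are the same dimension count.
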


\begin{proof}
We can compute the Euler form using the standard projective resolution of $X$,
\[ \begin{tikzcd}
0 \arrow[r] & X\otimes_\Lambda\Omega \arrow[r] & X\otimes_A\Lambda \arrow[r] & X \arrow[r] & 0
\end{tikzcd} \]
so that
\[ \langle X,Y\rangle = \dim\Hom_A(X,Y) - \dim\Hom_\Lambda(X\otimes_\Lambda\Omega,Y). \]
If $\Lambda=T_A(M)$ is a tensor algebra, then $\Omega=\Lambda\otimes_AM\otimes_A\Lambda$, and so
\[ \Hom_\Lambda(X\otimes_\Lambda\Omega,Y) \cong \Hom_A(X\otimes_AM,Y) \]
as required. If instead $\Lambda$ is finite dimensional, then every finite dimensional module is nilpotent, and as the Euler form is bilinear, we only need know its values on simple modules. In this case we have the projective resolution $0\to\Lambda_+\to\Lambda\to A\to 0$, yielding
\[ \Ext^1_{\,\hyp\Lambda}(A,A) \iso \Hom_{\,\hyp\Lambda}\Lambda(\Lambda_+,A) \cong \Hom_{\,\hyp A}(M,A) \cong DM. \]
We now note that
\[ \Ext^1_{\,\hyp\Lambda}(S_i,S_j) \iso \Hom_{\,\hyp A}(e_iM,e_jA) \cong e_j\cdot DM\cdot e_i \cong D(e_iMe_j). \qedhere \]
\end{proof}

\begin{rem}
Suppose $\Lambda$ is finite dimensional. Comparing the two computations for the Euler form reveals that
\[ \dim\Hom_\Lambda(X\otimes_\Lambda\Omega,Y) = \dim\Hom_\Lambda(X\otimes_AM\otimes_A\Lambda,Y), \]
so by a result of Auslander (see \cite{Bongartz}) we know that, for each $X$, there is an isomorphism of right $\Lambda$-modules $X\otimes_\Lambda\Omega\cong X\otimes_AM\otimes_A\Lambda$.

This isomorphism, however, is in general not natural in $X$, since otherwise it would yield, for $X=A$, an isomorphism of $A$-bimodules $\Lambda_+\cong M\otimes_A\Lambda$, which we saw in \intref{Remark}{rem:omega-not-proj-bimod} happens if and only if $\Lambda=T_A(M)$ is a tensor algebra.
\end{rem}

The proposition above used the radical-squared zero (trivial extension) algebra
\[ \Lambda/\Lambda^2_+ = A\ltimes M \cong A\ltimes D\Ext^1_\Lambda(A,A). \]
Also, we can represent the Euler form on $K_0(A)$ by the matrix $\mat D-\mat R$, where $\mat D=\mathrm{diag}(d_i)$ and $\mat R=(r_{ij})$ are given by $d_i=\dim A_i$ and $r_{ij}=\dim e_iMe_j$. It is common to visualise this as a valued quiver, having vertices $i$ corresponding to the simples $S_i$, and an arrow $i\to j$ with valuation $(r_{ij}/d_i,r_{ij}/d_j)$ whenever $r_{ij}\neq0$. This is precisely the ext-quiver of $\Lambda/\Lambda_+^2$, or equivalently of $\nilp\Lambda$. Observe that the algebra $\Lambda$ is indecomposable (so not a product of algebras) if and only if the valued quiver is connected.

Note however that the numbers $d_i$ are repressed in this description, so it is sometimes appropriate to label the vertex $i$ with the number $d_i$. One can even be more precise and attach the data $A_i$ and $\Ext^1_\Lambda(S_i,S_j)$ (called moduluation by Dlab and Ringel), rather than just the numerical information.

The corresponding symmetric bilinear form $(x,y)\coloneqq\langle x,y\rangle+\langle y,x\rangle$ is represented by the matrix $2\mat D-\mat B$, where $\mat B=(b_{ij})=\mat R+\mat R^t$, and we have the associated symmetrisable matrix $\mat C=2-\bar{\mat B}$, where $\bar{\mat B}=\mat D^{-1}\mat B$. In this case we visualise $\mat C$ as a valued graph, with an edge connecting $i$ and $j$, and having valuation $(b_{ij}/d_i,b_{ji}/d_j)$, whenever $b_{ij}\neq0$. Provided the graph has no `vertex loops', so $e_iMe_i=0$ for all $i$, the matrix $\mat C$ is a symmetrisable (generalised) Cartan matrix; in general it is a symmetrisable Borcherds matrix.

We say that $\Lambda$ is of Dynkin type if $\mat C$ is a Cartan matrix of Dynkin type, equivalently if its valued graph is a Dynkin diagram. Gabriel's Theorem tells us this is equivalent to $\Lambda$ being representation-finite  (see \cite{DR2}). We say that $\Lambda$ is of affine type if the valued graph is an affine, or extended Dynkin, diagram.

\section{The Auslander--Reiten translate}

In order to define the preprojective algebra we need to introduce the Auslander--Reiten translate $\tau$ of a finite dimensional hereditary algebra $\Lambda$, together with the corresponding Auslander--Reiten formula. In particular, we will require that the Auslander--Reiten formula is invariant under $\tau$, a result which we could not find in the literature.

We recall that the translate and its inverse can be defined on the category $\modcat\Lambda$ as
\[ \tau X \coloneqq D\Ext^1_\Lambda(X,\Lambda) \quad\textrm{and}\quad \tau^-X \coloneqq \Ext^1_\Lambda(DX,\Lambda). \]
Moreover, these functors form an adjoint pair $(\tau^-,\tau)$, and induce natural isomorphisms of bifunctors
\[ D\Hom_\Lambda(Y,\tau X) \cong \Ext^1_\Lambda(X,Y) \cong D\Hom_\Lambda(\tau^-Y,X). \]
Our approach will be to construct an exact sequence of $\Lambda$-bimodules
\[ \begin{tikzcd}
\Lambda\otimes_A\Lambda \arrow[r] & \mho \arrow[r] & \Pi_1 \arrow[r] & 0
\end{tikzcd} \]
where $\Lambda=A\oplus J$, such that, for all right $\Lambda$-modules $X$ and left $\Lambda$-modules $Y$, their inverse Auslander--Reiten translates are given by
\[  \tau^-X \coloneqq X\otimes_\Lambda\Pi_1 \quad\textrm{and}\quad \tau^-Y \coloneqq \Pi_1\otimes_\Lambda Y. \]

We shall need the following well-known lemma.

\begin{lem}\label{lem:standard-isos}
Fix a pair of rings $\Lambda$ and $\Gamma$, and modules ${}_\Gamma Y$ and ${}_\Gamma M_\Lambda$.
\begin{enumerate}
\item For a left module ${}_\Lambda X$ there is a natural homomorphism
\[ \Hom_\Gamma(Y,M)\otimes_\Lambda X \to \Hom_\Gamma(Y,M\otimes_\Lambda X), \]
which is an isomorphism whenever $X$ or $Y$ is finitely generated projective.
\item For a right module $X_\Lambda$ there is a natural homomorphism
\[ X\otimes_\Lambda\Hom_\Gamma(M,Y) \to \Hom_\Gamma(\Hom_\Lambda(X,M),Y), \]
which is an isomorphism provided either $X$ is finitely generated projective, or else $X$ is finitely presented and $Y$ is injective.
\end{enumerate}
\end{lem}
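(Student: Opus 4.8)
The plan is entirely standard: write down the natural maps, reduce to the case of a free module of rank one by additivity, and handle the one genuinely homological case by a presentation argument.

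\textbf{Construction of the maps.} First I would exhibit the maps explicitly. In (1), send $\varphi\otimes x$ to the homomorphism $y\mapsto\varphi(y)\otimes x$; in (2), send $x\otimes g$ to the homomorphism $f\mapsto g(f(x))$. In each case one checks that the assignment is balanced over $\Lambda$ — for (1) because $\varphi(y)\lambda\otimes x=\varphi(y)\otimes\lambda x$ in $M\otimes_\Lambda X$, for (2) because $f$ is right $\Lambda$-linear, so $f(x\lambda)=f(x)\lambda$ — and that the image really consists of $\Gamma$-linear maps. Naturality in all arguments is immediate from the formulas.

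\textbf{The free case.} Next I would evaluate both maps when $X=\Lambda$, and (for (1)) when $Y=\Gamma$. Using the canonical identifications $\Lambda\otimes_\Lambda(-)\cong(-)$, $\Hom_\Lambda(\Lambda,M)\cong M$ and $\Hom_\Gamma(\Gamma,-)\cong(-)$, one sees in each case that the map becomes an identity morphism. Since $\Hom_\Gamma(Y,-)$, $\Hom_\Gamma(-,Y)$, $\Hom_\Lambda(-,M)$ and the tensor functors in play are all additive, the map is then an isomorphism for finitely generated \emph{free} $X$ (resp.\ $Y$), hence, being compatible with direct summands, for finitely generated \emph{projective} $X$ (resp.\ $Y$). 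This proves all of (1) and the first alternative of (2).

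\textbf{The finitely presented / injective case.} For the second alternative of (2), fix a finite presentation $\Lambda^m\to\Lambda^n\to X\to 0$ and apply the two functors $X\mapsto X\otimes_\Lambda\Hom_\Gamma(M,Y)$ and $X\mapsto\Hom_\Gamma(\Hom_\Lambda(X,M),Y)$, getting a commutative ladder with the natural map as vertical arrows. The top row is right exact since $-\otimes_\Lambda\Hom_\Gamma(M,Y)$ is. For the bottom row, $\Hom_\Lambda(-,M)$ turns the presentation into an exact sequence $0\to\Hom_\Lambda(X,M)\to M^n\to M^m$, and then $\Hom_\Gamma(-,Y)$, which is exact because $Y$ is injective, produces an exact sequence $\Hom_\Gamma(M^m,Y)\to\Hom_\Gamma(M^n,Y)\to\Hom_\Gamma(\Hom_\Lambda(X,M),Y)\to 0$; thus the bottom row is right exact too. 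As the vertical maps over $\Lambda^m$ and $\Lambda^n$ are isomorphisms by the free case, the induced map on cokernels — which is exactly the natural map for $X$ — is an isomorphism. The only point needing a little care is verifying exactness of the bottom sequence at the middle term $\Hom_\Gamma(M^n,Y)$, which I would do by factoring $M^n\to M^m$ through its image $K$ and invoking injectivity of $Y$ once more (so $\Hom_\Gamma(M^m,Y)\twoheadrightarrow\Hom_\Gamma(K,Y)$ and $\Hom_\Gamma(K,Y)=\ker(\Hom_\Gamma(M^n,Y)\to\Hom_\Gamma(\Hom_\Lambda(X,M),Y))$). I do not anticipate any real obstacle beyond keeping the one-sided module structures straight and this last exactness check.
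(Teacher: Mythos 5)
Your proof is correct and matches the paper's argument exactly: construct the natural maps, reduce the projective cases to the regular module by additivity, and handle the finitely-presented/injective case by applying the exact functor $\Hom_\Gamma(-,Y)$ to a finite presentation of $X$ and comparing cokernels. You have merely filled in the routine verifications (balancedness, commutativity of the ladder, exactness at the middle term) that the paper leaves to the reader.
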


\begin{proof}
That we have natural homomorphisms in both cases is clear, so to prove that we have an isomorphism for a finitely generated projective, it is enough to show it for the regular module. For the final case, when $X$ is finitely presented and $Y$ is injective, we know that $\Hom_\Gamma(-,Y)$ is exact, so we can choose a presentation of $X$ and use that right exactness.
\end{proof}

\subsection{Symmetric algebras}

We begin by covering some basic results concerning duality for semisimple algebras. We recall that a (finite dimensional) $k$-algebra $A$ is symmetric provided there is an isomorphism of $A$-bimodules between $A$ and its $k$-dual $DA\coloneqq\Hom_k(A,k)$. Clearly every $A$-bimodule map $A\to DA$ is determined by the image $\sigma\in DA$ of $1\in A$, in which case we have an isomorphism of $A$-bimodules if and only if $\sigma$ is a symmetrising element, meaning that $\sigma(ab)=\sigma(ba)$ and $\sigma(a-)=0$ implies $a=0$.

For convenience we prove the following standard lemma.

\begin{lem}
Every finite dimensional semisimple algebra is symmetric.
\end{lem}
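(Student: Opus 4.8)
The plan is to exhibit an explicit symmetrising element, peeling off the structure of $A$ by Wedderburn's theorem.

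\emph{Reduction to simple algebras.} Write $A=A_1\times\cdots\times A_m$ with each $A_\ell$ simple, and let $\pi_\ell\colon A\to A_\ell$ be the projection. If $\sigma_\ell$ is a symmetrising element for $A_\ell$, then I claim $\sigma\coloneqq\sum_\ell\sigma_\ell\circ\pi_\ell$ is one for $A$. Indeed $\sigma(ab)=\sigma(ba)$ holds componentwise, and if $\sigma(a-)=0$ then evaluating on elements supported in a single factor gives $\sigma_\ell(\pi_\ell(a)-)=0$, so $\pi_\ell(a)=0$ for every $\ell$ and hence $a=0$. Thus we may assume $A$ is simple.

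\emph{The simple case.} Let $Z=Z(A)$, a finite field extension of $k$, so that $A$ is a central simple $Z$-algebra, and let $\operatorname{Trd}=\operatorname{Trd}_{A/Z}\colon A\to Z$ be the reduced trace. It is $Z$-linear, satisfies $\operatorname{Trd}(ab)=\operatorname{Trd}(ba)$, and is nonzero, hence surjective. Choosing any nonzero $k$-linear $\tau\colon Z\to k$, I would take $\sigma\coloneqq\tau\circ\operatorname{Trd}$. Being a composite of surjections, $\sigma$ is surjective, and it inherits $\sigma(ab)=\sigma(ba)$. For non-degeneracy, suppose $\sigma(a-)=0$ with $a\neq0$; from $\sigma(xay)=\sigma(ayx)\in\sigma(aA)=0$ we get $\sigma(AaA)=0$, but $AaA=A$ since $A$ is simple, so $\sigma=0$, contradicting surjectivity. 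Hence $a=0$ and $\sigma$ is symmetrising. (When $A=Z$ is a field this just says any nonzero $k$-linear functional on a finite field extension is symmetrising, again because $aZ=Z$ for $a\neq0$.) Alternatively one can reduce a matrix algebra $M_n(\Delta)$ directly to the division algebra $\Delta$ via the ordinary matrix trace, since $\tr(XY)-\tr(YX)=\sum_{i,j}[X_{ij},Y_{ji}]$ and evaluation on the elementary matrices $E_{ji}d$ isolates $\sigma_\Delta(X_{ij}d)$.

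\emph{Main obstacle.} The only ingredient that is not purely formal is that the reduced trace of a central simple algebra is nonzero, equivalently that the reduced-trace form is non-degenerate. When the characteristic does not divide $\sqrt{\dim_Z A}$ this is immediate, since $\operatorname{Trd}(1)=\sqrt{\dim_Z A}$; in general it is a standard fact from the theory of central simple algebras, obtained by passing to a splitting field where the reduced trace becomes the ordinary matrix trace. This is the only point where anything beyond Wedderburn's structure theorem enters, and the step I expect to require the most care to phrase cleanly.
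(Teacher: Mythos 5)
Your proof is correct, and it takes a genuinely different route from the paper's, so a comparison is worthwhile. The paper does the $M_n$-step explicitly (a symmetrising form on $A$ induces one on $M_n(A)$ via $(a_{ij})\mapsto\sum\sigma(a_{ii})$) and then works with a division algebra $A$: it observes that the commutator subspace $C(A)$ is a \emph{proper} $K$-subspace by tensoring up to a splitting field and comparing dimensions against the codimension-one commutator subspace of $M_n(L)$, and then takes \emph{any} nonzero functional vanishing on $C(A)$; non-degeneracy is automatic because $a\neq0$ in a division algebra gives $aA=A$. You instead stop at the level of simple algebras and write down a \emph{specific} symmetrising form, $\tau\circ\operatorname{Trd}$, proving non-degeneracy via $AaA=A$. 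Both arguments ultimately pass through the same splitting-field fact about the matrix trace; the paper packages it as ``the commutator subspace of $M_n(L)$ is proper,'' you package it as ``the reduced-trace form is non-degenerate.'' What the paper's version buys is self-containment: it needs only a dimension count and never invokes the theory of reduced traces, and its non-degeneracy step is immediate for division algebras. What yours buys is explicitness (a canonical $\sigma$, not merely an existence statement) and a non-degeneracy argument via two-sided ideals that works uniformly at the simple-algebra level without first peeling off $M_n$. You are also right to flag the nonvanishing of $\operatorname{Trd}$ as the one non-formal input; that is exactly where the paper's dimension count lives, so the two proofs have the same mathematical core, just with a different division of labour.
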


\begin{proof}
If $A$ is symmetric with symmetrising element $\sigma$, then the matrix ring $M_n(A)$ is also symmetric with symmetrising element $(a_{ij})\mapsto\sum\sigma(a_{ii})$. If $B$ is another symmetric algebra, with symmetrising element $\sigma'$, then $A\times B$ is again symmetric, with symmetrising element $(a,b)\mapsto\sigma(a)+\sigma'(b)$. By the Artin--Wedderburn Theorem it is thus enough to show the result when $A$ is a division algebra.

Let $K$ be the centre of $A$ and $L/K$ a splitting field extension, so that $L\otimes_KA\cong M_n(L)$. Writing $C(A)$ for the $K$-subspace of $A$ spanned by all commutators $[a,b]=ab-ba$, we clearly have $L\otimes_KC(A)\subseteq C(L\otimes_KA)$. The latter is a proper subspace of $M_n(L)$, since not all matrices have trace zero. We can now take as symmetrising element any non-zero $\sigma\in DA$ vanishing identically on $C(A)$.
\end{proof}

For the remainder of the paper we shall fix a symmetrising element $\sigma\in DA$. Also, in this section, all unadorned homomorphisms and tensor products will be over $A$, and $k$ will act centrally on all $A$-bimodules.

\begin{lem}\label{lem:A-homs}
Let $X$ and $Y$ be right $A$-modules with $X$ finite dimensional. Then there are isomorphisms
\[ \Hom(X,Y) \leftiso Y\otimes\Hom(X,A) \iso Y\otimes DX \]
sending an element $y\otimes f$ in $Y\otimes\Hom(X,A)$ to the map $x\mapsto yf(x)$ in $\Hom(X,Y)$, and to the element $y\otimes\sigma f$ in $Y\otimes DX$.
\end{lem}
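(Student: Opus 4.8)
The statement is a chain of two isomorphisms of right $A$-modules, and both are essentially instances of the general adjunction/evaluation maps from Lemma~\ref{lem:standard-isos} specialised to the semisimple ring $A$, where finitely generated is the same as finite dimensional and every module is projective. First I would treat the right-hand arrow $Y\otimes\Hom(X,A)\to Y\otimes DX$. Since $A$ is symmetric with fixed symmetrising element $\sigma$, the map $A\to DA$, $a\mapsto\sigma(a-)$ is an isomorphism of $A$-bimodules; applying $\Hom_{\,\hyp A}(X,-)$ to this (using that $X$ is finite dimensional, hence finitely generated projective over the semisimple $A$, so $\Hom$ commutes with the relevant constructions) gives an isomorphism $\Hom(X,A)\iso\Hom(X,DA)\cong DX$, $f\mapsto\sigma f$, which is natural in $X$ and compatible with the right $A$-action coming from the bimodule structure on $A$ (resp.\ on $DA$). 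Tensoring on the left with $Y$ yields the second isomorphism of the lemma, and tracks $y\otimes f$ to $y\otimes\sigma f$ as claimed.

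For the left-hand arrow $Y\otimes\Hom(X,A)\to\Hom(X,Y)$, $y\otimes f\mapsto(x\mapsto yf(x))$, this is precisely the natural homomorphism of Lemma~\ref{lem:standard-isos}(1) with $\Gamma=\Lambda=A$, $M=A$ (as an $A$-bimodule, so $M_\Lambda=A_A$ and ${}_\Gamma M=\,{}_AA$), reading the left module structure on $X$ appropriately: the lemma gives $\Hom(X,A)\otimes X'\to\Hom(X,A\otimes X')=\Hom(X,X')$. Here I need $X$ finitely generated projective to invoke the isomorphism claim, which holds automatically since $A$ is semisimple. Hence this map is an isomorphism, and composing its inverse with the previous isomorphism gives the full chain, with the stated formulae for the element $y\otimes f$.

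The only point requiring a little care — and the part I expect to be the main (minor) obstacle — is bookkeeping of \emph{which} $A$-module structure is in play at each stage: $\Hom(X,A)$ carries a right $A$-module structure via the right action on the target $A$, $DX$ carries one via the left action on $X$, and one must check the isomorphism $f\mapsto\sigma f$ intertwines these two, using the symmetry property $\sigma(ab)=\sigma(ba)$ precisely to move the $A$-action across. Likewise, the tensor product $Y\otimes\Hom(X,A)$ is over $A$ using the \emph{left} $A$-structure on $\Hom(X,A)$ (from the left action on the target $A$), which is the structure that gets transported to the evaluation map into $\Hom(X,Y)$. Once these identifications are made explicit, both maps are visibly isomorphisms and the formulae are immediate, so the proof is short.
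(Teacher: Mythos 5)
Your argument is correct and follows the same two-step reasoning as the paper: the left isomorphism is the standard evaluation map, which is an isomorphism because $X$ is finitely generated projective over the semisimple ring $A$, and the right isomorphism is obtained by applying $\Hom_{\,\hyp A}(X,-)$ to the bimodule isomorphism $A\cong DA$ given by $\sigma$ and then tensoring with $Y$. The only quibble is that your instantiation of Lemma~\ref{lem:standard-isos}(1) is stated with $\Gamma=\Lambda=A$ rather than $A^\op$, so the tensor factor appears on the wrong side; you flag the left/right bookkeeping yourself as the delicate point, and indeed one should either pass to $A^\op$ throughout or note that $Y\otimes_A\Hom_{\,\hyp A}(X,A)$ equals $\Hom_{\,\hyp A}(X,A)\otimes_{A^\op}Y$, which is where part~(1) directly applies.
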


\begin{proof}
The isomorphism on the left follows since $X$ is a finitely generated projective $A$-module. The map on the right is induced by the isomorphism $A\cong DA$.
\end{proof}

We write $\xi\mapsto\!\rmod\xi$ for the inverse of the isomorphism $\Hom(X,A)\iso DX$, $f\mapsto\sigma f$. Thus $\rmod\xi\!(xa)=\rmod\xi\!(x)a$ and $\sigma(\!\rmod\xi\!(x))=\xi(x)$. Note also that this is an isomorphism of left $A$-modules, so $\rmod{a\xi}=a\rmod\xi$, which is the map $x\mapsto a\rmod\xi(x)$.

Taking $X=Y$ in the lemma yields the special case $\End(X)\cong X\otimes DX$. Let $\sum x_i\otimes\xi_i$ correspond to the identity on $X$, so that $x=\sum x_i\rmod{\xi_i}(x)$ for all $x\in X$. For an arbitrary module $Y$ we can then write the isomorphism from the lemma as
\[ \Hom(X,Y) \iso Y\otimes DX, \quad f\mapsto\sum f(x_i)\otimes\xi_i. \]
For, we just need to check that $\sum f(x_i)\rmod{\xi_i}=f$, which follows from
\[ \sum f(x_i)\rmod{\xi_i}(x) = \sum f(x_i\rmod{\xi_i}(x)) = f(x). \]

\begin{rem}\label{rem:alt-actions}
\begin{enumerate}
\item There is an analogous result for left modules. In this case we have an isomorphism
\[ \Hom(X',A)\iso DX', \quad f \mapsto \sigma f, \]
whose inverse we write as $\xi\mapsto\!\lmod\xi$, and for an arbitrary left module $Y$ we have the isomorphism $\Hom(X',Y')\cong DX'\otimes Y'$.
\item If we take a finite dimensional right module $X$ and a left module $Y'$, then we obtain $\Hom(DX,Y')\cong X\otimes Y'$. In this case $x\otimes y'$ is sent to the morphism $\xi\mapsto\!\rmod\xi\!(x)y'$.
\end{enumerate}
\end{rem}

\begin{lem}\label{lem:A-homs-dual}
Let $X$ and $Y$ be finite dimensional right $A$-modules. Then the composite $\Hom(X,Y)\iso Y\otimes DX\iso\Hom(DY,DX)$ sends $f$ to $Df$.

In particular, the same element of $X\otimes DX$ corresponds to both the identity in $\End(X_A)$ and the identity in $\End({}_ADX)$.
\end{lem}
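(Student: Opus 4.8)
The plan is to unwind the two composite isomorphisms on a general element and compare the resulting maps, using the explicit formulas established just before the statement. Fix $f \in \Hom(X,Y)$. By the displayed formula preceding the lemma, the first isomorphism $\Hom(X,Y) \iso Y\otimes DX$ sends $f$ to $\sum f(x_i)\otimes\xi_i$, where $\sum x_i\otimes\xi_i \in X\otimes DX$ corresponds to $\id_X$ under $\End(X_A)\cong X\otimes DX$. So the whole point is to understand the second isomorphism $Y\otimes DX \iso \Hom(DY,DX)$: by the left-module version of \intref{Lemma}{lem:A-homs} (applied with the roles of left/right swapped, i.e. the isomorphism $\Hom(X',Y')\cong DX'\otimes Y'$ from \intref{Remark}{rem:alt-actions}(1), here with $X'=DY$ and $Y'=DX$ both left modules, noting $D(DY)\cong Y$), the element $\sum f(x_i)\otimes\xi_i$ is sent to the morphism $DY\to DX$ determined by the "evaluation" recipe from that remark. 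I would spell out that recipe and check directly that it computes $Df$.

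The cleanest way is to evaluate $Df$ against $X$: for $\eta\in DY$ and $x\in X$ we want to show the image morphism $g\colon DY\to DX$ satisfies $g(\eta)(x) = \eta(f(x))$. Using $x = \sum x_i\,\rmod{\xi_i}(x)$ and $A$-linearity, together with the defining property $\sigma(\rmod{\xi_i}(x)) = \xi_i(x)$, this reduces to a short bookkeeping computation: $g(\eta)(x) = \sum \xi_i\bigl(\,\rmod{?}\,\bigr)\cdots = \sum \eta(f(x_i))\,\xi_i(x) \cdot(\text{sign/symmetrising factor})$, and one collapses the sum via $\sum f(x_i)\xi_i(x)\cdots$ back to $\eta(f(x))$. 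The second assertion is then immediate: take $X=Y$ and $f=\id_X$; the lemma says $\id_X$ corresponds to the same element $\sum x_i\otimes\xi_i\in X\otimes DX$ that maps to $D(\id_X)=\id_{DX}$ under $X\otimes DX\iso\End({}_ADX)$, so that element is simultaneously the distinguished element for both $\End(X_A)$ and $\End({}_ADX)$.

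The main obstacle I anticipate is purely bookkeeping rather than conceptual: keeping straight which of the several isomorphisms $\Hom(-,A)\iso D(-)$ is being used (right-module version $\xi\mapsto\rmod\xi$ versus left-module version $\xi\mapsto\lmod\xi$), since $Y\otimes DX$ must be read as a left-$A$-module tensor when feeding it into the left-module form of \intref{Lemma}{lem:A-homs}, and the passage $X\otimes DX$ for $\End(X_A)$ versus $\End({}_ADX)$ swaps the two sides. Concretely, one has to verify that the fixed element $\sum x_i\otimes\xi_i$ plays the role of $\id$ on both sides \emph{before} invoking it, which means checking both $x=\sum x_i\rmod{\xi_i}(x)$ (already noted) and its dual $\phi = \sum \xi_i\,(\text{acting appropriately})$ for $\phi\in DX$ — but this dual identity follows formally by applying $D$ to the first and using \intref{Remark}{rem:alt-actions}, so no genuinely new input is needed. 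Everything else is naturality of the maps in \intref{Lemma}{lem:standard-isos} and \intref{Lemma}{lem:A-homs}, which the earlier proofs already supply.
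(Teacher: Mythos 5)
The strategy you outline (unwind both isomorphisms and compare) is the right one and matches the paper in spirit, but the proposal leaves the decisive step undone, and the sketch of that step as written would not go through. Two concrete problems.

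First, the crux of the paper's argument is that the two sides each reduce to an expression of the form $\sigma(\rmod\eta(y)\rmod\xi(x))$ versus $\sigma(\rmod\xi(x)\rmod\eta(y))$, and the lemma is \emph{exactly} the statement that these coincide because $\sigma$ is a symmetrising form, i.e.\ $\sigma(ab)=\sigma(ba)$. Your ``(sign/symmetrising factor)'' gestures at this but never pins it down, and nothing in the proposal actually invokes the symmetry of $\sigma$. Without that, the computation does not close; it is not ``purely bookkeeping.''

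Second, the partial computation you do write has a confusion of types that matters. You suggest collapsing ``$\sum f(x_i)\xi_i(x)$'' back to $f(x)$, treating $\xi_i(x)$ as a scalar you can commute past $f$. But the correct identity is $x=\sum x_i\rmod{\xi_i}(x)$ where $\rmod{\xi_i}(x)\in A$, and correspondingly $f(x)=\sum f(x_i)\rmod{\xi_i}(x)$ with $\rmod{\xi_i}(x)$ acting on the right of $Y$. Feeding this through $\eta$ gives $\eta(f(x))=\sum\sigma\bigl(\rmod\eta(f(x_i))\rmod{\xi_i}(x)\bigr)$, while unwinding $g(\eta)(x)$ gives $\sum\sigma\bigl(\rmod{\xi_i}(x)\rmod\eta(f(x_i))\bigr)$, and equality is precisely the symmetry of $\sigma$. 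This is also where the paper's route is cleaner: rather than expanding $f$ through the dual-basis element $\sum x_i\otimes\xi_i$ (which forces you to track elements of $A$ carefully), the paper simply takes a simple tensor $y\otimes\xi\in Y\otimes DX$, computes the two associated maps $x\mapsto y\rmod\xi(x)$ and $\eta\mapsto\rmod\eta(y)\xi$, evaluates both against a pair $(\eta,x)$, and invokes symmetry of $\sigma$ once. I'd suggest reworking the proof along those lines; the final paragraph of your proposal, deriving the second assertion from $D(\id_X)=\id_{DX}$, is fine as is.
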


Explicitly, if $\sum x_i\otimes\xi_i$ corresponds to the identity on $X$, then also $\xi=\sum_i\!\rmod\xi\!(x_i)\xi_i$ for all $\xi\in DX$.

\begin{proof}
Take $y\otimes\xi\in Y\otimes DX$. This is sent to $f\colon x\mapsto y\rmod\xi\!(x)$ in $\Hom(X,Y)$, and to $g\colon\eta\mapsto\rmod\eta(y)\xi$ in $\Hom(DY,DX)$. Now
\[ (Df)(\eta)(x) = \eta(f(x)) = \eta(y\rmod\xi\!(x)) = \sigma(\rmod\eta(y)\rmod\xi\!(x)), \]
whereas
\[ g(\eta)(x) = (\rmod\eta(y)\xi)(x) = \xi(x\rmod\eta(y)) = \sigma(\rmod\xi\!(x)\rmod\eta(y)). \]
As $\sigma$ is symmetric we see that $Df=g$ as required. The final statement follows since $D(\id_X)=\id_{DX}$.
\end{proof}

Now assume that we have a finite dimensional $A$-bimodule $M$ and a morphism $X\otimes_AM\to X$ of right $A$-modules, written $x\otimes m\mapsto xm$. We then get the induced map $M\otimes DX\to DX$ such that $(m\xi)(x)\coloneqq\xi(xm)$.

\begin{lem}\label{lem:rmod-action}
Let $\sum m_i\otimes\mu_i$ correspond to the identity on $M_A$, so $m=\sum m_i\rmod{\mu_i}(m)$ for all $m\in M$. Given $x\in X$, the map of left $A$-modules $M\otimes DX\to A$, $m\otimes\xi\mapsto\rmod{m\xi}(x)$ corresponds under the isomorphism $\Hom(M\otimes DX,A)\cong X\otimes DM$ to the element $\sum xm_i\otimes\mu_i$.
\end{lem}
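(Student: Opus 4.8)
The plan is to make the isomorphism $\Hom(M\otimes DX,A)\cong X\otimes DM$ completely explicit, and then evaluate it on the element $\sum_i xm_i\otimes\mu_i$ and check that the result is the map $m\otimes\xi\mapsto\rmod{m\xi}(x)$.

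First I would unwind the isomorphism into the standard ones of this subsection. Regarding $M\otimes_A DX$ as a left $A$-module via the left action on $M$, the tensor--hom adjunction gives $\Hom_A(M\otimes_A DX,A)\cong\Hom_A\big(DX,\Hom_A({}_AM,A)\big)$; then $\Hom_A({}_AM,A)\cong DM$ via $\nu\mapsto\!\lmod\nu$ by \intref{Remark}{rem:alt-actions}(1), and $\Hom_A(DX,DM)\cong X\otimes DM$ via $u\otimes\nu\mapsto\big(\xi\mapsto\rmod\xi(u)\,\nu\big)$ by \intref{Remark}{rem:alt-actions}(2), where the product $\rmod\xi(u)\,\nu$ is formed using the left action $(a\nu)(m)=\nu(ma)$ on $DM$. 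Composing these, the isomorphism $\Theta\colon X\otimes DM\iso\Hom_A(M\otimes_A DX,A)$ is characterised --- since $\sigma$ is nondegenerate --- by
\[ \sigma\big(\Theta(u\otimes\nu)(m\otimes\xi)\,b\big)=\nu\big((bm)\,\rmod\xi(u)\big)\qquad\text{for all }b\in A. \]
A short check using $\rmod{a\xi}=a\rmod\xi$ and the symmetry of $\sigma$ confirms that this formula does define a left $A$-linear map, independent of the chosen representative $m\otimes\xi$.

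With $\Theta$ in hand I would substitute $u\otimes\nu=\sum_i xm_i\otimes\mu_i$ and, writing $m'=bm$, rewrite the right-hand side above as $\sum_i\mu_i\big(m'\,\rmod\xi(xm_i)\big)=\big(\sum_i\rmod\xi(xm_i)\,\mu_i\big)(m')$. The crux is then the identity $\sum_i\rmod\xi(xm_i)\,\mu_i=\xi_x$, where $\xi_x\in DM$ denotes the functional $m''\mapsto\xi(xm'')$. To obtain it, observe that $m''\mapsto\rmod\xi(xm'')$ and $\rmod{\xi_x}$ are both right $A$-linear maps $M\to A$, and that $\sigma\big(\rmod\xi(xm'')\,c\big)=\xi\big(x(m''c)\big)=\sigma\big(\rmod{\xi_x}(m'')\,c\big)$ for all $m''\in M$ and $c\in A$; hence these maps coincide by nondegeneracy of $\sigma$, and therefore $\sum_i\rmod\xi(xm_i)\,\mu_i=\sum_i\rmod{\xi_x}(m_i)\,\mu_i=\xi_x$ by \intref{Lemma}{lem:A-homs-dual} (in the explicit form $\nu=\sum_i\rmod\nu(m_i)\,\mu_i$). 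It follows that $\sigma\big(\Theta(\sum_i xm_i\otimes\mu_i)(m\otimes\xi)\,b\big)=\xi_x(bm)=\xi\big((xb)m\big)=(m\xi)(xb)=\sigma\big(\rmod{m\xi}(x)\,b\big)$, using associativity in $X\otimes_A M$ and the definitions of the induced actions on $DX$; since this holds for every $b\in A$, nondegeneracy of $\sigma$ forces $\Theta(\sum_i xm_i\otimes\mu_i)(m\otimes\xi)=\rmod{m\xi}(x)$, which is the assertion.

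I expect the main obstacle to lie entirely in the first step: keeping straight the several one-sided $A$-actions in play --- the two natural $A$-actions on $\Hom_A({}_AM,A)$, and the left versus right action on $DM$ --- and placing the $\sigma$-twists correctly, so that the explicit formula for $\Theta$, and hence the element it sends to $m\otimes\xi\mapsto\rmod{m\xi}(x)$, comes out right. Once $\Theta$ is pinned down, what remains is a routine computation with the symmetrising form $\sigma$ whose only substantive input is \intref{Lemma}{lem:A-homs-dual}.
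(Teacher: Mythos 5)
Your proof is correct and follows essentially the same route as the paper: unwinding $\Hom(M\otimes DX,A)\cong X\otimes DM$ through the same chain $\Hom(M\otimes DX,A)\cong\Hom(DX,\Hom_{A\hyp}(M,A))\cong\Hom(DX,DM)\cong X\otimes DM$ and comparing via the symmetrising form $\sigma$. The only cosmetic difference is the final collapse of the sum, where you invoke the dual identity $\nu=\sum\rmod\nu(m_i)\mu_i$ from \intref{Lemma}{lem:A-homs-dual}, whereas the paper derives $\mu(m\rmod\xi(x))=\xi(x\rmod\mu(m))$ directly from $\sigma$-symmetry and then uses $m=\sum m_i\rmod{\mu_i}(m)$ --- two equivalent consequences of the choice of $\sum m_i\otimes\mu_i$.
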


\begin{proof}
We begin by observing that
\[\Hom(M\otimes DX,A) \cong \Hom(DX,\Hom_{A\hyp}(M,A)) \cong \Hom(DX,DM) \cong X\otimes DM. \]
Now $f\colon m\otimes\xi\mapsto\rmod{m\xi}(x)$ and $\sum xm_i\otimes\mu_i$ both yield maps of left $A$-modules $M\otimes DX\to A$, so they correspond if and only if they agree after composing with $\sigma$.

Clearly
\[ \sigma f(m\otimes\xi) = \sigma(\rmod{m\xi}(x)) = (m\xi)(x) = \xi(xm). \]
On the other hand, $\sum xm_i\otimes\mu_i$ corresponds first to the map $DX\to DM$, $\xi\mapsto\sum\!\rmod\xi\!(xm_i)\mu_i$, and then to the morphism
\[ g\colon M\otimes DX \to A, \quad m\otimes\xi\mapsto\sum\lmod\mu_i(m\rmod\xi\!(xm_i)). \]
Now, using $A$-linearity, and the symmetry of $\sigma$, we have
\[ \mu(m\rmod\xi\!(x)) = \sigma\big(\rmod\mu(m)\!\rmod\xi\!(x)\big) = \xi(x\rmod\mu(m)). \]
Thus
\[ \sigma g(m\otimes\xi) = \sum\mu_i(m\rmod\xi\!(xm_i)) = \sum\xi(xm_i\rmod{\mu_i}(m)) = \xi(xm). \]
The result follows.
\end{proof}

\subsection{The Auslander--Reiten translate}\label{sec:ART}

Let $\Lambda=A\oplus J$ be a finite dimensional hereditary algebra. We wish to construct an exact sequence of finite dimensional $\Lambda$-bimodules
\[ \begin{tikzcd}\label{eq:Pi_1}
\Lambda\otimes_A\Lambda \arrow[r] & \mho \arrow[r] & \Pi_1 \arrow[r] & 0 \tag{$\dagger$}
\end{tikzcd} \]
such that, for all right $\Lambda$-modules $X$ and left $\Lambda$-modules $Y$, their inverse Auslander--Reiten translates are given by
\[ \tau^-X \coloneqq X\otimes_\Lambda\Pi_1 \quad\textrm{and}\quad \tau^-Y \coloneqq \Pi_1\otimes_\Lambda Y. \]

Earlier we defined $\Omega$ as the kernel of the multiplication map $\Lambda\otimes_A\Lambda\to\Lambda$. We can therefore define the $\Lambda$-bimodule
\[ \mho \coloneqq D(D\Lambda\otimes_\Lambda\Omega\otimes_\Lambda D\Lambda) \]
as well as the bimodule homomorphism $\Lambda\otimes_A\Lambda \to \mho$ sending $\alpha\otimes\beta$ to the linear functional
\[ \theta\otimes d\lambda\otimes\phi \mapsto \theta(\lmod{\phi\alpha}(\lambda)\beta) - \phi(\alpha\rmod{\beta\theta}(\lambda)). \]
In particular, the image of $1\otimes1$ is the element $\bar c$ having action
\[ \bar c \colon \theta\otimes d\lambda\otimes\phi \mapsto \theta(\lmod\phi(\lambda)) - \phi(\rmod\theta\!(\lambda)). \]

\begin{lem}\label{lem:bar-c-duality}
\begin{enumerate}
\item Let $X$ be a finite dimensional right $\Lambda$-module, and
\[ \Omega\otimes_\Lambda DX \to \Lambda\otimes_ADX, \quad d\lambda\otimes\xi \mapsto 1\otimes\lambda\xi - \lambda\otimes\xi, \]
the left hand map in the standard projective resolution of $DX$. Applying $\Hom_{\Lambda\hyp}(-,\Lambda)$ yields, under the usual natural isomorphisms, the morphism
\[ X\otimes_A\Lambda \to X\otimes_\Lambda\mho, \quad x\otimes1 \mapsto x\otimes\bar c. \]
\item Dually, let $Y$ be a finite dimensional left $\Lambda$-module, and
\[ DY\otimes_\Lambda\Omega \to DY\otimes_A\Lambda, \quad \eta\otimes d\lambda \mapsto \eta\otimes\lambda - \eta\lambda\otimes1, \]
the left hand map in the standard projective resolution of $DY$. Applying $\Hom_{\,\hyp\Lambda}(-,\Lambda)$ yields, under the usual natural isomorphisms, the morphism
\[ \Lambda\otimes_AY \to \mho\otimes_\Lambda Y, \quad 1\otimes y \mapsto \bar c\otimes y. \]
\end{enumerate}
\end{lem}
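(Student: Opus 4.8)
The plan is to apply $\Hom_{\Lambda\hyp}(-,\Lambda)$ to the two‑term complex
\[ \Omega\otimes_\Lambda DX \xrightarrow{\ \iota\ } \Lambda\otimes_A DX \]
(the left‑hand part of the standard projective resolution of the left module $DX$, by the left‑module analogue of \intref{Proposition}{prop:omega}), to identify the two resulting spaces with $X\otimes_A\Lambda$ and $X\otimes_\Lambda\mho$ by isomorphisms natural in $X$, and then to pin down the induced map. Since all the isomorphisms are natural and the functors $X\mapsto X\otimes_A\Lambda$ and $X\mapsto X\otimes_\Lambda\mho$ carry a right $\Lambda$‑linear $f\colon\Lambda\to X$ to $f\otimes\id$, the whole of (1) reduces to the single identity $\eta_\Lambda(1\otimes1)=\bar c$, where $\eta_\Lambda\colon\Lambda\otimes_A\Lambda\to\mho$ denotes $\Hom_{\Lambda\hyp}(\iota,\Lambda)$ read off through the identifications at $X=\Lambda$: taking $f\colon\Lambda\to X$, $\mu\mapsto x\mu$, naturality gives $\eta_X(x\otimes1)=\eta_X\bigl((f\otimes\id)(1\otimes1)\bigr)=(f\otimes\id)\bigl(\eta_\Lambda(1\otimes1)\bigr)=x\otimes\bar c$.

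For the source, tensor--hom adjunction along $\Lambda\otimes_A-$ gives $\Hom_{\Lambda\hyp}(\Lambda\otimes_ADX,\Lambda)\cong\Hom_{A\hyp}(DX,\Lambda)$, and \intref{Remark}{rem:alt-actions}(2)---where the fixed symmetrising element $\sigma$ enters---rewrites this as $X\otimes_A\Lambda$, with $x\otimes\nu$ corresponding to $\mu\otimes\xi\mapsto\mu\,\rmod\xi(x)\,\nu$. For the target I would use the canonical bimodule isomorphism $\Lambda\cong D(D\Lambda)$ and the adjunction $\Hom_{\Lambda\hyp}(L,DW)\cong D(W\otimes_\Lambda L)$ to get $\Hom_{\Lambda\hyp}(\Omega\otimes_\Lambda DX,\Lambda)\cong D(D\Lambda\otimes_\Lambda\Omega\otimes_\Lambda DX)$. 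The structural point making this useful is that $D\Lambda\otimes_\Lambda\Omega$ is a finitely generated projective \emph{right} $\Lambda$‑module: applying the exact functor $D\Lambda\otimes_\Lambda-$ to the fundamental sequence $0\to\Omega\to\Lambda\otimes_A\Lambda\to\Lambda\to0$ realises it as a submodule of the projective right module $D\Lambda\otimes_A\Lambda$, and $\Lambda$ is hereditary. Hence $\mho\cong\Hom_{\,\hyp\Lambda}(D\Lambda\otimes_\Lambda\Omega,\Lambda)$ is a projective left module, so $X\otimes_\Lambda\mho\cong\Hom_{\,\hyp\Lambda}(D\Lambda\otimes_\Lambda\Omega,X)$, and the elementary duality $D(B\otimes_\Lambda DX)\cong\Hom_{\,\hyp\Lambda}(B,X)$ for finite‑dimensional $X$ (with $B=D\Lambda\otimes_\Lambda\Omega$) identifies the latter with $D(D\Lambda\otimes_\Lambda\Omega\otimes_\Lambda DX)$, closing the loop and giving the required natural isomorphism $\Hom_{\Lambda\hyp}(\Omega\otimes_\Lambda DX,\Lambda)\cong X\otimes_\Lambda\mho$.

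It then remains to chase $1\otimes1$ at $X=\Lambda$. By the first identification it corresponds to $\psi_0\colon\mu\otimes\xi\mapsto\mu\,\rmod\xi(1)$, and precomposing with $\iota$ gives the left $\Lambda$‑linear map $d\lambda\otimes\xi\mapsto\rmod{\lambda\xi}(1)-\lambda\,\rmod\xi(1)$. Transporting this back along the chain above and comparing with $\bar c$---after substituting the test functionals and using $\Lambda\cong D(D\Lambda)$---comes down to verifying
\[ \theta\bigl(\rmod{\lambda\xi}(1)-\lambda\,\rmod\xi(1)\bigr)=\theta\bigl(\lmod\xi(\lambda)\bigr)-\xi\bigl(\rmod\theta(\lambda)\bigr) \]
for all $\theta,\xi\in D\Lambda$ and $\lambda\in\Lambda$; this is a routine manipulation using $\sigma(\rmod\xi(\mu))=\xi(\mu)=\sigma(\lmod\xi(\mu))$, the one‑sided $A$‑linearity of $\xi\mapsto\rmod\xi$ and $\xi\mapsto\lmod\xi$, the symmetry of $\sigma$, and \intref{Lemma}{lem:A-homs-dual} together with \intref{Lemma}{lem:rmod-action}, which were tailored to expressions such as $\rmod{\lambda\xi}(1)$. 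This yields $\eta_\Lambda(1\otimes1)=\bar c$ and hence (1). Part (2) is the mirror image: replace right modules by left, $\rmod{(-)}$ by $\lmod{(-)}$, and \intref{Remark}{rem:alt-actions}(2) by its left analogue, noting that the defining formula of the fundamental map $\Lambda\otimes_A\Lambda\to\mho$ (and of $\bar c$) is antisymmetric under exchanging the two $D\Lambda$‑slots together with $\lmod{(-)}\leftrightarrow\rmod{(-)}$, a sign which is exactly cancelled by the opposite sign in the left‑hand map of the standard resolution used in (2); the same computation then delivers $\bar c\otimes y$.

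The main obstacle is the computation in the third paragraph: keeping the many one‑sided module structures and the $\rmod{(-)}/\lmod{(-)}$ conventions consistent through the chain of adjunction and duality isomorphisms, so that the generator lands on $\bar c$ with exactly the right sign and the right placement of $\rmod{(-)}$ versus $\lmod{(-)}$. The adjunctions, the projectivity of $D\Lambda\otimes_\Lambda\Omega$, and the reduction to $X=\Lambda$ are all routine by comparison.
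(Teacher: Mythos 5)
Your proof is correct, and it takes a genuinely different route from the paper's. The main structural departure is the reduction to $X=\Lambda$ via naturality, which the paper does not make explicit (it works with a general finite dimensional $X$ throughout). You also identify the target $\Hom_{\Lambda\hyp}(\Omega\otimes_\Lambda DX,\Lambda)$ with $X\otimes_\Lambda\mho$ along a different chain: you pass through $D(D\Lambda\otimes_\Lambda\Omega\otimes_\Lambda DX)\cong\Hom_{\,\hyp\Lambda}(D\Lambda\otimes_\Lambda\Omega,X)$ and invoke the finitely generated projectivity of $D\Lambda\otimes_\Lambda\Omega$ (hence projectivity of $\mho$ as a left module), whereas the paper rewrites $\bar c$ so that it factors as the homomorphism $\Psi\colon\Omega\otimes_\Lambda D\Lambda\to\Lambda$, $d\lambda\otimes\phi\mapsto\lmod\phi(\lambda)-\lambda\lmod\phi(1)$, and then applies part (2) of \intref{Lemma}{lem:standard-isos} using the injectivity of $\Hom_{\Lambda\hyp}(\Omega,\Lambda)$ from \intref{Proposition}{prop:omega}. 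Both routes are sound; yours buys the reduction to a single element chase at $X=\Lambda$, while the paper's rewrite of $\bar c$ (note $\phi(\rmod\theta(\lambda))=\theta(\lambda\lmod\phi(1))$) makes the verification for general $x$ more transparent. One small remark on the verification you defer: the two identities you need are exactly $\rmod{\lambda\xi}(1)=\lmod\xi(\lambda)$ and $\rmod\xi(1)=\lmod\xi(1)$, both following from the symmetry and non-degeneracy of $\sigma$; the first is the $x=1$ case of the paper's claim $\lmod\phi(\lambda)=\rmod{\lambda\xi}(x)$, and the second gives $\theta(\lambda\rmod\xi(1))=\sigma(\rmod\theta(\lambda)\lmod\xi(1))=\xi(\rmod\theta(\lambda))$. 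Your remark on the sign in part (2) is also correct: the induced element is $\lmod\eta(y)\lambda-\lmod{\eta\lambda}(y)$ rather than the mirror-image $\lmod{\eta\lambda}(y)-\lmod\eta(y)\lambda$, and this is compensated by the antisymmetry of $\bar c$ under exchanging the two $D\Lambda$-slots together with $\lmod{(-)}\leftrightarrow\rmod{(-)}$.
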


\begin{proof}
(1) Applying $\Hom_{\Lambda\hyp}(-,\Lambda)$ to the projective resolution of $DX$ and using the isomorphism $\Hom_\Lambda(\Lambda\otimes_ADX,\Lambda)\cong X\otimes_A\Lambda$ yields the map of right $\Lambda$-modules
\[ X\otimes_A\Lambda \to \Hom_\Lambda(\Omega\otimes_\Lambda DX,\Lambda) \]
sending $x\otimes1$ to the morphism
\[ d\lambda\otimes\xi \mapsto \rmod{\lambda\xi}(x)-\lambda\rmod\xi(x). \]

On the other hand, the action of $\bar c$ can alternatively be expressed as
\[ \theta\otimes d\lambda\otimes\phi \mapsto \theta(\lmod\phi(\lambda)) - \theta(\lambda\lmod\phi(1)), \]
so under the usual tensor-hom adjunction $\mho\cong\Hom_{\Lambda\hyp}(\Omega\otimes_\Lambda D\Lambda,\Lambda)$ the element $\bar c$ corresponds to the left $\Lambda$-homomorphism
\[ \Psi \colon \Omega\otimes_\Lambda D\Lambda \to \Lambda, \quad d\lambda\otimes\phi \mapsto \lmod\phi(\lambda)-\lambda\lmod\phi(1). \]

We recall from \intref{Proposition}{prop:omega} that $\Hom_{\Lambda\hyp}(\Omega,\Lambda)$ is an injective left $\Lambda$-module. We can therefore apply \intref{Lemma}{lem:standard-isos} to obtain a natural isomorphism
\[ X\otimes_\Lambda\Hom_{\Lambda\hyp}(D\Lambda,\Hom_{\Lambda\hyp}(\Omega,\Lambda)) \cong
\Hom_{\Lambda\hyp}(\Hom_{\,\hyp\Lambda}(X,D\Lambda),\Hom_{\Lambda\hyp}(\Omega,\Lambda)). \]
Now $\Hom_\Lambda(X,D\Lambda)\cong DX$, so we have the natural isomorphisms
\[ X\otimes_\Lambda\mho \iso X\otimes_\Lambda \Hom_{\Lambda\hyp}(\Omega\otimes_\Lambda D\Lambda,\Lambda) \iso \Hom_\Lambda(\Omega\otimes_\Lambda DX,\Lambda). \]
In particular, this sends $x\otimes\bar c$ to the morphism
\[ d\lambda\otimes\xi \mapsto \Psi(d\lambda\otimes\phi)=\lmod\phi(\lambda)-\lambda\lmod\phi(1), \quad\textrm{where }
\phi(\mu) \coloneqq \xi(x\mu). \]
Finally, we check that $\lmod\phi(\lambda)=\rmod{\lambda\xi}(x)$ for all $\lambda$. For, given any $a\in A$ we have
\[ \sigma(a\lmod\phi(\lambda)) = \phi(a\lambda) = \xi(xa\lambda) =(\lambda\xi)(xa) = \sigma(\rmod{\lambda\xi}(x)a), \]
and so the claim follows from the non-degeneracy of $\sigma$.

(2) The proof for left modules is analogous. In this case we have
\[ \Lambda\otimes_AY \to \Hom_\Lambda(DY\otimes_\Lambda\Omega,\Lambda) \]
sending $1\otimes y$ to the map
\[ \eta\otimes d\lambda \mapsto \lmod\eta(y)\lambda - \lmod{\eta\lambda}(y). \]
Next, the isomorphism $\mho\iso\Hom_{\,\hyp\Lambda}(D\Lambda\otimes_\Lambda\Omega,\Lambda)$ sends $\bar c$ to the map
\[ \theta\otimes d\lambda \mapsto \rmod\theta(1)\lambda - \rmod\theta(\lambda). \]
Thus the isomorphism
\[ \mho\otimes_\Lambda Y \cong \Hom_\Lambda(DY\otimes_\Lambda\Omega,\Lambda) \]
sends $\bar c\otimes y$ to the morphism
\[ \eta\otimes d\lambda \mapsto \rmod\theta(1)\lambda - \rmod\theta(\lambda), \quad\textrm{where } \theta(\mu) \coloneqq \eta(\mu y). \]
Finally, $\rmod\theta(\lambda)=\lmod{\eta\lambda}(y)$ for all $\lambda$.
\end{proof}

\begin{prop}\label{prop:AR-translate}
Let $X$ be a finite dimensional right $\Lambda$-module. Then $\tau^-X\cong X\otimes_\Lambda\Pi_1$ and $\tau X\cong\Hom_\Lambda(\Pi_1,X)$. Analogously for left $\Lambda$-modules.
\end{prop}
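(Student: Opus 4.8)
The plan is to read the proposition off from \intref{Lemma}{lem:bar-c-duality} by a cokernel comparison: first the two inverse translates, then the translates themselves by $k$-duality.

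I would start with $\tau^-X$ for a finite dimensional right module $X$. Since $DX$ is a finite dimensional left module, the left-module version of the standard projective resolution of \intref{Proposition}{prop:omega} reads
\[ 0 \to \Omega\otimes_\Lambda DX \to \Lambda\otimes_A DX \to DX \to 0, \]
whose first two terms are finitely generated projective because $\Lambda$ is hereditary. Hence $\tau^-X = \Ext^1_{\Lambda\hyp}(DX,\Lambda)$ is the cokernel of the differential $\Hom_{\Lambda\hyp}(\Lambda\otimes_A DX,\Lambda)\to\Hom_{\Lambda\hyp}(\Omega\otimes_\Lambda DX,\Lambda)$ obtained by applying $\Hom_{\Lambda\hyp}(-,\Lambda)$, and by \intref{Lemma}{lem:bar-c-duality}(1) this differential is, under the usual natural isomorphisms, the right $\Lambda$-module homomorphism $X\otimes_A\Lambda\to X\otimes_\Lambda\mho$, $x\otimes1\mapsto x\otimes\bar c$. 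On the other hand, applying the right exact functor $X\otimes_\Lambda-$ to the defining sequence $(\dagger)$ and using $X\otimes_\Lambda(\Lambda\otimes_A\Lambda)\cong X\otimes_A\Lambda$ presents $X\otimes_\Lambda\Pi_1$ as the cokernel of a right $\Lambda$-module homomorphism $X\otimes_A\Lambda\to X\otimes_\Lambda\mho$ which, since $1\otimes1\mapsto\bar c$ in $(\dagger)$, also sends $x\otimes1$ to $x\otimes\bar c$. As $X\otimes_A\Lambda$ is generated as a right $\Lambda$-module by $X\otimes1$, the two homomorphisms agree under the natural isomorphisms, so $\tau^-X\cong X\otimes_\Lambda\Pi_1$, naturally in $X$. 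The left-module statement $\tau^-Y\cong\Pi_1\otimes_\Lambda Y$ follows verbatim from part (2) of the lemma.

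To pass to $\tau$, recall that $\tau X = D\Ext^1_{\,\hyp\Lambda}(X,\Lambda) = D\tau^-(DX)$ for a finite dimensional right module $X$, where $DX$ is regarded as a left module. Feeding in the left-module case just established and using adjoint associativity together with $D(DX)\cong X$ gives the natural isomorphisms
\[ \tau X \cong D(\Pi_1\otimes_\Lambda DX) \cong \Hom_{\,\hyp\Lambda}(\Pi_1,D(DX)) \cong \Hom_\Lambda(\Pi_1,X), \]
and the left-module statement $\tau Y\cong\Hom_\Lambda(\Pi_1,Y)$ is obtained in the same way.

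Nearly all the substance is already in \intref{Lemma}{lem:bar-c-duality}, so the point that really needs care is the cokernel comparison in the second paragraph: one must check that the chain of natural isomorphisms produced by the lemma actually intertwines the $\Ext$-differential with the map obtained by tensoring $(\dagger)$ with $X$, and that naturality in $X$ is preserved at every stage. The remaining ingredients --- right exactness of $X\otimes_\Lambda-$, hereditariness forcing the syzygy to be projective, and the standard tensor--hom and $k$-duality manipulations --- are routine.
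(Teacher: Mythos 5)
Your proof is correct, and the first half — reducing everything to \intref{Lemma}{lem:bar-c-duality}(1), identifying the $\Ext$-differential with $x\otimes1\mapsto x\otimes\bar c$, and observing that tensoring $(\dagger)$ with $X$ presents $X\otimes_\Lambda\Pi_1$ as the cokernel of the very same map — is the paper's argument, spelled out in slightly more detail. The paper is quite terse here (it simply asserts that the cokernel was shown to be $X\otimes_\Lambda\Pi_1$ in the lemma, which is only implicit), so your explicit comparison of the two right $\Lambda$-module maps $X\otimes_A\Lambda\to X\otimes_\Lambda\mho$ is a genuine gap-fill, not padding.

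Where you diverge is in passing from $\tau^-$ to $\tau$. The paper invokes uniqueness of right adjoints: having identified $\tau^-\cong-\otimes_\Lambda\Pi_1$ as functors on $\modcat\Lambda$, and knowing both $\tau$ and $\Hom_\Lambda(\Pi_1,-)$ are right adjoints of these respectively, it concludes the right adjoints agree. You instead go through $k$-duality: $\tau X=D\tau^-(DX)\cong D(\Pi_1\otimes_\Lambda DX)\cong\Hom_\Lambda(\Pi_1,D(DX))\cong\Hom_\Lambda(\Pi_1,X)$, using adjoint associativity and $D^2\cong\id$. Both arguments are equally valid on finite dimensional modules; yours is more computational and hands you the isomorphism explicitly, while the paper's is shorter but relies on the adjunction $(\tau^-,\tau)$ being already in place. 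The adjoint-uniqueness route also extends more readily once the paper generalises $\tau^-X\coloneqq X\otimes_\Lambda\Pi_1$ and $\tau X\coloneqq\Hom_\Lambda(\Pi_1,X)$ to arbitrary modules, where $D^2\cong\id$ is no longer available.
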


\begin{proof}
The module $\tau^-X=\Ext^1_\Lambda(DX,\Lambda)$ can be computed by applying $\Hom_\Lambda(-,\Lambda)$ to the standard projective presentation of $DX$. Explicitly, it is the cokernel of the morphism considered in the previous lemma, which we showed is isomorphic to $X\otimes_\Lambda\Pi_1$. The corresponding right adjoints are necessarily also isomorphic, so $\tau X\cong\Hom_\Lambda(\Pi_1,X)$.
\end{proof}

In general we define $\tau^-X\coloneqq X\otimes_\Lambda\Pi_1$ and $\tau X\coloneqq\Hom_\Lambda(\Pi_1,X)$ for all right $\Lambda$-modules $X$.

\begin{cor}\label{cor:dual-AR}
\pushQED{\qed}
For an arbitrary right $\Lambda$-module $X$ we have
\[ D(\tau^-X) \cong \Hom_\Lambda(\Pi_1,DX) \cong \tau(DX). \qedhere \]
\end{cor}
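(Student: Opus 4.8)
The plan is to obtain both isomorphisms formally, from the tensor–hom adjunction together with the very definitions $\tau^-X=X\otimes_\Lambda\Pi_1$ and $\tau Y=\Hom_\Lambda(\Pi_1,Y)$, the latter applied to the left $\Lambda$-module $Y=DX$. The rightmost isomorphism in the statement is then nothing but this definition, so the whole content is the first one, $D(X\otimes_\Lambda\Pi_1)\cong\Hom_\Lambda(\Pi_1,DX)$.

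For that, I would write $D(X\otimes_\Lambda\Pi_1)=\Hom_k(X\otimes_\Lambda\Pi_1,k)$, where the tensor product is formed using the right $\Lambda$-action on $X$ and the left $\Lambda$-action on $\Pi_1$, and retains a right $\Lambda$-module structure via the remaining (right) action on $\Pi_1$. The standard adjunction isomorphism $\Hom_k(X\otimes_\Lambda\Pi_1,k)\cong\Hom_\Lambda(\Pi_1,\Hom_k(X,k))$, together with $\Hom_k(X,k)=DX$, then gives $D(X\otimes_\Lambda\Pi_1)\cong\Hom_\Lambda(\Pi_1,DX)$ as $k$-spaces, and the right-hand side is by definition $\tau(DX)$.

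The only point that needs a line of verification is that this identification respects the left $\Lambda$-module structures carried by $D(\tau^-X)$ and by $\tau(DX)$. Both come from the right $\Lambda$-action on the bimodule $\Pi_1$ --- on $D(X\otimes_\Lambda\Pi_1)$ by dualising $(x\otimes\pi)\cdot\lambda=x\otimes\pi\lambda$, and on $\Hom_\Lambda(\Pi_1,DX)$ by $(\lambda\cdot f)(\pi)=f(\pi\lambda)$ --- and the adjunction map $\phi\mapsto\big(\pi\mapsto(x\mapsto\phi(x\otimes\pi))\big)$ intertwines them, this being just the naturality of the adjunction in the variable $\Pi_1$. Since $\Pi_1$ is finite dimensional the argument requires no hypothesis on $X$, so the statement indeed holds for an arbitrary module.

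I do not expect any genuine obstacle: the corollary is a purely formal consequence of the tensor–hom adjunction and of the chosen definitions of $\tau$ and $\tau^-$, and the only care required is to keep track of which of the two $\Lambda$-actions on $\Pi_1$ is in play at each step.
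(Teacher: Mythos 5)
Your proof is correct and is exactly the argument the paper intends (the corollary is stated with no proof because it is the tensor--hom adjunction applied to the definitions $\tau^-X=X\otimes_\Lambda\Pi_1$ and $\tau(DX)=\Hom_{\Lambda\hyp}(\Pi_1,DX)$, and you verify carefully that the adjunction respects the left $\Lambda$-actions coming from the right action on $\Pi_1$). One small inaccuracy: the adjunction $\Hom_k(X\otimes_\Lambda\Pi_1,k)\cong\Hom_{\Lambda\hyp}(\Pi_1,DX)$ holds with no finiteness assumption at all, so the parenthetical appeal to $\Pi_1$ being finite dimensional is a red herring --- the argument already works for arbitrary $X$ without it.
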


\subsection{The Auslander--Reiten formula}

Our aim here is to prove the Auslander--Reiten formula
\[ D\Hom_\Lambda(Y,\tau X) \cong \Ext^1_\Lambda(X,Y) \cong D\Hom_\Lambda(\tau^-Y,X). \]

We begin by noting that for $X\in\modcat\Lambda$ there is a linear functional $t_X$ on
\[ X\otimes_\Lambda DX \cong D\End_\Lambda(X) \]
sending $x\otimes\xi$ to $\xi(x)$, or equivalently evaluating $\theta\in D\End_\Lambda(X)$ at the identity $\id_X$. That these descriptions agree follows from the fact that the isomorphism $X\otimes_\Lambda DX\iso D\End_\Lambda(X)$ sends $x\otimes\xi$ to the map $f\mapsto\xi(f(x))$.

We remark that each $h\colon X\to Y$ yields
\[ Dh_\ast \colon D\Hom_\Lambda(X,Y)\to D\End_\Lambda(X) \]
and
\[ Dh^\ast \colon D\Hom_\Lambda(Y,Y) \to D\Hom_\Lambda(X,Y) \]
with the property that 
\[ t_X(Dh_\ast(\theta)) = \alpha(h) = t_Y(Dh^\ast(\theta)) \quad\textrm{for all }\theta\in D\Hom(X,Y). \]

\begin{lem}
For $X,P\in\modcat\Lambda$ with $P$ projective there is a natural isomorphism
\[ \Hom_\Lambda(X,D\Hom_\Lambda(P,\Lambda)) \iso D\Hom_\Lambda(P,X), \quad f \mapsto \tilde f, \]
in which case, for all $h\colon P\to X$ we have
\[ t_P(\widetilde{fh}) = \tilde f(h). \]
\end{lem}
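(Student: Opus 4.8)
The plan is to realize $f\mapsto\tilde f$ as a composite of two standard natural isomorphisms, and then to deduce $t_P(\widetilde{fh})=\tilde f(h)$ formally from naturality together with the description of $t_P$ as ``evaluation at $\id_P$'', so that essentially no computation is needed.

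\emph{Constructing the isomorphism.} Write $P^\ast\coloneqq\Hom_{\,\hyp\Lambda}(P,\Lambda)$, a left $\Lambda$-module, so that $DP^\ast$ is again a right $\Lambda$-module. For an arbitrary $Z\in\modcat\Lambda$ I would define
\[ \Phi_Z\colon \Hom_\Lambda(Z,DP^\ast) \longrightarrow D\Hom_\Lambda(P,Z) \]
as the composite of the tensor--hom adjunction $\Hom_\Lambda\bigl(Z,\Hom_k(P^\ast,k)\bigr)\cong\Hom_k\bigl(Z\otimes_\Lambda P^\ast,k\bigr)$ with the $k$-dual of the evaluation map $\mathrm{ev}_Z\colon Z\otimes_\Lambda P^\ast\to\Hom_\Lambda(P,Z)$, $z\otimes\phi\mapsto\bigl(p\mapsto z\phi(p)\bigr)$. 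Since $P$ is finitely generated projective, $\mathrm{ev}_Z$ is an isomorphism --- it clearly is for $P=\Lambda$, hence for finite free modules, hence for direct summands, and this is also a special case of \intref{Lemma}{lem:standard-isos}(1). Both factors are natural in $Z$, so $\Phi=\{\Phi_Z\}$ is a natural isomorphism of the two contravariant functors $\Hom_\Lambda(-,DP^\ast)$ and $D\Hom_\Lambda(P,-)$ on $\modcat\Lambda$, and I would set $\tilde f\coloneqq\Phi_X(f)$. Chasing the definitions, $\tilde f$ sends $h\in\Hom_\Lambda(P,X)$ to $\sum_j f\bigl(h(p_j)\bigr)(\phi_j)$ for any finite dual basis $\{p_j,\phi_j\}$ of $P$, although this explicit form will not actually be needed.

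\emph{Compatibility with $t_P$.} Given $h\colon P\to X$, apply naturality of $\Phi$ in its first argument to the morphism $h$: the commuting square states that $\Phi_P(f\circ h)=\Phi_X(f)\circ\Hom_\Lambda(P,h)$ for all $f$, that is,
\[ \widetilde{fh}=\tilde f\circ h_\ast, \qquad h_\ast\colon\End_\Lambda(P)\to\Hom_\Lambda(P,X),\quad g\mapsto hg. \]
Because $t_P$ is by definition the functional ``evaluate at $\id_P$'' on $D\End_\Lambda(P)$, it follows immediately that
\[ t_P(\widetilde{fh})=\widetilde{fh}(\id_P)=\bigl(\tilde f\circ h_\ast\bigr)(\id_P)=\tilde f(h\circ\id_P)=\tilde f(h). \]

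\emph{Main obstacle.} There is no deep obstacle; the only point requiring care is the bookkeeping of sides --- taking $P^\ast$ with its left-module structure, so that $DP^\ast$ is a right module and $\Hom_\Lambda(X,DP^\ast)$ makes sense, and writing $\mathrm{ev}_Z$ as the correct evaluation map, so that it is simultaneously an isomorphism for $P$ finitely generated projective and natural in $Z$. Once those two natural isomorphisms are correctly in place, both halves of the lemma follow formally.
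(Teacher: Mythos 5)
Your proof is correct and takes essentially the same route as the paper: the isomorphism is built as the tensor--hom adjunction composed with the $k$-dual of the evaluation map $X\otimes_\Lambda\Hom_\Lambda(P,\Lambda)\iso\Hom_\Lambda(P,X)$ (the paper's $\alpha$, which is its \intref{Lemma}{lem:standard-isos}(1)), and the identity $t_P(\widetilde{fh})=\tilde f(h)$ follows from $\widetilde{fh}=\tilde f\circ h_\ast$ together with $t_P=\ev_{\id_P}$. The only cosmetic difference is that you obtain $\widetilde{fh}=\tilde f\circ h_\ast$ from naturality of the isomorphism, whereas the paper verifies it by evaluating both sides on $\alpha(p\otimes\pi)$; both are valid and amount to the same thing.
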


\begin{proof}
By \intref{Lemma}{lem:standard-isos} (1) we have the isomorphism
\[ \alpha \colon X\otimes_\Lambda\Hom_\Lambda(P,\Lambda) \cong \Hom_\Lambda(P,X) \]
sending $x\otimes\pi$ to the map $p\mapsto x\pi(p)$. This in turn yields the isomorphisms
\[ \Hom_\Lambda(X,D\Hom_\Lambda(P,\Lambda)) \cong D(X\otimes_\Lambda\Hom_\Lambda(P,\Lambda)) \cong D\Hom_\Lambda(P,X). \]
In particular, given $f\colon X\to D\Hom_\Lambda(P,\Lambda)$, we have $\tilde f(\alpha(x\otimes\pi))=f(x)(\pi)$.

We then see that for $Dh_\ast\colon D\Hom_\Lambda(P,X)\to D\End_\Lambda(P)$ we have $\widetilde{fh}=Dh_\ast(\tilde f)$, since both send $\alpha(p\otimes\pi)$ to $f(h(p))(\pi)$. Thus $t_P(Dh_\ast(\tilde f))=\tilde f(h)$ by the earlier remark.

Alternatively, let $\sum_ip_i\otimes\pi_i\in P\otimes_\Lambda D\Hom_\Lambda(P,\Lambda)$ correspond to the identity on $P$, so that $p=\sum_ip_i\pi_i(p)$ for all $p\in P$. Then $h\colon P\to X$ corresponds to $\sum_ih(p_i)\otimes\pi_i$, and so
\[ \tilde f(h) = \sum_i f(h(p_i))(\pi_i) = \widetilde{fg}(\id_P) = t_P(\widetilde{fg}). \qedhere \]
\end{proof}

\begin{thm}\label{thm:ARF}
There is a bifunctorial perfect pairing, called the Auslander--Reiten pairing,
\[ \{-,-\} \colon \Ext^1_\Lambda(X,Y) \times \Hom_\Lambda(Y,\tau X) \to k. \]
In particular, $\{\eta,g\}$ can be computed by choosing a projective resolution of $X$
\[ \begin{tikzcd}
0 \arrow[r] & P \arrow[r] & P' \arrow[r] & X \arrow[r] & 0
\end{tikzcd} \]
writing $\eta$ as the pushout along some $f\colon P\to Y$, and taking $t_P(\widetilde{igf})$, where $i\colon\tau X\rightarrowtail D\Hom_\Lambda(P_1,\Lambda)$ is the inclusion.
\end{thm}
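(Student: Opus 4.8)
The plan is to adopt the displayed formula as the \emph{definition} of the pairing $\{-,-\}$, and then to verify in turn that it is well defined, bifunctorial, and perfect. Fix a projective resolution $0\to P\to P'\to X\to 0$, with inclusion $\iota\colon P\hookrightarrow P'$; since $\Lambda$ is hereditary, $P$ is projective (it is the module denoted $P_1$ in the statement). Write $\nu N\coloneqq D\Hom_\Lambda(N,\Lambda)$. Applying $\Hom_\Lambda(-,\Lambda)$ to the resolution, and using that $P'$ is projective, identifies $\Ext^1_\Lambda(X,\Lambda)=\Coker\bigl(\iota^\ast\colon\Hom_\Lambda(P',\Lambda)\to\Hom_\Lambda(P,\Lambda)\bigr)$, so that $\tau X=D\Ext^1_\Lambda(X,\Lambda)$ is the kernel of $\nu\iota\colon\nu P\to\nu P'$, and $i\colon\tau X\hookrightarrow\nu P$ is its inclusion; in particular $(\nu\iota)\circ i=0$.

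The first step is to rewrite the formula. Given $g\colon Y\to\tau X$, the composite $ig\colon Y\to\nu P$ lies in the source of the isomorphism of the lemma immediately preceding the theorem, applied with its first argument equal to $Y$; write $\widetilde{ig}\in D\Hom_\Lambda(P,Y)$ for its image. The compatibility $t_P(\widetilde{(ig)\circ f})=\widetilde{ig}(f)$ of that lemma, applied to $f\colon P\to Y$, then gives $t_P(\widetilde{igf})=\widetilde{ig}(f)$. So the pairing sends $(\eta,g)$ to the value of the functional $\widetilde{ig}$ at a representative $f$ of $\eta$, where $\eta$ is, as in the statement, the class of $f$ under the surjection $\Hom_\Lambda(P,Y)\twoheadrightarrow\Ext^1_\Lambda(X,Y)=\Coker\bigl(\iota^\ast\colon\Hom_\Lambda(P',Y)\to\Hom_\Lambda(P,Y)\bigr)$ (equivalently, the pushout of the resolution along $f$).

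Well-definedness and bifunctoriality then follow from naturality of that lemma's isomorphism in each of its two arguments. Naturality in $P$ produces a commuting square identifying $(\nu\iota)_\ast\colon\Hom_\Lambda(Y,\nu P)\to\Hom_\Lambda(Y,\nu P')$ with $D(\iota^\ast)\colon D\Hom_\Lambda(P,Y)\to D\Hom_\Lambda(P',Y)$, where $\iota^\ast$ now denotes precomposition on $\Hom_\Lambda(-,Y)$; since $(\nu\iota)\circ(ig)=\bigl((\nu\iota)\circ i\bigr)\circ g=0$, this forces $D(\iota^\ast)(\widetilde{ig})=0$, i.e. $\widetilde{ig}$ annihilates $\iota^\ast\Hom_\Lambda(P',Y)$ and so descends to a functional on $\Ext^1_\Lambda(X,Y)$; hence $\widetilde{ig}(f)$ depends only on $\eta$. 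Independence of the chosen resolution is the routine comparison argument: two resolutions differ by split-exact summands $0\to Q\to Q\to 0$, and since elements of $\tau X$ annihilate the corresponding summand of $\Hom_\Lambda(-,\Lambda)$, the extra summand contributes nothing to $t_P$. Naturality in the first argument gives $\widetilde{i(g'\beta)}(f)=\widetilde{ig'}(\beta f)$ for $\beta\colon Y\to Y'$, so $\{\eta,g'\beta\}=\{\beta_\ast\eta,g'\}$, which is bifunctoriality in $Y$; bifunctoriality in $X$ is analogous, lifting a morphism $X\to X'$ to a morphism of projective resolutions and invoking naturality of $\nu$, $\tau$ and $i$.

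Finally, for perfectness: the map $g\mapsto ig$ identifies $\Hom_\Lambda(Y,\tau X)$ with $\Hom_\Lambda\bigl(Y,\Ker(\nu\iota)\bigr)=\Ker\bigl((\nu\iota)_\ast\colon\Hom_\Lambda(Y,\nu P)\to\Hom_\Lambda(Y,\nu P')\bigr)$ by left exactness of $\Hom_\Lambda(Y,-)$, and the lemma's isomorphism carries this kernel isomorphically onto $\Ker\bigl(D(\iota^\ast)\bigr)=D\Coker(\iota^\ast)=D\Ext^1_\Lambda(X,Y)$. Hence $g\mapsto\widetilde{ig}$ is an isomorphism $\Hom_\Lambda(Y,\tau X)\iso D\Ext^1_\Lambda(X,Y)$, which is exactly the assertion that $(\eta,g)\mapsto\widetilde{ig}(f)$ is a perfect pairing. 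I expect the main obstacle to be the bookkeeping in the middle two steps --- verifying that the explicit, cocycle-style expression $t_P(\widetilde{igf})$ genuinely coincides with the value of the above chain of natural isomorphisms, and that the relevant naturality squares commute exactly; once this is in place the perfectness and bifunctoriality are formal, and the only remaining point needing (routine) care is the independence of the projective resolution.
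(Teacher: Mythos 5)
Your argument follows essentially the same route as the paper's: both apply the lemma preceding the theorem to a projective presentation $0\to P\to P'\to X\to 0$, form the commuting square comparing $\Hom_\Lambda(Y,D\Hom_\Lambda(-,\Lambda))$ with $D\Hom_\Lambda(-,Y)$, identify the induced isomorphism on kernels as $\Hom_\Lambda(Y,\tau X)\cong D\Ext^1_\Lambda(X,Y)$, and unwind the formula $\{\eta,g\}=t_P(\widetilde{igf})=\widetilde{ig}(f)$. The one small divergence is the independence-of-resolution step, where you appeal to the split-exact-summand normal form for projective presentations whereas the paper constructs a comparison (pushout) diagram directly between two presentations and applies the lemma once more; both are valid in this finite-dimensional setting, and you are also a bit more explicit about bifunctoriality, which the paper leaves implicit in the naturality of its constructions.
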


\begin{proof}
Choosing such a projective resolution of $X$ we can use the lemma to obtain a commutative square
\[ \begin{tikzcd}
\Hom_\Lambda(Y,D\Hom_\Lambda(P,\Lambda)) \arrow[r,"\sim"] \arrow[d] & D\Hom_\Lambda(P,Y) \arrow[d]\\
\Hom_\Lambda(Y,D\Hom_\Lambda(P',\Lambda)) \arrow[r,"\sim"] & D\Hom_\Lambda(P',Y)
\end{tikzcd} \]
We therefore obtain a natural isomorphism between the kernels of the vertical maps
\[ \Hom_\Lambda(Y,\tau X) \cong D\Ext^1_\Lambda(X,Y), \quad \tau X = D\Ext^1_\Lambda(X,\Lambda). \]
Explicitly, given $\eta\in\Ext^1_\Lambda(X,Y)$, we write this as the pushout along some $f\colon P\to Y$ of the projective resolution of $X$, in which case $\{\eta,g\}$ equals the natural pairing between $f$ and $\widetilde{ig}$, which by the previous lemma is $t_P(\widetilde{igf})$.

We now show that this construction is independent of the chosen projective presentation of $X$. We know that we can always relate two such projective presentations, yielding a pushout diagram
\[ \begin{tikzcd}
0 \arrow[r] & Q \arrow[r] \arrow[d,"p"] & Q' \arrow[r] \arrow[d] & X \arrow[r] \arrow[d,equal] & 0\\
0 \arrow[r] & P \arrow[r] & P' \arrow[r] & X \arrow[r] & 0
\end{tikzcd} \]
in which case $\eta$ is also the pushout along $fp$. We also have the inclusion $j\colon\tau X\rightarrowtail D\Hom_\Lambda(Q,\Lambda)$ such that $i=(Dp^\ast)j$. Now $jgf\in\Hom_\Lambda(P,D\Hom_\Lambda(Q,\Lambda))\cong D\Hom(Q,P)$, so, with a slight abuse of notation,
\[ t_P(pjgf) = (jgf)(p) = t_Q(jgfp). \qedhere \]
\end{proof}

\section{Preprojective algebras}

The preprojective algebra $\Pi$ of a finite dimensional hereditary algebra $\Lambda$ was defined in \cite{BGL} to be the tensor algebra $T_\Lambda(\tau^-\Lambda)$, whereas for the path algebra of a quiver $\Lambda=kQ$, its preprojective algebra is given as $\Pi=k\bar Q/\langle c\rangle$, where $\bar Q$ is the doubled quiver, formed by adjoining an arrow $a^\ast\colon j\to i$ for each arrow $a\colon i\to j$ of $Q$, and $c=\sum[a,a^\ast]$.

For acyclic quivers a similar notion was introduced earlier in \cite{GP} using the Coxeter functor rather than the Auslander--Reiten translate. Both algebras have the property that, when viewed as a right $\Lambda$-module, they decompose as the direct sum of all indecomposable preprojective $\Lambda$-modules, hence the name. The definition by Baer, Geigle and Lenzing, however, has proven to be the more fruitful, and occurs naturally in many diverse settings.

Our aim in this section is to unify both of these constructions by giving a sequence of $\Lambda$-bimodules
\[ \begin{tikzcd}
\Lambda\otimes_A\Lambda \arrow[r] & \mho \arrow[r] & \Pi_1 \arrow[r] & 0
\end{tikzcd} \]
when $\Lambda$ is one of our hereditary algebras, and defining the preprojective algebra as the tensor algebra $\Pi=T_\Lambda(\Pi_1)$. The above sequence then tells us that $\Pi$ is a quotient of the tensor algebra $T_\Lambda(\mho)$ by the ideal generated by $\bar c$, the image of $1\otimes1$ under the left hand map, and we will furthermore show that $T_\Lambda(\mho)$ is always hereditary. In particular, when $\Lambda=kQ$ is the path algebra of a quiver, then $T_\Lambda(\mho)\cong k\bar Q$ is isomorphic to the path algebra of the doubled quiver.

For convenience we say that the preprojective algebra $\Pi$ has the same `type' as the underlying hereditary algebra $\Lambda$. For example, we will say that $\Pi$ is of Dynkin type if $\Lambda$ is of Dynkin type, equivalently if $\Lambda$ has finite representation type.

Recall that we are interested in two types of hereditary algebras: those which are finite dimensional, and those which are locally finite tensro algebras. In both cases we can write $\Lambda=A\oplus\Lambda_+$ with $A$ a semisimple subalgebra, and $\Lambda_+$ the (Jacobson or graded) radical.

\subsection{Preprojective algebra of a finite dimensional hereditary algebra}

Let $\Lambda=A\oplus\Lambda_+$ be a finite dimensional hereditary algebra. In this case we have defined the $\Lambda$-bimodule $\Pi_1$ in \intref{Section}{sec:ART} via the sequence
\[ \begin{tikzcd}
\Lambda\otimes_A\Lambda \arrow[r] & \mho \arrow[r] & \Pi_1 \arrow[r] & 0
\end{tikzcd} \]
where $\mho=D(D\Lambda\otimes_\Lambda\Omega\otimes_\Lambda D\Lambda)$ and the map on the left sends $1\otimes1$ to
\[ \bar c \colon \theta\otimes d\lambda\otimes\phi \mapsto \theta(\lmod\phi(\lambda)) - \phi(\rmod\theta\!(\lambda)). \]
The preprojective algebra of $\Lambda$ is then $\Pi=T_\Lambda(\Pi_1)$, and the universal property for tensor algebras tells us that $\Pi$ is isomorphic to the quotient of the tensor algebra $T_\Lambda(\mho)$ be the ideal $I=\langle\bar c\rangle$.

Clearly $\Pi=\bigoplus_n\Pi_n$ is a graded algebra with $\Pi_0=\Lambda$, and using \intref{Proposition}{prop:AR-translate} we have a bimodule isomorphism $\Pi_n\cong\tau^{-n}\Lambda$, using the inverse Auslander--Reiten translate from either the category of left or right $\Lambda$-modules.

Our aim in this section is to show that the tensor algebra $T_\Lambda(\mho)$ is in fact hereditary, and to compute a standard projective resolution for each $\Pi$-module.

\begin{lem}\label{lem:mho-proj}
We have that $X\otimes_\Lambda\mho$ is projective as a right $\Lambda$-module for each $X\in\modcat\Lambda$. Analogously $\mho\otimes_\Lambda Y$ is projective for each left $\Lambda$-module $Y$.
\end{lem}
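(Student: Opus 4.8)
The plan is to reduce the claim to the fact, recorded in \intref{Proposition}{prop:omega} and used repeatedly above, that for a finite dimensional hereditary algebra $\Lambda$ the module $\Hom_{\Lambda\hyp}(\Omega,\Lambda)$ is an injective left $\Lambda$-module, together with the dual statement on the right. The key observation is that $X\otimes_\Lambda\mho$ can be rewritten, via the natural isomorphisms assembled in the proof of \intref{Lemma}{lem:bar-c-duality}(1), as $\Hom_\Lambda(\Omega\otimes_\Lambda DX,\Lambda)$. Indeed, unravelling the definition $\mho=D(D\Lambda\otimes_\Lambda\Omega\otimes_\Lambda D\Lambda)$ and applying tensor--hom adjunction gives $\mho\cong\Hom_{\Lambda\hyp}(\Omega\otimes_\Lambda D\Lambda,\Lambda)$, and then \intref{Lemma}{lem:standard-isos}(1), applicable because $\Hom_{\Lambda\hyp}(\Omega,\Lambda)$ is injective, yields the natural isomorphism $X\otimes_\Lambda\mho\iso\Hom_\Lambda(\Omega\otimes_\Lambda DX,\Lambda)$.

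First I would carry out this identification carefully. Write $DX$ for the left $\Lambda$-module dual to $X$; it is finitely presented (indeed finite dimensional), so it admits the standard projective presentation of \intref{Corollary}{co:standard-proj-tensor}, or rather, since $\Lambda$ need not be a tensor algebra, the standard presentation of \intref{Proposition}{prop:omega}: a short exact sequence $0\to\Omega\otimes_\Lambda DX\to\Lambda\otimes_A DX\to DX\to 0$ of left $\Lambda$-modules. Here $\Lambda\otimes_A DX$ is a projective left $\Lambda$-module since $A$ is semisimple, and therefore its syzygy $\Omega\otimes_\Lambda DX$ is again projective, because $\Lambda$ is hereditary. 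Consequently $\Hom_\Lambda(\Omega\otimes_\Lambda DX,\Lambda)$ is a direct summand of a finite direct sum of copies of $\Hom_\Lambda(\Lambda,\Lambda)\cong\Lambda$, hence is a projective right $\Lambda$-module. Combining this with the isomorphism $X\otimes_\Lambda\mho\cong\Hom_\Lambda(\Omega\otimes_\Lambda DX,\Lambda)$ from the previous paragraph gives the first assertion. The statement for left modules follows by the same argument using part~(2) of \intref{Lemma}{lem:bar-c-duality} and the dual of \intref{Proposition}{prop:omega}: there $\mho\otimes_\Lambda Y\cong\Hom_\Lambda(DY\otimes_\Lambda\Omega,\Lambda)$, and $DY\otimes_\Lambda\Omega$ is a projective right $\Lambda$-module, being the syzygy of the finitely presented right module $DY$ over the hereditary algebra $\Lambda$.

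The step I expect to be the only real point requiring care is the chain of natural isomorphisms identifying $X\otimes_\Lambda\mho$ with $\Hom_\Lambda(\Omega\otimes_\Lambda DX,\Lambda)$ — in particular verifying that the hypotheses of \intref{Lemma}{lem:standard-isos}(1) genuinely apply (one needs $\Hom_{\Lambda\hyp}(\Omega,\Lambda)$ injective as a left $\Lambda$-module, which is exactly the content of the last paragraph of the proof of \intref{Proposition}{prop:omega}, together with $\Hom_\Lambda(X,D\Lambda)\cong DX$). Once this is in place, projectivity is immediate from the heredity of $\Lambda$ applied to the standard presentation of $DX$, so there is no genuine obstacle beyond bookkeeping with the adjunctions, all of which has essentially already been done in the proof of \intref{Lemma}{lem:bar-c-duality}.
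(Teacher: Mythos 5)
Your argument for finite dimensional $X$ is correct and matches the paper's first step: rewrite $X\otimes_\Lambda\mho\cong\Hom_\Lambda(\Omega\otimes_\Lambda DX,\Lambda)$ using the adjunctions from the proof of \intref{Lemma}{lem:bar-c-duality}, then note that $\Omega\otimes_\Lambda DX$ is a projective left module by \intref{Proposition}{prop:omega} and conclude. (One small slip: the isomorphism you need is \intref{Lemma}{lem:standard-isos}(2), not (1); part (2) is where the hypothesis that the target, namely $\Hom_{\Lambda\hyp}(\Omega,\Lambda)$, is injective is actually used, together with $DX$ being finitely presented.)

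However, you have only treated the case where $X$ is finite dimensional, whereas the paper proves the statement for an arbitrary right $\Lambda$-module — and this extra generality is genuinely required. The lemma is applied in the proof of \intref{Theorem}{thm:tensor-mho-hered} with $X$ the restriction to $\Lambda$ of a right $T$-module, and $T=T_\Lambda(\mho)$ is infinite dimensional, so such an $X$ need not be finite dimensional over $\Lambda$. Your isomorphism $X\otimes_\Lambda\mho\cong\Hom_\Lambda(\Omega\otimes_\Lambda DX,\Lambda)$ breaks down in this setting, both because $DX$ is no longer a well-behaved finite dimensional dual and because the hypotheses of \intref{Lemma}{lem:standard-isos}(2) (finite presentation of $X$) fail. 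The paper closes this gap with a separate bootstrap argument: since $\Lambda$ is finite dimensional, every $X$ is the directed union of its finite dimensional submodules, so $X\otimes_\Lambda\mho$ is a directed colimit of projectives, hence flat, hence projective because $\Lambda$ is a perfect ring. You should add this second step to make the argument complete for the uses the paper makes of it.
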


\begin{proof}
Assume first that $X$ is finite dimensional. We saw in the proof of \intref{Proposition}{prop:AR-translate} that there is a natural isomorphism
\[ X\otimes_\Lambda\mho \cong \Hom_\Lambda(\Omega\otimes_\Lambda DX,\Lambda). \]
Now $\Omega\otimes_\Lambda DX$ is a projective left $\Lambda$-module by \intref{Proposition}{prop:omega}, so the right hand side above is a projective right $\Lambda$-module.

This shows that $X\otimes_\Lambda\mho$ is a projective right $\Lambda$-module whenever $X$ is finite dimensional. In general, as $\Lambda$ is finite dimensional, each module $X$ is the directed colimit (union) of its finite dimensional submodules. It follows that $X\otimes_\Lambda\mho$ is a directed colimit of projective modules, so flat, and hence projective as $\Lambda$ is finite dimensional (and so perfect).

The result for left modules is analogous.
\end{proof}

We have algebra inclusions $A\rightarrowtail\Lambda\rightarrowtail T$, so we again have a fundamental exact sequence
\[ \begin{tikzcd}
0 \arrow[r] & \widetilde\Omega \arrow[r] & T\otimes_AT \arrow[r] & T \arrow[r] & 0
\end{tikzcd} \]
with $\widetilde\Omega$ the bimodule of noncommutative 1-forms on $T$ over $A$.

\begin{thm}\label{thm:tensor-mho-hered}
The tensor algebra $T=T_\Lambda(\mho)$ is hereditary, every right module $X$ has a functorial projective presentation
\[ \begin{tikzcd}
0 \arrow[r] & X\otimes_T\widetilde\Omega \arrow[r] & X\otimes_AT \arrow[r] & X \arrow[r] & 0,
\end{tikzcd} \]
and there is a functorial exact sequence
\[ \begin{tikzcd}
0 \arrow[r] & X\otimes_\Lambda\Omega\otimes_\Lambda T \arrow[r] & X\otimes_T\widetilde\Omega \arrow[r] &
X\otimes_\Lambda\mho\otimes_\Lambda T \arrow[r] & 0.
\end{tikzcd} \]
This latter sequence is split exact, though the splitting is not functorial.
\end{thm}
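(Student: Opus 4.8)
The plan is to read off split exactness directly from projectivity of the cokernel, and then to rule out a \emph{functorial} splitting by evaluating a hypothetical natural section at the free module, thereby reducing to a statement about $T$-bimodules which one contradicts by passing modulo the radical, in the spirit of \intref{Remark}{rem:omega-not-proj-bimod}. For split exactness I would observe that $X\otimes_\Lambda\mho$ is projective as a right $\Lambda$-module by \intref{Lemma}{lem:mho-proj}, so that the cokernel $X\otimes_\Lambda\mho\otimes_\Lambda T=(X\otimes_\Lambda\mho)\otimes_\Lambda T$, being induced along $\Lambda\rightarrowtail T$ from a projective right $\Lambda$-module, is a projective right $T$-module; a short exact sequence whose cokernel is projective splits (and, equivalently, its left-hand inclusion splits).

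For the non-functoriality, suppose there were a section $s_X\colon X\otimes_\Lambda\mho\otimes_\Lambda T\to X\otimes_T\widetilde\Omega$ of the right-hand map, natural in $X$. Evaluating at $X=T$ and invoking naturality with respect to the right $T$-module endomorphisms of $T$ given by left multiplication, the map $s_T\colon T\otimes_\Lambda\mho\otimes_\Lambda T\to\widetilde\Omega$ is a homomorphism of $T$-bimodules splitting the natural $T$-bimodule epimorphism $\widetilde\Omega\twoheadrightarrow T\otimes_\Lambda\mho\otimes_\Lambda T$; here $T\otimes_\Lambda\mho\otimes_\Lambda T$ is the bimodule of $1$-forms of $T=T_\Lambda(\mho)$ over $\Lambda$ coming from \intref{Proposition}{prop:tensor-alg-seq}, and that this epimorphism is indeed the bimodule incarnation of the right-hand map is just the $X=T$ case of the functorial exact sequence already established. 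Consequently its kernel $T\otimes_\Lambda\Omega\otimes_\Lambda T$ would be a direct summand of $\widetilde\Omega$ as a $T$-bimodule. Now I would apply the additive functor $\Lambda\otimes_T(-)\otimes_T\Lambda$: it preserves direct summands, it carries $T\otimes_\Lambda\Omega\otimes_\Lambda T$ to $\Omega$, and it carries $\widetilde\Omega$ to the $\Lambda$-bimodule $\Lambda\otimes_T\widetilde\Omega\otimes_T\Lambda$, which one computes to sit in an exact sequence $\mho\otimes_A\Lambda\to\Lambda\otimes_T\widetilde\Omega\otimes_T\Lambda\to\Omega\to 0$ and whose further reduction to an $A$-bimodule is $M\oplus\mho/(\Lambda_+\mho+\mho\Lambda_+)$. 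Isolating the degree-zero contributions, using the grading of $T_\Lambda(\mho)$, forces $\Omega$ to be a direct summand of a projective $\Lambda$-bimodule, hence projective, hence $\Lambda\cong T_A(M)$ by \intref{Remark}{rem:omega-not-proj-bimod}. Since not every finite-dimensional hereditary algebra is a tensor algebra, no such natural section can exist in general.

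I expect the real work to lie in this final reduction. The functor $\Lambda\otimes_T(-)\otimes_T\Lambda$ is badly non-exact, so it is not enough to know abstractly that some summand of $\widetilde\Omega$ reduces to $\Omega$; one must check that the particular summand $T\otimes_\Lambda\Omega\otimes_\Lambda T$ survives the reduction cleanly, without absorbing any of the $\mho$-contribution, and that the (in general non-projective) $T$-bimodule $\widetilde\Omega$ causes no trouble. The cleanest implementation should stay in the graded setting throughout: the sequence of the statement is one of graded $T$-bimodules in which $T\otimes_\Lambda\Omega\otimes_\Lambda T$ lives in non-negative degrees and $T\otimes_\Lambda\mho\otimes_\Lambda T$ in degrees $\ge1$, so a $T$-bimodule splitting, restricted to degrees $\le1$ and then reduced modulo $\Lambda_+$ on each side, exhibits $\Omega$ — the degree-zero piece — as a summand of a projective $\Lambda$-bimodule. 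Combined with the fact that $\Omega$ is always merely biprojective (\intref{Proposition}{prop:omega} and its dual), this is exactly the contradiction with \intref{Remark}{rem:omega-not-proj-bimod}.
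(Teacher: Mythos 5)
Your proposal addresses only the last sentence of the theorem and leaves out the substantive parts. You do not construct the functorial exact sequence $0 \to X\otimes_\Lambda\Omega\otimes_\Lambda T \to X\otimes_T\widetilde\Omega \to X\otimes_\Lambda\mho\otimes_\Lambda T \to 0$, nor do you prove the projective presentation $0\to X\otimes_T\widetilde\Omega\to X\otimes_A T\to X\to 0$ or that $T$ is hereditary, yet these are the main content. The paper obtains all of this from a three-by-three commutative diagram built out of three known exact sequences: the fundamental sequence for $T$ over $A$ tensored with $X$, the standard $\Lambda$-resolution of $X$ tensored on the right with the $\Lambda$-projective module $T$, and the fundamental sequence for $T=T_\Lambda(\mho)$ over $\Lambda$ from Proposition~\ref{prop:tensor-alg-seq} tensored on the left with $X$. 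The desired sequence emerges as the left-hand column, and its two outer terms are projective $T$-modules because $X\otimes_\Lambda\Omega$ and $X\otimes_\Lambda\mho$ are projective right $\Lambda$-modules by Proposition~\ref{prop:omega} and Lemma~\ref{lem:mho-proj}; hence the column splits and the middle term $X\otimes_T\widetilde\Omega$ is projective, which is what makes $T$ hereditary. Your ``cokernel projective $\Rightarrow$ splits'' observation is correct but only recovers half of this and takes the existence of the sequence for granted.

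On the non-functoriality, your approach is in the same spirit as the paper, which handles this in the remark immediately following the theorem rather than in the proof itself: a natural section, evaluated at $X=T$ and using naturality against left multiplications, would give a $T$-bimodule splitting, which must fail because $\mho\cong D(D\Lambda\otimes_\Lambda\Omega\otimes_\Lambda D\Lambda)$ is not projective as a $\Lambda$-bimodule when $\Lambda$ is not a tensor algebra (Remark~\ref{rem:omega-not-proj-bimod}). The ``naturality $\Rightarrow$ bimodule map'' reduction in your first step is clean and standard. The subsequent reduction from the hypothetical $T$-bimodule splitting down to the non-projectivity of $\Omega$ or $\mho$ over $\Lambda^e$, however, is sketched only loosely: the functor $\Lambda\otimes_T(-)\otimes_T\Lambda$ is non-exact, the resulting $\Lambda$-bimodule $\Lambda\otimes_T\widetilde\Omega\otimes_T\Lambda$ is not obviously projective, and truncating to degrees $\le 1$ and passing modulo $\Lambda_+$ is not shown to produce a \emph{projective} $\Lambda$-bimodule of which $\Omega$ is a summand. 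You flag these issues yourself, and they are real; as written this part would need to be worked out in full before it could be called a proof.
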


\begin{proof}
As usual we can tensor the fundamental exact sequence for $T$ with $X$ to get
\[ \begin{tikzcd}
0 \arrow[r] & X\otimes_T\widetilde\Omega \arrow[r] & X\otimes_AT \arrow[r] & X \arrow[r] & 0
\end{tikzcd} \]
Next, regarding $X$ as a right $\Lambda$-module we have the short exact sequence
\[ \begin{tikzcd}
0 \arrow[r] & X\otimes_\Lambda\Omega \arrow[r] & X\otimes_A\Lambda \arrow[r] & X \arrow[r] & 0.
\end{tikzcd} \]
We tensor this on the right by $T$, noting that $T=\bigoplus_n\mho^{\otimes n}$ is a projective left $\Lambda$-module. This yields the exact sequence
\[ \begin{tikzcd}
0 \arrow[r] & X\otimes_\Lambda\Omega\otimes_\Lambda T \arrow[r] & X\otimes_AT \arrow[r] & X\otimes_\Lambda T \arrow[r] &0.
\end{tikzcd} \]
On the other hand we have the fundamental exact sequence for $T=T_\Lambda(\mho)$ given in \intref{Proposition}{prop:tensor-alg-seq}, which we may tensor on the left with $X$ to get
\[ \begin{tikzcd}
0 \arrow[r] & X\otimes_\Lambda\mho\otimes_\Lambda T \arrow[r] & X\otimes_\Lambda T \arrow[r] & X \arrow[r] & 0.
\end{tikzcd} \]
Putting these together yields the following exact commutative diagram
\[ \begin{tikzcd}
& X\otimes_\Lambda\Omega\otimes_\Lambda T \arrow[r,equal] \arrow[d,rightarrowtail] & X\otimes_\Lambda\Omega\otimes_\Lambda T \arrow[d,rightarrowtail]\\
0 \arrow[r] & X\otimes_T\widetilde\Omega \arrow[r] \arrow[d,twoheadrightarrow] &
X\otimes_AT \arrow[r] \arrow[d,twoheadrightarrow] & X \arrow[r] \arrow[d,equal] & 0\\
0 \arrow[r] & X\otimes_\Lambda\mho\otimes_\Lambda T \arrow[r] & X\otimes_\Lambda T \arrow[r] & X \arrow[r] & 0
\end{tikzcd} \]
Finally, as both $X\otimes_\Lambda\Omega$ and $X\otimes_\Lambda\mho$ are projective right $\Lambda$-modules, by \intref{Propositions}{prop:omega} and \intref{Lemma}{lem:mho-proj}, the left hand column is split exact, and $X\otimes_T\widetilde\Omega$ is a projective right $T$-module.
\end{proof}

\begin{rem}
Suppose $\Lambda$ is finite dimensional, but not a tensor algebra. We saw in \intref{Remark}{rem:omega-not-proj-bimod} that $\Omega$ is not projective as a $\Lambda$-bimodule. Similarly, noting that $\Omega\cong D(D\Lambda\otimes_\Lambda\mho\otimes_\Lambda D\Lambda)$, we see that $\mho$ is not projective as a bimodule. Thus the $T$-bimodule map $\widetilde\Omega\twoheadrightarrow T\otimes_\Lambda\mho\otimes_\Lambda T$ is split as both left and right $T$-modules, but not as $T$-bimodules.
\end{rem}

Our final result  in this section is taken from \cite[Proposition 3.5]{BGL} and shows that $\Pi$ is invariant under applying APR-tilts.

We will need the following alternative description of the preprojective algebra, as
\[ \Pi = \bigoplus_{n\geq0}\Hom_\Lambda(\Lambda,\tau^{-n}\Lambda), \]
using the standard identification $X\cong\Hom_\Lambda(\Lambda,X)$. The multiplication is now given by $g\cdot f=(\tau^{-m}g)f$ for $f\in\Hom_\Lambda(\Lambda,\tau^{-m}\Lambda)$ and $g\in\Hom_\Lambda(\Lambda,\tau^{-n}\Lambda)$.

\begin{prop}
Let $X\in\modcat\Lambda$ be a preprojective tilting module, with endomorphism algebra $\Lambda'$. If $\Lambda$ is connected and of infinite representation type, then the preprojective algebras of $\Lambda$ and $\Lambda'$ are Morita equivalent.
\end{prop}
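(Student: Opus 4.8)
The plan is to use the description $\Pi=\bigoplus_{n\ge 0}\Hom_\Lambda(\Lambda,\tau^{-n}\Lambda)$ recorded just above (with multiplication $g\cdot f=(\tau^{-m}g)f$), together with its analogue for $\Lambda'$, and to transport the latter across the tilting functor. Write $X=\bigoplus_i\tau^{-m_i}P_i$, the sum running over a complete set of primitive idempotents $e_i$ of $\Lambda$; that each $\tau$-orbit of indecomposable projectives occurs exactly once here, and indeed that the indecomposable summands of $X$ form a complete slice of the preprojective component, is the standard description of preprojective tilting modules over a connected hereditary algebra of infinite representation type (cf.\ \cite{BGL}). The endomorphism algebra $\Lambda'=\End_\Lambda(X)$ is again connected, basic, hereditary and of infinite representation type, and since every preprojective tilting module is obtained from $\Lambda$ by a finite chain of APR-tilts (reflections at sources), and Morita equivalence is transitive, it would suffice to treat a single APR-tilt $X=\tau^-P_k\oplus\bigoplus_{i\ne k}P_i$ at a source $k$; the argument below, however, applies to a general preprojective tilting module without change.

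The first step is to reconstruct $\Pi(\Lambda')$ from the $\Lambda$-side. Because $X$ is preprojective, its associated torsion-free class $\{Y:\Hom_\Lambda(X,Y)=0\}$ consists of the finitely many indecomposables lying to the left of the slice, so each $\tau_\Lambda^{-n}X$ ($n\ge0$), lying weakly to the right of the slice, belongs to $\mathrm{Gen}(X)=\{Y:\Ext^1_\Lambda(X,Y)=0\}$. Hence $\Hom_\Lambda(X,-)$ is fully faithful on $\add\{\tau_\Lambda^{-n}X:n\ge0\}$; it carries $X$ to the regular module $\Lambda'$, and since the almost-split sequences among the modules $\tau_\Lambda^{-n}X$ lie in the interior of the torsion class they are preserved by the tilting equivalence, yielding $\Hom_\Lambda(X,\tau_\Lambda^{-1}(-))\cong\tau_{\Lambda'}^{-1}\Hom_\Lambda(X,-)$ on this subcategory, compatibly with the $\tau^-$-twisted composition. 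This produces a ring isomorphism
\[ \Pi(\Lambda')=\bigoplus_{n\ge0}\Hom_{\Lambda'}\bigl(\Lambda',\tau_{\Lambda'}^{-n}\Lambda'\bigr)\ \cong\ R:=\bigoplus_{n\ge0}\Hom_\Lambda\bigl(X,\tau_\Lambda^{-n}X\bigr), \]
where $R$ carries the multiplication $g\cdot f=(\tau_\Lambda^{-m}g)f$ for $f\colon X\to\tau_\Lambda^{-m}X$ and $g\colon X\to\tau_\Lambda^{-n}X$.

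The second step is to show $R$ is Morita equivalent to $\Pi=\Pi(\Lambda)$. One has $\End_\Pi(X\otimes_\Lambda\Pi)\cong\bigoplus_n\Hom_\Lambda(X,\tau_\Lambda^{-n}X)=R$ by adjunction, so the work is to replace $X\otimes_\Lambda\Pi$ (which is not projective over $\Pi$, since $X$ is not projective over $\Lambda$) by an honest progenerator with the same endomorphism ring — equivalently, to produce a $\Pi$-$R$-bimodule built from the mixed Hom-spaces $\bigoplus_n\Hom_\Lambda(\Lambda,\tau_\Lambda^{-n}X)$ and $\bigoplus_n\Hom_\Lambda(X,\tau_\Lambda^{-n}\Lambda)$ whose two composition pairings are isomorphisms onto $\Pi$ and $R$. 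For the single APR-tilt at a source $k$ this can be done completely explicitly: when $\Lambda=kQ$ the reflection is the relabelling $a\leftrightarrow a^\ast$ of the arrows incident with $k$, which visibly identifies $\Pi(kQ)$ with $\Pi(kQ')$, and in the general hereditary case the same reflection produces the required progenerator directly (the need to twist a bimodule by a sign on one side, as with the Koszul dual discussed in the introduction, is exactly why one obtains a Morita equivalence rather than an isomorphism in general).

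I expect the main obstacle to be this second step — controlling the degree-shifting discrepancy between $R$ and $\Pi$ well enough to exhibit the progenerator. The first step, though it carries the representation-theoretic content (this is precisely where the hypotheses that $X$ be preprojective and $\Lambda$ of infinite representation type are used, through the slice structure of the preprojective component), reduces to the known compatibility of tilting functors with Auslander--Reiten theory; one still has to verify carefully that every $\tau_\Lambda^{-n}X$ lies in $\mathrm{Gen}(X)$ and that no exceptional connecting sequences intervene among the modules $\tau_\Lambda^{-n}X$.
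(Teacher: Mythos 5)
Your first step — transporting the description $\Pi(\Lambda')\cong\bigoplus_n\Hom_\Lambda(X,\tau^{-n}X)$ across the tilting functor, using that each $\tau^{-n}X$ sits in the torsion class $\gen X$ and that the tilting equivalence takes the relevant AR-sequences to AR-sequences over $\Lambda'$ — is exactly the paper's argument (the paper cites \cite[Theorem 6.5]{HR} for this compatibility, which is the precise reference you were gesturing at). Writing $\Pi(X)$ for $\bigoplus_n\Hom_\Lambda(X,\tau^{-n}X)$, both of you arrive at $\Pi'\cong\Pi(X)$ as graded algebras.

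Your second step, however, diverges from the paper and is where the gap lies. You correctly observe that $X\otimes_\Lambda\Pi$ has endomorphism ring $\Pi(X)$ but is not projective over $\Pi$, and you propose to manufacture a $\Pi$-progenerator (or a pair of connecting $\Pi$-$\Pi(X)$-bimodules with pairings) from the mixed Hom-spaces. But you never construct this object: the assertions that ``the same reflection produces the required progenerator directly,'' and that the quiver relabelling $a\leftrightarrow a^\ast$ does the job in the special case, are left unproved, and you yourself flag the degree-shifting discrepancy as the main obstacle without resolving it. The paper avoids all of this by not attempting a Morita theory argument at all. After reducing to a single APR/BB-tilt, it writes $\Lambda_\Lambda=P\oplus S$ and $X=P\oplus\tau^-S$ with $\Hom_\Lambda(P,S)=0$ (hence also $\Hom_\Lambda(\tau^-S,P)=0$), displays the $2\times2$ block descriptions
\[ \Hom(\Lambda,\tau^{-n}\Lambda) = \begin{bmatrix}\Hom(P,\tau^{-n}P)&\Hom(S,\tau^{-n}P)\\\Hom(P,\tau^{-n}S)&\Hom(S,\tau^{-n}S)\end{bmatrix},
\quad
\Hom(X,\tau^{-n}X) \cong \begin{bmatrix}\Hom(P,\tau^{-n}P)&\Hom(S,\tau^{-n+1}P)\\\Hom(P,\tau^{-n-1}S)&\Hom(S,\tau^{-n}S)\end{bmatrix}, \]
and observes that summing over $n$ the identity on entries (with the off-diagonal degree shifts) gives an \emph{ungraded algebra isomorphism} $\Pi\iso\Pi(X)$. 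This is both stronger than a Morita equivalence and substantially easier to verify than exhibiting a progenerator. So your route is not wrong in spirit — the connecting-bimodule picture is morally what the matrix identification encodes — but as written it has a genuine hole precisely at the point you identified as the crux, and the explicit block computation is the idea you are missing.

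One smaller point: your remark that $\Lambda'$ is ``again basic'' is only valid after replacing $X$ by a multiplicity-free representative; this is harmless since the statement is about Morita classes, and the paper handles it with the phrase ``up to multiplicity of direct summands,'' but it should be said.
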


\begin{proof}
We sketch the proof given in \cite{BGL}. We write $\Pi$ and $\Pi'$ for the preprojective algebras of $\Lambda$ and $\Lambda'$ respectively, and define $\Pi(X)=\bigoplus_{n\geq0}\Hom_\Lambda(X,\tau^{-n}X)$.

The tilting functor $\Hom_\Lambda(X,-)$ induces an equivalence between the subcategory $\mathcal T=\gen X=\Ker\Ext^1_\Lambda(X,-)$ of $\modcat\Lambda$ and the subcategory $\mathcal Y=\Ker\Tor_1^{\Lambda'}(-,X)$ of $\modcat\Lambda'$, with quasi-inverse the functor $-\otimes_{\Lambda'}X$. In particular $\Hom_\Lambda(X,-)$ sends a (relative) Auslander--Reiten sequence in $\mathcal T$ to an Auslander--Reiten sequence in $\modcat\Lambda'$ with all terms in $\mathcal Y$. (See for example \cite[Theorem 6.5]{HR}, especially part (2) of the proof.) As the Auslander--Reiten sequence $0\to \tau^{-n}X\to E_n\to \tau^{-n-1}X\to 0$ lies in $\mathcal T$, we see by induction that $\tau^{-n}\Lambda'\cong\Hom_\Lambda(X,\tau^{-n}X)$.

It follows that there is an isomorphism of graded algebras $\Pi(X)\cong\Pi'$.

Next, up to multiplicity of direct summands we can obtain $X$ from $\Lambda_\Lambda$ via a sequence of APR-tilts, so let us write $\Lambda_\Lambda=P\oplus S$ and $X=P\oplus\tau^-S$ such that $\Hom_\Lambda(P,S)=0$. Then also $\Hom_\Lambda(\tau^-S,P)=0$, and
\[ \Hom(\Lambda,\tau^{-n}\Lambda) = \begin{bmatrix}
\Hom(P,\tau^{-n}P)&\Hom(S,\tau^{-n}P)\\\Hom(P,\tau^{-n}S)&\Hom(S,\tau^{-n}S)
\end{bmatrix} \]
whereas
\[ \Hom(X,\tau^{-n}X) \cong \begin{bmatrix}
\Hom(P,\tau^{-n}P)&\Hom(S,\tau^{-n+1}P)\\\Hom(P,\tau^{-n-1}S)&\Hom(S,\tau^{-n}S)
\end{bmatrix}. \]
The obvious map $\Pi\to\Pi(X)$ is then an isomorphism of algebras (but is not a map of graded algebras).
\end{proof}

\begin{rem}
As all representation finite hereditary algebras are tensor algebras, the analogous result in this case is covered by \intref{Lemma}{lem:indep-orient}.
\end{rem}

\subsection{Preprojective algebra of an hereditary tensor algebra}

In this section we define the preprojective algebra of an arbitrary locally finite hereditary tensor algebra $\Lambda=T_A(M)$, generalising the definition in the finite dimensional case. Note that the bimodule of differential 1-forms is $\Omega=\Lambda\otimes_AM\otimes_A\Lambda$.

We begin by analysing the case when $\Lambda$ is finite dimensional. We recall that for finite dimensional right $A$-modules $X$ and $Y$ we have the isomorphisms
\[ X\otimes_ADY \cong \Hom_A(Y,X) \cong \Hom_A(Y,D^2X) \cong D(Y\otimes_ADX). \]
Applying this to
\[ \mho = D(D\Lambda\otimes_\Lambda\Omega\otimes_\Lambda D\Lambda) \cong D(D\Lambda\otimes_AM\otimes_AD\Lambda) \]
we obtain that
\[ \mho \cong \Lambda\otimes_AD(D\Lambda\otimes_AM) \cong \Lambda\otimes_ADM\otimes_A\Lambda. \]
The defining sequence for $\Pi_1$ thus becomes
\[ \begin{tikzcd}
\Lambda\otimes_A\Lambda \arrow[r] & \Lambda\otimes_ADM\otimes_A\Lambda \arrow[r] & \Pi_1 \arrow[r] & 0
\end{tikzcd} \]
and we wish to describe the morphism on the left. For this we will need to fix representatives
\[ \End(M_A) \cong M\otimes_ADM, \quad \id_{M_A} \leftrightarrow \rho = \sum m_i\otimes\mu_i \]
and
\[ \End({}_AM) \cong DM\otimes_AM, \quad \id_{{}_AM} \leftrightarrow \ell = \sum \nu_j\otimes n_j. \]
Thus each element of $M$ satisfies both $m=\sum m_i\rmod\mu_i(m)$ and $m=\sum\lmod\nu_j(m)n_j$.

\begin{prop}\label{prop:bar-c-tensor}
Let $\Lambda=T_A(M)$ be a finite dimensional hereditary tensor algebra. Then in the defining sequence
\[ \begin{tikzcd}
\Lambda\otimes_A\Lambda \arrow[r] & \Lambda\otimes_ADM\otimes_A\Lambda \arrow[r] & \Pi_1 \arrow[r] & 0
\end{tikzcd} \]
the map on the left sends $1\otimes1$ to
\[ \bar c \coloneqq \rho\otimes1-1\otimes\ell =  \sum m_i\otimes\mu_i\otimes1 - \sum 1\otimes\nu_j\otimes n_j. \]
\end{prop}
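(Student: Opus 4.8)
The plan is to unwind the definition of the bimodule map $\Lambda\otimes_A\Lambda\to\mho$ from \intref{Section}{sec:ART} under the chain of identifications $\mho\cong D(D\Lambda\otimes_AM\otimes_AD\Lambda)\cong\Lambda\otimes_ADM\otimes_A\Lambda$, and to evaluate the image of $1\otimes1$, which we already know is the element $\bar c$ with action $\theta\otimes d\lambda\otimes\phi\mapsto\theta(\lmod\phi(\lambda))-\phi(\rmod\theta(\lambda))$. So the real task is to translate this linear functional on $D\Lambda\otimes_\Lambda\Omega\otimes_\Lambda D\Lambda$ into an explicit element of $\Lambda\otimes_ADM\otimes_A\Lambda$. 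First I would record that, because $\Lambda=T_A(M)$, we have $\Omega=\Lambda\otimes_AM\otimes_A\Lambda$ and hence $D\Lambda\otimes_\Lambda\Omega\otimes_\Lambda D\Lambda\cong D\Lambda\otimes_AM\otimes_AD\Lambda$, so $\bar c$ becomes a functional on $D\Lambda\otimes_AM\otimes_AD\Lambda$; under $D\Lambda\otimes_AM\cong\Hom_A({}_AM,\Lambda)$-type dualities from \intref{Lemma}{lem:A-homs} and \intref{Remark}{rem:alt-actions}, this is the same data as an element of $\Lambda\otimes_ADM\otimes_A\Lambda$. The two summands of $\bar c$ should separate cleanly: the term $\theta\mapsto\theta(\lmod\phi(\lambda))$ contributes the part living in $\Lambda\otimes_ADM\otimes_A1$ and the term $-\phi(\rmod\theta(\lambda))$ the part in $1\otimes_ADM\otimes_A\Lambda$, up to the sign.

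The key computational step is to identify each summand with $\rho\otimes1$ and $1\otimes\ell$ respectively. For the first summand, I would use that $\rho=\sum m_i\otimes\mu_i$ represents $\id_{M_A}\in\End(M_A)\cong M\otimes_ADM$, so that $m=\sum m_i\rmod\mu_i(m)$; pairing the functional $d\lambda\otimes\phi\mapsto\lmod\phi(\lambda)$ (in the guise arising from $\bar c$ on the $\Omega\otimes_\Lambda D\Lambda$ factor, exactly as in the proof of \intref{Lemma}{lem:bar-c-duality}) against this resolution of the identity, and keeping track of the $\sigma$-duality $\xi\mapsto\rmod\xi$, should collapse the sum to $\sum m_i\otimes\mu_i\otimes1=\rho\otimes1$. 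Symmetrically, for the second summand I would use $\ell=\sum\nu_j\otimes n_j$ representing $\id_{{}_AM}$, so $m=\sum\lmod\nu_j(m)n_j$, and the same manipulation with the left-module duality $\eta\mapsto\lmod\eta$ produces $\sum1\otimes\nu_j\otimes n_j=1\otimes\ell$; the relative minus sign in $\bar c$ is precisely the minus sign between $\rho\otimes1$ and $1\otimes\ell$.

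I expect the main obstacle to be bookkeeping: making sure that the several dualities in play — the vector-space duality $D$, the $\sigma$-induced isomorphisms $\Hom_A(X,A)\cong DX$ of \intref{Lemma}{lem:A-homs}, their left-module analogues, and the identity $X\otimes_ADY\cong D(Y\otimes_ADX)$ — are composed in the correct order and with the correct variance, so that the element one extracts genuinely lies in $\Lambda\otimes_ADM\otimes_A\Lambda$ and not in some twisted form of it. In particular one must check carefully which of the tensor factors of $\bar c$ (the $D\Lambda$ on the left, the $M$ in the middle, the $D\Lambda$ on the right) is dualized when passing to $\Lambda\otimes_ADM\otimes_A\Lambda$, and that \intref{Lemma}{lem:A-homs-dual} (the compatibility $\id_{X}\leftrightarrow\id_{DX}$) is what lets the choice of $\rho$ serve on both sides of the duality without introducing an extra transpose. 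Once the variances are pinned down, the computation of each summand is a short calculation of the type already carried out in \intref{Lemma}{lem:bar-c-duality} and \intref{Lemma}{lem:rmod-action}, using non-degeneracy of $\sigma$ to conclude the two functionals agree.
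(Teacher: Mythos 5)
Your proposal is correct and matches the paper's proof, which runs the same chain of identifications $\Lambda\otimes_ADM\otimes_A\Lambda\cong\mho$ (via the tensor--hom adjunctions and the $\sigma$-duality of \intref{Lemma}{lem:A-homs}) and verifies, using the dual-basis descriptions of $\rho$ and $\ell$, that $\rho\otimes1-1\otimes\ell$ and $\bar c$ agree. The only cosmetic difference is direction: the paper pushes $\rho\otimes1-1\otimes\ell$ forward to a functional on $D\Lambda\otimes_AM\otimes_AD\Lambda$ and checks it equals $\bar c$, whereas you propose to pull $\bar c$ back to an explicit element.
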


\begin{proof}
Under the isomorphism $\Lambda\otimes_ADM \cong \Hom{\,\hyp\Lambda}(M,\Lambda)$, the element $\sum m_i\otimes\mu_i$ corresponds to the inclusion $\iota\colon M\rightarrowtail\Lambda$. Next, under the isomorphism
\[ \Hom_{\,\hyp\Lambda}(M,\Lambda)\otimes_A\Lambda \cong \Hom_{\,\hyp\Lambda}(D\Lambda,\Hom_{\,\hyp\Lambda}(M,\Lambda))
\cong \Hom_{\,\hyp\Lambda}(D\Lambda\otimes_AM,\Lambda) \]
the element $\iota\otimes1$ corresponds first to the morphism $\theta\mapsto\iota\lmod\theta(1)$, and then to the morphism $\theta\otimes m\mapsto\lmod\theta(1)m$. Finally, this corresponds in $\mho=D(D\Lambda\otimes_AM\otimes_A\Lambda)$ to the map sending $\theta\otimes m\otimes\phi$ to
\[ \phi(\lmod\theta(1)m) = \sigma(\lmod\theta(1)\lmod\phi(m)) = \sigma(\lmod\phi(m)\lmod\theta(1)) = \theta(\lmod\phi(m)). \]

Similarly $\sum1\otimes\nu_j\otimes n_j$ corresponds in $\Hom_{A\hyp}(D\Lambda,\Hom_{A\hyp}(M,\Lambda))$ to the map $\phi\mapsto\rmod\phi(1)\iota$, and so to $m\otimes\phi\mapsto m\rmod\phi(1)$ in $\Hom_{A\hyp}(M\otimes_AD\Lambda,\Lambda)$. This then corresponds in $\mho$ to the map sending $\theta\otimes m\otimes\phi$ to
\[ \theta(m\rmod\phi(1)) = \phi(\rmod\theta(m)). \]

We conclude that $\rho\otimes1-1\otimes\ell$ corresponds to the element $\bar c$, sending $\theta\otimes m\otimes\phi$ to $\theta(\lmod\phi(m))-\phi(\rmod\theta(m))$.
\end{proof}

\begin{thm}
Let $\Lambda=T_A(M)$ be a finite dimensional hereditary tensor algebra. Then $T_\Lambda(\mho)\cong T_A(M\oplus DM)$, under which $\bar c\in\mho$ is sent to
\[ c = \rho-\ell \in (M\otimes_ADM)\oplus(DM\otimes_AM). \]
Thus the preprojective algebra $\Pi$ of $\Lambda$ is isomorphic to $T_A(M\oplus DM)/\langle c\rangle$.
\end{thm}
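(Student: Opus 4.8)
The plan is to identify the tensor algebra $T_\Lambda(\mho)$ with $T_A(M\oplus DM)$ by exhibiting the latter as a tensor algebra over $\Lambda=T_A(M)$ in a natural way, and then to track where $\bar c$ goes under this identification. First I would use associativity of the tensor algebra construction: since $T_A(M\oplus DM)$ contains $T_A(M)=\Lambda$ as a (graded) subalgebra, and since as an $A$-bimodule $M\oplus DM$ decomposes with the $M$-part already absorbed into $\Lambda$, one expects $T_A(M\oplus DM)\cong T_\Lambda(B)$ for the $\Lambda$-bimodule $B=\Lambda\otimes_A DM\otimes_A\Lambda$. Indeed this is the standard ``change of base ring'' for tensor algebras: a $T_A(M\oplus DM)$-module is a right $A$-module $X$ together with $A$-linear maps $X\otimes_A M\to X$ and $X\otimes_A DM\to X$; the first datum is exactly the structure of a $\Lambda$-module, and the second is then equivalent to a $\Lambda$-linear map $X\otimes_\Lambda(\Lambda\otimes_A DM\otimes_A\Lambda)\to X$ by the tensor--hom adjunction. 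This universal-property argument gives the algebra isomorphism $T_\Lambda(\mho)\cong T_A(M\oplus DM)$, once we recall from the previous subsection that $\mho\cong\Lambda\otimes_A DM\otimes_A\Lambda$ as $\Lambda$-bimodules.

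Next I would pin down the isomorphism concretely enough to compute the image of $\bar c$. Under $\mho\cong\Lambda\otimes_A DM\otimes_A\Lambda$, the degree-one part $\mho_1$ (the copy landing inside $T_\Lambda(\mho)_1$) maps into the degree-one and degree-two parts of $T_A(M\oplus DM)$: an element $1\otimes\mu\otimes 1$ of $\mho$ with $\mu\in DM$ goes simply to $\mu\in DM\subseteq (M\oplus DM)$, while $m\otimes\mu\otimes 1$ goes to $m\otimes\mu\in M\otimes_A DM$ and $1\otimes\mu\otimes n$ goes to $\mu\otimes n\in DM\otimes_A M$, both living in degree two of $T_A(M\oplus DM)$. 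With \intref{Proposition}{prop:bar-c-tensor} in hand, $\bar c=\rho\otimes 1-1\otimes\ell=\sum m_i\otimes\mu_i\otimes 1-\sum 1\otimes\nu_j\otimes n_j$, so applying the identification term by term sends $\bar c$ to $\sum m_i\otimes\mu_i-\sum\nu_j\otimes n_j=\rho-\ell=c$, an element of $(M\otimes_A DM)\oplus(DM\otimes_A M)\subseteq T_A(M\oplus DM)_2$. Finally, the preprojective algebra is by definition $\Pi=T_\Lambda(\Pi_1)$, which the defining sequence identifies with $T_\Lambda(\mho)/\langle\bar c\rangle$; transporting along the isomorphism just constructed yields $\Pi\cong T_A(M\oplus DM)/\langle c\rangle$.

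The only genuinely delicate point is bookkeeping: making sure that the $\Lambda$-bimodule isomorphism $\mho\cong\Lambda\otimes_A DM\otimes_A\Lambda$ used here is literally the one constructed in the preceding paragraphs (built from the fixed symmetrising element $\sigma$ and the representatives $\rho,\ell$), so that the formula for $\bar c$ in \intref{Proposition}{prop:bar-c-tensor} is compatible with the identification; otherwise one risks an extra scalar or a twist. I would therefore phrase the proof as: first establish $T_A(M\oplus DM)\cong T_\Lambda(\Lambda\otimes_A DM\otimes_A\Lambda)$ by the universal property, then invoke the already-proven $\mho\cong\Lambda\otimes_A DM\otimes_A\Lambda$, then read off the image of $\bar c$ from \intref{Proposition}{prop:bar-c-tensor}. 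The final statement about $\Pi$ is then immediate from $\Pi=T_\Lambda(\mho)/\langle\bar c\rangle$.
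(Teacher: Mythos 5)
Your proposal is correct and takes essentially the same approach as the paper: identify $\mho$ with $\Lambda\otimes_A DM\otimes_A\Lambda$, invoke the universal property of the tensor algebra to obtain $T_\Lambda(\mho)\cong T_A(M\oplus DM)$, and read off the image of $\bar c=\rho\otimes 1-1\otimes\ell$ as $c=\rho-\ell$. You simply unfold the paper's one-line appeal to the universal property into an explicit module-theoretic argument, and your caution about using the same bimodule identification as in \intref{Proposition}{prop:bar-c-tensor} is exactly the right point to flag.
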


As a slogan we see that the ideal is generated by the element
\[ c = \id_{M_A} - \id_{{}_AM}. \]

\begin{proof}
We know that $\Pi$ is isomorphic to $T_\Lambda(\mho)/\langle\bar c\rangle$, and we saw above that $\mho\cong\Lambda\otimes_ADM\otimes_A\Lambda$. Using that $\Lambda=T_A(M)$, the universal property for tensor algebras tells us that $T_\Lambda(\mho)\cong T_A(M\oplus DM)$. Moreover, this isomorphism sends the element $\bar c=\rho\otimes1-1\otimes\ell$ in $\Lambda\otimes_ADM\otimes_A\Lambda$ to $c=\rho-\ell$.
\end{proof}

We now observe that, quite generally, $a\rho=\rho a$ in $M\otimes_ADM$ for all $a\in A$, since both elements correspond in $\End_{\,\hyp A}(M)$ to left multiplication by $a$. Similarly $a\ell=\ell a$, and so for any locally finite hereditary tensor algebra $\Lambda=T_A(M)$ we can always define a sequence of bimodules
\[ \begin{tikzcd}
\Lambda\otimes_A\Lambda \arrow[r] & \Lambda\otimes_ADM\otimes_A\Lambda \arrow[r] & \Pi_1 \arrow[r] & 0
\end{tikzcd} \]
such that the left hand map sends $1\otimes1$ to $\bar c=\rho\otimes1-1\otimes\ell$. Now $T_\Lambda(\Lambda\otimes_ADM\otimes_A\Lambda)\cong T_A(M\oplus DM)$, under which $\bar c$ is sent to $c=\rho-\ell$. We may therefore define the preprojective algebra of $\Lambda$ to be
\[ \Pi \coloneqq T_\Lambda(\Pi_1) \cong T_A(M\oplus DM)/\langle c\rangle. \]

The next result shows that the preprojective algebra of an hereditary tensor algebra is independent of the `orientation'.

\begin{lem}\label{lem:indep-orient}
The preprojective algebras associated to the locally finite hereditary tensor algebras $T_A(M\oplus N)$ and $T_A(M\oplus DN)$ are isomorphic.
\end{lem}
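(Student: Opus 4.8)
The plan is to realise both preprojective algebras as quotients, by a single element, of one and the same free tensor algebra over $A$, and then to transport one relation onto the other by an evident automorphism. Write $\Pi=\Pi(T_A(M\oplus N))$. By the description recalled just before the statement (the case of $T_A(M')$ with $M'=M\oplus N$) we have $\Pi\cong T_A\big((M\oplus N)\oplus D(M\oplus N)\big)/\langle c\rangle$, where $c=\id_{(M\oplus N)_A}-\id_{{}_A(M\oplus N)}$. Using the canonical identification $D(M\oplus N)=DM\oplus DN$, and the fact that the identity of a direct sum is the sum of the identities of its summands, this becomes
\[ \Pi \cong T_A(M\oplus DM\oplus N\oplus DN)/\langle c\rangle, \qquad c=(\id_{M_A}-\id_{{}_AM})+(\id_{N_A}-\id_{{}_AN}). \]
Likewise, using $D(M\oplus DN)=DM\oplus D(DN)$ together with the canonical isomorphism $D(DN)\cong N$ (legitimate since $N$ is finite dimensional, because $T_A(M\oplus N)$ is locally finite), the algebra $\Pi'=\Pi(T_A(M\oplus DN))$ is presented on the same free tensor algebra as
\[ \Pi' \cong T_A(M\oplus DM\oplus N\oplus DN)/\langle c'\rangle, \qquad c'=(\id_{M_A}-\id_{{}_AM})+(\id_{(DN)_A}-\id_{{}_A DN}). \]

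The key input is now \intref{Lemma}{lem:A-homs-dual}: the identity of $X_A$ and the identity of ${}_A DX$ name the same element of $X\otimes_A DX$. Taking $X=DN$ gives $\id_{(DN)_A}=\id_{{}_AN}$ in $DN\otimes_A N$, and taking $X=N$ gives $\id_{{}_A DN}=\id_{N_A}$ in $N\otimes_A DN$; hence
\[ c'=(\id_{M_A}-\id_{{}_AM})-(\id_{N_A}-\id_{{}_AN}). \]
So $c$ and $c'$ share the same "$M$-part" and have opposite "$N$-parts". It remains to produce an automorphism of $T:=T_A(M\oplus DM\oplus N\oplus DN)$ carrying $c$ to $c'$: take the algebra automorphism $\Phi$ induced, via the universal property of the tensor algebra, by the $A$-bimodule automorphism of $M\oplus DM\oplus N\oplus DN$ that is the identity on $M\oplus DM\oplus DN$ and $-\id$ on $N$. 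Each of $\id_{M_A}\in M\otimes_A DM$ and $\id_{{}_AM}\in DM\otimes_A M$ involves no tensor factor equal to $N$, while each of $\id_{N_A}\in N\otimes_A DN$ and $\id_{{}_AN}\in DN\otimes_A N$ involves exactly one, so $\Phi$ fixes the first two and negates the last two; therefore $\Phi(c)=c'$. Consequently $\Phi(\langle c\rangle)=\langle c'\rangle$, so $\Phi$ descends to an isomorphism $\Pi\cong\Pi'$.

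I do not anticipate a genuine obstacle here; the only delicate part is the bookkeeping with the canonical dualities, namely making sure that \intref{Lemma}{lem:A-homs-dual} is invoked on the correct handedness so that $\id_{(DN)_A}$ becomes $\id_{{}_AN}$ and $\id_{{}_A DN}$ becomes $\id_{N_A}$, and that the chosen automorphism negates precisely the two $N$-terms while fixing the $M$-terms. (One should not expect the resulting isomorphism to be graded, since the summand $N$ lies in degree $0$ for $\Pi$ but in degree $1$ for $\Pi'$; this mirrors the non-graded isomorphism appearing in the APR-tilt statement earlier.)
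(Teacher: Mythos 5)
Your proof is correct and is essentially the same as the paper's: realise both preprojective algebras as quotients of $T_A(M\oplus DM\oplus N\oplus DN)$ by $c_1+c_2$ and $c_1-c_2$ respectively (via \intref{Lemma}{lem:A-homs-dual} applied to $N$ and $DN$), and then transport one ideal to the other by the sign automorphism. The only cosmetic difference is that the paper negates $DN$ while you negate $N$; both send $c_2\mapsto -c_2$, so the arguments coincide.
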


\begin{proof}
We take $\rho_1\in M\otimes_ADM$ and $\rho_2\in N\otimes_ADN$ corresponding to the identities on $M_A$ and $N_A$ respectively. Similarly we take $\ell_1\in DM\otimes_AM$ and $\ell_2\in DN\otimes_AN$ corresponding to the identities on ${}_AM$ and ${}_AN$.

For convenience we set $\widetilde M\coloneqq M\oplus DM$, and similarly for $\widetilde N$. Then, if we write $c_1=\rho_1-\ell_1$ and $c_2=\rho_2-\ell_2$, the preprojective algebra of $T_A(M\oplus N)$ is the quotient $T_A(\widetilde M\oplus\widetilde N)/\langle c_1+c_2\rangle$. On the other hand, we know from \intref{Lemma}{lem:A-homs-dual} that $\rho_2$ and $\ell_2$ correspond respectively to the identities on ${}_A(DN)$ and $(DN)_A$. Thus the preprojective algebra of $T_A(M\oplus DN)$ is the quotient $T_A(\widetilde M\oplus\widetilde DN)/\langle c_1-c_2\rangle$.

Finally, the automorphism of $N\oplus DN$ sending $(n,\nu)$ to $(n,-\nu)$ induces an automorphism of $T_A(\widetilde M\oplus\widetilde N)$ exchanging $c_2$ with $-c_2$, and hence induces an isomorphism between the two preprojective algebras.
\end{proof}

\begin{exa}
Let $\Lambda=kQ$ be the path algebra of a quiver. Then $A=k^n$, having basis the vertices of $Q$, and $M$ is a direct sum of one dimensional bimodules, having basis the arrows $a$ of $Q$. Writing $a^\ast$ for the dual basis of $DM$, we see that $\rho=\sum a\otimes a^\ast$ and $\ell=\sum a^\ast\otimes a$, so that $c=\sum[a,a^\ast]$ and we recover the well-known description of the preprojective algebra $\Pi=k\bar Q/\langle\sum[a,a^\ast]\rangle$.
\end{exa}

\begin{lem}
Let $\Lambda$ be an hereditary algebra in one of our classes. Then its preprojective algebra $\Pi$ is finite dimensional if and only if $\Lambda$ is finite dimensional and of finite representation type.
\end{lem}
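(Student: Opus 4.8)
The plan is to read off everything from the grading $\Pi=\bigoplus_{n\ge0}\Pi_n$ with $\Pi_0=\Lambda$. The first, trivial, observation is that if $\Pi$ is finite dimensional then so is $\Pi_0=\Lambda$; in particular no locally finite hereditary tensor algebra $T_A(M)$ which fails to be finite dimensional (for instance one whose valued quiver contains an oriented cycle) can have finite dimensional preprojective algebra. Conversely, a locally finite hereditary tensor algebra that happens to be finite dimensional is in particular a finite dimensional hereditary algebra, so it suffices to treat the case where $\Lambda$ is finite dimensional. In that case \intref{Proposition}{prop:AR-translate} gives $\Pi_n\cong\tau^{-n}\Lambda$, and since $\tau^-$ preserves finite length each $\Pi_n$ is finite dimensional; hence $\Pi$ is finite dimensional if and only if $\tau^{-n}\Lambda=0$ for all $n\gg0$.

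Next I would reduce to the connected case. Writing $\Lambda=\Lambda_1\times\cdots\times\Lambda_r$ with each $\Lambda_j$ connected, the Auslander--Reiten translate and hence the preprojective algebra decompose compatibly, $\Pi\cong\Pi(\Lambda_1)\times\cdots\times\Pi(\Lambda_r)$, so $\Pi$ is finite dimensional exactly when every $\Pi(\Lambda_j)$ is, while $\Lambda$ is of finite representation type exactly when every $\Lambda_j$ is. So we may assume $\Lambda$ is connected and finite dimensional, and we must show $\tau^{-n}\Lambda=0$ for $n\gg0$ if and only if $\Lambda$ is representation-finite.

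If $\Lambda$ is representation-finite (equivalently of Dynkin type), there are only finitely many indecomposable $\Lambda$-modules, so only finitely many of the indecomposable preprojective modules $\tau^{-n}P_i$ can be nonzero; hence $\tau^{-N}\Lambda=0$ for some $N$, and then $\tau^{-n}\Lambda=\tau^-(\tau^{-n-1}\Lambda)=0$ for all $n\ge N$, so $\Pi$ is finite dimensional. For the converse, if $\Lambda$ is connected of infinite representation type then, by the classical structure theory of the preprojective component of a hereditary algebra (see e.g.\ \cite{DR2}), one has $\tau^{-n}P_i\neq0$ for all $n\ge0$ and all $i$, and these are pairwise non-isomorphic. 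Thus $\Pi\cong\bigoplus_n\tau^{-n}\Lambda$ contains infinitely many non-isomorphic indecomposable summands, each of positive $k$-dimension, and so $\Pi$ is infinite dimensional. The one genuinely non-formal input here is the non-vanishing $\tau^{-n}P_i\neq0$ for all $n$ in the representation-infinite case; this is the expected main obstacle, and rather than reprove it I would invoke the standard representation theory of hereditary algebras (equivalently, the fact that in non-Dynkin type no power of the Coxeter transformation carries a positive dimension vector to a non-positive one). Everything else is bookkeeping with the grading and the product decomposition.
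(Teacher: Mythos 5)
Your proof is correct and follows essentially the same route as the paper: reduce to $\Lambda$ finite dimensional via $\Pi_0=\Lambda$, use $\Pi_n\cong\tau^{-n}\Lambda$, and observe that $\tau^{-n}\Lambda=0$ for $n\gg0$ is equivalent to finite representation type. The connectedness reduction you insert is harmless but unnecessary --- the paper just phrases the key fact as ``all projective modules are postinjective if and only if $\Lambda$ has finite representation type'', which works componentwise without needing to decompose $\Lambda$ first.
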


\begin{proof}
We begin by noting that $\Lambda$ is always a subalgebra of $\Pi$, so we can immediately restrict to the case when $\Lambda$ is finite dimensional. Then $\Pi_n\cong\tau^{-n}\Lambda$, say as right $\Lambda$-modules, so $\Pi$ is finite dimensional if and only if $\tau^{-n}\Lambda=0$ for $n\gg0$. This is equivalent to saying that all projective modules are postinjective, which happens if and only if $\Lambda$ has finite representation type.
\end{proof}

\subsection{The ext-quiver}

Each preprojective algebra $\Pi$ admits a split algebra homomorphism $\Pi\twoheadrightarrow A$, and so we may again define the Serre subcategory $\nilp\Pi$ of modules generated by the simples $S_i$. Equivalently, writing $\Pi$ as the preprojective algebra of an hereditary algebra $\Lambda$, these are the finite dimensional modules on which $\Lambda_++\Pi_+$ acts nilpotently, or alternatively those on which $\Pi_+$ acts nilpotently and whose retriction lies in $\nilp\Lambda$.

As usual we can construct the associated ext-quiver of $\nilp\Pi$, or more precisely consider the radical-squared zero algebra $\Pi/(\Lambda_++\Pi_+)^2$.

\begin{lem}\label{lem:triv-extn}
We have the trivial extension algebra
\[ \Pi/(\Lambda_++\Pi_+)^2 \cong A\ltimes\big(M\oplus DM\big), \quad \textrm{where }M=\Lambda_+/\Lambda_+^2. \]
In particular, the ext-quiver of $\nilp\Pi$ is the double of the ext-quiver of $\nilp\Lambda$.
\end{lem}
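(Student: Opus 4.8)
The plan is to compute the quotient $\Pi/(\Lambda_++\Pi_+)^2$ directly from the presentation $\Pi\cong T_A(M\oplus DM)/\langle c\rangle$, where $c=\rho-\ell$, established in the previous results. The key observation is that $c$ lives in degree two of the tensor grading on $T_A(M\oplus DM)$ (more precisely, in $(M\otimes_A DM)\oplus(DM\otimes_A M)$), hence in $(\Lambda_++\Pi_+)^2$. First I would identify the combined radical $\Lambda_++\Pi_+$ with the augmentation ideal $T_A(M\oplus DM)_+$ of the tensor algebra, so that $(\Lambda_++\Pi_+)^2$ corresponds to $T_A(M\oplus DM)_{\geq 2}$ together with the ideal $\langle c\rangle$ (which is already contained in degrees $\geq 2$). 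Since $\langle c\rangle\subseteq T_A(M\oplus DM)_{\geq 2}$, the quotient $\Pi/(\Lambda_++\Pi_+)^2$ is canonically isomorphic to $T_A(M\oplus DM)/T_A(M\oplus DM)_{\geq 2}$, which is exactly $A\oplus(M\oplus DM)=A\ltimes(M\oplus DM)$, the trivial extension algebra with square-zero bimodule $M\oplus DM$.

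The second step is to deduce the statement about ext-quivers. By \intref{Proposition}{prop:ext-quiver-lambda} and the discussion following it, the ext-quiver of $\nilp\Pi$ is the valued quiver attached to the $A$-bimodule $(\Lambda_++\Pi_+)/(\Lambda_++\Pi_+)^2\cong M\oplus DM$. Since the ext-quiver of $\nilp\Lambda$ is the valued quiver of $M=\Lambda_+/\Lambda_+^2$, with an arrow $i\to j$ of valuation $(r_{ij}/d_i,r_{ij}/d_j)$ whenever $e_iMe_j\neq 0$, and since $e_i(DM)e_j\cong D(e_j M e_i)$ has dimension $r_{ji}$, the summand $DM$ contributes precisely the reversed arrows. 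Thus the ext-quiver of $\nilp\Pi$ is obtained from that of $\nilp\Lambda$ by adjoining, for each arrow $i\to j$, a reverse arrow $j\to i$; that is, it is the double quiver.

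The main obstacle, though it is largely bookkeeping rather than a genuine difficulty, is making precise the identification of $\Lambda_++\Pi_+$ as an ideal of $\Pi$ with the image of the augmentation ideal of $T_A(M\oplus DM)$, and checking that the grading of $\Pi$ (with $\Pi_0=\Lambda$) interacts correctly with the \emph{tensor} grading of $T_A(M\oplus DM)$ — the two gradings differ, since $\Lambda=\Pi_0$ is itself nontrivially graded in the tensor grading on $T_A(M\oplus DM)$. One must verify that both $\Lambda_+$ (the graded radical of $\Lambda$, i.e.\ $\bigoplus_{n\geq 1}M^{\otimes n}$) and $\Pi_+$ (the sum of positive $\Pi$-degree pieces) lie inside the tensor-degree-$\geq 1$ part, and conversely that $M$ and $DM$ together generate $\Lambda_++\Pi_+$, so that $(\Lambda_++\Pi_+)^2$ is exactly the tensor-degree-$\geq 2$ part of $T_A(M\oplus DM)$ modulo $\langle c\rangle$. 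Once this alignment of filtrations is in place, the isomorphism is immediate because $c$ contributes nothing in tensor-degree $\leq 1$.
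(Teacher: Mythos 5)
Your proof is correct, but only in one of the two situations covered by the lemma. The presentation $\Pi\cong T_A(M\oplus DM)/\langle c\rangle$ on which your entire argument hinges is established in the paper only when $\Lambda=T_A(M)$ is a tensor algebra. The paper explicitly works with two classes of hereditary algebras: locally finite tensor algebras and finite dimensional hereditary algebras, and these do not coincide --- there exist finite dimensional hereditary algebras that are not tensor algebras (when, for example, $\Omega$ is not a projective bimodule; see \intref{Remark}{rem:omega-not-proj-mod}). For such an algebra one only has $\Pi\cong T_\Lambda(\mho)/\langle\bar c\rangle$ with $\mho=D(D\Lambda\otimes_\Lambda\Omega\otimes_\Lambda D\Lambda)$, and the $A$-graded presentation you quote is unavailable.

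For the finite dimensional case the paper argues differently: it first notes $\Pi/\Pi_+^2\cong\Lambda\ltimes\Pi_1$, hence $\Pi/(\Lambda_++\Pi_+)^2\cong(\Lambda/\Lambda_+^2)\ltimes(A\otimes_\Lambda\Pi_1\otimes_\Lambda A)$, and then identifies the bimodule via the Auslander--Reiten Formula, $A\otimes_\Lambda\Pi_1\otimes_\Lambda DA\cong D\Hom_\Lambda(\tau^-A,A)\cong\Ext^1_\Lambda(A,A)\cong DM$. Your degree-counting argument in $T_A(M\oplus DM)$ is exactly the paper's proof in the tensor-algebra case (and your discussion of the two gradings is a fine point correctly identified), but you need to either add the Auslander--Reiten argument for the other case, or else restrict the statement you are proving to tensor algebras.

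Your deduction of the ext-quiver statement from the bimodule isomorphism is correct and matches the paper.
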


\begin{proof}
We begin by observing that $\Pi/\Pi_+^2\cong\Lambda\ltimes\Pi_1$, and hence that
\[ \Pi/(\Lambda_++\Pi_+)^2 \cong (\Lambda/\Lambda_+^2) \ltimes (A\otimes_\Lambda\Pi_1\otimes_\Lambda A). \]

Assume first that $\Lambda$ is finite dimensional. Then $A\otimes_\Lambda\Pi_1 \cong \tau^{-1}A$, so using that $A\cong DA$ together with the Auslander--Reiten Formula we have
\[ A\otimes_\Lambda\Pi_1\otimes_\Lambda DA \cong D\Hom_\Lambda(\tau^-A,A) \cong \Ext^1_\Lambda(A,A). \]
We also have the isomorphism
\[ \Lambda/\Lambda_+^2 =A\ltimes M \cong A\ltimes D\Ext^1_\Lambda(A,A) \]
shown in \intref{Proposition}{prop:ext-quiver-lambda}. As the ideal $M$ in $A\ltimes M$ acts trivially on $\Ext^1_\Lambda(A,A)$, the claim follows.

Suppose instead that $\Lambda=T_A(M)$ is a tensor algebra, so we can write $\Pi=T_A(M\oplus DM)/\langle c\rangle$. As we are quotienting out by the ideal $\langle M\oplus DM\rangle^2$, and this contains the element $c$, we again get $A\ltimes(M\oplus DM)$.
\end{proof}

As a consequence we observe that
\[ \Ext^1_\Pi(S_i,S_j) \cong \Ext^1_\Lambda(S_i,S_j)\oplus D\Ext^1_\Lambda(S_j,S_i), \]
and hence that
\[ \Ext^1_\Pi(S_j,S_i) \cong D\Ext^1_\Pi(S_i,S_j). \]

\section{Global dimension}

We have now defined the preprojective algebra $\Pi$ for each hereditary algebra $\Lambda$ in one of our classes. This can be constructed either as a tensor algebra $\Pi=T_\Lambda(\Pi_1)$, or equivalently as the quotient of an hereditary tensor algebra $T_\Lambda(\mho)$ by an ideal $I=\langle\bar c\rangle$.

More precisely, starting from $\Lambda=A\oplus\Lambda_+$, we first introduced the $\Lambda$-bimodule $\Omega$ of 1-forms on $\Lambda$ over $A$, defined as the kernel of the multiplication map $\Lambda\otimes_A\Lambda\to\Lambda$. The bimodule $\mho$ is then defined as
\[ \mho \coloneqq D(D\Lambda\otimes_\Lambda\Omega\otimes_\Lambda D\Lambda) \]
whenever $\Lambda$ is finite dimensional, whereas if $\Lambda=T_A(M)$ is a locally finite tensor algebra, then
\[ \Omega = \Lambda\otimes_AM\otimes_A\Lambda \quad\textrm{and}\quad \mho = \Lambda\otimes_ADM\otimes_A\Lambda. \]
In both cases we have that $\Pi_1$ is the cokernel of the bimodule homomorphism
\[ \Lambda\otimes_A\Lambda \to \mho, \quad 1\otimes1 \mapsto \bar c. \]
In fact, for a locally finite hereditary tensor algebra $\Lambda=T_A(M)$ we have $T_\Lambda(\mho)\cong T_A(M\oplus DM)$, leading to a third description
\[ \Pi = T_A(M\oplus DM)/\langle c\rangle, \quad c = \rho-\ell \in (M\otimes_ADM)\oplus(DM\otimes_AM). \]

\subsection{Standard projective presentation}

Let $\Lambda$ be an hereditary algebra in one of our classes, $\Pi$ its preprojective algebra, and $T=T_\Lambda(\mho)$.

\begin{thm}\label{thm:standard-proj-pi}
Every right $\Pi$-module $X$ admits a standard projective presentation
\[ \begin{tikzcd}
X\otimes_A\Pi \arrow[r] & X\otimes_T\widetilde\Omega\otimes_T\Pi \arrow[r] & X\otimes_A\Pi \arrow[r] & X \arrow[r] & 0,
\end{tikzcd} \]
where the map in the left sends $x\otimes1$ to $x\otimes d\bar c\otimes1$.
\end{thm}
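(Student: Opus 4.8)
The plan is to obtain the presentation by reducing, modulo the ideal $I=\langle\bar c\rangle$, the standard $T$-projective resolution of $X$ furnished by \intref{Theorem}{thm:tensor-mho-hered}. Regarding $X$ as a right $T$-module along $T\twoheadrightarrow\Pi$, we apply $-\otimes_T\Pi$ to the exact sequence $0\to X\otimes_T\widetilde\Omega\to X\otimes_AT\to X\to0$. Since $X$ is already a $\Pi$-module we have $X\otimes_T\Pi\cong X$, so we obtain a right-exact complex
\[ X\otimes_T\widetilde\Omega\otimes_T\Pi\xrightarrow{\ \partial_1\ }X\otimes_A\Pi\xrightarrow{\ \partial_0\ }X\to0, \]
in which both nonzero terms are projective right $\Pi$-modules: the first because $X\otimes_T\widetilde\Omega$ is $T$-projective by \intref{Theorem}{thm:tensor-mho-hered}, the second because $A$ is semisimple. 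Here $\partial_0(x\otimes\pi)=x\pi$, while $\partial_1(x\otimes dt\otimes1)=x\otimes\bar t-xt\otimes1$ comes from the inclusion $dt\mapsto 1\otimes t-t\otimes1$ of $\widetilde\Omega$ into $T\otimes_AT$. Thus the only remaining point is exactness at $X\otimes_T\widetilde\Omega\otimes_T\Pi$, i.e.\ that $\Ker\partial_1$ coincides with the image of the map $\partial_2$ determined by $\partial_2(x\otimes1)=x\otimes d\bar c\otimes1$.

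First I would record two elementary facts. Since $\bar c$ is the image of $1\otimes1$ under the $A$-bimodule map $\Lambda\otimes_A\Lambda\to\mho$, the sub-bimodule $A\bar cA\subseteq\mho$ is a quotient of the regular $A$-bimodule, whence $\bar c$ corresponds there to a central idempotent of $A$; in particular $a\bar c=\bar ca$ for all $a\in A$. Secondly, $\bar c$ annihilates $X$, since it maps to $0$ in $\Pi$. The first fact makes $\partial_2\colon x\otimes\pi\mapsto x\otimes d\bar c\otimes\pi$ well defined over $A$ (one checks $a\cdot d\bar c=d\bar c\cdot a$ in $\widetilde\Omega$, both sides equalling $d(a\bar c)=d(\bar ca)$), and the two facts together give $\partial_1\partial_2=0$, as $\partial_1(x\otimes d\bar c\otimes1)=0$, the two resulting terms vanishing because $\bar c$ maps to $0$ in $\Pi$ and annihilates $X$. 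Hence $\Ima\partial_2\subseteq\Ker\partial_1$.

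For the reverse inclusion I would use $\Tor$. As the displayed complex is $-\otimes_T\Pi$ applied to a projective $T$-resolution of $X$, there is a natural isomorphism $\Ker\partial_1\cong\Tor_1^T(X,\Pi)$. Computing $\Tor_1^T(X,\Pi)$ instead from $0\to I\to T\to\Pi\to0$ (using $\Tor_1^T(X,T)=0$), and observing that $XI=0$ forces the map $X\otimes_TI\to X$ to vanish, shows that the connecting map $\delta\colon\Tor_1^T(X,\Pi)\iso X\otimes_TI$ is an isomorphism. Moreover $x\otimes\bar t\mapsto x\otimes\bar ct$ defines a surjection $X\otimes_A\Pi\twoheadrightarrow X\otimes_TI$ — well defined because $a\bar c=\bar ca$ handles the $A$-balancing and $x\otimes\bar ci=0$ for $i\in I$ (writing $i=\sum_ks_k\bar cs_k'$, each term is $x\bar c s_k\otimes\bar cs_k'=0$). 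The last step is to trace $\delta$ on $\partial_2(x\otimes1)$: lifting $x\otimes d\bar c\otimes1$ to $x\otimes d\bar c\in X\otimes_T\widetilde\Omega$, its image in $X\otimes_AT$ is $x\otimes\bar c-x\bar c\otimes1=x\otimes\bar c$, which lies in $X\otimes_AI$ and has class $x\otimes\bar c$ in $X\otimes_TI$; thus $\delta\circ\partial_2$ is exactly the surjection above. Since $\delta$ is an isomorphism onto $X\otimes_TI$ and that surjection is onto, we conclude $\Ima\partial_2=\delta^{-1}(X\otimes_TI)=\Ker\partial_1$, completing the proof.

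I expect the main obstacle to be the final bookkeeping of the previous paragraph — matching the connecting homomorphism of the two $\Tor$-computations with the explicit formula $x\otimes1\mapsto x\otimes d\bar c\otimes1$ — together with carrying the argument out uniformly in both our settings, i.e.\ for a locally finite hereditary tensor algebra and for a general finite-dimensional hereditary algebra, where only the abstract descriptions of $\widetilde\Omega$, $\mho$ and $\bar c$ are available. Isolating the identities $a\bar c=\bar ca$ and $x\bar c=0$ at the outset is what keeps the numerous tensor manipulations manageable.
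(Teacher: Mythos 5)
Your proposal is correct and follows essentially the same strategy as the paper: start from the standard $T$-projective resolution of $X$ (\intref{Theorem}{thm:tensor-mho-hered}), reduce modulo $I=\langle\bar c\rangle$, identify the homology at the middle term with $X\otimes_TI$, and splice in the surjection $X\otimes_A\Pi\twoheadrightarrow X\otimes_TI$, $x\otimes\bar t\mapsto x\otimes\bar ct$. The only real difference is packaging: the paper applies the Snake Lemma directly to the $3\times 2$ diagram obtained by tensoring the $T$-resolution of $X$ with $I\rightarrowtail T$, getting the four-term exact sequence in one stroke, whereas you pass through the abstract identification $\Ker\partial_1\cong\Tor_1^T(X,\Pi)$ and the connecting map of the other $\Tor$ long exact sequence, and then must reconcile the two computations by a diagram chase. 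Both are the same underlying homological algebra, and your final chase (lift $x\otimes d\bar c\otimes1$ to $x\otimes d\bar c$, push down to $x\otimes\bar c\in X\otimes_AI$) reproduces the paper's computation verbatim.

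One small logical wobble worth flagging: the aside that $A\bar cA$ is a cyclic quotient of the regular $A$-bimodule ``whence $\bar c$ corresponds to a central idempotent, in particular $a\bar c=\bar ca$'' has the implication backwards. The identity $a\bar c=\bar ca$ comes directly from the fact that $a\otimes1=1\otimes a$ in $\Lambda\otimes_A\Lambda$ for $a\in A$ (since $\bar c$ is defined as the image of $1\otimes1$ under a bimodule map), and it is \emph{this} identity that makes $A\to\mho$, $a\mapsto a\bar c$, a bimodule map in the first place; the ``central idempotent'' framing is a consequence, not a cause, and is best dropped. This does not affect the correctness of what follows.
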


\begin{proof}
We write $\Pi=T/I$ where $I=\langle\bar c\rangle$. We begin by tensoring the standard projective resolution of $X_T$, given in \intref{Theorem}{thm:tensor-mho-hered}, with the inclusion $I\rightarrowtail T$, viewed as a map between projective left $T$-modules. We obtain the exact commutative diagram
\[ \begin{tikzcd}
0 \arrow[r] & X\otimes_T\widetilde\Omega\otimes_T I \arrow[r] \arrow[d] & X\otimes_AI \arrow[r] \arrow[d,rightarrowtail]
& X\otimes_TI \arrow[r] \arrow[d,"0"] & 0\\
0 \arrow[r] & X\otimes_T\widetilde\Omega\otimes \arrow[r] & X\otimes_AT \arrow[r] & X \arrow[r] & 0
\end{tikzcd} \]
As $X$ is a $\Pi$-module we have $XI=0$, and hence the vertical map $X\otimes_TI\to X$ on the right is zero. Also, the the vertical map in the middle is injective as $A$ is semisimple. The Snake Lemma now yields the four term exact sequence
\[ 0 \to X\otimes_TI \to X\otimes_T\widetilde\Omega\otimes_T\Pi \to X\otimes_A\Pi \to X \to 0. \]

Next, we know that $I$ is generated as an ideal by the element $\bar c$, given via the bimodule homomorphism
\[ \Lambda\otimes_A\Lambda \to \mho, \quad 1\otimes1 \mapsto \bar c. \]
In particular, as $a\otimes1=1\otimes a$ on the left, we must have that $a\bar c=\bar ca$. Thus there is a natural surjection of right $T$-modules
\[ X\otimes_AT \twoheadrightarrow X\otimes_TI, \quad x\otimes t \mapsto x\otimes\bar ct. \]
Using that $XI=0$ we see that
\[ x\otimes t\bar ct' \mapsto x\otimes\bar ct\bar ct' = x\bar ct\otimes\bar ct' = 0, \]
so the submodule $X\otimes_AI$ is contained in the kernel and we have the induced surjection $X\otimes_A\Pi\twoheadrightarrow X\otimes_TI$. Putting this together yields the desired four term presentation of $X$.

Finally, the map on the left sends the element $x\otimes1$ in $X\otimes_A\Pi$ to $x\otimes\bar c\in X\otimes_TI$. As in the usual diagram chase, we can consider the element $x\otimes\bar c$ in $X\otimes_AT$, which as $x\bar c=0$ is the image of $x\otimes d\bar c$ from $X\otimes_T\widetilde\Omega$. Thus $x\otimes1$ is sent to $x\otimes d\bar c\otimes1$ in $X\otimes_T\widetilde\Omega\otimes_T\Pi$.
\end{proof}

When $\Lambda$ is a tensor algebra we have the following alternative description of the standard projective presentation.

\begin{cor}\label{cor:LHS-tensor}
Let $\Lambda=T_A(M)$ be a locally finite hereditary tensor algebra, $\Pi$ its preprojective algebra, and $X$ a right $\Pi$-module. Then the functorial projective presentation of $X$ becomes
\[ \begin{tikzcd}
X\otimes_A\Pi \arrow[r] & X\otimes_A(M\oplus DM)\otimes_A\Pi \arrow[r] & X\otimes_A\Pi \arrow[r] & X \arrow[r] & 0,
\end{tikzcd} \]
with the left and map sending $x\otimes1$ to the element
\[ \sum (xm_i\otimes\mu_i\otimes1 + x\otimes m_i\otimes\mu_i) - \sum (x\nu_j\otimes n_j\otimes1 + x\otimes\nu_j\otimes n_j). \]
\end{cor}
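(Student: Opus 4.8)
The plan is to take the standard projective presentation of \intref{Theorem}{thm:standard-proj-pi} and simply rewrite each of its terms using that, in the tensor case, $T=T_\Lambda(\mho)$ is itself a tensor algebra $T_A(M\oplus DM)$ over the semisimple ring $A$; there is essentially nothing new to prove. First I would identify the middle term: since $T_A(M\oplus DM)$ is hereditary over the semisimple ring $A$, \intref{Corollary}{co:standard-proj-tensor} gives $\widetilde\Omega\cong T\otimes_A(M\oplus DM)\otimes_AT$, with $1\otimes w\otimes1$ corresponding to the generator $dw=1\otimes w-w\otimes1$ for $w\in M\oplus DM$. Hence
\[ X\otimes_T\widetilde\Omega\otimes_T\Pi\cong X\otimes_A(M\oplus DM)\otimes_A\Pi, \]
the outer copies of $T$ being absorbed into $X$ (via the right $T$-action, which factors through $T\twoheadrightarrow\Pi$) and into the final copy of $\Pi$. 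Inserting this into \intref{Theorem}{thm:standard-proj-pi} produces the displayed four-term sequence, all of whose nonzero terms are projective because $A$ is semisimple.

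It then remains to trace the left-hand map, which by \intref{Theorem}{thm:standard-proj-pi} sends $x\otimes1$ to $x\otimes d\bar c\otimes1$. Under the isomorphism $T_\Lambda(\mho)\cong T_A(M\oplus DM)$ the element $\bar c$ becomes $c=\rho-\ell$, where $\rho=\sum_i m_i\mu_i$ and $\ell=\sum_j\nu_jn_j$ are the elements representing $\id_{M_A}$ and $\id_{{}_AM}$, each a sum of products of two generators of $T_A(M\oplus DM)$. The universal $A$-derivation $d\colon T\to\widetilde\Omega$ sends a generator $w$ to $1\otimes w\otimes1$ and obeys the Leibniz rule, so
\[ d\rho=\sum_i\big(1\otimes m_i\otimes\mu_i+m_i\otimes\mu_i\otimes1\big),\qquad d\ell=\sum_j\big(1\otimes\nu_j\otimes n_j+\nu_j\otimes n_j\otimes1\big). \]
Substituting $d\bar c=d\rho-d\ell$ and pushing through the identification of the middle term (absorbing a left-hand generator $m_i$ or $\nu_j$ into $x$ using $M,DM\subseteq\Pi$, and a right-hand generator $\mu_i$ or $n_j$ into the final copy of $\Pi$) yields exactly
\[ \sum_i\big(xm_i\otimes\mu_i\otimes1+x\otimes m_i\otimes\mu_i\big)-\sum_j\big(x\nu_j\otimes n_j\otimes1+x\otimes\nu_j\otimes n_j\big). \]

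The argument is pure bookkeeping, so the only genuine point requiring care is keeping straight which tensor products are over $A$ and which over $T$, and, relatedly, confirming that $\rho$, originally defined via $\End(M_A)\cong M\otimes_ADM$, really is the product $\sum_i m_i\mu_i$ once $M\otimes_ADM$ is viewed inside $T_A(M\oplus DM)$; this is precisely the content of the isomorphism $T_\Lambda(\mho)\cong T_A(M\oplus DM)$ used above. As a sanity check, for a path algebra $\Lambda=kQ$, where $\rho=\sum_a a\otimes a^\ast$ and $\ell=\sum_a a^\ast\otimes a$, the formula specialises to the classical one, with the left-hand map sending $x\otimes1$ to $\sum_a(xa\otimes a^\ast\otimes1+x\otimes a\otimes a^\ast)-\sum_a(xa^\ast\otimes a\otimes1+x\otimes a^\ast\otimes a)$.
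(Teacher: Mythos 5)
Your proposal is correct and follows the same route as the paper: identify $\widetilde\Omega\cong T\otimes_A(M\oplus DM)\otimes_AT$, write $d\bar c$ via the Leibniz rule applied to $\bar c=\rho-\ell$, and push through the identification by absorbing the outer $T$-factors into $X$ and $\Pi$. (As a minor aside, you carry the minus sign through $d\bar c=d\rho-d\ell$ correctly, whereas the paper's displayed formula for $d\bar c$ has a sign typo, though its final conclusion is right.)
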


\begin{proof}
We have $\bar c=\sum m_i\cdot\mu_i - \sum\nu_j\cdot n_j$ in $T=T_A(M\oplus DM)$, so
\[ d\bar c = \sum(m_i\cdot d\mu_i + dm_i\cdot\mu_i) + \sum(\nu_j\cdot dn_j + d\nu_j\cdot n_j). \]
Under the bimodule isomorphism $T\otimes_A(M\oplus DM)\otimes_AT\iso\widetilde\Omega$, sending $1\otimes\gamma\otimes1$ to $d\gamma$ for all $\gamma\in M\oplus DM$, we see that $d\bar c$ corresponds to the element $\rho\otimes1+1\otimes\rho-\ell\otimes1-1\otimes\ell$. The result follows.
\end{proof}

\subsection{Global dimension 2 in non-Dynkin type}

Our aim in this section is to prove the following strengthening of \intref{Theorem}{thm:standard-proj-pi}.

\begin{thm}\label{thm:gl-dim-2}
Let $\Pi$ be an indecomposable preprojective algebra of non-Dynkin type. The every right $\Pi$-module $X$ admits a standard projective resolution
\[ \begin{tikzcd}
0 \arrow[r] & X\otimes_A\Pi \arrow[r] & X\otimes_T\widetilde\Omega\otimes_T\Pi \arrow[r] & X\otimes_A\Pi \arrow[r] & X \arrow[r] & 0
\end{tikzcd} \]
and $\gldim\Pi=2$.
\end{thm}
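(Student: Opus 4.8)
The plan is to upgrade the four-term standard presentation of \intref{Theorem}{thm:standard-proj-pi} to a resolution by proving that the leftmost map $X\otimes_A\Pi\to X\otimes_T\widetilde\Omega\otimes_T\Pi$ is injective, for every right $\Pi$-module $X$. Since that map is functorial and everything in sight commutes with directed colimits of submodules, I would first reduce to the case where $X$ is finite dimensional (as in the proof of \intref{Lemma}{lem:mho-proj}, using that $\Pi$ need not be finite dimensional, but that flatness plus the presence of a projective presentation still suffices to conclude the general case once the finite dimensional case is known). Actually a cleaner reduction is to reduce to graded modules: the leftmost map is a map of graded $\Pi$-modules up to a shift, so it is enough to check injectivity after restricting to $\modcat\Lambda$ in each graded degree, and one is left with a question purely about $\Lambda$-modules and the inverse Auslander--Reiten translate.

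The key mechanism, following the sketch in the introduction, is to \emph{compose the leftmost map with a further natural map} so that the kernel becomes visibly a sum of Hom-spaces from postinjective to preprojective $\Lambda$-modules, which vanish in non-Dynkin type. Concretely, using the alternative description $\Pi=\bigoplus_n\Hom_\Lambda(\Lambda,\tau^{-n}\Lambda)$ and \intref{Corollary}{cor:dual-AR} ($D\tau^-\cong\tau D$), I would identify the relevant degree-$n$ piece of $X\otimes_A\Pi$ with a direct sum of spaces $\Hom_\Lambda(P_i,\tau^{-n}\Lambda)$ and the degree-$n$ piece of $X\otimes_T\widetilde\Omega\otimes_T\Pi$ with (after using the split exact sequence of \intref{Theorem}{thm:tensor-mho-hered} relating $\widetilde\Omega$ to $\Omega$ and $\mho$) a sum of $\Hom_\Lambda$-spaces into which the previous one injects provided a certain $\Lambda$-linear map built from $\bar c$ (equivalently from $d\bar c$) has no kernel. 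The point of composing with an extra natural map is that $\bar c$ corresponds, via \intref{Lemma}{lem:bar-c-duality}, to the comparison between the two standard projective resolutions (the tensor one using $\mho$ and the bimodule-of-$1$-forms one using $\Omega$); dualising, the kernel of the assembled map is computed by $\Ext^1$ or $\Hom$ between a postinjective $\Lambda$-module (coming from the $D\Lambda$-factors, i.e.\ from $\tau$ applied to injectives) and a preprojective one ($\tau^{-n}\Lambda$), and since $\Lambda$ is hereditary and indecomposable of infinite representation type, $\Hom_\Lambda(I,P)=0$ for $I$ postinjective and $P$ preprojective (no nonzero maps run ``backwards'' in the AR-quiver across the regular component in the wild/tame non-Dynkin case). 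That vanishing is exactly where non-Dynkin type is used: in Dynkin type preprojective = postinjective and the argument collapses.

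I would organise the write-up as: (i) reduce to finite dimensional $X$ and then work graded degree by graded degree, reducing to a statement in $\modcat\Lambda$; (ii) unwind, using \intref{Theorem}{thm:tensor-mho-hered} and \intref{Lemma}{lem:bar-c-duality}, the leftmost map in degree $n$ into an explicit $\Lambda$-module map, and exhibit the natural map to post-compose with; (iii) identify the kernel of the composite with a Hom-space $\Hom_\Lambda(-,-)$ (or an $\Ext^1$-space) from a postinjective to a preprojective module; (iv) invoke the structure theory of hereditary algebras of infinite representation type to see this vanishes, hence the leftmost map is mono; (v) conclude $\gldim\Pi\le 2$, and note $\gldim\Pi\ge 2$ because e.g.\ $\Pi$ is not hereditary (the simple $A$ has the genuinely three-term resolution just produced, and it does not terminate earlier since $X\otimes_T\widetilde\Omega\otimes_T\Pi$ is not projective — indeed $\mho$ and $\Omega$ are not projective bimodules unless trivially, and one checks $\Ext^2_\Pi(A,A)\neq 0$ directly from the resolution, or uses $\Ext^2_\Pi(X,Y)\cong D\Hom_\Pi(Y,X)$), giving $\gldim\Pi=2$.

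The main obstacle I expect is step (iii): correctly bookkeeping the many standard isomorphisms (tensor--hom adjunctions, the symmetrising-element identifications of \intref{Lemma}{lem:A-homs}, \intref{Lemma}{lem:A-homs-dual} and \intref{Lemma}{lem:rmod-action}, and the $D\Lambda$-twists) so that the kernel of the composite genuinely appears as a $\Hom$ or $\Ext^1$ from a postinjective module to $\tau^{-n}\Lambda$, rather than as something merely bounded by such a space. The introduction's remark that ``the kernel, viewed as a $\Lambda$-module, is a direct sum of homomorphisms from postinjective modules to preprojective ones'' is the target identity, and making it precise — in particular getting the direction of the arrows right and confirming there is no leftover $\Ext$ term obstructing exactness — is where the real work lies; everything after that is a citation to the representation theory of hereditary algebras.
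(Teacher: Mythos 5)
Your overall mechanism---compose the left-hand map with a further natural map so the kernel appears as $\bigoplus_n \Hom_\Lambda(D\Pi_n, -)$ with $D\Pi_n$ postinjective, then invoke vanishing---is the right idea for the finite dimensional case, but there are two serious gaps in how you set it up, and a third error at the end.

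First, the reduction. You want to prove injectivity of the left-hand map for all $X$ by reducing to finite dimensional $X$ and then to graded pieces. Neither reduction gives you the vanishing you need. The kernel of the assembled composite is $\bigoplus_n\Hom_\Lambda(D\Pi_n, X)$, and this is nonzero as soon as the restriction of $X$ to $\Lambda$ contains a postinjective summand; it does not vanish for an arbitrary finite dimensional $X$. (Also, imitating \intref{Lemma}{lem:mho-proj} to reduce to finite dimensional $X$ fails here, since $\Pi$ is not finite dimensional in non-Dynkin type, so an arbitrary $\Pi$-module is not a directed union of finite dimensional submodules.) The reduction that actually works---and is the observation missing from your proposal---is to the single case $X=\Pi$. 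The four-term presentation of \intref{Theorem}{thm:standard-proj-pi} is functorial, and all its nonzero terms are projective as both left and right $\Pi$-modules; so once it is exact for $X=\Pi$, it is an exact sequence of bimodules projective on both sides, hence stays exact after $X\otimes_\Pi(-)$ for arbitrary $X$. And it is only at $X=\Pi$ that the kernel computation actually vanishes: $\Hom_\Lambda(D\Pi_n,\Pi)=0$ is exactly the statement that there are no nonzero maps from postinjectives to preprojectives.

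Second, your argument tacitly assumes $\Lambda$ is finite dimensional. The theorem also covers locally finite hereditary tensor algebras of non-Dynkin type, e.g.\ quivers with oriented cycles, where $\Lambda$ is infinite dimensional, $D\Pi_n$ is not a module, and ``postinjective'' has no direct meaning. The paper handles this by passing to the graded category: it replaces $D\Pi_n$ with the locally finite graded dual $E_n$, replaces $\Hom_\Lambda$ and $\Ext^1_\Lambda$ with $\HOM$ and $\EXT$, and replaces the postinjective-to-preprojective vanishing with \intref{Lemma}{lem:zero-HOM}, which---after using \intref{Lemma}{lem:indep-orient} to choose an orientation of $\Lambda$ with no finite dimensional right ideals---shows $\HOM(E_n,\Pi_m)=0$ by a degree-support argument and an inductive Tor-vanishing computation. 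Your proposal does not engage with this half of the statement at all.

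Finally, on $\gldim\Pi\geq2$: your assertion that $X\otimes_T\widetilde\Omega\otimes_T\Pi$ ``is not projective'' is simply false. It is a projective right $\Pi$-module by \intref{Theorem}{thm:tensor-mho-hered}; that is precisely why the four-term sequence is a projective resolution. The clean argument is that the resolution of $A$ is linear, so applying $\Hom_\Pi(-,A)$ kills all differentials and $\Ext^2_\Pi(A,A)\cong\Hom_A(A,A)\neq0$. In the tensor-algebra case one can equivalently note that the left-hand map $\Pi\to(M\oplus DM)\otimes_A\Pi$ sends $1$ to $c\in(M\oplus DM)\otimes_A(M\oplus DM)$, so any retraction $\alpha$ satisfies $\alpha(c)\in\Pi\cdot(M\oplus DM)\subseteq\Pi_+$, forcing $\alpha(c)\neq1$. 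Using $\Ext^2_\Pi(X,Y)\cong D\Hom_\Pi(Y,X)$ to conclude is circular, since that isomorphism in the paper is derived from $\gldim\Pi=2$, not the other way round.
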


We observe that it is enough to show the exactness in the special case of $X=\Pi$. For, the sequence from \intref{Theorem}{thm:standard-proj-pi} is functorial, so if the above result holds for $X=\Pi$, then we have an exact sequence with all terms projective on both the left and the right. It therefore remains exact after tensoring on the left with an arbitrary module $X$, which is what we needed to show.

The key to proving this will be the following observation.

\begin{prop}\label{prop:ker-on-left}
The composite
\[ X\otimes_A\Pi \to X\otimes_T\widetilde\Omega\otimes_T\Pi \to X\otimes_\Lambda\mho\otimes_\Lambda\Pi \]
sends $x\otimes1$ to $x\otimes\bar c\otimes1$. If $\Lambda$ is finite dimensional, then viewed as a morphism of right $\Lambda$-modules this map has kernel $\bigoplus_n\Hom_\Lambda(D\Pi_n,X)$.
\end{prop}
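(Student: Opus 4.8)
The plan is to track the composite map explicitly through its two constituent morphisms and then identify the kernel using the duality machinery already developed, together with the structure of the category of $\Lambda$-modules in the finite dimensional hereditary case. First I would confirm the description of the composite: by \intref{Theorem}{thm:standard-proj-pi} the first map sends $x\otimes1$ to $x\otimes d\bar c\otimes1$, and by the functorial split surjection $X\otimes_T\widetilde\Omega\otimes_T\Pi\twoheadrightarrow X\otimes_\Lambda\mho\otimes_\Lambda\Pi$ from \intref{Theorem}{thm:tensor-mho-hered}, the element $d\bar c$ maps to $\bar c$ (this is the image of $1\otimes\bar c\otimes 1$ under $\widetilde\Omega\twoheadrightarrow T\otimes_\Lambda\mho\otimes_\Lambda T$). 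So the composite is $x\otimes1\mapsto x\otimes\bar c\otimes1$, exactly as claimed.

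Next I would compute the kernel as a right $\Lambda$-module. Since $\Pi=\bigoplus_n\Pi_n$ with $\Pi_n\cong\tau^{-n}\Lambda$ as bimodules, the map $X\otimes_A\Pi\to X\otimes_\Lambda\mho\otimes_\Lambda\Pi$ decomposes into graded components, and the $n$-th component can be identified, using \intref{Lemma}{lem:mho-proj} and the natural isomorphism $X\otimes_\Lambda\mho\cong\Hom_\Lambda(\Omega\otimes_\Lambda DX,\Lambda)$ from the proof of \intref{Proposition}{prop:AR-translate}, with a map whose source is $X\otimes_A\tau^{-n}\Lambda$. The key computational identity is that sending $x\otimes1$ to $x\otimes\bar c$ in $X\otimes_\Lambda\mho$ corresponds, by \intref{Lemma}{lem:bar-c-duality}(1), to the left-hand map $\Omega\otimes_\Lambda DX\to\Lambda\otimes_ADX$ in the standard projective resolution of $DX$. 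Thus, after dualising back, the composite realises (in each graded degree) the natural transformation $X\otimes_\Lambda(-)\Rightarrow (\tau^-X)\otimes_\Lambda(-)$ induced by the defining sequence for $\Pi_1$, tensored with $\Pi_{n-1}$ or $\tau^{-(n-1)}\Lambda$; the kernel in degree $n$ should then be $\Hom_\Lambda(D\Pi_n, X)$, where the appearance of $D\Pi_n$ reflects the duality $\tau X\cong D(\tau^- DX)$ from \intref{Corollary}{cor:dual-AR} together with $\mho\cong D(D\Lambda\otimes_\Lambda\Omega\otimes_\Lambda D\Lambda)$.

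Concretely, I would argue as follows. The map $X\otimes_A\Pi\to X\otimes_\Lambda\mho\otimes_\Lambda\Pi$ is, by \intref{Lemma}{lem:bar-c-duality}(1), obtained by applying $\Hom_{\Lambda\hyp}(-,\Lambda)$ to the left-hand map of the standard projective presentation of $D X$, then tensoring with $\Pi$ over $\Lambda$ on the right; equivalently it is $\Ext^0$ applied to $DX$ extended along $-\otimes_\Lambda\Pi$. Its cokernel is $\tau^-X\otimes_\Lambda\Pi = (\tau^-X)\otimes_\Lambda\Pi$, and its kernel, as a right $\Lambda$-module, is $\Hom_\Lambda(DX,\Lambda)\otimes_\Lambda\Pi$ --- but $\Hom_\Lambda(DX,\Lambda)\cong\Hom_\Lambda(D\Lambda, X)$-type terms vanish unless they pair a postinjective with a preprojective module. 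Breaking into graded pieces, the degree-$n$ contribution to the kernel is $\Hom_\Lambda(D(\tau^{-n}\Lambda), X) = \Hom_\Lambda(D\Pi_n, X)$, and summing over $n$ gives the stated $\bigoplus_n\Hom_\Lambda(D\Pi_n,X)$.

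The main obstacle is the careful bookkeeping of the duality isomorphisms: one must check that the natural transformation induced on the $\Hom$-side by $\bar c$ really is the left-hand map of the standard presentation of $DX$ (this is precisely \intref{Lemma}{lem:bar-c-duality}(1), so it is available), and then that forming $\Hom_{\Lambda\hyp}(-,\Lambda)$ of a monomorphism between projective presentations, and tensoring with the projective left $\Lambda$-module $\Pi$, has kernel computed degreewise by $\Hom_\Lambda(D\Pi_n,X)$ with no higher correction terms --- this uses that $\Lambda$ is hereditary (so $\Ext^{\geq2}$ vanishes) and that each $\Pi_n\cong\tau^{-n}\Lambda$ is a projective resolution datum. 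Once the identification of the composite with the $\Ext$-long-exact-sequence connecting map is in place, the kernel computation is a routine consequence of $\Hom_\Lambda(D(\tau^{-n}\Lambda),X)=\Hom_\Lambda(D\Pi_n,X)$.
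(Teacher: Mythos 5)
Your first step—showing the composite sends $x\otimes1$ to $x\otimes\bar c\otimes1$—matches the paper, and you correctly identify that the map should be decomposed into its graded pieces $X\otimes_A\Pi_n\to X\otimes_\Lambda\mho\otimes_\Lambda\Pi_n$. However, the kernel computation has a genuine gap, and the source of it is which module gets dualised.

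You propose to realise the map as $\Hom_{\Lambda\hyp}(-,\Lambda)$ applied to the resolution of $DX$, then tensored with $\Pi$ over $\Lambda$ on the right. This runs into two problems. First, \intref{Lemma}{lem:bar-c-duality}(1) requires $X$ to be \emph{finite dimensional}, but the proposition must hold for arbitrary right $\Pi$-modules $X$ (indeed it is applied later with $X=\Pi$, which is infinite dimensional whenever $\Lambda$ is not Dynkin). Second, even for finite dimensional $X$, the claim that the kernel of the tensored map is $\Hom_\Lambda(DX,\Lambda)\otimes_\Lambda\Pi$ requires $-\otimes_\Lambda\Pi$ to be left exact, i.e.\ $\Pi$ to be flat as a left $\Lambda$-module; but $\Pi_n\cong\tau^{-n}\Lambda$ is preprojective, not projective, so this fails. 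Your subsequent jump to ``the degree-$n$ contribution is $\Hom_\Lambda(D\Pi_n,X)$'' does not follow from $\Hom_\Lambda(DX,\Lambda)\otimes_\Lambda\Pi_n$ (these are not obviously isomorphic, and the former is not even the correct kernel).

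The fix, and what the paper actually does, is to dualise the \emph{other} tensor factor: for each fixed $n$, one writes $X\otimes_A\Pi_n = X\otimes_\Lambda(\Lambda\otimes_A\Pi_n)$ and, via the left-handed version of \intref{Lemma}{lem:bar-c-duality} with $Y=\Pi_n$, identifies the map $\Lambda\otimes_A\Pi_n\to\mho\otimes_\Lambda\Pi_n\cong\Hom_{\,\hyp\Lambda}(D\Pi_n\otimes_\Lambda\Omega,\Lambda)$ with the one arising from $\Hom_{\,\hyp\Lambda}(-,\Lambda)$ applied to the standard projective resolution of $D\Pi_n$ (a right $\Lambda$-module that is always finite dimensional). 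Then, because $D\Pi_n\otimes_\Lambda\Omega$ is finitely generated projective, \intref{Lemma}{lem:standard-isos} lets one commute $X\otimes_\Lambda(-)$ past the $\Hom$, so the degree-$n$ map becomes precisely $\Hom_{\,\hyp\Lambda}(-,X)$ applied to the resolution of $D\Pi_n$; the kernel is then $\Hom_\Lambda(D\Pi_n,X)$ by left exactness, with no flatness hypotheses on $X$. The key manoeuvre you are missing is trading the resolution of $DX$ (which may be uncontrolled) for the resolutions of $D\Pi_n$ (which are always finite dimensional and behave well under $X\otimes_\Lambda-$).
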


\begin{proof}
The map $\widetilde\Omega\to T\otimes_\Lambda\mho\otimes_\Lambda T$ is induced by the composite $\widetilde\Omega\rightarrowtail T\otimes_AT \twoheadrightarrow T\otimes_\Lambda T$. As this sends $dt$ to $1\otimes t-t\otimes1$ for all $t\in T$, we see that $d\gamma\in\widetilde\Omega$ is sent to $1\otimes\gamma\otimes1$ in $T\otimes_\Lambda\mho\otimes_\Lambda T$ for all $\gamma\in\mho$.

It follows that the map $X\otimes_A\Pi\to X\otimes_\Lambda\mho\otimes_\Lambda\Pi$ sends $x\otimes\pi$ to $x\otimes\bar c\otimes\pi$. 
Viewed as a map of right $\Lambda$-modules, this decomposes into the direct sum of the maps
\[ X\otimes_A\Pi_n \to X\otimes_\Lambda\mho\otimes_\Lambda\Pi_n, \quad x\otimes\pi \mapsto x\otimes\bar c\otimes\pi. \]
If $\Lambda$ is finite dimensional, then we can use the left handed version of \intref{Proposition}{prop:AR-translate} to obtain the morphism
\[ \Lambda\otimes_A\Pi_n \to \mho\otimes_\Lambda\Pi_n \cong \Hom_{\,\hyp\Lambda}(D\Pi_n\otimes_\Lambda\Omega,\Lambda) \]
sending $1\otimes\pi$ to the map
\[ \theta\otimes d\lambda \mapsto \lmod{\theta\lambda}(\pi) - \lmod\theta(\pi)\lambda. \]
Moreover, this agrees with the map obtained from applying $\Hom_{\,\hyp\Lambda}(-,\Lambda)$ to the projective resolution of $D\Pi_n$ as a right $\Lambda$-module.

Finally, we know that $D\Pi_n\otimes_\Lambda\Omega$ is a finite dimensional projective right $\Lambda$-module, so \intref{Lemma}{lem:standard-isos} applies and we can tensor on the left with $X$ to get
\[ X\otimes_A\Pi_n \to \Hom_{\,\hyp\Lambda}(D\Pi_n\otimes_\Lambda\Omega,X) \]
sending $x\otimes\pi$ to the map
\[ \theta\otimes d\lambda \mapsto x\lmod{\theta\lambda}(\pi) - x\lmod\theta(\pi)\lambda. \]
This now agrees with the map obtained from applying $\Hom_{\,\hyp\Lambda}(-,X)$ to the projective resolution of $D\Pi_n$. It follows that the kernel is precisely $\Hom_\Lambda(D\Pi_n,X)$.
\end{proof}

\subsubsection{Finite dimensional hereditary algebras}

In this section we consider a finite dimensional, indecomposable hereditary algebra $\Lambda$ of infinite representation type, with preprojective algebra $\Pi$. We will prove \intref{Theorem}{thm:gl-dim-2} in two steps, first showing that we have an exact sequence, so that $\gldim\Pi\leq2$, and then showing that the semisimple module $A$ has projective dimension precisely two.

\begin{prop}\label{prop:gl-dim-2-fd}
We have the following exact sequence
\[ \begin{tikzcd}
0 \arrow[r] & \Pi\otimes_A\Pi \arrow[r] & \Pi\otimes_T\widetilde\Omega\otimes_T\Pi \arrow[r] & \Pi\otimes_A\Pi \arrow[r] & \Pi \arrow[r] & 0.
\end{tikzcd} \]
\end{prop}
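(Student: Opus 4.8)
The plan is to reduce exactness of the sequence to a single monomorphism statement, and then kill the relevant kernel using \intref{Proposition}{prop:ker-on-left} together with the classical fact that there are no nonzero homomorphisms from a postinjective $\Lambda$-module to a preprojective one. Concretely, I would first note that \intref{Theorem}{thm:standard-proj-pi}, applied with $X=\Pi$, already yields exactness of the displayed four-term sequence at every spot except possibly the leftmost term $\Pi\otimes_A\Pi$, and that all four nonzero terms are projective right $\Pi$-modules; so it is enough to prove that the left hand map $\phi\colon\Pi\otimes_A\Pi\to\Pi\otimes_T\widetilde\Omega\otimes_T\Pi$ is injective. I would do this by post-composing with the natural map $\psi\colon\Pi\otimes_T\widetilde\Omega\otimes_T\Pi\to\Pi\otimes_\Lambda\mho\otimes_\Lambda\Pi$ and showing that the composite $\psi\phi$ is already injective.

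Here \intref{Proposition}{prop:ker-on-left} does the work: with $X=\Pi$ it identifies $\psi\phi$, as a morphism of right $\Lambda$-modules, with the direct sum over $n$ of the maps $\Pi\otimes_A\Pi_n\to\Pi\otimes_\Lambda\mho\otimes_\Lambda\Pi_n$, and computes its kernel to be $\bigoplus_n\Hom_\Lambda(D\Pi_n,\Pi)$. Decomposing $\Pi=\bigoplus_m\Pi_m$ as a right $\Lambda$-module (which commutes with $\Hom_\Lambda(D\Pi_n,-)$ since $D\Pi_n$ is finite dimensional), this kernel becomes $\bigoplus_{m,n}\Hom_\Lambda(D\Pi_n,\Pi_m)$. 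Now I would identify the summands: as recalled above, $\Pi_m\cong\tau^{-m}\Lambda$ as right $\Lambda$-modules, so each $\Pi_m$ is preprojective; dually, $\Pi_n$ is $\tau^{-n}({}_\Lambda\Lambda)$ as a left module, so by the left-module analogue of \intref{Corollary}{cor:dual-AR} one has $D\Pi_n\cong\tau^n(D({}_\Lambda\Lambda))$ as right $\Lambda$-modules, hence a postinjective module. Since $\Lambda$ is connected and of infinite representation type, $\Hom_\Lambda(N,P)=0$ whenever $N$ is postinjective and $P$ is preprojective, so every summand vanishes. Thus $\psi\phi$, hence $\phi$, is injective, and the sequence is exact.

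The step carrying the real weight is the vanishing $\Hom_\Lambda(D\Pi_n,\Pi_m)=0$: this is the classical preprojective/postinjective dichotomy for connected hereditary algebras of infinite representation type (no nonzero maps run ``backwards'' between the components of the Auslander--Reiten quiver), and it is precisely where the non-Dynkin hypothesis enters --- in Dynkin type every indecomposable is both preprojective and postinjective, and indeed there $\Pi$ is finite dimensional and self-injective, of infinite global dimension. The only other point requiring care is the bookkeeping of left- versus right-module structures when passing from $\Pi_n$ to its $k$-dual via \intref{Corollary}{cor:dual-AR}, which is routine.
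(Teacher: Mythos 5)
Your proposal is correct and follows the paper's own argument essentially verbatim: reduce via \intref{Theorem}{thm:standard-proj-pi} to injectivity of the leftmost map, post-compose with the natural map to $\Pi\otimes_\Lambda\mho\otimes_\Lambda\Pi$, invoke \intref{Proposition}{prop:ker-on-left} to identify the kernel of the composite with $\bigoplus_n\Hom_\Lambda(D\Pi_n,\Pi)$, and kill it by noting $D\Pi_n$ is postinjective while $\Pi$ is a sum of preprojectives. Your extra care over the left/right module bookkeeping in applying \intref{Corollary}{cor:dual-AR} (one really wants the left-module analogue, since $\Pi_n$ enters as a left $\Lambda$-module) is a genuine but minor clarification of a point the paper treats implicitly, and the explicit decomposition $\Pi=\bigoplus_m\Pi_m$ is simply spelling out what the paper leaves tacit.
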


\begin{proof}
Using \intref{Theorem}{thm:standard-proj-pi} we only need to show that the left hand map is injective. Now we know from \intref{Proposition}{prop:ker-on-left} that the composite
\[ X\otimes_A\Pi \to X\otimes_T\widetilde\Omega\otimes_T\Pi \to X\otimes_\Lambda\mho\otimes_\Lambda\Pi \]
regarded as a morphism of right $\Lambda$-modules, has kernel $\bigoplus_n\Hom_\Lambda(D\Pi_n,X)$. Next, \intref{Corollary}{cor:dual-AR} tells us that $D(\tau^{-n}\Lambda)\cong\tau^n(D\Lambda)$, so that $D\Pi_n$ is a postinjective $\Lambda$-module. Thus when $X=\Pi$ we can use that there are no non-zero homomorphisms from postinjective modules to preprojective modules to deduce that $\Hom_{\,\hyp\Lambda}(D\Pi_n,\Pi)=0$ for all $n$.

This shows that, when $X=\Pi$, the above composite map is injective, and hence the map on the left is also injective.
\end{proof}

\begin{thm}
The left hand map in the standard projective resolution of $A$, viewed as a morphism of right $\Lambda$-modules, is isomorphic to the direct sum of the source maps for each indecomposable direct summand of $\Pi$. In particular, $\Pi$ has right (and dually left) global dimension two.
\end{thm}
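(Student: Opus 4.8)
The plan is to make the kernel computation from \intref{Proposition}{prop:ker-on-left} precise when $X=A$. Recall that the composite
\[
A\otimes_A\Pi \to A\otimes_T\widetilde\Omega\otimes_T\Pi \to A\otimes_\Lambda\mho\otimes_\Lambda\Pi
\]
has kernel, as a right $\Lambda$-module, equal to $\bigoplus_n\Hom_\Lambda(D\Pi_n,A)$. Since $D\Pi_n\cong\tau^n(D\Lambda)$ is postinjective and $A$ is semisimple, the only contributions come from those $D\Pi_n$ having an injective summand, and $\Hom_\Lambda(I,A)\cong\soc(\text{something})$ picks out the socle data; concretely, $\Hom_\Lambda(D\Pi_n,A)$ is non-zero only when $\tau^n(D\Lambda)$ has an indecomposable injective summand $I_i$, i.e. when $D\Lambda$ itself has the summand $\tau^{-n}I_i$. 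But $\tau^{-n}I_i$ is injective only for $n=0$, so in fact the kernel of the composite is concentrated in degree zero and equals $\Hom_\Lambda(D\Lambda,A)\cong A$. Thus, as a right $\Lambda$-module, the left hand map of the standard presentation of $A$ has the \emph{same} kernel (namely $A$, sitting in degree $0$) whether we map into $A\otimes_T\widetilde\Omega\otimes_T\Pi$ or all the way into $A\otimes_\Lambda\mho\otimes_\Lambda\Pi$; since the second map is (split) injective by \intref{Theorem}{thm:tensor-mho-hered}, this shows the left hand map of the presentation of $A$ is itself non-injective, with image of the kernel inside $A\otimes_A\Pi$ equal to $A$ in degree zero.

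Next I would identify this degree-zero kernel with a sum of source maps. The point is that $A\otimes_A\Pi_n\cong\tau^{-n}A$ as right $\Lambda$-modules, and the composite in degree $n$ is, by the explicit formula in \intref{Proposition}{prop:ker-on-left}, precisely the map obtained by applying $\Hom_{\,\hyp\Lambda}(-,A)$ to the standard projective resolution of $D\Pi_n$ — equivalently, it computes the connecting behaviour governing $\Ext^1_\Lambda(D\Pi_n,A)$ versus $\Hom_\Lambda(D\Pi_n,A)$. Since the global composite is injective once we restrict to positive degrees, the kernel of the left hand map $A\otimes_A\Pi\to A\otimes_T\widetilde\Omega\otimes_T\Pi$ coincides with the kernel of the left hand map $A\otimes_A\Pi\to A\otimes_\Lambda\mho\otimes_\Lambda\Pi$, which is $\Hom_\Lambda(D\Lambda,A)$ in degree zero. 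Writing $\Lambda=P_1\oplus\cdots\oplus P_m$ as a sum of indecomposable projectives and dualising, $D\Lambda=I_1\oplus\cdots\oplus I_m$, and the degree-zero part of the kernel decomposes as $\bigoplus_i\Hom_\Lambda(I_i,A)=\bigoplus_i\soc(I_i)^{\oplus?}$; matching idempotents one sees each summand is $S_i$, the top of $P_i$. I would then argue that the corresponding inclusion $S_i\rightarrowtail A\otimes_A\Pi$ (into the degree-$0$ piece $\Pi_0=\Lambda$, top $S_i$, then pushed up through $\Pi$) is exactly the minimal left almost split map, i.e. source map, of $P_i$ inside $\add\Pi$: indeed source maps of projectives in a hereditary preprojective component are the inclusions of the radical, and $\rad P_i\hookrightarrow P_i$ fits into $0\to\rad P_i\to P_i\to S_i\to 0$, which is precisely the data recorded by $\Hom_\Lambda(I_i,A)$.

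The last step is to conclude that the left hand map of the standard presentation of $A$ is isomorphic to the direct sum over the indecomposable summands of $\Pi$ of their source maps. For this I would invoke the functoriality in \intref{Theorem}{thm:standard-proj-pi} together with the fact that $\Pi=\bigoplus_n\tau^{-n}\Lambda$ runs over all indecomposable preprojectives, and that the left hand map, being a map of projective right $\Lambda$-modules whose cokernel at each stage is the middle term of the relevant Auslander--Reiten sequence, is forced degree by degree to be built from the irreducible maps $\tau^{-n}P_i\to E$ ending at the preprojective modules. More carefully, the image of the left hand map is the kernel of the middle map, and by \intref{Proposition}{prop:gl-dim-2-fd} applied with $X=\Pi$ this middle map has image of known rank; comparing with the source-map description on each indecomposable summand and using that a map of projectives with prescribed cokernel in a Krull--Schmidt setting is determined up to isomorphism, the two maps agree. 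Finally, since $A$ has a projective resolution of length two that does not terminate earlier — the left hand map is injective (it is a sub-map of the split injection into $A\otimes_\Lambda\mho\otimes_\Lambda\Pi$, hence injective), yet $\Pi\otimes_A\Pi$ is non-zero — we get $\mathrm{pd}_\Pi A=2$, and combined with \intref{Theorem}{thm:standard-proj-pi} this gives $\gldim\Pi=2$, on both sides by the analogous left-module argument.

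The main obstacle I anticipate is the identification in the middle step: showing that the canonical inclusion of $\Hom_\Lambda(I_i,A)$ into the degree-zero part of $A\otimes_A\Pi$, transported through all the natural isomorphisms of \intref{Proposition}{prop:ker-on-left} and \intref{Lemma}{lem:bar-c-duality}, really is (up to the automorphisms of $\add\Pi$) the source map of $P_i$, rather than merely a map with the same domain and codomain. Pinning down the relevant commutative diagram — relating the bar-type differential $d\bar c$, the connecting map for $\Ext^1_\Lambda(D\Pi_n,-)$, and the Auslander--Reiten translate — is where the real bookkeeping lies; everything else is formal once that square commutes.
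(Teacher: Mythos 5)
Your proposal contains several serious errors that unravel the argument; let me point them out concretely.

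First, the claim that $\Hom_\Lambda(D\Pi_n,A)=0$ for $n\geq1$ is false. Since $A$ is semisimple, $\Hom_\Lambda(X,A)\cong\Hom_\Lambda(X/\rad X,A)$ is governed by the \emph{top} of $X$, not by injective direct summands of $X$; any non-zero finite-dimensional $\Lambda$-module has a non-zero top, so $\Hom_\Lambda(D\Pi_n,A)\neq0$ for every $n$ with $\Pi_n\neq0$. This is exactly the opposite situation from the one exploited in \intref{Proposition}{prop:gl-dim-2-fd}, where the target is the preprojective module $\Pi$ and one uses that there are no non-zero homomorphisms from postinjectives to preprojectives. With target $A$, that vanishing is simply not available, and the kernel of the composite $A\otimes_A\Pi\to A\otimes_\Lambda\mho\otimes_\Lambda\Pi$ is supported in all degrees.

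Second, the assertion that the map $A\otimes_T\widetilde\Omega\otimes_T\Pi\to A\otimes_\Lambda\mho\otimes_\Lambda\Pi$ is ``(split) injective by \intref{Theorem}{thm:tensor-mho-hered}'' has the split short exact sequence backwards: that theorem exhibits $X\otimes_\Lambda\mho\otimes_\Lambda T$ as a split \emph{quotient} of $X\otimes_T\widetilde\Omega$, so the map in question is a surjection. This matters, because your deduction that the left-hand map of the presentation of $A$ has non-zero kernel rests on this, and the resulting conclusion contradicts \intref{Proposition}{prop:gl-dim-2-fd} (and the functoriality remark after \intref{Theorem}{thm:gl-dim-2}), which already establish that the left-hand map is a monomorphism for every $X$. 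Indeed, later in the same paragraph you assert the left-hand map \emph{is} injective, so the proposal is internally inconsistent.

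Third, you conflate source and sink maps: for an indecomposable projective $P_i$ the inclusion $\rad P_i\hookrightarrow P_i$ is the \emph{sink} map (minimal right almost split), whereas the source map is $P_i\to E$ from the Auslander--Reiten sequence $0\to P_i\to E\to\tau^- P_i\to0$. The theorem is about source maps. Finally, $\Pi\otimes_A\Pi\neq0$ alone does not give $\mathrm{pd}_\Pi A=2$; one must show the resolution does not truncate, i.e.\ that the second syzygy of $A$ is not projective.

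The paper's actual proof avoids all of this: it rewrites the second short exact sequence in the Yoneda composite using $A\otimes_\Lambda\Omega\cong J$ and $A\otimes_\Lambda\mho\cong{}^\vee\!J$, splits off the identity map $J\to J$, checks the remaining maps respect the grading, and thereby decomposes the sequence into the pieces $0\to\Pi_n\to(J\otimes_\Lambda\Pi_{n+1})\oplus({}^\vee\!J\otimes_\Lambda\Pi_n)\to\Pi_{n+1}\to0$; then it shows each piece is an Auslander--Reiten sequence by computing $\Ext^1_\Lambda(\tau^{-n-1}P_i,\tau^{-n}P_i)\cong A_i$ (so any non-split class is the AR class) and proving non-splitness by showing there are no non-zero maps from the middle term to $\Pi_n$. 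That non-splitness, as right $\Pi$-modules, is precisely what gives $\mathrm{pd}_\Pi A=2$. If you want to salvage the idea of arguing directly with source maps, the step you must supply is exactly this non-splitness, which is not a consequence of the kernel computation you proposed.
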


\begin{proof}
Consider the standard projective resolution of $X=A$. This is the Yoneda composite of the short exact sequences
\[ \begin{tikzcd}
0 \arrow[r] & J+\Pi_+ \arrow[r] & \Pi \arrow[r] & A \arrow[r] & 0
\end{tikzcd} \]
and
\[ \begin{tikzcd}
0 \arrow[r] & \Pi \arrow[r] & A\otimes_T\widetilde\Omega\otimes_T\Pi \arrow[r] & J+\Pi_+ \arrow[r] & 0.
\end{tikzcd} \]
We wish to show that the latter sequence is isomorphic, as right $\Lambda$-modules, to the direct sum of the Auslander--Reiten sequences starting at each $\Pi_n=\tau^{-n}\Lambda$ (together with the identity map $J\to J$). As such it will be non-split, so will be non-split as right $\Pi$-modules, and hence $\Pi$ has right global dimension 2. An analogous argument shows that it also has left global dimension 2.

We first recall from \intref{Remark}{rem:omega-not-proj-bimod} that $A\otimes_\Lambda\Omega\cong J$ as right $\Lambda$-modules. Then, using that $A\cong DA$ and \intref{Lemma}{lem:mho-proj}, we have the projective right $\Lambda$-module
\[ A\otimes_\Lambda\mho \cong \Hom_{\Lambda\hyp}(\Omega\otimes_\Lambda A,\Lambda) \cong \Hom_{\Lambda\hyp}(J,\Lambda)
\eqqcolon {}^\vee\! J. \]
Next, as $A\otimes_\Lambda\mho$ is a projective right $\Lambda$-module, the surjection $\mho\twoheadrightarrow A\otimes_\Lambda\mho$ admits a section $\beta$. We can then use this to construct a morphism $\beta\otimes1$ from ${}^\vee\!J\otimes_\Lambda T$ to $T_+\cong A\otimes_\Lambda\widetilde\Omega$ of right $T$-modules, and hence a section of the epimorphism $A\otimes_\Lambda\widetilde\Omega\twoheadrightarrow A\otimes_\Lambda\mho\otimes_\Lambda T$.

After tensoring this map with $\Pi$ we see that we can write the second exact sequence above as
\[ \begin{tikzcd}
0 \arrow[r] & \Pi \arrow[r] & (J\otimes_\Lambda\Pi)\oplus({}^\vee\!J\otimes_\Lambda\Pi) \arrow[r] & J+\Pi_+ \arrow[r] & 0.
\end{tikzcd} \]
As a sequence of right $\Lambda$-modules we see that we can split off the identity map $J\otimes_\Lambda\Lambda\to J$ to obtain
\[ \begin{tikzcd}
0 \arrow[r] & \Pi \arrow[r] & (J\otimes_\Lambda\Pi_+)\oplus({}^\vee\!J\otimes_\Lambda\Pi) \arrow[r] & \Pi_+ \arrow[r] & 0.
\end{tikzcd} \]
It remains to see that the morphisms and compatible with the grading. This is clear for the multiplication map $J\otimes_\Lambda\Pi_+\to\Pi_+$ on the right hand side. For the second component note that we started from the section $\beta\colon{}^\vee\!J\to\mho$, so $\beta\otimes1$ sends ${}^\vee\!J\otimes_\Lambda\Pi_n$ to $\mho\otimes_\Lambda\Pi_n=\Pi_{n+1}$.

Next, $\bar c-\beta(1\otimes\bar c)\in\mho$ is sent to zero in $A\otimes_\Lambda\mho$, so comes from some element $x\in J\otimes_\Lambda\mho$. Thus the left hand map $\Pi\to(J\otimes_\Lambda\Pi_+)\oplus({}^\vee\!J\otimes_\Lambda\Pi)$ sends to the preimage of $\bar c$, so to $x+\beta(1\otimes\bar c\otimes1)$. Thus the left hand map restricts to morphisms $\Pi_n\to(J\otimes_\Lambda\Pi_{n+1})\oplus({}^\vee\!J\otimes_\Lambda\Pi_n)$.

It follows that our sequence decomposes into the direct sum of the short exact sequences
\[ \begin{tikzcd}
0 \arrow[r] & \Pi_n \arrow[r] & (J\otimes_\Lambda\Pi_{n+1})\oplus({}^\vee\!J\otimes_\Lambda\Pi_n) \arrow[r] & \Pi_{n+1} \arrow[r] & 0.
\end{tikzcd} \]
To see that each of these is an Auslander--Reiten sequence we first multiply on the left by the primitive idempotent $e_i$. The resulting short exact sequence lies in $\Ext^1_\Lambda(\tau^{-n-1}P_i,\tau^{-n}P_i)$, which by the Auslander--Reiten Formula is isomorphic to $D\End_\Lambda(P_i)\cong DA_i\cong A_i$. Thus every non-split sequence must be isomorphic to the Auslander--Reiten sequence.

It remains to observe that the sequence is indeed non-split. In fact, there are no non-zero homomorphisms from the middle term to the left hand term. For, using the adjunction, we just need to show that $\Hom_{\,\hyp\Lambda}(\tau^-(e_iJ)\oplus e_i\cdot{}^\vee\!J,P_i)=0$. This is clear for the first component, whereas for the second component we have $e_i\cdot{}^\vee\!J\cong{}^\vee\!(Je_i)$, so
\[ \Hom_{\,\hyp\Lambda}(e_i\cdot{}^\vee\!J,P_i) \cong P_i\otimes_\Lambda\Hom_{\,\hyp\Lambda}({}^\vee\!(Je_i),\Lambda) \cong P_i\otimes Je_i \cong e_iJe_i, \]
which vanishes as $\Lambda$ is finite dimensional.
\end{proof}

\subsubsection{Locally finite hereditary tensor algebras}

We now consider an indecomposable, hereditary, locally finite tensor algebra $\Lambda=T_A(M)$ of infinite representation type, with preprojective algebra $\Pi$. For notational simplicity all unadorned tensor products in this section will be over $A$.

Recall that $\Omega=\Lambda\otimes M\otimes\Lambda$, that $\mho=\Lambda\otimes DM\otimes\Lambda$, and that $\widetilde\Omega=T\otimes_(M\oplus DM)\otimes T$. In particular, the standard projective presentation of $X$ becomes
\[ \begin{tikzcd}
X\otimes_A\Pi \arrow[r] & X\otimes_A(M\oplus DM)\otimes_A\Pi \arrow[r] & X\otimes_A\Pi \arrow[r] & X \arrow[r] & 0.
\end{tikzcd} \]
In the previous section we showed that, when $\Lambda$ is finite dimensional, the second component of the left hand map
\[ \Pi\otimes\Pi \to \Pi\otimes DM\otimes\Pi, \]
viewed as a morphism of right $\Lambda$-modules, decomposes into the direct sum of morphisms $\Pi\otimes\Pi_n\to\Pi\otimes DM\otimes\Pi_n$, each having vanishing kernel $\Hom_{\,\hyp\Lambda}(D\Pi_n,\Pi)=0$.

We therefore need to find an analogous statement when $\Lambda$ is infinite dimensional. For this we will need to incorporate the natural grading on $\Lambda=T_A(M)$. We write $\Mod_\gr\Lambda$ for the category of all graded $\Lambda$-modules, together with degree zero morphisms. Thus, for graded modules $X=\bigoplus X_d$ and $Y=\bigoplus Y_d$, the set $\Hom_\gr(X,Y)$ consists of those $\Lambda$-homomorphisms $f\colon X\to Y$ with $f(X_d)\subseteq Y_d$ for all $d$. We also have the $r$-th twist $Y(r)$ of a graded module $Y$, given by $Y(r)_d=Y_{d+r}$. This then allows us to define
\[ \HOM(X,Y) \coloneqq \bigoplus \Hom_\gr(X,Y(d)) \quad\textrm{and}\quad \EXT(X,Y) \coloneqq \bigoplus \Ext^1_\gr(X,Y(d)). \]

We observe that the category $\Mod_\gr\Lambda$ is again hereditary. In fact, for an $A$-module $W$, we may consider $W\otimes\Lambda$ as a graded projective generated in degree zero, in which case the standard resolution of $X\in\Mod\Lambda$ can be regarded as an exact sequence of graded modules
\[ \begin{tikzcd}
0 \arrow[r] & \bigoplus_d (X_{-d}\otimes M\otimes\Lambda)(d-1) \arrow[r] & \bigoplus_d (X_{-d}\otimes\Lambda)(d) \arrow[r]
& X \arrow[r] & 0.
\end{tikzcd} \]
Alternatively, if we follow the usual convention that $X\otimes\Lambda$ has grading $(X\otimes\Lambda)_n=\bigoplus_{d+e=n}X_d\otimes\Lambda_e$, and similarly $(X\otimes M\otimes\Lambda)_n=\bigoplus_{d+e+1=n}X_d\otimes M\otimes\Lambda_e$ (with the understanding that elements of $M$ have degree one), then we can simply write $0\to X\otimes M\otimes\Lambda\to X\otimes\Lambda\to X\to 0$.

We next observe that $T=T_A(M\oplus DM)$ has a natural bigrading such that $\deg A=(0,0)$, $\deg M=(1,0)$ and $\deg DM=(0,1)$. The element $c=\rho-\ell$ is now homogeneous of degree $(1,1)$, so the preprojective algebra $\Pi=T/\langle c\rangle$ inherits the bigrading. In particular each $\Pi_n=\bigoplus_m\Pi_{(m,n)}$ is a locally finite graded $\Lambda$-bimodule, so we can construct its graded dual
\[ E_n \coloneqq \sideset{}{_m}\bigoplus D\Pi_{(m,n)} \]
which is again a locally finite graded $\Lambda$-bimodule.

Note that, to remain consistent, the defining sequence for $\Pi_1$
\[ \begin{tikzcd}
\Lambda\otimes\Lambda \arrow[r] & \Lambda\otimes DM\otimes\Lambda \arrow[r] & \Pi_1 \arrow[r] & 0
\end{tikzcd} \]
should be a morphism of graded right $\Lambda$-modules, so element of $DM$ have degree $-1$, the element $\bar c$ has degree zero, and $\Pi_1$ has $\Pi_{(m,1)}$ in degree $m-1$. As $\Pi_n=\Pi_1\otimes_\Lambda\Pi_{n-1}$ we see that $\Pi_n$ has $\Pi_{(m,n)}$ in degree $m-n$.
Taking duals then tells is that $E_n$ should have $D\Pi_{(m,n)}$ in degree $n-m$.

We are now set to emulate the proof in the finite dimensional case. We will replace the finite dimensional modules $D\Pi_n=\tau^n(D\Lambda)$ in that case with the graded modules $E_n$, and replace $\Hom_\Lambda(X,Y)$ and $\Ext^1_\Lambda(X,Y)$ by $\HOM(X,Y)$ and $\EXT(X,Y)$.

The first step is to generalise the second part of \intref{Proposition}{prop:ker-on-left}.

\begin{lem}\label{lem:gr-hom-ext}
For a positively graded $\Lambda$-module $X$ we have the exact sequence
\[ \begin{tikzcd}[column sep=17pt]
0 \arrow[r] & \HOM(E_n,X) \arrow[r] & X\otimes\Pi_n \arrow[r] & X\otimes DM\otimes\Pi_n \arrow[r] & \EXT(E_n,X) \arrow[r] & 0
\end{tikzcd} \]
where the morphism in the middle sends $x\otimes\pi$ to $x\rho\otimes\pi - x\otimes\ell\pi$.
\end{lem}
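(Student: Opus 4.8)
The plan is to obtain the displayed sequence by applying the graded Hom functor $\HOM(-,X)$ to the standard graded projective resolution of $E_n$, repeating inside the hereditary category $\Mod_\gr\Lambda$ the computation that proved \intref{Proposition}{prop:ker-on-left}, but with $\Hom_\Lambda$, $\Ext^1_\Lambda$ and $D\Pi_n$ replaced throughout by $\HOM$, $\EXT$ and $E_n$. What makes this replacement work --- and what plays the role of the finite dimensionality of $\Lambda$ used in the finite dimensional case --- is that $\Pi_n$ is a locally finite graded $\Lambda$-bimodule with $\Pi_{(m,n)}$ in degree $m-n$, so that its graded dual $E_n$ is again locally finite, is bounded above (living in degrees $\le n$), and has graded dual $\Pi_n$ once more. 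Since $X$ is positively graded, for every twist $X(d)$ only finitely many graded pieces of $E_n$ meet $X(d)$, so each $\Hom_\gr(E_n,X(d))$ is finite dimensional (the product defining it having only finitely many nonzero factors). This is exactly what allows the biduality isomorphisms below to be honest isomorphisms rather than maps into completions.

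Concretely I would first write down the standard resolution of $E_n$, regarded as a graded right $\Lambda$-module over $\Lambda=T_A(M)$,
\[ 0 \to E_n\otimes_AM\otimes_A\Lambda \to E_n\otimes_A\Lambda \to E_n \to 0, \]
whose left hand map sends $\eta\otimes m\otimes\lambda$ to $\eta\otimes m\lambda-\eta m\otimes\lambda$ and whose first two terms are graded projective because $A$ is semisimple. Since $\Mod_\gr\Lambda$ is hereditary, applying $\HOM(-,X)$ produces the four term exact sequence
\[ 0 \to \HOM(E_n,X) \to \HOM(E_n\otimes_A\Lambda,X) \to \HOM(E_n\otimes_AM\otimes_A\Lambda,X) \to \EXT(E_n,X) \to 0. \]
Extension of scalars along $A\rightarrowtail\Lambda$ gives $\HOM(E_n\otimes_A\Lambda,X)\cong\HOM_A(E_n,X)$ and $\HOM(E_n\otimes_AM\otimes_A\Lambda,X)\cong\HOM_A(E_n\otimes_AM,X)$; then \intref{Lemma}{lem:A-homs}, applied degreewise and combined with the biduality isomorphisms $(E_n)^\vee\cong\Pi_n$ and $(E_n\otimes_AM)^\vee\cong DM\otimes_A\Pi_n$ for the graded dual, yields $\HOM_A(E_n,X)\cong X\otimes_A\Pi_n$ and $\HOM_A(E_n\otimes_AM,X)\cong X\otimes_ADM\otimes_A\Pi_n$.

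It then remains to transport the middle arrow through these identifications. Under the adjunction it is precomposition with the $A$-linear map $E_n\otimes_AM\to E_n\otimes_A\Lambda$, $\eta\otimes m\mapsto\eta\otimes m-\eta m\otimes1$, associated with the resolution map, and a diagram chase using the fixed elements $\rho=\sum m_i\otimes\mu_i$ and $\ell=\sum\nu_j\otimes n_j$ --- formally the same chase as in the proofs of \intref{Proposition}{prop:bar-c-tensor}, \intref{Corollary}{cor:LHS-tensor} and \intref{Proposition}{prop:ker-on-left} --- shows that it sends $x\otimes\pi$ to $x\rho\otimes\pi-x\otimes\ell\pi$, which is the asserted map. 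I expect the main obstacle to be precisely this last piece of bookkeeping: one must keep careful track of the two gradings on $\Pi=T_A(M\oplus DM)/\langle c\rangle$, and of which one sided module structures are being dualised at each stage, and then check that the whole chain of identifications is mutually compatible and natural in $X$. Everything goes through because $X$ is positively graded and each $\Pi_{(m,n)}$ is finite dimensional, so every graded Hom and Ext that occurs is finite dimensional in each degree.
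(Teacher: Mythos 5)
Your proposal follows essentially the same route as the paper: apply $\HOM(-,X)$ to the standard graded projective resolution of $E_n$, use the positive grading of $X$ together with the fact that $E_n$ is bounded above and locally finite to convert the resulting products/duals into the desired tensor products via graded biduality, and then trace the middle arrow through the identifications. The piece you defer as ``bookkeeping'' is exactly what the paper carries out explicitly (following \intref{Lemma}{lem:bar-c-duality}(2) and \intref{Proposition}{prop:bar-c-tensor}); note only that the resolution differential should read $\eta\otimes m\otimes\lambda\mapsto\eta m\otimes\lambda-\eta\otimes m\lambda$, not the opposite sign, which would otherwise propagate into the formula for the middle map.
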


\begin{proof}
The standard projective resolution of $E_n$ is
\[ \begin{tikzcd}
0 \arrow[r] & E_n\otimes M\otimes\Lambda \arrow[r] & E_n\otimes\Lambda \arrow[r] & E_n \arrow[r] & 0
\end{tikzcd} \]
This is an exact sequence of graded $\Lambda$-modules, and since $(E_n)_{-d}=D\Pi_{(n+d,n)}$ we can express the map on the left as
\[ \sideset{}{_d}\bigoplus(D\Pi_{(n+d,n)}\otimes M\otimes\Lambda)(d-1) \to \sideset{}{_d}\bigoplus D\Pi_{(n+d,n)}\otimes\Lambda(d). \]

We now want to apply $\Hom_\gr(-,X(r))$. Given a finite dimensional $A$-module $W$, the usual tensor-hom adjunction shows that
\[ \Hom_\gr(W\otimes\Lambda(d),X(r)) \cong \Hom_A(W,X_{r-d}) \cong X_{r-d}\otimes DW, \]
which vanishes for all $d>r$ as $X$ is positively graded. Thus
\[ \Hom_\gr(E_n\otimes\Lambda,X(r)) \cong \sideset{}{_d}\bigoplus X_{r-d}\otimes\Pi_{(n+d,n)} \cong (X\otimes\Pi_n)_r, \]
where we have used that $(\Pi_n)_d=\Pi_{(n+d,n)}$. Similarly for the term involving $E_n\otimes M\otimes\Lambda$. Taking the sum over all $r$ thus yields the required four term exact sequence.

To compute the morphism we may follow the proof of \intref{Lemma}{lem:bar-c-duality} (2) to see that $x\otimes\pi\in X_{r-d}\otimes\Pi_{(n+d,n)}$ is sent to the pair $(\alpha,-\beta)$, where
\[ \alpha \colon D\Pi_{(n+d,n)}\otimes M \to X_{r+d+1}, \quad \theta\otimes m \mapsto x\lmod\theta(\pi)m \]
and
\[ \beta \colon D\Pi_{(n+d+1,n)}\otimes M \to X_{r+d}, \quad \theta\otimes m \mapsto x\lmod{\theta m}(\pi). \]

Now, as in the proof of \intref{Proposition}{prop:bar-c-tensor}, the element $x\rho=\sum xm_i\otimes\mu_i$ in $X_{r+d+1}\otimes DM$ corresponds to the map $m\mapsto xm$ in $\Hom_A(M,X_{r+d+1})$, and so $x\rho\otimes\pi$ corresponds to the map $\alpha$.

Analogously $\ell\pi\in DM\otimes\Pi_{(n+d,n)}$ yields $m\mapsto m\pi$ in $\Hom_{A\hyp}(M,\Pi_{(n+d+1,n)})$, and hence $\psi\colon\theta\otimes m\mapsto\theta(m\pi)$ in $D(D\Pi_{(n+d+1,n)}\otimes M)$. Now for all $a\in A$ we have
\[ \sigma(\rmod\psi(\theta\otimes m)a) = \psi(\theta\otimes ma) = \theta(ma\pi) = (\theta m)(a\pi) = \sigma(a\lmod{\theta m}(\pi)). \]
Thus $\rmod\psi(\theta\otimes m)=\lmod{\theta m}(\pi)$, and so $x\otimes\ell\pi$ corresponds to the map $\beta$.

We deduce that $x\otimes\pi$ is sent to $x\rho\otimes\pi-x\otimes\ell\pi$.
\end{proof}

The next step is to show that $\HOM(E_n,\Pi_m)=0$ for all $m,n$. The analogous statement in the finite dimensional case followed since there are no non-zero homomorphisms from postinjective modules to preprojective modules.

\begin{lem}\label{lem:zero-HOM}
Assume $\Lambda$ has no finite dimensional right ideals. Then for all $m,n$ we have, in the category of graded right $\Lambda$-modules, that $\HOM(E_n,\Pi_m)=0$ and $\EXT(E_n,\Pi_m)\cong\Pi_{m+n+1}$.
\end{lem}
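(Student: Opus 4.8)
The plan is to prove the vanishing of $\HOM$ by a degree-support argument resting on a torsion-freeness property of the layers $\Pi_m$, and then to read the $\EXT$ formula off from \intref{Lemma}{lem:gr-hom-ext}. First I would record that, for every $m$, the graded right $\Lambda$-module $\Pi_m$ has no nonzero finite dimensional submodule, by induction on $m$. For $m=0$ this is exactly the standing hypothesis that $\Lambda$ has no finite dimensional right ideals. For the step, $\Pi_m=\mho\otimes_\Lambda\Pi_{m-1}$ with $\mho=\Lambda\otimes_ADM\otimes_A\Lambda$; since $A$ is semisimple, $DM\otimes_A\Lambda$ is a projective left $A$-module, so $\mho$, as a left $\Lambda$-module, is $\Lambda\otimes_A(DM\otimes_A\Lambda)$ with action on the first factor, hence projective, hence a direct summand of a free left $\Lambda$-module. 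Tensoring such a decomposition over $\Lambda$ with $\Pi_{m-1}$ exhibits $\Pi_m$ as a direct summand, as a right $\Lambda$-module, of a direct sum of copies of $\Pi_{m-1}$. A finite dimensional submodule of such a direct sum lies in a finite partial sum, and projecting onto one coordinate at a time (invoking the inductive hypothesis each time) shows it vanishes; hence so does every finite dimensional submodule of the summand $\Pi_m$.

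Next I would prove $\HOM(E_n,\Pi_m)=0$. With the grading conventions already fixed, $E_n$ has $D\Pi_{(n+d,n)}$ in degree $-d$, hence is concentrated in degrees $\le n$, while $\Pi_m$ has $\Pi_{(m+e,m)}$ in degree $e$, hence is concentrated in degrees $\ge -m$. For any homogeneous morphism $f\colon E_n\to\Pi_m(r)$, the image $\Ima f$ is a quotient of $E_n$, hence concentrated in degrees $\le n$, and a submodule of $\Pi_m(r)$, hence concentrated in degrees $\ge -m-r$. So $\Ima f$ is concentrated in the finite range $[-m-r,n]$; as each graded component of $\Pi_m$ is finite dimensional, $\Ima f$ is a finite dimensional submodule of $\Pi_m$, and so $\Ima f=0$ by the previous paragraph. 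Letting $f$ and $r$ vary gives $\HOM(E_n,\Pi_m)=0$.

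For the $\EXT$ formula, I would apply \intref{Lemma}{lem:gr-hom-ext} with $X=\Pi_m$ (after a degree shift, if one insists on the positivity hypothesis there). The vanishing just established collapses its four term exact sequence to
\[ 0 \to \Pi_m\otimes\Pi_n \xrightarrow{\phi} \Pi_m\otimes DM\otimes\Pi_n \to \EXT(E_n,\Pi_m) \to 0, \qquad \phi(x\otimes\pi)=x\rho\otimes\pi-x\otimes\ell\pi, \]
so it remains to identify $\Coker\phi$ with $\Pi_{m+n+1}$. For this I would tensor the defining presentation $\Lambda\otimes_A\Lambda\to\mho\to\Pi_1\to0$ of $\Pi_1$, in which $1\otimes1\mapsto\bar c=\rho\otimes1-1\otimes\ell$, by $\Pi_m\otimes_\Lambda(-)\otimes_\Lambda\Pi_n$. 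Right exactness of tensor, together with $\Pi_m\otimes_\Lambda\Pi_1\otimes_\Lambda\Pi_n=\Pi_{m+n+1}$, $\Pi_m\otimes_\Lambda\mho\otimes_\Lambda\Pi_n=\Pi_m\otimes_ADM\otimes_A\Pi_n$ and $\Pi_m\otimes_\Lambda(\Lambda\otimes_A\Lambda)\otimes_\Lambda\Pi_n=\Pi_m\otimes_A\Pi_n$, yields a right exact sequence whose first map sends $x\otimes\pi$ to $x\otimes\bar c\otimes\pi=x\rho\otimes\pi-x\otimes\ell\pi$, that is, to $\phi$. Hence $\EXT(E_n,\Pi_m)\cong\Coker\phi\cong\Pi_{m+n+1}$, compatibly with the gradings.

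I expect the main obstacle to be not any single deep step but the bookkeeping: the genuinely non-formal input is the torsion-freeness of the $\Pi_m$ in the first paragraph, and everything afterwards is formal once one keeps the two gradings — the internal $M$-grading of $\Lambda=T_A(M)$ and the $DM$-count $n$ — carefully apart, so that the degree bounds in the second paragraph come out as claimed and the map produced in the third paragraph is recognised as the one appearing in \intref{Lemma}{lem:gr-hom-ext}.
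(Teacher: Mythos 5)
Your second and third paragraphs are sound, and the third is in fact a slightly cleaner route to the $\EXT$ formula than the paper's: you just observe that the four-term sequence from \intref{Lemma}{lem:gr-hom-ext} and the sequence obtained by applying $\Pi_m\otimes_\Lambda(-)\otimes_\Lambda\Pi_n$ to the defining presentation of $\Pi_1$ end with the \emph{same} map $\phi$, so their cokernels agree, whereas the paper reaches $\EXT(E_n,\Pi_m)\cong\Pi_{m+n+1}$ only after establishing $\Tor_1^\Lambda(\Pi_a,\Pi_b)=0$ by an induction. Your degree-support argument in the middle paragraph is also correct bookkeeping and is exactly the paper's argument in the case $m=0$.

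The gap is in the first paragraph, and it is a genuine one. You write ``$\Pi_m=\mho\otimes_\Lambda\Pi_{m-1}$'', but this is false: $\Pi_m=\Pi_1\otimes_\Lambda\Pi_{m-1}$, and $\Pi_1$ is only a \emph{quotient} of $\mho$ (the cokernel of $\Lambda\otimes_A\Lambda\to\mho$, $1\otimes1\mapsto\bar c$). Hence $\Pi_m$ is a quotient, not a direct summand, of $\mho\otimes_\Lambda\Pi_{m-1}$, and a quotient can perfectly well acquire finite-dimensional submodules even if the source has none. Nor is $\Pi_1$ itself projective as a left $\Lambda$-module — in the finite-dimensional case it is $\tau^-\Lambda$, which is not projective — so there is no way to replace $\mho$ by $\Pi_1$ in your summand argument. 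Consequently the inductive step of your torsion-freeness claim does not go through, and with it the extension of the degree-support argument to $m\geq1$ collapses. (It is worth noting that the paper never asserts that $\Pi_m$ has no finite-dimensional submodules; that statement is not used, and whether it is even true is unclear at this stage of the development.)

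The paper avoids your first paragraph entirely. It runs the degree-support argument only for $m=0$, where the hypothesis on $\Lambda$ is literally what is needed. It then compares the resulting short exact sequence $0\to\Lambda\otimes\Pi_n\to\Lambda\otimes DM\otimes\Pi_n\to\EXT(E_n,\Lambda)\to0$ with the sequence obtained by tensoring $\Lambda\otimes_A\Lambda\to\mho\to\Pi_1\to0$ on the right by $\Pi_n$; since both have the same left-hand map, one reads off simultaneously that $\EXT(E_n,\Lambda)\cong\Pi_{n+1}$ \emph{and} that $\Tor_1^\Lambda(\Pi_1,\Pi_n)=0$. The Tor vanishing (propagated to $\Tor_1^\Lambda(\Pi_a,\Pi_b)=0$ by tensoring on the right and inducting) then lets one tensor the short exact sequence on the left by $\Pi_m$ to obtain $0\to\Pi_m\otimes\Pi_n\to\Pi_m\otimes DM\otimes\Pi_n\to\Pi_{m+n+1}\to0$, and another application of \intref{Lemma}{lem:gr-hom-ext} identifies the kernel of the (now injective) map $\phi$ as $\HOM(E_n,\Pi_m)$, forcing it to vanish. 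So $\HOM(E_n,\Pi_m)=0$ for $m\geq1$ is an \emph{output} of the comparison, not an input as in your plan. If you want to salvage your structure, you would need an independent proof that the $\Pi_m$ have no finite-dimensional graded submodules, and the route you chose does not supply one.
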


\begin{proof}
Consider first the case $m=0$, so $\Pi_0=\Lambda$. Since $\Lambda(r)$ is non-zero only in degrees $d\geq-r$, whereas $E_n$ lives in degrees $d\leq n$, we see that the image of any graded right $\Lambda$ homomorphism $E_n\to\Lambda(r)$ is supported in only finitely many degrees, so yields a finite dimensional submodule of $\Lambda$. Our assumption on $\Lambda$ thus implies that $\HOM(E_n,\Lambda)=0$. By \intref{Lemma}{lem:gr-hom-ext} we have the short exact sequence
\[ \begin{tikzcd}
0 \arrow[r] & \Lambda\otimes\Pi_n \arrow[r] & \Lambda\otimes DM\otimes\Pi_n \arrow[r] & \EXT(E_n,\Lambda) \arrow[r] & 0
\end{tikzcd} \]
with the map on the left being $1\otimes\pi\mapsto\rho\otimes\pi-1\otimes\ell\pi$. On the other hand we have the exact sequence from \intref{Proposition}{prop:bar-c-tensor}
\[ \begin{tikzcd}
\Lambda\otimes\Lambda \arrow[r] & \Lambda\otimes DM\otimes\Lambda \arrow[r] & \Pi_{(1)} \arrow[r] & 0,
\end{tikzcd} \]
where the left hand map sends $1\otimes1$ to $\rho\otimes1-1\otimes\ell$. Comparing these in the case $n=0$ reveals that $\Pi_1\cong\EXT(E_0,\Lambda)$ and that the latter sequence is exact. We may now tensor the latter sequence on the right with $\Pi_n$, recalling that $\Pi_a\otimes_\Lambda\Pi_b\cong\Pi_{a+b}$, to get
\[ \begin{tikzcd}[column sep=17pt]
0 \arrow[r] & \Tor_1^\Lambda(\Pi_1,\Pi_n) \arrow[r] & \Lambda\otimes\Pi_n \arrow[r] & \Lambda\otimes DM\otimes\Pi_n \arrow[r]
& \Pi_{n+1} \arrow[r] & 0.
\end{tikzcd} \]
Again, comparing this to the first sequence above shows that $\Pi_{n+1}\cong\EXT(E_n,\Lambda)$ and $\Tor_1^\Lambda(\Pi_1,\Pi_n)=0$, so we have the short exact sequence
\[  \begin{tikzcd}
0 \arrow[r] & \Lambda\otimes\Pi_n \arrow[r] & \Lambda\otimes DM\otimes\Pi_n \arrow[r] & \Pi_{n+1} \arrow[r] & 0.
\end{tikzcd} \]
If we tensor this on the right with $\Pi_m$, then the same reasoning shows that $\Tor_1^\Lambda(\Pi_n,\Pi_m)=0$ for all $m,n\geq1$. If instead we tensor on the left with $\Pi_m$, then we can use the vanishing of tor to obtain the short exact sequence
\[ \begin{tikzcd}
0 \arrow[r] & \Pi_m\otimes\Pi_n \arrow[r] & \Pi_m\otimes DM\otimes\Pi_n \arrow[r] & \Pi_{m+n+1} \arrow[r] & 0.
\end{tikzcd} \]
Again, the left hand map sends $\pi\otimes\pi'$ to $\pi\rho\otimes\pi'-\pi\otimes\ell\pi'$, so we can invoke \intref{Lemma}{lem:gr-hom-ext} once more to conclude that $\HOM(E_n,\Pi_m)=0$ and $\EXT(E_n,\Pi_m)\cong\Pi_{m+n+1}$.
\end{proof}

With these preparations, we can now prove \intref{Theorem}{thm:gl-dim-2} when $\Lambda=T_A(M)$ is a locally finite hereditary tensor algebra. Again, we split the proof into two parts.

\begin{prop}\label{prop:gl-dim-2-tensor}
We have the following exact sequence
\[ \begin{tikzcd}
0 \arrow[r] & \Pi\otimes_A\Pi \arrow[r] & \Pi\otimes(M\oplus DM)\otimes\Pi \arrow[r] & \Pi\otimes_A\Pi \arrow[r] & \Pi \arrow[r] & 0.
\end{tikzcd} \]
\end{prop}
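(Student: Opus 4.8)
By the observation recorded just after Theorem~\ref{thm:gl-dim-2}, it is enough to prove exactness in the case $X=\Pi$, for then the sequence has all terms projective on both sides and remains exact after tensoring on the left by an arbitrary module. A projective presentation is automatically exact at every spot except possibly the leftmost one, so the only thing to check is that the left hand map $\Pi\otimes_A\Pi\to\Pi\otimes(M\oplus DM)\otimes\Pi$ is a monomorphism. I would verify this by composing with the projection onto the $DM$-summand: as in Proposition~\ref{prop:ker-on-left} this composite agrees with
\[ \Pi\otimes_A\Pi \to \Pi\otimes_T\widetilde\Omega\otimes_T\Pi \to \Pi\otimes_\Lambda\mho\otimes_\Lambda\Pi \cong \Pi\otimes DM\otimes\Pi, \]
and, writing $\bar c=\rho\otimes1-1\otimes\ell$ as in Proposition~\ref{prop:bar-c-tensor}, it sends $x\otimes\pi$ to $x\rho\otimes\pi-x\otimes\ell\pi$. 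So it suffices to show that this last map is injective.

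The plan is to mimic the finite dimensional argument of Proposition~\ref{prop:gl-dim-2-fd}, with the graded bimodule $E_n$ playing the role of the postinjective module $D\Pi_n$ there. Viewing the map $x\otimes\pi\mapsto x\rho\otimes\pi-x\otimes\ell\pi$ as a morphism of right $\Lambda$-modules, I would decompose the outer tensor factor as $\Pi=\bigoplus_n\Pi_n$ and the inner one as $\Pi=\bigoplus_m\Pi_m$, where each $\Pi_k=\tau^{-k}\Lambda$ is a locally finite $\Lambda$-bimodule, bounded below in its internal $M$-grading. This exhibits the map as the direct sum over all pairs $(m,n)$ of the maps
\[ \Pi_m\otimes\Pi_n \to \Pi_m\otimes DM\otimes\Pi_n, \quad x\otimes\pi\mapsto x\rho\otimes\pi-x\otimes\ell\pi, \]
and Lemma~\ref{lem:gr-hom-ext}, applied to $\Pi_m$ (positively graded after a shift), identifies the kernel of the $(m,n)$ summand with $\HOM(E_n,\Pi_m)$. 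By Lemma~\ref{lem:zero-HOM} every such group vanishes, so each summand, and hence the whole map, is injective; here one uses that $\Lambda$ has no finite dimensional right ideals, which we may assume, since $\Pi$ does not change under reorientation (Lemma~\ref{lem:indep-orient}): if $\Lambda$ is finite dimensional we are in the situation of Proposition~\ref{prop:gl-dim-2-fd}, and otherwise the connected valued graph, carrying an oriented cycle, can be oriented so that every vertex lies on an infinite path. Therefore the left hand map of the standard presentation is injective and the four term sequence is exact.

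The substantive work has already been done in Lemmas~\ref{lem:gr-hom-ext} and~\ref{lem:zero-HOM}, so what remains is mostly bookkeeping, with two points needing care: first, matching the left hand map of the standard presentation with the explicit map $x\otimes\pi\mapsto x\rho\otimes\pi-x\otimes\ell\pi$ of Lemma~\ref{lem:gr-hom-ext}, which runs through the identifications of Proposition~\ref{prop:ker-on-left} and Corollary~\ref{cor:LHS-tensor}; and second, splitting $\Pi$ into the individual pieces $\Pi_m$ before invoking Lemma~\ref{lem:gr-hom-ext}, since that lemma applies to the $\Pi_m$ but not directly to $\Pi$ as a graded $\Lambda$-module. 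I expect the treatment of algebras admitting finite dimensional right ideals, via reorientation, to be the only genuinely non-formal wrinkle.
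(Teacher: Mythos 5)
Your proposal is correct and follows the same route the paper takes: reduce to injectivity of the left-hand map, compose with the projection onto the $DM$-summand using \intref{Corollary}{cor:LHS-tensor}, decompose the resulting map of bigraded modules into the pieces $\Pi_m\otimes\Pi_n\to\Pi_m\otimes DM\otimes\Pi_n$, and quote \intref{Lemma}{lem:gr-hom-ext} and \intref{Lemma}{lem:zero-HOM} for the vanishing of the kernel $\HOM(E_n,\Pi_m)$. The only difference is in the preliminary reduction via \intref{Lemma}{lem:indep-orient}: the paper splits on whether the valued graph has a vertex loop (no loop: reorient acyclically and cite \intref{Proposition}{prop:gl-dim-2-fd}; loop at $i$: orient all edges toward $i$), whereas you split on whether the original $\Lambda$ is finite dimensional. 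Your version works too — when $\Lambda$ is infinite dimensional the underlying valued graph contains a (possibly loop) cycle, which can be oriented as a directed cycle with all other edges flowing into it, so every vertex lies on an infinite path and $\Lambda$ has no finite dimensional right ideals — but your phrase ``the connected valued graph, carrying an oriented cycle'' conflates the undirected graph with a choice of orientation and should be tightened. Your index bookkeeping ($\HOM(E_n,\Pi_m)$ from applying \intref{Lemma}{lem:gr-hom-ext} with $X=\Pi_m$) is in fact more consistent with the statement of that lemma than what the paper writes.
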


\begin{proof}
We recall from \intref{Lemma}{lem:indep-orient} that the preprojective algebra is independent of the orientation of $\Lambda$. Suppose first that $e_iMe_i=0$ for all $i$ (no vertex loops). Then we can choose an orientation such that $\Lambda$ is finite dimensional, in which case we know the result. In the remaining cases we fix a vertex $i$ having $e_iMe_i\neq0$, and choose an orientation such that there is a path from every other vertex to $i$. It follows that every non-zero projective right module is infinite dimensional.

Again, we only need to show that the left hand map is injective, and by \intref{Corollary}{cor:LHS-tensor} this has second component sending $\pi\otimes\pi'$ to $\pi\rho\otimes\pi'-\pi\otimes\ell\pi'$ in $\Pi\otimes DM\otimes\Pi$. As a morphism of graded right $\Lambda$-modules we may therefore decompose this into the direct sum of maps $\Pi_m\otimes\Pi_n\to\Pi_m\otimes DM\otimes\Pi_n$. The latter map has kernel $\HOM(E_m,\Pi_n)$ by \intref{Lemma}{lem:gr-hom-ext}, which this vanishes by \intref{Lemma}{lem:zero-HOM}. The result follows.
\end{proof}

\begin{lem}
The left hand map $\Pi\to(M\oplus DM)\otimes\Pi$ in the projective resolution of the semisimple module $A$ is not split. In particular, $\gldim\Pi=2$.
\end{lem}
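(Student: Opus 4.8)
The plan is to deduce both assertions from the single statement that the semisimple module $A$ has projective dimension exactly two over $\Pi$. On the one hand, \intref{Proposition}{prop:gl-dim-2-tensor} together with the observation following \intref{Theorem}{thm:gl-dim-2} shows that every right $\Pi$-module admits a projective resolution of length at most two, so $\gldim\Pi\le2$; hence $\mathrm{pd}_\Pi A=2$ forces $\gldim\Pi=2$. On the other hand, if the left hand map of the resolution of $A$ were split as a map of right $\Pi$-modules, then its cokernel would be a projective $\Pi$-module, and the resolution of $A$ would collapse to length one, contradicting $\mathrm{pd}_\Pi A=2$.

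First I would write down the resolution of $A$ explicitly. Specialising \intref{Theorem}{thm:standard-proj-pi}, in the form of \intref{Corollary}{cor:LHS-tensor}, to $X=A$ and invoking \intref{Proposition}{prop:gl-dim-2-tensor} yields an exact sequence
\[ \begin{tikzcd}
0 \arrow[r] & \Pi \arrow[r,"\phi"] & (M\oplus DM)\otimes_A\Pi \arrow[r] & \Pi \arrow[r,"\mu"] & A \arrow[r] & 0,
\end{tikzcd} \]
in which $\mu$ is the canonical surjection with kernel $\Lambda_++\Pi_+$. For $X=A$ the summands $xm_i\otimes\mu_i\otimes1$ and $x\nu_j\otimes n_j\otimes1$ appearing in \intref{Corollary}{cor:LHS-tensor} all vanish, since $M$ and $DM$ act as zero on $A$, so $\phi$ sends the generator $1$ to the element $c=\rho-\ell\in(M\otimes_ADM)\oplus(DM\otimes_AM)$, viewed inside $(M\oplus DM)\otimes_A\Pi$.

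The key step is to check that $\phi$ becomes the zero map after applying $\Hom_\Pi(-,A)$. Under the standard identifications $\Hom_\Pi(\Pi,A)\cong A$ and $\Hom_\Pi\big((M\oplus DM)\otimes_A\Pi,A\big)\cong\Hom_A(M\oplus DM,A)$, the induced map $\phi^\ast$ sends a right $A$-linear map $g$ to $\sum_i g(m_i)\overline{\mu_i}-\sum_j g(\nu_j)\overline{n_j}$, where $\overline{\pi}$ denotes the image of $\pi$ in $A=\Pi/(\Lambda_++\Pi_+)$. But each $\mu_i$ lies in $DM\subseteq\Pi_1$ and each $n_j$ lies in $M\subseteq\Lambda_+$, so all these images vanish and $\phi^\ast=0$. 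Consequently $\Ext^2_\Pi(A,A)\cong\Coker(\phi^\ast)=\Hom_\Pi(\Pi,A)\cong A\neq0$, and therefore $\mathrm{pd}_\Pi A=2$, which completes the argument.

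The computation is essentially forced once one notes that $\phi$ is multiplication by $c$, and that $c$ lies in the square of the graded radical of $T_A(M\oplus DM)$. I expect the only point needing care to be the bookkeeping in the adjunction isomorphism for $\Hom_\Pi\big((M\oplus DM)\otimes_A\Pi,A\big)$, where one must keep track of the left and right $A$-module structures; beyond that the argument is routine.
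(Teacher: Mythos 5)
Your argument is correct, and it rests on exactly the same underlying observation as the paper's, namely that $\phi(1)=c$ has its ``right tail'' in the graded radical (each $\mu_i$ and $n_j$ lies in $\Pi_{(1)}$), so that $c$ becomes zero after hitting it with anything landing in $A$. You package this as the computation $\phi^*=0$, hence $\Ext^2_\Pi(A,A)\cong A\neq0$, and then argue backwards to non-splitness; the paper applies the same observation directly, noting that any retraction $\alpha$ would satisfy $\alpha(c)\in\Pi(M\oplus DM)\subseteq\Pi_+$, so $\alpha(c)\neq1$. The two proofs are logically equivalent, with yours taking one extra step through the $\Ext$-formalism; your reliance on \intref{Proposition}{prop:gl-dim-2-tensor} and the functoriality observation for $\gldim\Pi\le2$ is fine and matches the paper's implicit use of the same facts.
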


\begin{proof}
By \intref{Corollary}{cor:LHS-tensor} the left hand map sends $1$ to $c=\sum m_i\otimes\mu_i-\sum\nu_j\otimes n_j$. Now any morphism $\alpha$ in the reverse direction comes from an $A$-linear map $M\oplus DM\to\Pi$, so $\alpha(c)$ lies in $\Pi(M\oplus DM)$, and hence $\alpha$ cannot be a retract.
\end{proof}

\subsection{Euler form}

Let $\Pi$ be a preprojective algebra of non-Dynkin type, say coming from an hereditary algebra $\Lambda$. We have seen that $\Pi$ has global dimension two, so we can compute the Euler form of any two finite dimensional modules $X$ and $Y$ as
\[ \dim\Hom_\Pi(X,Y) - \dim\Ext^1_\Pi(X,Y) + \dim\Ext^2_\Pi(X,Y). \]

We next recall from the discussion in \intref{Section}{sec:ext-quiver-hered} that restriction of scalars yields an additive map $K_0(\fd\Lambda)\to K_0(A)$, inducing an isomorphism $K_0(\nilp\Lambda)\iso K_0(A)$. Moreover, the Euler form on $\fd\Lambda$ descends to a bilinear form $\langle-,-\rangle$ on $K_0(A)$, represented by the matrix $\mat D-\mat R$. This therefore depends only on the ext-quiver of $\nilp\Lambda$, or equivalently on the trivial extension algebra $A\ltimes M$, where $M=\Lambda_+/\Lambda_+^2$.

Similarly, restriction of scalars yields an additive map $K_0(\fd\Pi)\to K_0(A)$, inducing an isomorphism $K_0(\nilp\Pi)\iso K_0(A)$. Also, the ext-quiver of $\nilp\Pi$ is determined by the trivial extension algebra $A\ltimes(M\oplus DM)$, as shown in \intref{Lemma}{lem:triv-extn}. This in turn determines a symmetric bilinear form $(-,-)$ on $K_0(\modcat A)$, which is precisely the symmetrised version of the bilinear form $\langle-,-\rangle$, so is represented by the matrix $2\mat D-\mat B$.

\begin{prop}
Let $\Pi$ be a preprojective algebra of non-Dynkin type. Then the Euler form of two finite dimensional modules depends only on their classes in $K_0(A)$, and is given by the symmetrised bilinear form $(-,-)$.
\end{prop}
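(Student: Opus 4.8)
The plan is to compute the Euler form directly from the length-two projective resolution of \intref{Theorem}{thm:gl-dim-2}. Since $\Pi$ has global dimension two and $X,Y$ are finite dimensional, applying $\Hom_\Pi(-,Y)$ to
\[ 0 \to X\otimes_A\Pi \to X\otimes_T\widetilde\Omega\otimes_T\Pi \to X\otimes_A\Pi \to X \to 0 \]
and taking the alternating sum of dimensions gives
\[ \langle X,Y\rangle_\Pi = 2\dim\Hom_\Pi(X\otimes_A\Pi,Y) - \dim\Hom_\Pi(X\otimes_T\widetilde\Omega\otimes_T\Pi,Y). \]
By the induction--restriction adjunction for $A\rightarrowtail\Pi$ the first term is $\dim\Hom_A(X,Y)$, which is additive in each variable over $\modcat A$ and hence bilinear in the classes $[X],[Y]\in K_0(A)$, represented on simples by the diagonal matrix $\mat D$. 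It therefore suffices to show that the middle term is also bilinear in $[X],[Y]$.

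For the middle term I first use the adjunction for $T\twoheadrightarrow\Pi$ to get $\Hom_\Pi(X\otimes_T\widetilde\Omega\otimes_T\Pi,Y)\cong\Hom_T(X\otimes_T\widetilde\Omega,Y)$, and then apply $\Hom_T(-,Y)$ to the split exact sequence of right $T$-modules from \intref{Theorem}{thm:tensor-mho-hered} and the adjunction for $\Lambda\rightarrowtail T$, obtaining
\[ \Hom_T(X\otimes_T\widetilde\Omega,Y) \cong \Hom_\Lambda(X\otimes_\Lambda\Omega,Y)\oplus\Hom_\Lambda(X\otimes_\Lambda\mho,Y). \]
The first summand is computed from the standard resolution $0\to X\otimes_\Lambda\Omega\to X\otimes_A\Lambda\to X\to 0$ of \intref{Proposition}{prop:omega}: since $X\otimes_\Lambda\Omega$ is projective, $\dim\Hom_\Lambda(X\otimes_\Lambda\Omega,Y)=\dim\Hom_A(X,Y)-\langle X,Y\rangle_\Lambda$, and by \intref{Proposition}{prop:ext-quiver-lambda} this depends only on the classes in $K_0(A)$, with matrix $\mat R$.

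For the second summand the goal is the dual identity $\dim\Hom_\Lambda(X\otimes_\Lambda\mho,Y)=\dim\Hom_A(X,Y)-\langle Y,X\rangle_\Lambda$, with matrix $\mat R^t$. When $\Lambda=T_A(M)$ is a tensor algebra this is immediate from $\mho=\Lambda\otimes_ADM\otimes_A\Lambda$: then $X\otimes_\Lambda\mho\cong X\otimes_ADM\otimes_A\Lambda$, so $\Hom_\Lambda(X\otimes_\Lambda\mho,Y)\cong\Hom_A(X\otimes_ADM,Y)$, and $\dim\Hom_A(S_i\otimes_ADM,S_j)=\dim_k e_jMe_i$. When $\Lambda$ is finite dimensional I will instead use the identification $X\otimes_\Lambda\mho\cong\Hom_\Lambda(\Omega\otimes_\Lambda DX,\Lambda)$ from the proof of \intref{Proposition}{prop:AR-translate}: dualizing for finitely generated projective modules turns $\Hom_\Lambda(X\otimes_\Lambda\mho,Y)$ into $Y\otimes_\Lambda\Omega\otimes_\Lambda DX$, and tensoring the resolution $0\to Y\otimes_\Lambda\Omega\to Y\otimes_A\Lambda\to Y\to 0$ on the right with $DX$ — using that $Y\otimes_\Lambda\Omega$ is flat — produces a four-term exact sequence whose outer terms, via the standard $k$-duality identities $\dim\Tor^\Lambda_1(Y,DX)=\dim\Ext^1_\Lambda(Y,X)$, $\dim(Y\otimes_ADX)=\dim\Hom_A(X,Y)$ and $\dim(Y\otimes_\Lambda DX)=\dim\Hom_\Lambda(Y,X)$, collapse to $\dim\Hom_A(X,Y)-\langle Y,X\rangle_\Lambda$. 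This finite dimensional $\mho$-term is the one step that genuinely requires the duality machinery of the earlier sections, and so I expect it to be the main obstacle.

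Putting the pieces together, the middle term equals $2\dim\Hom_A(X,Y)-\langle X,Y\rangle_\Lambda-\langle Y,X\rangle_\Lambda$, and hence
\[ \langle X,Y\rangle_\Pi = \langle X,Y\rangle_\Lambda + \langle Y,X\rangle_\Lambda = (X,Y), \]
the symmetrised form represented by $(\mat D-\mat R)+(\mat D-\mat R)^t=2\mat D-\mat B$, which is exactly the form on $K_0(A)$ attached to the ext-quiver of $\nilp\Pi$ identified just before the proposition. In particular $\langle X,Y\rangle_\Pi$ depends only on the classes $[X],[Y]\in K_0(A)$.
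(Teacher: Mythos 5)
Your proposal is correct and follows essentially the same route as the paper: compute the Euler form from the standard length-two resolution, split the middle term $\dim\Hom_T(X\otimes_T\widetilde\Omega,Y)$ via the split sequence of \intref{Theorem}{thm:tensor-mho-hered} into an $\Omega$-summand and an $\mho$-summand, and identify the $\mho$-summand with $\dim\Hom_\Lambda(Y\otimes_\Lambda\Omega,X)$ by duality, then invoke \intref{Proposition}{prop:ext-quiver-lambda}. The only difference is cosmetic: where the paper dispatches the $\mho$-term in a single chain $\Hom_\Lambda(X\otimes_\Lambda\mho,Y)\cong D(X\otimes_\Lambda\mho\otimes_\Lambda DY)\cong D\Hom_\Lambda(Y\otimes_\Lambda\Omega,X)$ covering both classes of hereditary algebra at once, you split into the tensor-algebra and finite-dimensional cases and, in the latter, pass through a $\Tor$ computation that amounts to the same duality; your version is a bit more verbose but equivalent.
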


\begin{proof}
For finite dimensional modules $X$ and $Y$, the extension groups $\Ext^i_\Pi(X,Y)$ can be computed as the cohomology of the complex
\[ \begin{tikzcd}
\Hom_A(X,Y) \arrow[r] & \Hom_T(X\otimes_T\widetilde\Omega,Y) \arrow[r] & \Hom_A(X,Y)
\end{tikzcd} \]
given by applying $\Hom_\Pi(-,Y)$ to the standard projective resolution of $X$. Thus the Euler form, which is the alternating sum of the dimensions of the extension groups, equals
\[ 2\dim\Hom_A(X,Y) - \dim\Hom_T(X\otimes_T\widetilde\Omega,Y). \]
Next, we have a split exact sequence
\[ \begin{tikzcd}
 0 \arrow[r] & X\otimes_\Lambda\Omega\otimes_\Lambda T \arrow[r] & X\otimes_T\widetilde\Omega \arrow[r] &
X\otimes_\Lambda\mho\otimes_\Lambda T \arrow[r] & 0
\end{tikzcd} \]
so the second dimension equals
\[ \dim\Hom_\Lambda(X\otimes_\Lambda\Omega,Y)+\dim\Hom_\Lambda(X\otimes_\Lambda\mho,Y). \]
Finally, we have the isomorphisms
\[ \Hom_\Lambda(X\otimes_\Lambda\mho,Y) \cong D(X\otimes_\Lambda\mho\otimes_\Lambda DY) \cong
D\Hom_\Lambda(Y\otimes_\Lambda\Omega,X), \]
so the result follows from \intref{Proposition}{prop:ext-quiver-lambda}, where we computed that
\[ \langle X,Y\rangle = \dim\Hom_A(X,Y) - \dim\Hom_\Lambda(X\otimes_\Lambda\Omega,Y) \]
depends only the classes of $X$ and $Y$ in $K_0(A)$.
\end{proof}

Our next result is an important strengthening of our earlier \intref{Proposition}{prop:ker-on-left}.

\begin{thm}
Let $\Pi$ be a preprojective algebra of an hereditary tensor algebra of non-Dynkin type. Then there is a natural isomorphism of bifunctors
\[ \Ext^2_\Pi(X,Y) \cong D\Hom_\Pi(Y,X) \]
for all finite dimensional modules $X$ and $Y$.
\end{thm}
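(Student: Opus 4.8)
The plan is to compute both $\Ext^2_\Pi(X,Y)$ and $\Hom_\Pi(Y,X)$ from the standard projective resolution of \intref{Theorem}{thm:gl-dim-2}, and then to recognise the asserted duality as a self-duality of the resulting three-term complex. We may assume $\Pi$ is indecomposable, arguing on each (necessarily non-Dynkin) factor otherwise.

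Fix finite dimensional right $\Pi$-modules $X$ and $Y$, and write $V\coloneqq M\oplus DM$. By \intref{Theorem}{thm:gl-dim-2} and \intref{Corollary}{cor:LHS-tensor} we have the projective resolution
\[ 0 \to X\otimes_A\Pi \to X\otimes_AV\otimes_A\Pi \to X\otimes_A\Pi \to X \to 0, \]
and applying $\Hom_\Pi(-,Y)$, together with $\Hom_\Pi(X\otimes_AW\otimes_A\Pi,Y)\cong\Hom_A(X\otimes_AW,Y)$, identifies $\Ext^\ast_\Pi(X,Y)$ with the cohomology of a complex $C^\bullet_{X,Y}$
\[ \Hom_A(X,Y) \xrightarrow{\ d^0\ } \Hom_A(X\otimes_AV,Y) \xrightarrow{\ d^1\ } \Hom_A(X,Y). \]
Here $d^0$ and $d^1$ are the transposes of the two differentials of the resolution; explicitly $d^0(\phi)(x\otimes v)=\phi(xv)-\phi(x)v$, while reading off the left hand (degree-raising) map from \intref{Corollary}{cor:LHS-tensor} gives
\[ d^1(\psi)(x)=\sum_i\big(\psi(xm_i\otimes\mu_i)+\psi(x\otimes m_i)\mu_i\big)-\sum_j\big(\psi(x\nu_j\otimes n_j)+\psi(x\otimes\nu_j)n_j\big). \]
Thus $\Ext^2_\Pi(X,Y)=\Coker d^1_{X,Y}$, while $\Hom_\Pi(Y,X)=\Ker d^0_{Y,X}$, the subspace of $A$-linear maps $\phi\colon Y\to X$ with $\phi(yv)=\phi(y)v$.

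The goal is then to build a natural isomorphism $DC^\bullet_{X,Y}\cong C^\bullet_{Y,X}$ of complexes, under the reindexing $i\mapsto 2-i$; taking cohomology in the top degree yields $D\Ext^2_\Pi(X,Y)\cong\Ker d^0_{Y,X}=\Hom_\Pi(Y,X)$, and hence the theorem by finite dimensionality, with naturality inherited throughout. Degreewise this rests on two natural identifications. First, $D\Hom_A(X,Y)\cong\Hom_A(Y,X)$: this expresses that the symmetric semisimple algebra $A$ coincides with its own Nakayama functor, and is realised through the chosen symmetrising element $\sigma$ exactly as in \intref{Lemmas}{lem:A-homs} and~\ref{lem:A-homs-dual}. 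Second, $D\Hom_A(X\otimes_AV,Y)\cong\Hom_A(Y\otimes_AV,X)$: combine the first identification with the adjunction $\Hom_A(U\otimes_AM,Z)\cong\Hom_A(U,Z\otimes_ADM)$, valid since $M$ is finitely generated projective as a one-sided $A$-module, which has the effect of interchanging the summands $M$ and $DM$ of $V$.

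The hard part---and the step I expect to be the main obstacle---will be to check that under these identifications the transpose $Dd^1_{X,Y}$ becomes $d^0_{Y,X}$ and $Dd^0_{X,Y}$ becomes $d^1_{Y,X}$, at worst up to an overall sign that does not affect cohomology. This is a bookkeeping computation with the $\sigma$-dualities of precisely the type carried out in \intref{Lemmas}{lem:bar-c-duality} and~\ref{lem:rmod-action}. The structural fact that makes it work is the self-duality of the defining relation: under the symmetry of $V$ swapping $M$ with $DM$ the elements $\rho\in M\otimes_ADM$ and $\ell\in DM\otimes_AM$ are interchanged---this is \intref{Lemma}{lem:A-homs-dual}, which records that the representative of $\id_{M_A}$ also represents $\id_{{}_A(DM)}$---so that $c=\rho-\ell$ is anti-invariant, and the two groups of summands in $d^1(\psi)(x)$ dualise respectively to the two terms $\phi(yv)$ and $-\phi(y)v$ of $d^0_{Y,X}(\phi)(y\otimes v)$. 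Conceptually this amounts to saying that the projective $\Pi$-bimodule resolution of \intref{Proposition}{prop:gl-dim-2-tensor} is isomorphic, after reversal and the identifications $DA\cong A$, $DV\cong V$, to its own $\Hom_{\Pi^e}(-,\Pi^e)$-dual---that is, $\Pi$ is $2$-Calabi-Yau as a bimodule---and the displayed duality between finite dimensional modules is the standard shadow of this; working directly with $C^\bullet_{X,Y}$ avoids having to take a derived $\Hom$ into the non-finite-dimensional algebra $\Pi^e$.
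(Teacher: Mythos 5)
Your proposal is correct and takes essentially the same route as the paper: the paper's proof likewise computes $\Ext^2_\Pi(X,Y)$ as the cokernel of the top map in $C^\bullet_{X,Y}$, identifies this map with the $D$-dual of the map $X\otimes_ADY\to X\otimes_A(M\oplus DM)\otimes_ADY$ from the $X$-presentation (Proposition~\ref{prop:ker-on-left-2}), and recognises the kernel of that map as $\Hom_\Pi(Y,X)$ by splitting into the $M$- and $DM$-components and invoking the $\sigma$-duality calculations of Lemma~\ref{lem:bar-c-duality}. Your packaging as a degree-reversing self-duality $DC^\bullet_{X,Y}\cong C^\bullet_{Y,X}$ (the shadow of the bimodule $2$-Calabi--Yau property) is a cleaner conceptual gloss, but the deferred ``bookkeeping'' step you flag is precisely the content of Proposition~\ref{prop:ker-on-left-2} and its reliance on Lemmas~\ref{lem:bar-c-duality} and \ref{lem:rmod-action}, so the substantive computations coincide.
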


As an immediate corollary we see that the Euler form satisfies
\[ (X,Y) = \dim\Hom_\Pi(X,Y)+\dim\Hom_\Pi(Y,X)-\dim\Ext^1_\Pi(X,Y). \]
As the Euler form is symmetric we deduce the following result.

\begin{cor}
Let $\Pi$ be a preprojective algebra on non-Dynkin type. Then for all finite dimensional modules $X$ and $Y$ we have
\[ \dim\Ext^1_\Pi(X,Y) = \dim\Ext^1_\Pi(Y,X). \]
\end{cor}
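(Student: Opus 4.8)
The plan is to read this off from the symmetry of the Euler form on $\fd\Pi$ together with the identification of $\Ext^2_\Pi$ just established; every ingredient has in fact already been assembled in the two results immediately above, so the argument is a short manipulation.

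First I would record the Euler form of two finite dimensional modules $X,Y$ using that $\gldim\Pi=2$ (\intref{Theorem}{thm:gl-dim-2}):
\[ \langle X,Y\rangle = \dim\Hom_\Pi(X,Y) - \dim\Ext^1_\Pi(X,Y) + \dim\Ext^2_\Pi(X,Y). \]
Substituting the natural isomorphism $\Ext^2_\Pi(X,Y)\cong D\Hom_\Pi(Y,X)$ from the preceding theorem — unconditionally in the hereditary tensor algebra case, and under the invariance hypothesis on the Auslander--Reiten formula in the finite dimensional hereditary case — yields
\[ \langle X,Y\rangle = \dim\Hom_\Pi(X,Y) + \dim\Hom_\Pi(Y,X) - \dim\Ext^1_\Pi(X,Y), \]
which is exactly the formula already noted as an immediate corollary above.

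Next I would invoke the proposition above, which shows that the Euler form of $\Pi$ factors through $K_0(A)$ and there coincides with the symmetrised bilinear form $(-,-)$ represented by the matrix $2\mat D-\mat B$; in particular $\langle X,Y\rangle=\langle Y,X\rangle$ for all finite dimensional $X$ and $Y$. Applying the displayed formula to both $\langle X,Y\rangle$ and $\langle Y,X\rangle$, the two $\Hom$-contributions occur symmetrically on each side and cancel, leaving $\dim\Ext^1_\Pi(X,Y)=\dim\Ext^1_\Pi(Y,X)$, as required.

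As for the main obstacle: there is essentially none once the preceding theorem and proposition are in hand — the proof is a two-line computation. The only points deserving a moment's attention are that the $\Ext^2_\Pi\cong D\Hom_\Pi$ identification is genuinely available at the level of generality in which one states this corollary, and that the Euler form arising from the length-three standard resolution really is the symmetric form $(-,-)$; both are settled by the two results immediately above.
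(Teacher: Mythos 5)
Your argument is exactly the paper's: the corollary is deduced from the symmetry of the Euler form $(X,Y)$ together with the formula $(X,Y)=\dim\Hom_\Pi(X,Y)+\dim\Hom_\Pi(Y,X)-\dim\Ext^1_\Pi(X,Y)$, which in turn follows by substituting $\Ext^2_\Pi(X,Y)\cong D\Hom_\Pi(Y,X)$ into the three-term Euler characteristic coming from $\gldim\Pi=2$. You also correctly flag the one genuine subtlety, namely that the $\Ext^2\cong D\Hom$ identification is only proved unconditionally in the tensor-algebra case (and under the $\tau^-$-invariance hypothesis in the finite dimensional case), which matches the caveat the paper itself records immediately after the corollary.
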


We expect the result to hold for all preprojective algebras of non-Dynkin type, but we can only prove this under the assumption that the perfect pairing describing the Auslander--Reiten pairing is invariant under the inverse translate.

\subsubsection{Locally finite hereditary tensor algebras}

\begin{prop}\label{prop:ker-on-left-2}
Let $\Pi$ be the preprojective algebra of $\Lambda=T_A(M)$, and $X$ and $Y$ two right $\Pi$-modules with $Y$ finite dimensional. Tensoring the standard projective presentation of $X$ with $DY$, the left hand map
\[ X\otimes_ADY \to X\otimes_A(M\oplus DM)\otimes_ADY \]
has kernel $\Hom_\Pi(Y,X)$.
\end{prop}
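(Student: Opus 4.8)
The plan is to apply $\Hom_\Pi(-,X)$ to the standard projective presentation of $Y$ as a right $\Pi$-module from \intref{Corollary}{cor:LHS-tensor},
\[ Y\otimes_A\Pi \xrightarrow{\varepsilon} Y\otimes_A(M\oplus DM)\otimes_A\Pi \xrightarrow{\partial} Y\otimes_A\Pi \xrightarrow{p} Y \to 0, \]
and to recognise the outcome as the standard projective presentation of $X$ tensored on the right over $\Pi$ with $DY$. This presentation of $Y$ is exact at its three right-hand terms — it is the four-term exact sequence produced by the Snake Lemma in the proof of \intref{Theorem}{thm:standard-proj-pi}, and can fail to be exact only at the leftmost $Y\otimes_A\Pi$ — so left-exactness of $\Hom_\Pi(-,X)$ shows that $\ker\bigl(\Hom_\Pi(\partial,X)\colon\Hom_\Pi(Y\otimes_A\Pi,X)\to\Hom_\Pi(Y\otimes_A(M\oplus DM)\otimes_A\Pi,X)\bigr)$ equals the image of $\Hom_\Pi(p,X)$, which is isomorphic to $\Hom_\Pi(Y,X)$ since $p$ is onto. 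It therefore suffices to identify $\Hom_\Pi(\partial,X)$ with the left-hand map of the standard projective presentation of $X$ tensored with $DY$.

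For the objects, the tensor--hom adjunction along $A\rightarrowtail\Pi$ gives $\Hom_\Pi(Y\otimes_A\Pi,X)\cong\Hom_A(Y,X)$ and $\Hom_\Pi(Y\otimes_A(M\oplus DM)\otimes_A\Pi,X)\cong\Hom_A(Y\otimes_A(M\oplus DM),X)$; as $Y$, hence also $Y\otimes_A(M\oplus DM)$, is finite dimensional, \intref{Lemma}{lem:A-homs} converts these to $X\otimes_ADY$ and $X\otimes_AD(Y\otimes_A(M\oplus DM))$. The latter becomes $X\otimes_A(M\oplus DM)\otimes_ADY$ once we invoke $D(M\oplus DM)\cong M\oplus DM$ together with \intref{Lemma}{lem:A-homs-dual}, so that the distinguished elements $\rho,\ell$ attached to $M_A,{}_AM$ correspond to those attached to $(DM)_A,{}_A(DM)$ — the same bookkeeping as in the proof of \intref{Lemma}{lem:indep-orient}. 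These are exactly the terms $X\otimes_A\Pi\otimes_\Pi DY$ and $X\otimes_A(M\oplus DM)\otimes_A\Pi\otimes_\Pi DY$ occurring in the standard projective presentation of $X$ after tensoring with $DY$.

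The remaining, and genuinely non-formal, point — which I expect to be the main obstacle — is to check that under these identifications $\Hom_\Pi(\partial,X)$ becomes the map $x\otimes\eta\mapsto\sum_i(xm_i\otimes\mu_i\otimes\eta+x\otimes m_i\otimes\mu_i\eta)-\sum_j(x\nu_j\otimes n_j\otimes\eta+x\otimes\nu_j\otimes n_j\eta)$ of \intref{Corollary}{cor:LHS-tensor}. Unwinding the adjunction, a homomorphism $\phi\in\Hom_\Pi(Y\otimes_A\Pi,X)$ with associated $\psi=\phi(-\otimes1)\in\Hom_A(Y,X)$ is sent by $\Hom_\Pi(\partial,X)$ to the $A$-linear map $Y\otimes_A(M\oplus DM)\to X$, $y\otimes\gamma\mapsto\psi(y)\gamma-\psi(y\gamma)$ — here one uses the description of the middle map as $y\otimes\gamma\otimes\pi\mapsto y\otimes\gamma\pi-y\gamma\otimes\pi$, which is immediate from the constructions in \intref{Theorem}{thm:tensor-mho-hered} and \intref{Theorem}{thm:standard-proj-pi}. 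One must then match this map, under $\Hom_A(Y\otimes_A(M\oplus DM),X)\cong X\otimes_A(M\oplus DM)\otimes_ADY$, with the image of $\psi$ under the left-hand map above. This is a direct computation with the distinguished elements $\rho=\sum m_i\otimes\mu_i$, $\ell=\sum\nu_j\otimes n_j$ and the symmetry of $\sigma$, structurally identical to the computation of the middle map in the proof of \intref{Lemma}{lem:gr-hom-ext} (and using \intref{Lemma}{lem:rmod-action} and \intref{Lemma}{lem:A-homs-dual} in the same way). Conceptually it records the self-duality of the three-term complex of $\Pi$-bimodules $\Pi\otimes_A\Pi\to\Pi\otimes_A(M\oplus DM)\otimes_A\Pi\to\Pi\otimes_A\Pi$ under $\Hom_{A\hyp A}(-,\Pi)$: reversal interchanges the two differentials, because both $\Pi\otimes_A\Pi$ and $\Pi\otimes_A(M\oplus DM)\otimes_A\Pi$ are self-dual $A$-bimodules. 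Granting this, $\ker\bigl(X\otimes_ADY\to X\otimes_A(M\oplus DM)\otimes_ADY\bigr)=\ker\Hom_\Pi(\partial,X)\cong\Hom_\Pi(Y,X)$, as required.
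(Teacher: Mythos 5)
Your approach is valid but is organized differently from the paper's. The paper's proof of \intref{Proposition}{prop:ker-on-left-2} decomposes the left-hand map into its two direct-summand components (landing in $X\otimes_ADM\otimes_ADY$ and $X\otimes_AM\otimes_ADY$ respectively), identifies each component via \intref{Lemma}{lem:bar-c-duality}(2) with the map $\Hom_A(Y,X)\to\Hom_\Lambda(Y\otimes_\Lambda\Omega,X)$, $f\mapsto(y\otimes d\lambda\mapsto f(y)\lambda-f(y\lambda))$ --- once for $\Lambda=T_A(M)$ and once for $\tilde\Lambda=T_A(DM)$ --- and then reads off the kernel as $\Hom_\Lambda(Y,X)\cap\Hom_{\tilde\Lambda}(Y,X)=\Hom_\Pi(Y,X)$. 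You instead apply $\Hom_\Pi(-,X)$ to the $\Pi$-presentation of $Y$ and invoke left-exactness, which puts the entire burden on identifying $\Hom_\Pi(\partial,X)$ with $\varepsilon_X\otimes_\Pi DY$. These two routes rely on the same kind of identification --- your deferred "direct computation" is in effect the two instances of \intref{Lemma}{lem:bar-c-duality}(2) merged into one (and it is exactly the computation carried out, in the dual direction, in the proof of the subsequent corollary $\Ext^2_\Pi(X,Y)\cong D\Hom_\Pi(Y,X)$). What yours buys: if you verify the identification for all three maps you get a quasi-isomorphism of complexes, and the corollary drops out simultaneously. What the paper's buys: modularity, since the key identification was already established once and for all in the hereditary setting. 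The one thing to flag is that the identification you defer is genuinely delicate --- it mixes $\rmod{}$ and $\lmod{}$ on the same factor and is sensitive to sign conventions, as the $(D\bar\Lambda)_{\mathrm{tw}}$ twist in the Koszul-dual section warns. You are candid that this is the main obstacle and you cite the right lemmas (\intref{Lemma}{lem:rmod-action}, \intref{Lemma}{lem:A-homs-dual}, and the template in \intref{Lemma}{lem:gr-hom-ext}, whose own proof routes through \intref{Lemma}{lem:bar-c-duality}), but as written the proposal defers its essential content to that unexecuted computation; it should be written out before the argument can be regarded as complete.
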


\begin{proof}
The morphism in question is given by
\[ x\otimes\eta \mapsto \sum(xm_i\otimes\mu_i\otimes\eta+x\otimes m_i\otimes\mu_i\eta) -
\sum(x\nu_j\otimes n_j\otimes\eta+x\otimes\nu_j\otimes n_j\eta). \]

By \intref{Lemma}{lem:bar-c-duality} (2) the morphism
\[ \Hom_A(Y,\Lambda) \to \Hom_\Lambda(Y\otimes_\Lambda\Omega,\Lambda) \]
sending $f$ to the map $y\otimes d\lambda\mapsto f(y)\lambda-f(y\lambda)$ corresponds, under the usual identifications, to
\[ \Lambda\otimes_ADY \to \mho\otimes_\Lambda DY, \quad 1\otimes\eta \mapsto \bar c\otimes\eta. \]
If we now tensor with $X$ and use \intref{Lemma}{lem:standard-isos} (1), we see that the map
\[ X\otimes_ADY \to X\otimes_\Lambda\mho\otimes_\Lambda DY, \quad x\otimes\eta \mapsto x\otimes\bar c\otimes\eta, \]
corresponds to the morphism
\[ \Hom_A(Y,X) \to \Hom_\Lambda(Y\otimes_\Lambda\Omega,X) \]
sending $f$ to $y\otimes d\lambda\mapsto f(y)\lambda-f(y\lambda)$.

Using the identifications $\Omega=\Lambda\otimes_AM\otimes_A\Lambda$ and $\mho=\Lambda\otimes_ADM\otimes_A\Lambda$, so that $\bar c=\rho\otimes1-1\otimes\ell$ as shown in \intref{Proposition}{prop:bar-c-tensor}, we can write this as a map $X\otimes_ADY \to X\otimes_ADM\otimes_ADY$, where
\[ x\otimes\eta \mapsto \sum xm_i\otimes\mu_i\otimes\eta - \sum x\otimes\nu_j\otimes n_j\eta. \]
In particular, this morphism has kernel $\Hom_\Lambda(X,Y)$, so those $A$-linear maps $X\to Y$ commuting with multiplication by elements of $M$.

We now consider the hereditary algebra $\tilde\Lambda=T_A(DM)$. In this case the bimodule of noncommutative 1-forms is $\tilde\Lambda\otimes_ADM\otimes_A\tilde\Lambda$, and so the Auslander--Reiten translate is the quotient of the bimodule $\tilde\Lambda\otimes_AM\otimes_A\tilde\Lambda$ by the sub-bimodule generated by
\[ \sum\nu_j\otimes n_j\otimes1-\sum1\otimes m_i\otimes\mu_i. \]
The analogous proof thus shows that the morphism $X\otimes_ADY\to X\otimes_AM\otimes_ADY$ given by
\[ x\otimes\eta \mapsto \sum x\nu_j\otimes n_j\otimes\eta - \sum x\otimes m_i\otimes\mu_i\eta \]
has kernel $\Hom_{\tilde\Lambda}(X,Y)$, so those $A$-linear maps $X\to Y$ commuting with multiplication by elements of $DM$.

Putting this together we see that the kernel of the map $X\otimes_ADY\to X\otimes_A(M\oplus DM)\otimes_ADY$ we are interested in consists of those $A$-linear maps $X\to Y$ which commute with multiplication by all elements of $M\oplus DM$, which as $T_A(M\oplus DM)\twoheadrightarrow\Pi$ is precisely the set $\Hom_\Pi(X,Y)$.
\end{proof}

\begin{cor}
Let $\Pi$ be the preprojective algebra of $\Lambda=T_A(M)$, not of Dynkin type. Then there is a natural isomorphism of bifunctors
\[ \Ext^2_\Pi(X,Y) \cong D\Hom_\Pi(Y,X) \]
for all finite dimensional modules $X$ and $Y$.
\end{cor}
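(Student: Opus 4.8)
The plan is to reduce the statement to \intref{Proposition}{prop:ker-on-left-2} by reinterpreting $\Ext^2_\Pi$ as a $\Tor$-group.

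First I would observe that, by \intref{Theorem}{thm:gl-dim-2}, the standard projective presentation of $X$ from \intref{Corollary}{cor:LHS-tensor},
\[ 0 \to X\otimes_A\Pi \to X\otimes_A(M\oplus DM)\otimes_A\Pi \to X\otimes_A\Pi \to X \to 0, \]
is in fact a projective resolution of $X$ as a right $\Pi$-module. Since $Y$ is finite dimensional we may write $Y\cong D(DY)$ with $DY$ a left $\Pi$-module, and the tensor--hom adjunction provides a natural isomorphism $\Hom_\Pi(P,D(DY))\cong D(P\otimes_\Pi DY)$. Applying this term by term identifies the cochain complex $\Hom_\Pi(P_\bullet,Y)$ computing $\Ext^\bullet_\Pi(X,Y)$ with the $k$-linear dual of the chain complex $P_\bullet\otimes_\Pi DY$ computing $\Tor_\bullet^\Pi(X,DY)$; as $D$ is exact, this yields natural isomorphisms $\Ext^i_\Pi(X,Y)\cong D\Tor_i^\Pi(X,DY)$ for all $i$.

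Next I would compute $\Tor_2^\Pi(X,DY)$ directly from that resolution. Tensoring it on the right with $DY$ over $\Pi$ gives the complex
\[ X\otimes_ADY \to X\otimes_A(M\oplus DM)\otimes_ADY \to X\otimes_ADY, \]
and since there is no term in chain-degree three, its homology in degree two is simply the kernel of the left hand map. But this is precisely the map analysed in \intref{Proposition}{prop:ker-on-left-2}, whose kernel is $\Hom_\Pi(Y,X)$. Hence $\Tor_2^\Pi(X,DY)\cong\Hom_\Pi(Y,X)$.

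Combining the two computations gives $\Ext^2_\Pi(X,Y)\cong D\Tor_2^\Pi(X,DY)\cong D\Hom_\Pi(Y,X)$, and naturality in both arguments follows from the functoriality of the standard resolution (\intref{Theorem}{thm:standard-proj-pi}), of the adjunction isomorphism, and of the identifications used in \intref{Proposition}{prop:ker-on-left-2}. I do not expect a genuine obstacle, since the essential content already lies in \intref{Proposition}{prop:ker-on-left-2}; the only point requiring a moment's care is confirming that its kernel description is natural in both $X$ and $Y$, which holds because it is assembled from the standard natural isomorphisms of \intref{Lemma}{lem:standard-isos} and \intref{Lemma}{lem:bar-c-duality}.
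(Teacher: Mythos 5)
Your proof is correct and takes essentially the same approach as the paper: both reduce the claim to Proposition~\ref{prop:ker-on-left-2} via the standard resolution of $X$. The only difference is presentational — you invoke the adjunction $\Hom_\Pi(P,D(DY))\cong D(P\otimes_\Pi DY)$ at the level of resolutions to obtain $\Ext^i_\Pi(X,Y)\cong D\Tor^\Pi_i(X,DY)$ in one stroke, whereas the paper unwinds the same duality explicitly, writing out the formula for the map $\Hom_A(X\otimes_A(M\oplus DM),Y)\to\Hom_A(X,Y)$ and matching it termwise with the $k$-dual of the morphism in the proposition; your packaging is slightly cleaner but the content is identical.
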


\begin{proof}
Applying $\Hom_\Pi(-,Y)$ to the projective presentation of $X$ yields the morphism
\[ \Hom_A(X\otimes_A(M\oplus DM),Y) \to \Hom_A(X,Y) \]
sending $f$ to the morphism
\[ \hat f \colon x \mapsto \sum f(xm_i\otimes\mu_i) + \sum f(x\otimes m_i)\mu_i - \sum f(x\nu_j\otimes n_j) - \sum f(x\otimes\nu_j)n_j. \]
Clearly this has cokernel $\Ext^2_\Pi(X,Y)$.

We next have the natural isomorphism $\Hom_A(X,Y) \cong D(X\otimes_ADY)$ sending a map $g\colon X\to Y$ to the linear functional $\psi(g)\colon x\otimes\eta\mapsto\eta(g(x))$. Doing the same for the first term, we see that for $f\in\Hom_A(X\otimes_A(M\oplus DM),Y)$ we have
\begin{align*}
\psi(\hat f)(x\otimes\eta) &= \sum \eta(f(xm_i\otimes\mu_i)) + \sum (\mu_i\eta)(f(x\otimes m_i))\\
&\qquad\qquad\qquad - \sum \mu(f(x\nu_j\otimes n_j)) - \sum (n_j\eta)(f(x\otimes\nu_j))\\
&= \sum \psi(f)(xm_i\otimes\mu_i\otimes\eta) + \sum \psi(f)(x\otimes m_i\otimes\mu_i\eta)\\
&\qquad\qquad\qquad - \sum \psi(f)(x\nu_j\otimes n_j\otimes\eta)
- \sum \psi(f)(x\otimes\nu_j\otimes n_j\eta).
\end{align*}
The corresponding morphism $D(X\otimes_A(M\oplus DM)\otimes_ADY) \to D(X\otimes_ADY)$ is thus dual to the morphism considered in the previous proposition, and hence has cokernel $D\Hom_\Pi(Y,X)$.

As all the constructions are natural isomorphisms of bifunctors, so too is the induced isomorphism $\Ext^2_\Pi(X,Y)\cong D\Hom_\Pi(Y,X)$.\end{proof}

\subsubsection{Finite dimensional hereditary algebras}

We now turn our attention to when $\Pi$ is the preprojective algebra of a finite dimensional hereditary algebra $\Lambda$. In this case we need a different approach, as it is not clear how to adapt the proof of \intref{Proposition}{prop:ker-on-left-2}, or what amounts to the same thing, how to produce a perfect pairing between $X\otimes_T\widetilde\Omega\otimes_TDY$ and $Y\otimes_T\widetilde\Omega\otimes_TDX$.

Recall that the Auslander--Reiten pairing is a bifunctorial perfect pairing
\[ \{-,-\} \colon \Ext^1_\Lambda(X,Y) \times \Hom_\Lambda(Y,\tau X) \to k. \]
We need to assume that this pairing is invariant under $\tau^-$, so for all $X$ having no non-zero injective direct summands
\[ \{\tau^-\eta,u_Xh\} = \{\eta,(\tau h)u_Y\} \quad\textrm{for } \eta\in\Ext^1_\Lambda(X,Y) \quad\textrm{and}\quad h\in\Hom_\Lambda(\tau^-Y,X), \]
where $u_X\colon X\to\tau\tau^-X$ is the unit of the adjunction. We note that the assumption on $X$ implies that $\tau^-\eta\in\Ext^1_\Lambda(\tau^-X,\tau^-Y)$ for all $\eta\in\Ext^1_\Lambda(X,Y)$.

Under this assumption we can prove the result. Our proof, analogously to say the proof of Serre duality for a smooth curve, first shows the result when one of the modules lies in a certain subcategory (for curves this would be the subcategory of locally free sheaves), and then deduces the result in general by constructing suitable approximations (every torsion sheaf has a presentation using locally free sheaves).

The subcategory we will use is $\fd_{\mathcal I}\Pi$ consisting of those finite dimensional modules whose restriction to $\Lambda$ contain no non-zero postinjective direct summands. We note that this subcategory is closed under kernels and extensions.

\begin{prop}
Let $\Pi$ be the preprojective algebra of a finite dimensional hereditary algebra $\Lambda$, not of Dynkin type. Under the assumption that the Auslander--Reiten pairing is invariant under $\tau^-$ there are natural isomorphisms of bifunctors
\[ \Ext^2_\Pi(X,Y) \cong D\Hom_\Pi(Y,X) \quad\textrm{and}\quad \Ext^2_\Pi(Y,X) \cong D\Hom_\Pi(X,Y). \]
for all modules $X\in\fd_{\mathcal I}\Pi$ and $Y\in\fd\Pi$.
\end{prop}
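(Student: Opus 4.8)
The plan is to follow the template of the tensor-algebra case, with the explicit maps of \intref{Proposition}{prop:ker-on-left-2} replaced by the Auslander--Reiten pairing, and then to remove the restriction on the second module by a d\'evissage in the spirit of the proof of Serre duality. Throughout, I resolve the module in the left slot of $\Ext^2$ by its standard projective resolution, which is a genuine resolution since $\gldim\Pi=2$ by \intref{Theorem}{thm:gl-dim-2}. Applying $\Hom_\Pi(-,V)$ to the standard resolution of $U$ gives the complex
\[ \Hom_A(U,V) \longrightarrow \Hom_T(U\otimes_T\widetilde\Omega,V) \longrightarrow \Hom_A(U,V), \]
so $\Ext^2_\Pi(U,V)$ is the cokernel of the right-hand map and, dually, $D\Ext^2_\Pi(U,V)$ is the kernel of the transpose map $\Hom_A(V,U)\to D\Hom_T(U\otimes_T\widetilde\Omega,V)$, where I use the canonical identification $D\Hom_A(U,V)\cong\Hom_A(V,U)$ coming from the trace form on the symmetric algebra $A$. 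On the other hand, the degree-zero part of the standard resolution of $V$ identifies $\Hom_\Pi(V,U)$ with the kernel of $\Hom_A(V,U)\to\Hom_T(V\otimes_T\widetilde\Omega,U)$. It therefore suffices to produce a natural isomorphism
\[ \phi\colon \Hom_T(V\otimes_T\widetilde\Omega,U) \iso D\Hom_T(U\otimes_T\widetilde\Omega,V) \]
intertwining these two maps out of $\Hom_A(V,U)$.

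I would build $\phi$ from two ingredients. First, the split exact sequence of \intref{Theorem}{thm:tensor-mho-hered} gives, for all $W$ and $Z$, a splitting $\Hom_T(W\otimes_T\widetilde\Omega,Z)\cong\Hom_\Lambda(W\otimes_\Lambda\Omega,Z)\oplus\Hom_\Lambda(W\otimes_\Lambda\mho,Z)$. Second, $\mho=D(D\Lambda\otimes_\Lambda\Omega\otimes_\Lambda D\Lambda)$ together with the duality isomorphisms for $A$ from \intref{Section}{sec:ART} gives a natural isomorphism $\Hom_\Lambda(W\otimes_\Lambda\mho,Z)\cong D\Hom_\Lambda(Z\otimes_\Lambda\Omega,W)$, exactly the identification already used in the symmetrised Euler-form computation. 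Combining these, both $\Hom_T(V\otimes_T\widetilde\Omega,U)$ and $D\Hom_T(U\otimes_T\widetilde\Omega,V)$ become identified with $\Hom_\Lambda(V\otimes_\Lambda\Omega,U)\oplus D\Hom_\Lambda(U\otimes_\Lambda\Omega,V)$; this defines $\phi$, up to a sign on one summand which must be fixed by hand (reminiscent of the sign-twist in \intref{Lemma}{lem:indep-orient}). What then remains is to check that $\phi$ is compatible with the two differentials, i.e., that the images of $\bar c$ under the two natural maps out of $\Hom_A(V,U)$ correspond under $\phi$.

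For the base case I would take both $U$ and $V$ in $\fd_{\mathcal I}\Pi$ and verify this compatibility after decomposing everything as right $\Lambda$-modules along the grading $\Pi=\bigoplus_n\tau^{-n}\Lambda$. As in \intref{Proposition}{prop:ker-on-left}, each homogeneous component of the maps in sight is obtained by applying $\Hom_\Lambda(-,U)$ or $\Hom_\Lambda(-,V)$ to the standard projective resolution of a translate $\tau^{\pm n}(D\Lambda)$; the cokernels that arise are $\Ext^1_\Lambda$-groups between such translates and $U$ or $V$, and $\phi$ becomes, componentwise, the Auslander--Reiten pairing of \intref{Theorem}{thm:ARF}. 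Since the two $\bar c$-differentials shift the grading by one, their compatibility with $\phi$ is precisely the invariance of the Auslander--Reiten pairing under $\tau^-$, our standing hypothesis. The hypothesis $U,V\in\fd_{\mathcal I}\Pi$ enters to guarantee that the translates $\tau^{-n}\Lambda$ occurring here never meet preinjective summands of the restrictions of $U$ or $V$, so that the relevant $\Hom_\Lambda$'s assemble into honest finite-dimensional objects and the argument stays clear of the injective summands on which the $\tau^-$-invariance identity would be vacuous.

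Finally, to pass to an arbitrary second module, I would use that $\fd_{\mathcal I}\Pi$ is closed under kernels and extensions, and that every finite-dimensional $\Pi$-module is a quotient of one lying in $\fd_{\mathcal I}\Pi$ (this last point needs a small separate argument, the analogue of resolving a torsion sheaf by locally free sheaves). Choosing a presentation $R_1\to R_0\to V\to 0$ with $R_0,R_1\in\fd_{\mathcal I}\Pi$, and noting that both $\Ext^2_\Pi(U,-)$ and $D\Hom_\Pi(-,U)$ are right exact ($\Ext^3_\Pi=0$ for the former, exactness of $D$ and left-exactness of $\Hom_\Pi$ for the latter), the base case and the five lemma give the isomorphism for $U\in\fd_{\mathcal I}\Pi$ and arbitrary $V$; since the construction is symmetric in its two arguments, the same d\'evissage on the left yields $\Ext^2_\Pi(Y,X)\cong D\Hom_\Pi(X,Y)$ for $X\in\fd_{\mathcal I}\Pi$ and arbitrary $Y$. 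I expect the main obstacle to be the base case: recognising $\phi$ on homogeneous components as the Auslander--Reiten pairing requires carefully threading the various duality isomorphisms for $A$ and tracking the grading shift introduced by $\bar c$, and it is exactly here that the $\tau^-$-invariance of the Auslander--Reiten pairing --- not available in general --- is indispensable.
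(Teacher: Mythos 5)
Your overall strategy---build a duality isomorphism between the $\Hom_T(-\otimes_T\widetilde\Omega,-)$ terms, observe that compatibility with the $\bar c$-differentials is where the $\tau^-$-invariance of the Auslander--Reiten pairing must enter, and then finish by d\'evissage through $\fd_{\mathcal I}\Pi$---is the right one, and the d\'evissage step at the end is essentially what the paper does in the follow-up corollary. But the base case, which you yourself flag as the main obstacle, is where your argument has a genuine gap, and moreover it is structured in a way that makes the gap hard to close.

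The problem is your construction of $\phi$. You build it from the splitting of
\[
0 \to V\otimes_\Lambda\Omega\otimes_\Lambda T \to V\otimes_T\widetilde\Omega \to V\otimes_\Lambda\mho\otimes_\Lambda T \to 0,
\]
but \intref{Theorem}{thm:tensor-mho-hered} explicitly says this splitting is \emph{not functorial}. That was harmless in the Euler-form computation (only dimensions were needed), but here you are trying to produce a \emph{natural} isomorphism of bifunctors. A $\phi$ assembled from per-object choices of splitting will not automatically commute with morphisms, so even if you could verify the compatibility with the differentials object by object, naturality would remain unestablished. On top of this, the compatibility check itself---that $\phi\circ\delta_V^\ast$ agrees with $D\varepsilon_U^\ast$ after the identification $D\Hom_A(U,V)\cong\Hom_A(V,U)$---is not carried out; you reduce it to ``the $\tau^-$-invariance of the AR-pairing, componentwise in the $\Lambda$-grading,'' but this reduction is precisely the hard computation, and it is not clear that it factors cleanly through your $\phi$ given the choice of splitting.

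The paper avoids these issues by never touching the $T=T_\Lambda(\mho)$ resolution in this argument. Instead it exploits $\Pi=T_\Lambda(\Pi_1)$: for $X\in\fd_{\mathcal I}\Pi$ one has $\Tor_1^\Lambda(\Pi,X)=0$, so tensoring the $\Lambda$-projective presentation of $X$ with $\Pi$ gives a projective presentation of $X\otimes_\Lambda\Pi$ of length one, whence $\Ext^1_\Pi(X\otimes_\Lambda\Pi,Y)\cong\Ext^1_\Lambda(X,Y)$ \emph{naturally}. Feeding this into the tensor-algebra sequence $0\to X\otimes_\Lambda\Pi_1\otimes_\Lambda\Pi\to X\otimes_\Lambda\Pi\to X\to0$ produces a functorial presentation
\[
\Ext^1_\Lambda(X,Y)\xrightarrow{\delta_X^\ast}\Ext^1_\Lambda(\tau^-X,Y)\to\Ext^2_\Pi(X,Y)\to0,
\]
and the entire content of the proof is the explicit verification that $\delta_X^\ast$ (computed as $\eta\mapsto g(\tau^-\eta)-\eta f$) and $\delta_Y^\ast$ are adjoint under the AR-pairings, using the $\tau^-$-invariance hypothesis in a single displayed chain of equalities. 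No splitting is ever needed, so naturality is immediate. This is also exactly where the hypothesis $X\in\fd_{\mathcal I}\Pi$ enters: it is what kills $\Tor_1^\Lambda(\Pi,X)$, not a condition about ``avoiding preinjective translates'' as you suggest. I would therefore recommend replacing your $T$-resolution framework by the $\Pi=T_\Lambda(\Pi_1)$ framework before attempting the base case; as written, the key step is both incomplete and built on a non-natural foundation.
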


\begin{proof}
By assumption on $X$ every short exact sequence of $\Lambda$-modules ending in $X$ remains exact after applying $\tau^{-n}$, equivalently after tensoring with $\Pi_n$, so $\Tor_1^\Lambda(\Pi,X)=0$. In particular, starting from the standard projective presentation of $X_\Lambda$
\[ \begin{tikzcd}
\xi\colon &[-20pt] 0 \arrow[r] & X\otimes_\Lambda\Omega \arrow[r] & X\otimes_A\Lambda \arrow[r] & X \arrow[r] & 0
\end{tikzcd} \]
we obtain the short exact sequence of $\Pi$-modules
\[ \begin{tikzcd}
\xi\otimes1\colon &[-20pt] 0 \arrow[r] & X\otimes_\Lambda\Omega\otimes_\Lambda\Pi \arrow[r] & X\otimes_A\Pi \arrow[r] & X\otimes_\Lambda\Pi \arrow[r] & 0.
\end{tikzcd} \]
We deduce that $X\otimes_\Lambda\Pi$ has projective dimension at most one, and after applying $\Hom_\Pi(-,Y)$ we obtain the isomorphism
\[ \Ext^1_\Pi(X\otimes_\Lambda\Pi,Y) \cong \Ext^1_\Lambda(X,Y). \]
We note that these constructions are all functorial in both $X$ and $Y$.

We now compute the inverse $\Theta$ of the final isomorphism explicitly. Given an extension $\eta\in\Ext^1_\Lambda(X,Y)$, we can represent it as the pushout $h\xi$ along some $h\colon X\otimes_\Lambda\Omega\to Y$. This in turn induces a $\Pi$-linear homomorphism $X\otimes_\Lambda\Omega\otimes_\Lambda\Pi\to Y$, and taking the pushout yields an extension in $\Ext^1_\Pi(X\otimes_\Lambda\Pi,Y)$. We can write the morphism as $g(h\otimes1)$, where $g\colon Y\otimes_\Lambda\Pi\to Y$ is the usual action, and clearly $(h\otimes1)(\xi\otimes1)=\eta\otimes1$. Thus $\Theta(\eta)=g(\eta\otimes1)$ is the pushout along the action map $g$ of the exact sequence $\eta\otimes1\in\Ext^1_\Pi(X\otimes_\Lambda\Pi,Y\otimes_\Lambda\Pi)$.

Next, as $\Pi=T_\Lambda(\Pi_1)$ is a tensor algebra, we have the standard exact sequence
\[ \begin{tikzcd}
0 \arrow[r] & X\otimes_\Lambda\Pi_1\otimes_\Lambda\Pi \arrow[r,"\delta_X"] & X\otimes_\Lambda\Pi \arrow[r] & X \arrow[r] & 0.
\end{tikzcd} \] 
Applying $\Hom_\Pi(-,Y)$ and using what we have just shown yields the exact sequence
\[ \begin{tikzcd}
\Ext^1_\Lambda(X,Y) \arrow[r] & \Ext^1_\Lambda(X\otimes_\Lambda\Pi_1,Y) \arrow[r] & \Ext^2_\Pi(X,Y) \arrow[r] & 0.
\end{tikzcd} \]
The map $\delta_X$ sends $x\otimes\pi\otimes1$ to $x\otimes\pi-x\pi\otimes1$, so writing $f\colon X\otimes_\Lambda\Pi\to X$ for the usual action yields the induced map
\[ \delta_X^\ast \colon \Hom_\Lambda(X,Y) \to \Hom_\Lambda(X\otimes_\Lambda\Pi_1,Y), \quad \alpha \mapsto g(\alpha\otimes1)-\alpha f. \]
We claim that we analogously have
\[ \delta_X^\ast \colon \Ext^1_\Lambda(X,Y) \to \Ext^1_\Lambda(X\otimes_\Lambda\Pi_1,Y), \quad \eta \mapsto g(\eta\otimes1)-\eta f. \]

The second component of $\delta_X^\ast$ is the pullback along $f\otimes1\colon X\otimes_\Lambda\Pi_1\otimes_\Lambda\Pi\to X\otimes_\Lambda\Pi$, and clearly
\[ \Theta(\eta)(f\otimes1) = g(\eta\otimes1)(f\otimes1) = g(\eta f\otimes1) = \Theta(\eta f), \]
so corresponds simply to the pullback along $f$.

The first component of $\delta_X^\ast$ is the pullback along $1\otimes m$, where $m\colon\Pi_1\otimes_\Lambda\Pi\rightarrowtail\Pi$ is the multiplication. Now, given a short exact sequence
\[ \begin{tikzcd}
\eta\colon &[-20pt] 0 \arrow[r] & Y \arrow[r] & E \arrow[r] & X \arrow[r] & 0
\end{tikzcd} \]
we have the exact commutative diagram
\[ \begin{tikzcd}
0 \arrow[r] & Y\otimes_\Lambda\Pi_1\otimes_\Lambda\Pi \arrow[r] \arrow[d,"1\otimes m"] &
E\otimes_\Lambda\Pi_1\otimes_\Lambda\Pi \arrow[r] \arrow[d,"1\otimes m"] &
X\otimes_\Lambda\Pi_1\otimes_\Lambda\Pi \arrow[r] \arrow[d,"1\otimes m"] & 0\\
0 \arrow[r] & Y\otimes_\Lambda\Pi \arrow[r] & E\otimes_\Lambda\Pi \arrow[r] &
X\otimes_\Lambda\Pi \arrow[r] & 0
\end{tikzcd} \]
It follows that the pushout of the top row equals the pullback of the bottom row, so $(1\otimes m)(\eta\otimes1\otimes1)=(\eta\otimes1)(1\otimes m)$. Next, we have $g(1\otimes m)=g(g\otimes1)$ as maps $Y\otimes_\Lambda\Pi_1\otimes_\Lambda\Pi\to Y$, so
\[ \Theta(\eta)(1\otimes m) = g(\eta\otimes1)(1\otimes m) = g(1\otimes m)(\eta\otimes1\otimes1) = g(g(\eta\otimes1)\otimes1) = \Theta(g(\eta\otimes1)). \]
This proves the claim.

Using the isomorphism $X\otimes_\Lambda\Pi_1\cong\tau^-X$ we see that $\tau^-h$ corresponds to $h\otimes1$ for all $h\in\Hom_\Lambda(X,Y)$, and similarly for $\eta\in\Ext^1_\Lambda(X,Y)$. Thus we can rephrase the above results as saying
\[ \delta_X^\ast \colon \Hom_\Lambda(X,Y) \to \Hom_\Lambda(\tau^-X,Y), \quad h \mapsto g(\tau^-h) - hf \]
and
\[ \delta_X^\ast \colon \Ext^1_\Lambda(X,Y) \to \Ext^1_\Lambda(\tau^-X,Y), \quad \eta \mapsto g(\tau^-\eta) - \eta f. \]

Finally, we make some computations using the Auslander--Reiten pairing. We have the perfect pairing
\[ \{-,-\} \colon \Ext^1_\Lambda(X,Y)\times\Hom_\Lambda(Y,\tau X) \to k, \]
which together with the adjunction $\Hom_\Lambda(\tau^-Y,X)\cong\Hom_\Lambda(Y,\tau X)$, $h\mapsto(\tau h)u_Y$, yields a perfect pairing
\[ \{-,-\}' \colon \Ext^1_\Lambda(X,Y)\times\Hom_\Lambda(\tau^-Y,X) \to k. \]
Now, taking $\eta\in\Ext^1_\Lambda(Y,X)$ and $h\in\Hom_\Lambda(\tau^-Y,X)$ we have
\begin{align*}
\{ \eta,\delta^\ast_Y(h) \}' &= \{ \eta,f(\tau^-h) \} - \{ \eta,hg \}'\\
&= \{ \eta,(\tau f)(\tau\tau^-h)u_Y \} - \{ \eta,\tau(hg)u_Y \} \\
&= \{ \eta,(\tau f)u_Xh \} - \{ \tau^-\eta,u_Xhg \}\\
&= \{ \eta f,u_Xh \} - \{ g(\tau^-\eta),u_Xh \} = -\{\delta^\ast_X(\eta),u_Xh\} 
\end{align*}
Here the second row uses the adjunction, the third row uses the identity $(\tau\tau^-h)u_Y=u_Xh$ together with the assumed invariance under $\tau^-$, and the fourth row follows from the bifunctoriality of the perfect pairing. We deduce that we have an exact commutative diagram
\[ \begin{tikzcd}
\Ext^1_\Lambda(X,Y) \arrow[r,"\delta^\ast_X"] \arrow[d,"\wr"] & \Ext^1_\Lambda(\tau^-X,Y) \arrow[r] \arrow[d,"\wr"] &
\Ext^2_\Pi(X,Y) \arrow[r] \arrow[d,"\wr","\exists" swap] & 0\\
D\Hom_\Lambda(\tau^-Y,X) \arrow[r,"D\delta^\ast_Y"] & D\Hom_\Lambda(Y,X) \arrow[r] & D\Hom_\Pi(Y,X) \arrow[r] & 0
\end{tikzcd} \]
where the vertical isomorphisms on the left and in the middle are induced by the Auslander--Reiten formula. It follows that we have an induced natural isomorphism of bifunctors between the cokernels
\[ \Ext^2_\Pi(X,Y) \iso D\Hom_\Pi(Y,X) \]
as required.

Similarly, applying $\Hom_\Pi(-,Y)$ to the standard exact sequence for $X$ given above yields the exact sequence
\[ \begin{tikzcd}
0 \arrow[r] & \Hom_\Pi(X,Y) \arrow[r] & \Hom_\Lambda(X,Y) \arrow[r,"\delta_X^\ast"] & \Hom_\Lambda(\tau^-X,Y)
\end{tikzcd} \]
whereas applying $\Hom_\Pi(-,X)$ to the analogous sequence for $Y$ and using that $Y\otimes_\Lambda\Pi$ has projective dimension at most one yields the exact sequence
\[ \begin{tikzcd}
\Ext^1_\Lambda(Y,X) \arrow[r,"\delta_Y^\ast"] & \Ext^1_\Lambda(\tau^-Y,X) \arrow[r] & \Ext^2_\Pi(Y,X) \arrow[r] & 0
\end{tikzcd} \]
As we just shown, the maps $\delta^\ast_X$ and $\delta^\ast_Y$ are adjoint to one another using the Auslander--Reiten pairings, so we can deduce the natural isomorphism of bifunctors
\[ \Ext^2_\Pi(Y,X) \iso D\Hom_\Pi(X,Y). \qedhere \] 
\end{proof}

To get the full result we show that every finite dimensional module admits an approximation by a module in $\fd_{\mathcal I}\Pi$.

\begin{lem}
Let $\Pi$ be the preprojective algebra of a finite dimensional hereditary algebra $\Lambda$, not of Dynkin type. Then every finite dimensional $\Pi$-module $X$ admits a resolution $0\to Z\to Y\to X\to 0$ with $Y,Z\in\fd_{\mathcal I}\Pi$.
\end{lem}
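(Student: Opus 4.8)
The plan is to reduce everything to producing one surjection onto $X$ from a module in $\fd_{\mathcal I}\Pi$, to split off the ``$\Pi_+$--divisible part'' of $X$, to cover the complementary (nilpotent) quotient by a truncated projective, and to reassemble by a pullback. The crux will be showing that the $\Pi_+$--divisible part itself has no postinjective summand over $\Lambda$.

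First, it suffices to find a surjection $Y\twoheadrightarrow X$ with $Y\in\fd_{\mathcal I}\Pi$: the kernel is then a $\Lambda$-submodule of $Y$, and since a nonzero postinjective $\Lambda$-module admits no nonzero homomorphism into a preprojective or regular module, no $\Lambda$-submodule of a module without postinjective summands can acquire one; hence the kernel lies in $\fd_{\mathcal I}\Pi$ as well.

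Now set $X^{\circ}:=\bigcap_{m\ge0}X\Pi_{+}^{\,m}$. As $X$ is finite-dimensional this decreasing chain of $\Pi$-submodules stabilises, so $X^{\circ}\Pi_{1}=X^{\circ}$, while $\Pi_{+}$ acts nilpotently on $X/X^{\circ}$ and $X\Pi_{\ge N}=X^{\circ}$ for $N\gg0$. For the nilpotent module $X/X^{\circ}$ I take a surjection from a finitely generated projective $Q=\bigoplus_{t}e_{i_{t}}\Pi\twoheadrightarrow X/X^{\circ}$; since $(X/X^{\circ})\Pi_{\ge N}=0$ this map kills $Q\Pi_{\ge N}$, hence factors through the finite-dimensional module $P:=Q/Q\Pi_{\ge N}$, whose restriction to $\Lambda$ is a direct sum of summands of the preprojective modules $\Pi_{n}|_{\Lambda}\cong\tau^{-n}\Lambda$; thus $P\in\fd_{\mathcal I}\Pi$ and $P\twoheadrightarrow X/X^{\circ}$. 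Forming the pullback $Y:=X\times_{X/X^{\circ}}P$ gives an exact sequence $0\to X^{\circ}\to Y\to P\to0$ together with a surjection $Y\twoheadrightarrow X$ whose kernel is $\ker(P\to X/X^{\circ})\subseteq P$, so in $\fd_{\mathcal I}\Pi$; and provided $X^{\circ}\in\fd_{\mathcal I}\Pi$, the module $Y$ is an extension of objects of $\fd_{\mathcal I}\Pi$ (which is closed under extensions), hence in $\fd_{\mathcal I}\Pi$, completing the argument.

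The whole proof therefore reduces to the single claim $X^{\circ}\in\fd_{\mathcal I}\Pi$, equivalently that a finite-dimensional $\Pi$-module $Z$ with $Z\Pi_{1}=Z$ has no postinjective $\Lambda$-summand; this is the step I expect to be hardest. The hypothesis says exactly that the structure map $\tau^{-}(Z|_{\Lambda})\to Z|_{\Lambda}$ is an epimorphism, so $\tau^{-n}(Z|_{\Lambda})$ surjects onto $Z|_{\Lambda}$ for every $n$; as $\tau^{-n}$ annihilates a postinjective summand once $n\gg0$ whereas preprojective and regular summands persist, one must combine the compatibility of these epimorphisms (it comes from the $\Pi$-action, being $\tau^{-n}$ of the structure map) with the directedness $\Hom_{\Lambda}(\text{postinjective},\text{preprojective}\oplus\text{regular})=0$ to force the postinjective part to vanish. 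Packaging the first ingredient through the left-derived functor of $\tau^{-}=-\otimes_{\Lambda}\Pi_{1}$ (which detects the injective part of a module) seems the cleanest route, but turning this into a genuine argument that the postinjective summand is zero is the delicate point.
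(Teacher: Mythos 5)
Your overall strategy is sound and genuinely different from the paper's: you split $X$ into the \emph{$\Pi_+$-divisible} part $X^\circ=\bigcap_m X\Pi_+^m$ and the nilpotent quotient $X/X^\circ$, cover the latter by a truncated projective $P=Q/Q\Pi_{\ge N}$ (which restricts to a sum of preprojectives $e_i\Pi_m$ with $m<N$, so indeed lies in $\fd_{\mathcal I}\Pi$), and reassemble via the pullback $Y=X\times_{X/X^\circ}P$. The reduction steps are all correct, including the observation that $\fd_{\mathcal I}\Pi$ is closed under submodules, so once a surjection $Y\twoheadrightarrow X$ with $Y\in\fd_{\mathcal I}\Pi$ is found the kernel is automatic. (The paper instead writes $X|_\Lambda = X'\oplus I_n\oplus\cdots\oplus I_0$ with $I_i\in\add\tau^i(D\Lambda)$ and builds the cover by a descending induction that strips off one postinjective layer at a time, lifting through a projective cover of $I_r$; it never isolates a ``divisible part.'')

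The gap is exactly where you suspect: you do not prove that $X^\circ\in\fd_{\mathcal I}\Pi$, i.e.\ that a finite-dimensional $\Pi$-module $Z$ with $Z\Pi_1 = Z$ has no postinjective $\Lambda$-summand. Your sketch does not close this. Directedness does give you $Z_{\mathrm{pp}}=0$ (no map hits the earliest preprojective layer), and it shows that $Z_{\mathrm{pi}}$ is a $\Pi$-\emph{sub}module on which $\Pi_+$ acts nilpotently, but to conclude $Z_{\mathrm{pi}}=0$ you ultimately need to rule out a surjection $\tau^{-}Z_{\mathrm{reg}}\twoheadrightarrow Z_{\mathrm{reg}}\oplus I_m$ onto the top postinjective layer. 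The dimension count that does this in tame type --- $\dim\tau^-Z_{\mathrm{reg}}=\dim Z_{\mathrm{reg}}$ --- fails for wild $\Lambda$, where $\dim\tau^-R$ can vastly exceed $\dim R$ for a regular $R$ and where surjections from regular modules onto postinjective ones do exist. Appealing to ``the left-derived functor of $\tau^-$ detecting the injective part'' names a hope rather than an argument; you would still have to explain why the growing regular part cannot feed the postinjective socle of $Z$ indefinitely. As written, the proposal reduces the lemma to a statement it does not establish, so it is incomplete. This is precisely the difficulty the paper's layer-by-layer construction sidesteps: it never needs to know that the divisible part is itself postinjective-free, because it covers each postinjective layer directly by a projective.
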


\begin{proof}
Recall that $\Pi=T_\Lambda(\Pi_1)$ is a tensor algebra, and $X\otimes_\Lambda\Pi_1\cong \tau^-X$, so right $\Pi$-modules can be regarded as pairs $(X,f)$ consisting of a right $\Lambda$-module $X$ and a $\Lambda$-homomorphism $f\colon\tau^-X\to X$. Morphisms $(Y,g)\to(X,f)$ of such pairs are given by those $\Lambda$-homomorphisms $p\colon Y\to X$ for which $f(\tau^-p)=pg$, and a sequence of $\Pi$-modules is exact if and only if its restriction is an exact sequence of $\Lambda$-modules.

Now let $(X,f)$ be any finite dimensional $\Pi$-module. We can write $X=X'\oplus I_n\oplus\cdots\oplus I_0$ for some $X'\in\fd_{\mathcal I}\Pi$ and $I_i\in\add\tau^i(D\Lambda)$. The morphism $f\colon\tau^-X\to X$ cannot include any non-zero component from $\tau^-I_r$ to either $X'$ or to $I_s$ for $r\geq s$, and so we have surjections of $\Pi$-modules
\[ (X,f)=(X_{-1},f_{-1}) \twoheadrightarrow (X_0,f_0) \twoheadrightarrow\cdots\twoheadrightarrow (X_n,f_n)=(X',f'), \]
where $X_i=X'\oplus I_n\oplus\cdots\oplus I_{i+1}$ and $f_i$ is the induced morphism.

Our proof will be by descending induction on $i$, with the case $i=n$ being trivial. Suppose therefore that we have an epimorphism $q\colon(Y,g)\twoheadrightarrow(X_r,f_r)$, so that $f_r(\tau^-q)=qg$, and consider the module $(X_{r-1},f_{r-1})$. We have $X_{r-1}=X_r\oplus I_r$, and the morphism $f_{r-1}$ has non-zero components $f_r$ and $h\colon\tau^-X_r\to I_r$.

Taking an epimorphism $p\colon P\twoheadrightarrow I_r$ with $P$ projective we obtain the commutative square
\[ \begin{tikzcd}
\tau^-Y\oplus\tau^{-2}Y\oplus\tau^-P \arrow[r,"\tau^-\pi"] \arrow[d,"\tilde g"] & \tau^-X_r\oplus\tau^-I_r \arrow[d,"f_{r-1}"]\\
Y\oplus\tau^-Y\oplus P \arrow[r,"\pi"] & X_r\oplus I_r
\end{tikzcd} \]
where
\[ \pi = \begin{bmatrix}q&0&0\\0&h\tau^-q&p\end{bmatrix}, \quad f_{r-1} = \begin{bmatrix}f_r&0\\h&0\end{bmatrix}, \quad
\tilde g = \begin{bmatrix}g&0&0\\1&0&0\\0&0&0\end{bmatrix}. \]
The left hand side thus determines a module $(\tilde Y,\tilde g)$ in $\fd_{\mathcal I}\Pi$, and $\pi\colon(\tilde Y,\tilde g)\twoheadrightarrow(X_{r-1},f_{r-1})$ is an epimorphism.

By induction we obtain an epimorphism $(Y',g')\twoheadrightarrow(X,f)$ with $(Y',g')\in\fd_{\mathcal I}\Pi$. As $\fd_{\mathcal I}\Pi$ is closed under submodules, the result follows..
\end{proof}

\begin{cor}
Let $\Pi$ be the preprojective algebra of a finite dimensional hereditary algebra, not of Dynkin type. Under the assumption that the Auslander--Reiten pairing is invariant under $\tau^-$ there is a natural isomorphism of bifunctors
\[ \Ext^1_\Pi(X,Y) \cong D\Hom_\Pi(Y,X) \]
for all finite dimensional modules $X$ and $Y$.
\end{cor}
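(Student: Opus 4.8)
The plan is to remove the restriction $X\in\fd_{\mathcal I}\Pi$ from the preceding Proposition by an approximation argument, the key point being that one should resolve the \emph{second} argument, not the first, by objects of $\fd_{\mathcal I}\Pi$. Fix finite dimensional $\Pi$-modules $X$ and $Y$, and use the previous lemma to choose a short exact sequence $0\to Z\to W\to Y\to 0$ with $W,Z\in\fd_{\mathcal I}\Pi$. Resolving $Y$ rather than $X$ is what makes the argument formal: it brings into play the terms $\Ext^2_\Pi(X,W)$ and $\Ext^2_\Pi(X,Z)$, which the Proposition evaluates, whereas a resolution of $X$ would instead produce $\Ext^1$-terms of $\fd_{\mathcal I}\Pi$-modules that we cannot control.

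Since $\gldim\Pi=2$ by \intref{Theorem}{thm:gl-dim-2}, the long exact sequence obtained by applying $\Hom_\Pi(X,-)$ to $0\to Z\to W\to Y\to 0$ terminates in
\[ \Ext^2_\Pi(X,Z)\longrightarrow\Ext^2_\Pi(X,W)\longrightarrow\Ext^2_\Pi(X,Y)\longrightarrow 0, \]
so $\Ext^2_\Pi(X,Y)\cong\Coker\big(\Ext^2_\Pi(X,Z)\to\Ext^2_\Pi(X,W)\big)$. Because $W$ and $Z$ lie in $\fd_{\mathcal I}\Pi$, the preceding Proposition supplies natural isomorphisms $\Ext^2_\Pi(X,W)\cong D\Hom_\Pi(W,X)$ and $\Ext^2_\Pi(X,Z)\cong D\Hom_\Pi(Z,X)$ (its second displayed isomorphism, with $W$ respectively $Z$ playing the role of the module in $\fd_{\mathcal I}\Pi$), and these identify the cokernel above with $\Coker\big(D\Hom_\Pi(Z,X)\to D\Hom_\Pi(W,X)\big)$, the map here being the dual of the restriction $\Hom_\Pi(W,X)\to\Hom_\Pi(Z,X)$ along $Z\hookrightarrow W$. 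As $D$ is exact this equals $D$ of the kernel of $\Hom_\Pi(W,X)\to\Hom_\Pi(Z,X)$, and left exactness of $\Hom_\Pi(-,X)$ on $0\to Z\to W\to Y\to 0$ identifies that kernel with $\Hom_\Pi(Y,X)$. Composing these identifications gives $\Ext^2_\Pi(X,Y)\cong D\Hom_\Pi(Y,X)$.

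Every step of the chain is natural in $X$ and in the pair $(W,Z)$, so the only thing left is to check that the resulting isomorphism does not depend on the chosen $\fd_{\mathcal I}\Pi$-resolution of $Y$; granting this, naturality in $Y$ follows as well. For a second such resolution $0\to Z'\to W'\to Y\to 0$, the fibre product $W\times_Y W'$ is a submodule of $W\oplus W'$, hence again lies in $\fd_{\mathcal I}\Pi$ (which is closed under submodules), and it surjects onto $Y$ with kernel again in $\fd_{\mathcal I}\Pi$; its two projections are morphisms of resolutions, so the naturality of the Proposition's isomorphisms forces the two constructions to coincide, and a morphism $Y\to Y''$ is handled the same way through a common dominating resolution. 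I expect this bookkeeping — independence of the resolution, and hence functoriality in the second variable — to be the only genuine work; granted the preceding Proposition and $\gldim\Pi=2$, the core identification is purely formal.
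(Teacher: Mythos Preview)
Your proof is correct and follows essentially the same approach as the paper: resolve $Y$ by objects of $\fd_{\mathcal I}\Pi$, use the Proposition's second isomorphism on the two resolving terms, and pass to cokernels. The only cosmetic differences are that the paper compares two resolutions via the direct sum $W\oplus W'\twoheadrightarrow Y$ rather than the fibre product, and handles functoriality in $Y$ by pulling back one resolution along $Y\to Y'$ and then re-approximating; both devices serve the same purpose as yours. (Note also that the displayed statement has a typo: the isomorphism being proved is for $\Ext^2_\Pi$, not $\Ext^1_\Pi$, as your argument and the paper's both make clear.)
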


\begin{proof}
By the lemma there is a short exact sequence
\[ \begin{tikzcd}
0 \arrow[r] & W \arrow[r] & Z \arrow[r] & Y \arrow[r] & 0
\end{tikzcd} \]
with $W,Z\in\fd_{\mathcal I}\Pi$. Applying both $\Hom_\Pi(X,-)$ and $\Hom_\Pi(-,X)$ and using the theorem, we obtain an exact commutative diagram
\[ \begin{tikzcd}
\Ext^2_\Pi(X,W) \arrow[r] \arrow[d,"\wr"] & \Ext^2_\Pi(X,Z) \arrow[r] \arrow[d,"\wr"] & \Ext^2_\Pi(X,Y) \arrow[r] & 0\\
D\Hom_\Pi(W,X) \arrow[r] & D\Hom_\Pi(Z,X) \arrow[r] & D\Hom_\Pi(Y,X) \arrow[r] & 0
\end{tikzcd} \]
Note that the left hand square commutes as we have natural isomorphisms of bifunctors. We therefore obtain an isomorphism between the cokernels, $\Ext^2_\Pi(X,Y)\cong D\Hom_\Pi(Y,X)$, which is functorial in $X$. To see that the isomorphism is independent of the chosen approximation consider first an exact commutative diagram
\[ \begin{tikzcd}
0 \arrow[r] & W \arrow[r] \arrow[d] & Z \arrow[r] \arrow[d] & Y \arrow[r] \arrow[d,equal] & 0\\
0 \arrow[r] & W' \arrow[r] & Z' \arrow[r] & Y \arrow[r] & 0
\end{tikzcd} \]
with $W,W',Z,Z'\in\fd_{\mathcal I}\Pi$. Again, the fact that the we have a natural isomorphism of bifunctors shows that we obtain a commutative cube on the left, and so the two induced isomorphisms $\Ext^2_\Pi(X,Y)\iso D\Hom_\Pi(Y,X)$ agree. In general, given two approximations $Z\twoheadrightarrow Y$ and $Z'\twoheadrightarrow Y$ with $Z,Z'\in\fd_{\mathcal I}\Pi$, we can compare the both to the approximation $Z\oplus Z'\twoheadrightarrow Y$, proving the claim.

Finally, to see that the construction is functorial in $Y$, consider a morphism $Y\to Y'$, take an approximation $Z'\twoheadrightarrow Y'$ with $Z'\in\fd_{\mathcal I}\Pi$ and form the pullback $\bar Z$. This is again finite dimensional, so can take an approximation $Z\twoheadrightarrow\bar Z$ with $Z\in\fd_{\mathcal I}\Pi$. We have thus constructed an exact commutative diagram
\[ \begin{tikzcd}
0 \arrow[r]  W \arrow[r] \arrow[d] & Z \arrow[r] \arrow[d] & Y \arrow[r] \arrow[d,equal] & 0\\
0 \arrow[r] & W' \arrow[r] & Z' \arrow[r] & Y' \arrow[r] & 0
\end{tikzcd} \]
from which we can deduce that we have a commutative square
\[ \begin{tikzcd}
\Ext^2_\Pi(X,Y) \arrow[r,"\wr"] \arrow[d] & D\Hom_\Pi(Y,X) \arrow[d]\\
\Ext^2_\Pi(X,Y') \arrow[r,"\wr"] & D\Hom_\Pi(Y',X)
\end{tikzcd} \]
This completes the proof.
\end{proof}

\section{Nilpotent modules and Koszul duality}

In this section we restrict ourselves to preprojective algebras of locally finite hereditary tensor algebras of non-Dynkin type. In this case the hereditary algebra is $\Lambda=T_A(M)$, and $\Pi=T/\langle c\rangle$ is the quotient of the hereditary tensor algebra $T=T_A(M\oplus DM)$ by the ideal generated by $c=\rho-\ell$. For convenience we write $\widetilde M\coloneqq M\oplus DM$, and all unadorned tensor products will be over the semisimple algebra $A$.

We note that $\Lambda$ and $T$ are locally finite hereditary tensor algebras with respect to the grading such that $\deg A=0$ and $\deg\widetilde M=1$. As $c$ is homogeneous of degree 2 the preprojective algebra $\Pi$ inherits this grading. Understanding this graded structure on $\Pi$ will be the main focus of this section. As we will only be interested in this grading in this section, we will reappropriate our earlier notation and write $\Pi_r$ for the graded pieces.

One aspect we will investigate will be the Hilbert series of the preprojective algebra. As $\Pi$ is non-commutative and $A$ is in general only semisimple, we define this to be
\[ H(\Pi,t) = \sum_r\sum_{i,j}(\dim e_i\Pi_re_j)t^r \in M_n(\bZ)\llbracket t\rrbracket. \]
As we shall see, the matrices $(\dim e_i\Pi_re_j)$ are related to the polynomials $V_r(x)\in\bZ[x]$ defined recursively via
\[ V_0(x) = 1, \quad V_1(x) = x, \quad V_{r+1}(x) = xV_r(x)-V_{r-1}(x). \]
Their generating function is thus
\[ \sum_r V_r(x)t^m = (1-xt+t^2)^{-1}. \]
The $V_r$ are sometimes called the Vieta--Fibonacci polynomials, or Dickson polynomials of the second kind (with parameter 1), and are related to the Chebyshev polynomials $U_r$ of the second kind via $U_r(x)=V_r(2x)$.

We also observe that the simple graded modules are precisely the simples $S_i$, so a finite dimensional module can be graded if and only if it is nilpotent. Restriction of scalars yields the isomorphism $K_0(\nilp\Pi)\iso K_0(A)$, which we equip with the symmetric $(-,-)$ represented by matrix $2\mat D-\mat B$. We set $\bar{\mat B}\coloneqq \mat D^{-1}\mat B$, and note that this is still an integer matrix.

\subsection{The Hilbert series}

In this section we assume that $\Pi$ is a basic indecomposable preprojective algebra of non-Dynkin type.

\begin{thm}\label{thm:Koszul}
The preprojective algebra $\Pi$ is Koszul. Moreover, $(\dim e_i\Pi_re_j)=\mat DV_m(\bar{\mat B})$ for all $m$, and the Hilbert series of $\Pi$ is given by
\[ H(\Pi,t) = \mat D(1-\bar{\mat B}t+t^2)^{-1}. \]
\end{thm}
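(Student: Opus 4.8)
The plan is to read everything off from the standard projective resolution of the semisimple quotient $A$ provided by \intref{Theorem}{thm:gl-dim-2}, after recording how that resolution interacts with the grading in which $\deg A=0$ and $\deg\widetilde M=1$. Since $\Lambda=T_A(M)$ we have $\widetilde\Omega=T\otimes_A\widetilde M\otimes_A T$, so specialising \intref{Theorem}{thm:gl-dim-2} to $X=A$ gives the four-term exact sequence
\[ \begin{tikzcd}
0 \arrow[r] & A\otimes_A\Pi \arrow[r] & \widetilde M\otimes_A\Pi \arrow[r] & A\otimes_A\Pi \arrow[r] & A \arrow[r] & 0.
\end{tikzcd} \]
By \intref{Corollary}{cor:LHS-tensor} the left-hand map sends the generator $1$ to the image of $d\bar c$, corresponding to $\rho\otimes1+1\otimes\rho-\ell\otimes1-1\otimes\ell$, which is homogeneous of degree $2$ because $c=\rho-\ell$ is; the right-hand map $\widetilde M\otimes_A\Pi\to\Pi$ is multiplication, homogeneous of degree $0$ with image $\Pi_+$. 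Thus, regrading the left-hand copy as $\Pi(-2)$, this is a linear projective resolution of $A$, with $i$-th term generated in degree $i$, and hence $\Pi$ is Koszul.

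Next I would compute the Hilbert series from this same resolution. Writing $D_r\coloneqq(\dim_k e_i\Pi_re_j)$, so that $H(\Pi,t)=\sum_r D_rt^r$, and using that $\dim_k(U\otimes_{A_i}V)=d_i^{-1}(\dim_k U)(\dim_k V)$ for a right $A_i$-module $U$ and left $A_i$-module $V$, the Hilbert matrix of $\widetilde M\otimes_A\Pi$ works out to $t(\mat B\mat D^{-1})H(\Pi,t)$, that of $\Pi(-2)$ to $t^2H(\Pi,t)$, and that of $A$ to $\mat D$. Taking the alternating sum of Hilbert matrices along the resolution then yields
\[ \bigl(1-\bar{\mat B}^{\,t}t+t^2\bigr)H(\Pi,t)=\mat D, \]
where I used $\mat B\mat D^{-1}=(\mat D^{-1}\mat B)^{t}=\bar{\mat B}^{\,t}$ since $\mat B$ is symmetric. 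As $1-\bar{\mat B}^{\,t}t+t^2$ is a unit in $M_n(\bZ)\llbracket t\rrbracket$ this determines $H(\Pi,t)$ uniquely, and since $\bar{\mat B}^{\,t}\mat D=\mat B=\mat D\bar{\mat B}$ the matrix $\mat D(1-\bar{\mat B}t+t^2)^{-1}$ satisfies the same equation, so $H(\Pi,t)=\mat D(1-\bar{\mat B}t+t^2)^{-1}$. Finally I would extract the coefficient of $t^r$, using the generating function $\sum_rV_r(x)t^r=(1-xt+t^2)^{-1}$ specialised at $x=\bar{\mat B}$, to conclude $D_r=\mat DV_r(\bar{\mat B})$.

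The argument is essentially formal once the resolution is in hand, so the step needing real care is the bookkeeping forced by $A$ being merely semisimple rather than a product of fields: one must carry the division-algebra dimensions $d_i$ through the computation of $\dim_k(\widetilde M\otimes_A\Pi)$, and correspondingly stay vigilant about whether $\mat D$ and $\bar{\mat B}$ multiply $H(\Pi,t)$ on the left or on the right. The identity $\bar{\mat B}^{\,t}\mat D=\mat D\bar{\mat B}$ is exactly what reconciles the ``left'' form $(1-\bar{\mat B}^{\,t}t+t^2)^{-1}\mat D$ that drops out of the Euler-characteristic computation with the ``right'' form appearing in the statement. I would also double-check the one genuinely non-formal input, namely that $\bar c$ is homogeneous of degree $2$, since this is precisely what makes the leftmost term $\Pi(-2)$ rather than $\Pi(-1)$, i.e. what makes the resolution linear in the first place.
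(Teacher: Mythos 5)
Your proof is correct and takes essentially the same approach as the paper: both read off Koszulity from the linearity of the standard resolution of $A$ supplied by Theorem~\ref{thm:gl-dim-2}, and both deduce the Hilbert series by counting dimensions along that resolution. The only difference is bookkeeping: the paper extracts the degree-$r$ recursion $L_{r+1}+L_{r-1}=\bar{\mat B}L_r$ after normalising $L_r\coloneqq\mat D^{-1}(\dim e_i\Pi_re_j)$, whereas you take the Euler characteristic of the whole resolution first and then reconcile $\mat B\mat D^{-1}=\bar{\mat B}^{\,t}$ with $\mat D\bar{\mat B}$ via $\bar{\mat B}^{\,t}\mat D=\mat B=\mat D\bar{\mat B}$ --- a detail the paper's early normalisation sidesteps but which you handle correctly.
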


\begin{proof}
By \intref{Theorem}{thm:gl-dim-2} we have the standard resolution of $A$
\[ \begin{tikzcd}
0 \arrow[r] & \Pi \arrow[r] & \widetilde M\otimes\Pi \arrow[r] & \Pi \arrow[r] & A \arrow[r] & 0
\end{tikzcd} \]
Next, given an $A$-module $W$, we can regard $W\otimes\Pi$ as a graded projective module generated in degree zero, in which case the standard resolution of $A$ becomes a linear resolution
\[ \begin{tikzcd}
0 \arrow[r] & \Pi(-2) \arrow[r,"\varepsilon"] & (\widetilde M\otimes\Pi)(-1) \arrow[r,"\delta"] & \Pi \arrow[r] & A\arrow[r] & 0.
\end{tikzcd} \]
In other words, the projectives are generated in degrees zero, one and two, and all the morphisms have degree zero. It follows that $\Pi$ is Koszul.

For $r\geq1$ the homogeneous part of degree $r+1$ yields an exact sequence of $A$-bimodules
\[ \begin{tikzcd}
0 \arrow[r] & \Pi_{r-1} \arrow[r] & \widetilde M\otimes\Pi_r \arrow[r] & \Pi_{r+1} \arrow[r] & 0.
\end{tikzcd} \]
Multiplying by idempotents and taking dimensions gives
\begin{multline*}
\dim(e_i\Pi_{r+1}e_j) + \dim(e_i\Pi_{r-1}e_j)\\
= \dim(e_i\widetilde M\otimes\Pi_re_j) = \sum_pb_{ip}d_p^{-1}\dim(e_p\Pi_re_j).
\end{multline*}
Clearly $\Pi_0=A$, so $(\dim e_i\Pi_0e_j)=\mat D$, and $\Pi_1=\widetilde M$, so $(\dim e_i\Pi_1e_j)=\mat B$. Setting $L_r\coloneqq\mat D^{-1}(\dim e_i\Pi_re_j)$ we therefore get $L_0=1$, $L_1=\bar{\mat B}$ and
\[ L_{r+1}+L_{r-1} = \mat D^{-1}\mat BL_r = \bar{\mat B}L_r \quad\textrm{for all }r\geq1. \]
We deduce that $L_r=V_r(\bar{\mat B})$, so that $\Pi$ has Hilbert polynomial
\[ H(\Pi,t) = \mat D\sum V_r(\bar{\mat B})t^r = \mat D(1-\bar{\mat B}t+t^2)^{-1}. \qedhere \] 
\end{proof}

\begin{prop}
For each $r$ the socle and radical series of $\Pi/\Pi_+^r$ coincide, and its socle $\Pi_{r-1}$ is sincere.
\end{prop}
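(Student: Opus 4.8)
The plan is to reduce the whole proposition to one fact about $\Pi$ — that $\Pi$ has no nonzero finite dimensional submodule — and then to extract that fact from $\gldim\Pi=2$.

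Write $B\coloneqq\Pi/\Pi_+^r$. Since $\Pi=T_A(\widetilde M)/\langle c\rangle$ is generated in degrees $0$ and $1$, we have $\Pi_+^k=\bigoplus_{s\geq k}\Pi_s$, so $B$ is a finite dimensional algebra with $B/\rad B=A$ semisimple, radical series $\rad^kB=\bigoplus_{k\leq s<r}\Pi_s$, and Loewy length $r$; in particular $\rad^{r-1}B=\Pi_{r-1}$. I claim everything follows from the assertion that $\soc(\Pi/\Pi_+^m)=\Pi_{m-1}$ for every $m\geq1$. Granting this, the coincidence of the socle and radical series is an induction on $k$: if $\soc^{k-1}B=\bigoplus_{s\geq r-k+1}\Pi_s=\Pi_+^{r-k+1}/\Pi_+^r$, then $B/\soc^{k-1}B\cong\Pi/\Pi_+^{r-k+1}$, so $\soc^kB/\soc^{k-1}B=\soc(\Pi/\Pi_+^{r-k+1})=\Pi_{r-k}$ and hence $\soc^kB=\bigoplus_{s\geq r-k}\Pi_s=\rad^{r-k}B$. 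Sincerity of $\Pi_{r-1}$ is then immediate: for each primitive idempotent $e_i$ the indecomposable projective $e_iB$ is nonzero and finite dimensional, so has nonzero socle, and $\soc(e_iB)=e_i\soc(B_B)=e_i\Pi_{r-1}$; dually $\soc(Be_i)=\Pi_{r-1}e_i\neq0$. Thus each $S_i$ is a composition factor of $\Pi_{r-1}$ on both sides.

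It remains to prove the displayed assertion. As $B$ is graded, $\soc(B_B)$ is graded; its degree-$(r-1)$ part is all of $\Pi_{r-1}$, while for $s<r-1$ its degree-$s$ part is $\{p\in\Pi_s:p\Pi_+=0\}=\{p\in\Pi_s:p\widetilde M=0\}$, since $\widetilde M=\Pi_1$ generates $\Pi_+$. Each such space is a finite dimensional $\Pi$-submodule of $\Pi$ (it lies in a single degree and is annihilated by $\Pi_+$), so the assertion, for all $m$ simultaneously, is equivalent to: $\Pi$ has no nonzero finite dimensional graded right submodule. Suppose $0\neq S\subseteq\Pi$ were such a submodule. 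Since $\Pi$ is a free, hence projective, right $\Pi$-module, $\Ext^i_\Pi(\Pi,-)=0$ for $i\geq1$, so the exact sequence $0\to S\to\Pi\to\Pi/S\to0$ gives $\Ext^{i+1}_\Pi(\Pi/S,-)\cong\Ext^i_\Pi(S,-)$ for $i\geq1$; taking $i=2$ and evaluating at $S$ yields $\Ext^3_\Pi(\Pi/S,S)\cong\Ext^2_\Pi(S,S)$. The left side vanishes because $\gldim\Pi=2$ by \intref{Theorem}{thm:gl-dim-2}, whereas the right side equals $D\Hom_\Pi(S,S)=D\End_\Pi(S)\neq0$ by the natural isomorphism $\Ext^2_\Pi(X,Y)\cong D\Hom_\Pi(Y,X)$ for finite dimensional modules proved above. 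This contradiction shows $\soc_\gr(\Pi_\Pi)=0$, and the analogous argument with left modules (using the left-handed versions of the results just cited) gives $\soc_\gr({}_\Pi\Pi)=0$ as well.

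The main point of the argument is the translation step — recognising that the desired identification $\soc(\Pi/\Pi_+^r)=\Pi_{r-1}$ for all $r$ is precisely the absence of finite dimensional submodules of $\Pi$. Once that is in place no new computation is needed: the obstruction $\gldim\Pi=2$, together with the already-established $\Ext^2$–$\Hom$ duality, forces any finite dimensional submodule $S$ to make $\Pi/S$ have projective dimension three, which is impossible.
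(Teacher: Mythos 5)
Your proof is correct and takes a genuinely different route from the paper's at both key steps, so a comparison is worthwhile.

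For identifying the socle of $\Pi/\Pi_+^r$ with $\Pi_{r-1}$, the paper works directly with the standard linear resolution of $A$: since the left-hand map $\varepsilon\colon\Pi\to\widetilde M\otimes\Pi$, $\pi\mapsto\pi\rho-\pi\ell$, is known to be a monomorphism (this is part of the exactness established in \intref{Theorem}{thm:gl-dim-2}), any homogeneous $y_t$ with $y_t\widetilde M=0$ satisfies $\varepsilon(y_t)=0$ and hence $y_t=0$. You instead repackage the same statement as ``$\Pi$ has no nonzero finite dimensional graded right submodule'' and derive it homologically: a nonzero finite dimensional $S\subseteq\Pi$ would give $\Ext^2_\Pi(S,S)\cong\Ext^3_\Pi(\Pi/S,S)=0$ from $\gldim\Pi=2$, contradicting $\Ext^2_\Pi(S,S)\cong D\End_\Pi(S)\neq0$. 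Both routes ultimately rest on the same theorem, but yours passes through two downstream consequences (the global dimension bound and the $\Ext^2$--$\Hom$ duality) rather than reading the injectivity off directly. Your version is slightly more indirect, but it isolates a statement of independent interest (no finite dimensional right ideals in $\Pi$, the preprojective analogue of the hypothesis in \intref{Lemma}{lem:zero-HOM}) and makes transparent why the argument fails in Dynkin type.

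For sincerity of $\Pi_{r-1}$, your argument is actually simpler than the paper's: once $\soc(B_B)=\Pi_{r-1}=\soc({}_BB)$ is established, each $e_iB$ and $Be_j$ is a nonzero finite dimensional module and so has nonzero socle, giving $e_i\Pi_{r-1}\neq0$ and $\Pi_{r-1}e_j\neq0$ directly. The paper instead invokes the Hilbert series computation from \intref{Theorem}{thm:Koszul} and the symmetry of the matrices $\mat DV_r(\bar{\mat B})$ to convert a vanishing column into a vanishing row. Your observation avoids the Hilbert series altogether. The one thing to note is that the displayed equivalence you mention (``the assertion \dots{} is equivalent to: $\Pi$ has no nonzero finite dimensional graded right submodule'') is used in only one direction; the converse implication (top graded piece of a fd submodule is killed by $\widetilde M$) is easy but unstated, and in any case not needed.
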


\begin{proof}
Consider an element $y\in\Pi$, which we decompose into its homogeneous parts $y=\sum y_t$. As the radical of $\Pi/\Pi_+^r$ is generated by the image of $\Pi_1=\widetilde M$, we see that the image of $y$ lies in the right socle if and only if $y_t\widetilde M=0$ for all $t<r-1$. Next, in the standard projective resolution of $A$, the left-most map $\varepsilon\colon\Pi\to\Pi\otimes\widetilde M$ is given by $\pi\mapsto\pi\rho-\pi\ell$. As this is a monomorphism we see that $y_t\widetilde M=0$ implies $y_t=0$. Thus the right socle equals $\Pi_+^{r-1}/\Pi_+^r$, which is of course the $(r-1)$-st radical. Similarly for the left socle.

Taking the quotient by the socle we obtain the algebra $\Pi/\Pi_+^{r-1}$, so it follows by induction that the socle and radical series of $\Pi/\Pi_+^r$ agree.

Note that $\soc\Pi/\Pi_+^{r+1}\cong\Pi_r$ as $A$-bimodules, so it remains to show that each $\Pi_r$ is sincere. By \intref{Theorem}{thm:Koszul} we know that $(\dim e_i\Pi_re_j)=\mat DV_r(\bar{\mat B})$. Now $\bar{\mat B}^t=(\mat D^{-1}\mat B)^t$, so each matrix $\mat D\bar{\mat B}^t$ is symmetric, and hence so too is each $\mat DV_r(\bar{\mat B})$. Thus if $\Pi_r$ is insincere as a right $A$-module, then some column vanishes, so by symmetry also some row vanishes, say $e_i\Pi_m=0$. Thus $\rad^r(e_i\Pi/\Pi_+^{r+1})=0$, so using that the socle and radical series agree we deduce that $\soc(e_i\Pi/\Pi_+^{r+1})=0$, a contradiction. We conclude that each $\Pi_r$ is sincere.
\end{proof}

\subsection{The Koszul dual}

The Koszul dual of $\Pi$ is the ext-algebra $\Pi^!\coloneqq\Ext^\ast_\Pi(A,A)$, where we regard $A$ as a right $\Pi$-module. Applying $\Hom_\Pi(-,A)$ to the linear presentation of $A$, all morphisms become zero, so $\Pi^!$ has graded pieces
\[ \Pi^!_0 \cong \End_{\,\hyp A}(A) \cong A, \qquad \Pi^!_1 \cong \Hom_{\,\hyp A}(\widetilde M,A) \cong \widetilde M, \qquad \Pi^!_2 \cong \End_{\,\hyp A}(A) \cong A. \]
Note that the isomorphism $\widetilde M\iso\Hom_{\,\hyp A}(\widetilde M,A)$ is given by combining
\[ DM \iso \Hom_{\,\hyp A}(M,A), \quad \mu\mapsto\rmod\mu, \]
and
\[ M \iso \Hom_{\,\hyp A}(DM,A), \quad m\mapsto\big(\nu\mapsto\lmod\nu(m)\big) \]
from \intref{Lemma}{lem:A-homs}. Using the tensor-hom adjunction we also regard this as a map
\[ \Theta \colon \widetilde M\otimes\widetilde M \to A, \quad (m+\mu)\otimes(n+\nu) = \lmod\nu(m)+\rmod\mu(n). \]

\begin{lem}
The isomorphism $\widetilde M\iso\Ext^1_\Pi(A,A)$ sends $\gamma\in\widetilde M$ to the class of the short exact sequence
\[ \begin{tikzcd}
0 \arrow[r] & A \arrow[r,"\binom01"] & E_\gamma \arrow[r,"{(1,0)}"] & A \arrow[r] & 0
\end{tikzcd} \]
where $E_\gamma=A\oplus A$ as a right $A$-module, with $\Pi$-action
\[ (a,b)\cdot\gamma' \coloneqq (0,\Theta(\gamma\otimes a\gamma')) \quad\textrm{for }\gamma'\in\widetilde M. \]
\end{lem}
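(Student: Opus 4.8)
The plan is to compute $\Ext^1_\Pi(A,A)$ directly from the linear resolution of $A$ in \intref{Theorem}{thm:Koszul} and then read off the class of the displayed sequence by the usual lifting recipe. First I would make the identification $\Ext^1_\Pi(A,A)\cong\Hom_{\,\hyp A}(\widetilde M,A)$ explicit. Writing $\varepsilon\colon\Pi\twoheadrightarrow A$ for the augmentation we have $\ker\varepsilon=\Pi_+$, and since $\Pi$ is generated over $A$ in degree one, $\Pi_+^2=\bigoplus_{r\geq2}\Pi_r$, so $\Pi_+/\Pi_+^2=\Pi_1=\widetilde M$. Any $\Pi$-linear map $\Pi_+\to A$ kills $\Pi_+^2$ (it sends $xy$ with $x,y\in\Pi_+$ to $\psi(x)\varepsilon(y)=0$), hence factors through its restriction to $\Pi_1$, while the restriction to $\Pi_+$ of any map $\Pi\to A$ vanishes; thus the differential $\Hom_\Pi(\Pi,A)\to\Hom_\Pi(\Pi_+,A)$ is zero and
\[ \Ext^1_\Pi(A,A)=\Hom_\Pi(\Pi_+,A)\cong\Hom_{\,\hyp A}(\widetilde M,A)\iso\widetilde M, \]
the last isomorphism being the one fixed in the text, namely $\gamma\leftrightarrow\Theta(\gamma\otimes-)$. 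Under it the class of a short exact sequence $0\to A\xrightarrow{j}E\xrightarrow{p}A\to0$ is computed by choosing a $\Pi$-linear lift $\tilde\varepsilon\colon\Pi\to E$ of $\varepsilon$ along $p$, observing that $\tilde\varepsilon(\Pi_+)\subseteq j(A)$, and taking the resulting map $\Pi_1\to A$.

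Before applying this I would check that $E_\gamma$ really carries a $\Pi$-module structure. The displayed formula is an $A$-linear map $E_\gamma\otimes_A\widetilde M\to E_\gamma$, hence gives a right action of $T_A(\widetilde M)$; it factors through $\Pi=T_A(\widetilde M)/\langle c\rangle$ because any element $(0,b)$ is annihilated by $\widetilde M$, so $(a,b)\cdot\rho=\sum\bigl((a,b)\cdot m_i\bigr)\cdot\mu_i=0$ and likewise $(a,b)\cdot\ell=0$, whence $(a,b)\cdot c=0$; as $c$ is a single degree-two relation this forces $E_\gamma\langle c\rangle=0$. The maps $j$ and $p$ are then manifestly $\Pi$-linear and the sequence is exact.

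Now I run the recipe. A $\Pi$-linear lift is forced by $\tilde\varepsilon(1)=(1,0)$, which indeed satisfies $p\,\tilde\varepsilon(1)=1=\varepsilon(1)$; so $\tilde\varepsilon(\pi)=(1,0)\cdot\pi$ for all $\pi$. For $\gamma'\in\widetilde M=\Pi_1$ this gives
\[ \tilde\varepsilon(\gamma')=(1,0)\cdot\gamma'=\bigl(0,\Theta(\gamma\otimes\gamma')\bigr)=j\bigl(\Theta(\gamma\otimes\gamma')\bigr), \]
so the induced cocycle $\Pi_1\to A$ is $\gamma'\mapsto\Theta(\gamma\otimes\gamma')$, which is precisely the image of $\gamma$ under $\widetilde M\iso\Hom_{\,\hyp A}(\widetilde M,A)$. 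Hence the class of $0\to A\to E_\gamma\to A\to0$ corresponds to $\gamma$, as claimed.

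I do not anticipate a genuine obstacle — the argument is essentially book-keeping. The one place needing a little care is Step 1: one must check that the identification $\Ext^1_\Pi(A,A)\cong\Hom_{\,\hyp A}(\widetilde M,A)$ used above agrees on the nose with the one fixed in the text (which is where the linearity of the resolution from \intref{Theorem}{thm:Koszul} enters, so that no coboundary corrections are needed), and in particular that the comparison with the two-term presentation introduces no stray sign — this comes down to the differentials of the tensor-algebra fundamental sequence in \intref{Proposition}{prop:tensor-alg-seq} being honest (unsigned) multiplication maps.
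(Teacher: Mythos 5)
Your argument is correct and is in substance the same as the paper's: the paper records the lifting as a commutative diagram (with $\psi(1)=(1,0)$ and the left vertical map $\Theta(\gamma\otimes -)$), while you narrate the identical diagram chase using the two-term presentation $0\to\Pi_+\to\Pi\to A\to 0$ in place of the linear resolution; the resulting cocycle $\gamma'\mapsto\Theta(\gamma\otimes\gamma')$ is the same. Your explicit verification that the displayed action actually factors through $\Pi$ (which the paper leaves implicit) is a welcome extra line; the sign worry you flag at the end is a non-issue here, for exactly the reason you give.
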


\begin{proof}
We just need to check that the following diagram commutes
\[ \begin{tikzcd}
& \widetilde M\otimes\Pi \arrow[r,"\delta"] \arrow[d,"\Theta(\gamma\otimes-)"] & \Pi \arrow[r] \arrow[d,"\psi"] & A \arrow[r] \arrow[d,equal] & 0\\
0 \arrow[r] & A \arrow[r] & E_\gamma \arrow[r] & A \arrow[r] & 0
\end{tikzcd} \]
The vertical map in the middle sends $1\in\Pi$ to $(1,0)\in E_\gamma$, whereas that on the left sends $\gamma'\otimes1$ to $\Theta(\gamma\otimes\gamma')$. Also, the map $\delta$ is just the multiplication map (since $A$ is killed by the graded radical $\Pi_+$). Thus the left hand square commutes, and sends $\gamma'\otimes1\in\widetilde M\otimes\Pi$ to $(0,\Theta(\gamma\otimes\gamma'))\in E_\gamma$. The commutativity of the right hand square is clear.
\end{proof}

We can now compute the Yoneda cup product
\[ \Ext^1_\Pi(A,A) \times \Ext^1_\Pi(A,A) \to \Ext^2_\Pi(A,A) \]
explicitly as a map $\widetilde M\times\widetilde M\to A$.

\begin{lem}\label{lem:yoneda-cup}
The algebra structure on the Koszul dual $\Pi^!$ is induced by the $A$-bilinear map
\[ \Psi \colon \widetilde M\otimes\widetilde M\to A, \quad (m+\mu)\otimes(n+\nu) \mapsto \lmod\nu(m)-\rmod\mu(n). \]
\end{lem}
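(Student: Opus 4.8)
The plan is to compute the Yoneda cup product $\Ext^1_\Pi(A,A)\times\Ext^1_\Pi(A,A)\to\Ext^2_\Pi(A,A)$ directly on the minimal graded projective resolution of $A$, using the cocycle description of $\Ext^1_\Pi(A,A)$ supplied by the previous lemma.

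Recall first, from \intref{Theorem}{thm:gl-dim-2} and \intref{Corollary}{cor:LHS-tensor} specialised to $X=A$ (where $1\cdot m_i=1\cdot\nu_j=0$), that $A$ has the minimal projective resolution
\[ 0 \to P_2 \xrightarrow{\ \varepsilon\ } P_1 \xrightarrow{\ \delta\ } P_0 \xrightarrow{\ \epsilon\ } A \to 0, \qquad P_0=P_2=\Pi,\quad P_1=\widetilde M\otimes\Pi, \]
with $\delta(\gamma''\otimes\pi)=\gamma''\pi$ and $\varepsilon(\pi)=c\pi$, where $c=\rho-\ell=\sum m_i\otimes\mu_i-\sum\nu_j\otimes n_j$ is read off inside $\widetilde M\otimes\Pi_1$. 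By the previous lemma, $\gamma\in\widetilde M$ corresponds under $\widetilde M\iso\Ext^1_\Pi(A,A)$ to the cocycle $f_\gamma\colon P_1\to A$, $\gamma''\otimes\pi\mapsto\Theta(\gamma\otimes\gamma'')\,\epsilon(\pi)$. Since $\gldim\Pi=2$, so that $P_3=0$, the product $[\gamma]\cdot[\gamma']$ is computed by choosing maps $f^{(1)}_{\gamma'}\colon P_1\to P_0$ and $f^{(2)}_{\gamma'}\colon P_2\to P_1$ with $\epsilon f^{(1)}_{\gamma'}=f_{\gamma'}$ and $\delta f^{(2)}_{\gamma'}=f^{(1)}_{\gamma'}\varepsilon$, and then representing the product by $\epsilon\, f^{(1)}_\gamma f^{(2)}_{\gamma'}\colon P_2\to A$, viewed as an element of $\Hom_\Pi(\Pi,A)\cong A$ via evaluation at $1$.

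First I would take $f^{(1)}_{\gamma'}(\gamma''\otimes\pi)=\Theta(\gamma'\otimes\gamma'')\,\pi$; this is right $\Pi$-linear and well defined over $A$ because $\Theta$ is an $A$-bimodule map, and it lifts $f_{\gamma'}$. For the second lift, evaluating $f^{(1)}_{\gamma'}\varepsilon$ on $\pi$ gives $f^{(1)}_{\gamma'}(c\pi)=\bigl(\sum\Theta(\gamma'\otimes m_i)\mu_i-\sum\Theta(\gamma'\otimes\nu_j)n_j\bigr)\pi$, and writing $\gamma'=n+\nu$ the definition of $\Theta$ turns the bracket into $\sum\rmod\nu(m_i)\mu_i-\sum\lmod{\nu_j}(n)\,n_j$. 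The one substantive point is to recognise, via the biduality identities $\sum_i\rmod\xi(m_i)\mu_i=\xi$ for $\xi\in DM$ (the explicit form of \intref{Lemma}{lem:A-homs-dual} with $X=M$) and $\sum_j\lmod{\nu_j}(x)\,n_j=x$ for $x\in M$ (recorded before \intref{Proposition}{prop:bar-c-tensor}), that this element of $\widetilde M$ equals precisely $\nu-n$. Hence one may take $f^{(2)}_{\gamma'}(\pi)=(\nu-n)\otimes\pi$, and the product is represented by $\pi\mapsto\epsilon\bigl(f^{(1)}_\gamma((\nu-n)\otimes\pi)\bigr)=\Theta(\gamma\otimes(\nu-n))\,\epsilon(\pi)$. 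Expanding $\gamma=m+\mu$ through the definition of $\Theta$ yields $\Theta(\gamma\otimes(\nu-n))=\lmod\nu(m)-\rmod\mu(n)=\Psi(\gamma\otimes\gamma')$, which is the claim. (Equivalently one may splice the extensions $E_\gamma,E_{\gamma'}$ of the previous lemma and match the resulting $2$-extension against a representative of the generator of $\Ext^2_\Pi(A,A)\cong A$; this reproduces the same calculation.)

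The computation is short, and the only real obstacle is bookkeeping: keeping the sign of $c=\rho-\ell$ straight as it propagates through $\varepsilon$, and spotting the collapse of the bracket to $\nu-n$, are exactly what produce the sign-twist that separates $\Psi$ from the na\"ive symmetric pairing $\Theta$; one must also fix the order convention for the Yoneda product so that it matches the statement.
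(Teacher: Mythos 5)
Your proposal is correct and, despite being phrased via the cochain/chain-lift description of the Yoneda product rather than the paper's explicit splice diagram, it is essentially the same computation: both lift to the standard resolution, both use the biduality identities $\nu=\sum\rmod\nu(m_i)\mu_i$ and $n=\sum\lmod{\nu_j}(n)n_j$ to collapse the middle term to $\nu-n$, and both then read off $\Theta(\gamma\otimes(\nu-n))=\Psi(\gamma\otimes\gamma')$. The two formalisms are equivalent, as you note at the end.
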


\begin{proof}
Given $\gamma,\gamma'\in\widetilde M$, we need to construct an exact commutative diagram of the following form, where the connecting map $E_\gamma\to E_{\gamma'}$ sends $(a,b)$ to $(0,a)$
\[ \begin{tikzcd}
0 \arrow[r] & \Pi \arrow[r,"\varepsilon"] \arrow[d,"\theta"] & \widetilde M\otimes\Pi \arrow[r,"\delta"] \arrow[d,"\phi"] &
\Pi \arrow[r] \arrow[d,"\psi"] & A \arrow[r] \arrow[d,equal] & 0\\
0 \arrow[r] & A \arrow[r] & E_\gamma \arrow[r] & E_{\gamma'} \arrow[r] & A \arrow[r] & 0
\end{tikzcd} \]
The morphism $\psi$ is as above, so sends $1$ to $(1,0)\in E_{\gamma_1}$. The composite $\psi\delta$ thus sends $\gamma''\otimes1$ to $(0,\Theta(\gamma'\otimes\gamma''))$, which we lift to the map $\phi\colon\widetilde M\otimes\Pi\to E_\gamma$, $\gamma''\otimes1\mapsto(\Theta(\gamma'\otimes\gamma''),0)$.

We now compute $\phi(\rho)$ and $\phi(\ell)$. For convenience take $\gamma'=n+\nu$. Then, using that $\rho=\sum m_i\otimes\mu_i$ corresponds to the identity of ${}_A(DM)$, so $\nu=\sum\rmod\nu(m_i)\mu_i$, we have
\begin{multline*}
 \phi(\rho) = \sum(\Theta(\gamma'\otimes m_i),0)\cdot\mu_i = \sum(\rmod\nu(m_i),0)\cdot\mu_i\\
= \sum(0,\Theta(\gamma\otimes\rmod\nu(m_i)\mu_i)) = (0,\Theta(\gamma\otimes\nu)) \in E_\gamma.
\end{multline*}
Similarly, $\ell=\sum\nu_j\otimes n_j$ corresponds to the identity on ${}_AM$, and $\phi(\ell)=(0,\Theta(\gamma\otimes n))$.

Finally, we have $\varepsilon(1)=\rho-\ell$, so $\theta(1)=\Theta(\gamma\otimes\nu)-\Theta(\gamma\otimes n)=\Psi(\gamma\otimes\gamma_1)$.
\end{proof}

We wish to give two other constructions of the Koszul dual $\Pi^!$. The first of these is as a trivial extension algebra. Recall that if $\Gamma$ is an algebra and $N$ a $\Gamma$-bimodule, then the trivial extension algebra $\Gamma\ltimes N$ has underlying additive group $\Gamma\oplus N$ with multiplication $(a,m)\cdot(b,n)\coloneqq(ab,an+mb)$. Alternatively, this is the quotient of the tensor algebra $T_\Gamma(N)$ by the square of the graded radical.

We set $\bar\Lambda\coloneqq\Lambda/\Lambda_+^2=A\ltimes M$.

\begin{lem}\label{lem:Koszul}
Using the $A$-bimodule identification $D\bar\Lambda=DM\oplus A$, the natural left and right $\bar\Lambda$-module structures are given respectively by
\[ (a,m)\cdot(\mu,b) \coloneqq (a\mu,ab+\lmod\mu(m)) \quad\textrm{and}\quad (\mu,b)\cdot(a,m) \coloneqq (\mu a,ba+\rmod\mu(m)). \]
\end{lem}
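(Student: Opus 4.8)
The plan is to unwind the definition of the $\bar\Lambda$-bimodule structure on $D\bar\Lambda$ and then compute directly. Recall that $D\bar\Lambda$ carries its natural left (resp.\ right) $\bar\Lambda$-module structure as the $k$-linear dual of the right (resp.\ left) regular module, so that $(x\cdot\phi)(y)=\phi(yx)$ and $(\phi\cdot x)(y)=\phi(xy)$ for all $x,y\in\bar\Lambda$ and $\phi\in D\bar\Lambda$. First I would pin down the $A$-bimodule identification used in the statement: since $\bar\Lambda=A\ltimes M$ equals $A\oplus M$ as an $A$-bimodule, we have $D\bar\Lambda=DA\oplus DM$, and the symmetrising element $\sigma$ identifies $DA$ with $A$ by sending $b\in A$ to the functional $a\mapsto\sigma(ba)$. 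Thus a general element of $D\bar\Lambda$ is written $(\mu,b)$ with $\mu\in DM$, $b\in A$, and it acts on $(a',m')\in\bar\Lambda$ by $(a',m')\mapsto\mu(m')+\sigma(ba')$.

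Next I would compute the left action. Take $x=(a,m)$ and $\phi=(\mu,b)$, and evaluate on an arbitrary $y=(a',m')$. Since $yx=(a'a,\,a'm+m'a)$ in the trivial extension algebra,
\[ (x\cdot\phi)(y)=\mu(a'm)+\mu(m'a)+\sigma(ba'a). \]
I would then rewrite each summand using the notation introduced after \intref{Lemma}{lem:A-homs}: from $\sigma(\lmod\mu(n))=\mu(n)$ together with the left-linearity $\lmod\mu(a'm)=a'\lmod\mu(m)$ and the symmetry of $\sigma$ we get $\mu(a'm)=\sigma(\lmod\mu(m)\,a')$; from the definition of the $A$-bimodule structure on $DM$ we have $\mu(m'a)=(a\mu)(m')$; and $\sigma(ba'a)=\sigma(aba')$ by cyclicity. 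Reassembling, $(x\cdot\phi)(y)=(a\mu)(m')+\sigma\bigl((ab+\lmod\mu(m))\,a'\bigr)$, which is precisely the action of $(a\mu,\,ab+\lmod\mu(m))$ on $(a',m')$; by non-degeneracy of the pairing this yields $(a,m)\cdot(\mu,b)=(a\mu,\,ab+\lmod\mu(m))$.

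The right action is entirely symmetric: with $xy=(aa',\,am'+ma')$ one finds $(\phi\cdot x)(y)=\mu(am')+\mu(ma')+\sigma(baa')$, and using $(\mu a)(m')=\mu(am')$ together with $\mu(ma')=\sigma(\rmod\mu(m)\,a')$ (from $\sigma(\rmod\mu(n))=\mu(n)$ and the right-linearity of $\rmod\mu$) one reads off $(\mu,b)\cdot(a,m)=(\mu a,\,ba+\rmod\mu(m))$. I do not expect any genuine obstacle here: the argument is pure bookkeeping, and the only point requiring care is keeping straight the left/right conventions for $DM$, $\lmod\mu$ and $\rmod\mu$, and remembering that $k$-duality interchanges left and right module structures.
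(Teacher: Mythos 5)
Your proof is correct and takes essentially the same route as the paper: identify $DA\cong A$ via $\sigma$ and compute the dual actions using the trivial-extension multiplication, the formulas $\sigma(\lmod\mu(m))=\mu(m)$ and $\sigma(\rmod\mu(m))=\mu(m)$, and the symmetry of $\sigma$. The paper only writes out the ``cross terms'' $m\cdot\mu$ and $\mu\cdot m$ and leaves the remaining components implicit, whereas you verify the full formula by evaluating on an arbitrary $(a',m')$; this is the same computation, just carried out more explicitly.
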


\begin{proof}
To compute the left $\bar\Lambda$-action on $D\bar\Lambda=DM\oplus DA$ take $m\in M$ and $\mu\in DM$. Then $m\cdot\mu\in DA$ sends $a$ to
\[ \mu(am) = \sigma(\lmod\mu(am)) = \sigma(a\lmod\mu(m)) = (\lmod\mu(m)\sigma)(a). \]
Similarly, $\mu\cdot m\in DA$ sends $a$ to
\[ \mu(ma) = \sigma(\rmod\mu(ma)) = \sigma(\rmod\mu(m)a) = (\sigma\rmod\mu(m))(a). \]
Now, as usual, $\sigma$ yields an $A$-bimodule isomorphism $A\cong DA$. This allows us to identify $D\bar\Lambda=DM\oplus A$, in which case we obtain the required $\bar\Lambda$-bimodule structure.
\end{proof}

We define $(D\bar\Lambda)_{\mathrm{tw}}$ by twisting the right $\bar\Lambda$-action by a minus sign
\[ (\mu,b)\cdot(a,m) \coloneqq (\mu a,ba-\rmod\mu(m)). \]

The second construction of the Koszul dual $\Pi^!$ is as the quadratic algebra $T_A(\widetilde M)/\langle\Ker\Psi\rangle$, again in terms of the map
\[ \Psi \colon \widetilde M\otimes\widetilde M\to A, \quad (m+\mu)\otimes(n+\nu) \mapsto \lmod\nu(m)-\rmod\mu(n). \]

We first note that $\Ker\Psi$ is naturally identified with the dual space $D\Pi_2$. To do this we will need that $\Theta\colon\widetilde M\otimes\widetilde M\to A$ induces an isomorphism
\[ \widetilde M\otimes\widetilde M \iso \Hom_{\,\hyp A}(\widetilde M\otimes\widetilde M,A), \quad \gamma\otimes\gamma' \mapsto \big( x\otimes y\mapsto \Theta(\gamma\Theta(\gamma'\otimes x)\otimes y)\big). \]
That this is an isomorphism follows from the sequence
\begin{multline*}
\widetilde M\otimes\widetilde M \iso \widetilde M\otimes\Hom_{\,\hyp A}(\widetilde M,A) \iso \Hom_{\,\hyp A}(\widetilde M,\widetilde M)\\
\iso \Hom_{\,\hyp A}(\widetilde M,\Hom_{\,\hyp A}(\widetilde M,A)) \iso \Hom_{\,\hyp A}(\widetilde M\otimes\widetilde M,A).
\end{multline*}
In particular, the latter half of this sequence sends the identity on $\widetilde M_A$ to $\Theta$.

\begin{lem}\label{lem:quad-dual}
In the standard resolution of $A$, the homogeneous part of degree two
\[ \begin{tikzcd}
0 \arrow[r] & A \arrow[r,"\varepsilon"] & \widetilde M\otimes\widetilde M \arrow[r,"\mathrm{pr}"] & \Pi_2 \arrow[r] & 0
\end{tikzcd} \]
is a short exact sequence of $A$-bimodules satisfying $\varepsilon(1)=c$. Applying $\Hom_{\,\hyp A}(-,A)$ gives the short exact sequence
\[ \begin{tikzcd}
0 \arrow[r] & D\Pi_2 \arrow[r,"\iota"] & \widetilde M\otimes\widetilde M \arrow[r,"\Psi"] & A \arrow[r] & 0
\end{tikzcd} \]
\end{lem}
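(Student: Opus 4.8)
The plan is to obtain the first sequence as the degree-two strand of the linear resolution of $A$, and then to dualise it, the only real content being the identification of the dual of $\varepsilon$ with $\Psi$. Concretely, the proof of \intref{Theorem}{thm:Koszul} exhibits, for each $r\geq1$, the homogeneous component of degree $r+1$ of the linear resolution of $A$ as the short exact sequence of $A$-bimodules $0\to\Pi_{r-1}\to\widetilde M\otimes\Pi_r\to\Pi_{r+1}\to0$; taking $r=1$ and using $\Pi_0=A$ and $\Pi_1=\widetilde M$ gives $0\to A\xrightarrow{\varepsilon}\widetilde M\otimes\widetilde M\xrightarrow{\mathrm{pr}}\Pi_2\to0$, with $\mathrm{pr}$ the canonical projection $T_2\twoheadrightarrow\Pi_2$. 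To see $\varepsilon(1)=c$ I would apply \intref{Corollary}{cor:LHS-tensor} to $X=A$, the semisimple quotient: since $M$ and $DM$ act as zero on $A$, the formula for the left-hand map collapses to $1\mapsto\sum 1\otimes m_i\otimes\mu_i-\sum 1\otimes\nu_j\otimes n_j$, which under $A\otimes_A\widetilde M\otimes_A\Pi\cong\widetilde M\otimes\Pi$ is $\rho-\ell=c$; its degree-two component is $c\in(M\otimes DM)\oplus(DM\otimes M)\subseteq\widetilde M\otimes\widetilde M$.

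Next I would apply $\Hom_{\,\hyp A}(-,A)$. Since $A$ is semisimple this functor is exact, so a short exact sequence results, and \intref{Lemma}{lem:A-homs}, in the form $\Hom_{\,\hyp A}(Z,A)\cong DZ$ for finite dimensional $Z$ (coming from $A\cong DA$), identifies the outer terms with $D\Pi_2$ and with $A=\Hom_{\,\hyp A}(A,A)$, while the middle term is identified with $\widetilde M\otimes\widetilde M$ via the $\Theta$-induced isomorphism recorded just before the lemma. As $\iota$ is then forced to be the kernel inclusion, everything reduces to checking that the induced map $\widetilde M\otimes\widetilde M\to A$ is $\Psi$.

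For this, write $g_{\gamma\otimes\gamma'}$ for the image of $\gamma\otimes\gamma'$ under the $\Theta$-isomorphism, so $g_{\gamma\otimes\gamma'}(x\otimes y)=\Theta(\gamma\,\Theta(\gamma'\otimes x)\otimes y)$; precomposition with $\varepsilon$ sends $g_{\gamma\otimes\gamma'}$ to $g_{\gamma\otimes\gamma'}(\varepsilon(1))=g_{\gamma\otimes\gamma'}(\rho)-g_{\gamma\otimes\gamma'}(\ell)\in A$. Writing $\rho=\sum m_i\otimes\mu_i$, balancing the scalars $\Theta(\gamma'\otimes m_i)$ over $A$, and using that $\rho$ represents $\id_{M_A}$ (so $\sum_i\rmod{\nu}(m_i)\mu_i=\nu$ by \intref{Lemma}{lem:A-homs-dual}) gives $g_{\gamma\otimes\gamma'}(\rho)=\Theta(\gamma\otimes\nu)$; dually $\ell=\sum\nu_j\otimes n_j$ represents $\id_{{}_AM}$ and gives $g_{\gamma\otimes\gamma'}(\ell)=\Theta(\gamma\otimes n)$. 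With $\gamma=m+\mu$ and $\gamma'=n+\nu$ the definition of $\Theta$ gives $\Theta(\gamma\otimes\nu)=\lmod{\nu}(m)$ and $\Theta(\gamma\otimes n)=\rmod{\mu}(n)$, so the map is $\gamma\otimes\gamma'\mapsto\lmod{\nu}(m)-\rmod{\mu}(n)=\Psi(\gamma\otimes\gamma')$. These are precisely the computations of $\phi(\rho)$ and $\phi(\ell)$ in the proof of \intref{Lemma}{lem:yoneda-cup}, so this step could alternatively just be cited. I expect no real obstacle; the only delicate point is keeping the $\Theta$-, $\sigma$- and $\rmod{\xi}/\lmod{\xi}$-identifications consistent throughout.
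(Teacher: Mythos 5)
Your proof is correct and follows essentially the same route as the paper: identify the outer terms via $\Hom_{\,\hyp A}(Z,A)\cong DZ$, identify the middle via the $\Theta$-induced isomorphism, and then evaluate on $\varepsilon(1)=c=\rho-\ell$, which reduces to exactly the computation of $\phi(\rho)$ and $\phi(\ell)$ already done in the proof of \intref{Lemma}{lem:yoneda-cup}. You additionally supply the justification that the first sequence is the degree-two strand of the linear resolution with $\varepsilon(1)=c$, which the paper treats as already established from \intref{Theorem}{thm:Koszul} and \intref{Corollary}{cor:LHS-tensor}; that added detail is accurate but not a different method.
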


\begin{proof}
For the term on the left we use the usual identification $\Hom_{\,\hyp A}(\Pi_2,A)\cong D\Pi_2$, whereas for the term in the middle we use the earlier isomorphism
\[ \widetilde M\otimes\widetilde M \iso \Hom_{\,\hyp A}(\widetilde M\otimes\widetilde M,A), \quad
\gamma\otimes\gamma' \mapsto \big( x\otimes y \mapsto \Theta(\gamma\Theta(\gamma'\otimes x)\otimes y) \big). \]
Then, as in the proof of \intref{Lemma}{lem:yoneda-cup}, we see that the image of $\gamma\otimes\gamma'$ sends $c$ to $\Psi(\gamma\otimes\gamma')$.
\end{proof}

\begin{thm}
The Koszul dual $\Pi^!$ is isomorphic to both of the following algebras.
\begin{enumerate}
\item The trivial extension algebra $\bar\Lambda \ltimes (D\bar\Lambda)_{\mathrm{tw}}$. Explicitly, this is the algebra $A\oplus\widetilde M\oplus A$, with multiplication
\[ (a,\gamma,b)\cdot(c,\gamma',d) \coloneqq (ac,a\gamma'+\gamma c,ad+bc+\Psi(\gamma\otimes\gamma')). \]
\item The quadratic algebra $T_A(\widetilde M)/\langle D\Pi_2\rangle$.
\end{enumerate}
\end{thm}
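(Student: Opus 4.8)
The plan is to read off both descriptions from the low-degree structure of $\Pi^{!}$, which is almost entirely recorded in the lemmas preceding this theorem. Recall that applying $\Hom_{\Pi}(-,A)$ to the linear resolution of $A$ from \intref{Theorem}{thm:Koszul} produces a complex with zero differentials, so $\Pi^{!}_{0}\cong\End_{\,\hyp A}(A)\cong A$, $\Pi^{!}_{1}\cong\Hom_{\,\hyp A}(\widetilde M,A)\cong\widetilde M$ and $\Pi^{!}_{2}\cong\End_{\,\hyp A}(A)\cong A$, while $\Ext^{i}_{\Pi}(A,A)=0$ for $i\geq3$ since that resolution has length two. Thus $\Pi^{!}$, as a graded $A$-bimodule, is $A\oplus\widetilde M\oplus A$ concentrated in degrees $0,1,2$; since any product landing in degree $\geq3$ vanishes automatically, its algebra structure is governed entirely by the $A$-bimodule actions in each degree together with the single Yoneda cup product $\Pi^{!}_{1}\otimes_{A}\Pi^{!}_{1}\to\Pi^{!}_{2}$.

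For part~(1), by \intref{Lemma}{lem:yoneda-cup} this cup product is the map $\Psi$, and the isomorphism $\Pi^{!}_{1}\cong\widetilde M$ coming from \intref{Lemma}{lem:A-homs} and \intref{Remark}{rem:alt-actions} is one of $A$-bimodules; hence the multiplication on $\Pi^{!}=A\oplus\widetilde M\oplus A$ is exactly the stated formula $(a,\gamma,b)\cdot(c,\gamma',d)=(ac,\,a\gamma'+\gamma c,\,ad+bc+\Psi(\gamma\otimes\gamma'))$. It then remains to recognise this algebra as the trivial extension. Since $\Psi$ vanishes on $M\otimes M$ and on $DM\otimes DM$, the subspace $A\oplus M\oplus0$ is a subalgebra isomorphic to $A\ltimes M=\bar\Lambda$, and $N\coloneqq 0\oplus DM\oplus A$ is a square-zero two-sided ideal, so $\Pi^{!}\cong\bar\Lambda\ltimes N$. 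Reading the induced $\bar\Lambda$-bimodule structure on $N$ off the multiplication formula, the left action is $(a,m)\cdot(\mu,b)=(a\mu,\,ab+\lmod\mu(m))$, coming from $\Psi(m\otimes\mu)=\lmod\mu(m)$, and the right action is $(\mu,b)\cdot(a,m)=(\mu a,\,ba-\rmod\mu(m))$, coming from $\Psi(\mu\otimes m)=-\rmod\mu(m)$. Comparing with \intref{Lemma}{lem:Koszul}, these are precisely the left and twisted-right actions defining $(D\bar\Lambda)_{\mathrm{tw}}$; in particular the sign twist on the right is forced by the minus sign in $\Psi$, as opposed to the plus sign in $\Theta$. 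This gives~(1).

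For part~(2) I would invoke the standard fact that the Koszul dual of a Koszul algebra over a semisimple ring is again Koszul, hence quadratic. Since $\Pi$ is Koszul by \intref{Theorem}{thm:Koszul}, $\Pi^{!}$ is generated in degree one with all defining relations in degree two, so $\Pi^{!}\cong T_{A}(\Pi^{!}_{1})/\langle\Ker(\Pi^{!}_{1}\otimes\Pi^{!}_{1}\to\Pi^{!}_{2})\rangle$. By \intref{Lemma}{lem:yoneda-cup} that multiplication is $\Psi$ (surjective, by \intref{Lemma}{lem:quad-dual}), and \intref{Lemma}{lem:quad-dual} identifies its kernel with $D\Pi_{2}$ inside $\widetilde M\otimes\widetilde M$. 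Hence $\Pi^{!}\cong T_{A}(\widetilde M)/\langle D\Pi_{2}\rangle$. One can sidestep the general fact by verifying directly that $D\Pi_{2}\otimes\widetilde M+\widetilde M\otimes D\Pi_{2}=\widetilde M^{\otimes3}$, equivalently that $(\Psi\otimes\id,\,\id\otimes\Psi)\colon\widetilde M^{\otimes3}\to\widetilde M\oplus\widetilde M$ is surjective; together with $\Pi^{!}_{\geq3}=0$ this forces $\langle D\Pi_{2}\rangle$ to be the full kernel. But this is exactly the degree-three Koszulity condition, so it seems cleanest simply to quote it.

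The delicate point is not any single step but the consistent bookkeeping of left versus right structures and of the two duality maps $\Theta$ and $\Psi$: one must check that the identifications $\Pi^{!}_{1}\cong\widetilde M$, $\Pi^{!}_{2}\cong A$ and $\widetilde M\otimes\widetilde M\cong\Hom_{\,\hyp A}(\widetilde M\otimes\widetilde M,A)$ are all compatible with the $A$-bimodule structures, so that the minus sign in $\Psi$ genuinely produces $(D\bar\Lambda)_{\mathrm{tw}}$ rather than $D\bar\Lambda$ — precisely the subtlety flagged in the introduction. The only input beyond these identifications is that $\Pi^{!}$ carries no relations above degree two, and that is where the Koszulity of $\Pi$ from \intref{Theorem}{thm:Koszul} is genuinely used.
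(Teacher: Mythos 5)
Your proof is correct and for part (1) follows essentially the same route as the paper: read $\Pi^!\cong A\oplus\widetilde M\oplus A$ off the linear resolution, identify the cup product with $\Psi$ via \intref{Lemma}{lem:yoneda-cup}, and then match the resulting formula against \intref{Lemma}{lem:Koszul}. Your observation that $\Psi$ vanishes on $M\otimes M$ and $DM\otimes DM$, so that $A\oplus M\oplus 0$ is a copy of $\bar\Lambda$ and $0\oplus DM\oplus A$ is a square-zero ideal, is a clean way to exhibit the trivial-extension structure abstractly rather than just verifying the multiplication formula componentwise, and your bookkeeping of the sign giving the twist $(D\bar\Lambda)_{\mathrm{tw}}$ is exactly right.

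For part (2) you take a genuinely different tack: you quote the general theorem that the Koszul dual of a Koszul algebra over a semisimple base is itself Koszul, hence quadratic, so that $\Pi^!\cong T_A(\Pi^!_1)/\langle\Ker(\Pi^!_1\otimes\Pi^!_1\to\Pi^!_2)\rangle$, and then identify the kernel with $D\Pi_2$ via \intref{Lemma}{lem:quad-dual}. This is valid and arguably slicker. The paper instead argues directly: the universal property gives a surjection $T_A(\widetilde M)/\langle D\Pi_2\rangle\twoheadrightarrow\Pi^!$ which is iso in degrees $\leq 2$, and then one checks by hand (dualising the degree-three piece of the standard resolution) that the left-hand side vanishes in degree three, which by a short induction kills all higher degrees. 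You flag this alternative yourself. The trade-off is that the paper's proof is self-contained and does not require invoking (and carefully matching hypotheses for) the general Koszul-dual theorem over a non-trivial semisimple base, whereas your version is shorter if one is happy to cite that machinery. Both are correct; no gap.
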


\begin{proof}
(1) We know that, as an $A$-bimodule, the Koszul dual $\Pi^!=\Ext^\ast_\Pi(A,A)$ is given by $A\oplus\widetilde M\oplus A$. Thus the only non-trivial part of the multiplication is that coming from the Yoneda cup product $\Ext^1_\Pi(A,A)\times\Ext^1_\Pi(A,A)\to\Ext^2_\Pi(A,A)$, which we computed in \intref{Lemma}{lem:yoneda-cup} to be $\Psi$. By \intref{Lemma}{lem:Koszul}, this is precisely the trivial extension algebra $\bar\Lambda\ltimes(D\bar\Lambda)_{\mathrm{tw}}$.

(2) By the universal property of the tensor algebra there is a natural map $T_A(\widetilde M)\twoheadrightarrow\Pi^!$ which is the identity modulo the square of the graded radical. Next, \intref{Lemma}{lem:quad-dual} tells us that $\Psi\colon\widetilde M\otimes\widetilde M\twoheadrightarrow A$ is onto with kernel $D\Pi_2$. It follows that we have a surjection $T_A(\widetilde M)/\langle D\Pi_2\rangle\twoheadrightarrow\Pi^!$, which is furthermore an isomorphism in degrees zero, one and two.

It is therefore enough to check that the left hand side vanishes in degree three, so $\widetilde M\otimes\Ker\Psi+\Ker\Psi\otimes\widetilde M=\widetilde M^{\otimes3}$, or equivalently that the map $\id\otimes\Psi\colon\widetilde M^{\otimes3}\twoheadrightarrow\widetilde M$ is surjective when restricted to $\Ker\Psi\otimes\widetilde M$.

We do this as in the proof of \intref{Lemma}{lem:quad-dual} by applying $\Hom_{\,\hyp A}(-,A)$ to the short exact sequence
\[ \begin{tikzcd}
0 \arrow[r] & \widetilde M \arrow[r,"\varepsilon"] & \widetilde M\otimes\Pi_2 \arrow[r] & \Pi_3 \arrow[r] & 0
\end{tikzcd} \]
given by the homogeneous part of degree three in the standard resolution of $A$. We note that the map on the left can be written as the composite
\[ \begin{tikzcd}[column sep=35pt]
\widetilde M \arrow[r,"\sim"] & A\otimes\widetilde M \arrow[r,rightarrowtail,"\varepsilon\otimes\id"] &
\widetilde M^{\otimes3} \arrow[r,twoheadrightarrow,"\id\otimes\mathrm{pr}"] & \widetilde M\otimes\Pi_2
\end{tikzcd} \]
By definition, $\Hom_{\,\hyp A}(\mathrm{pr},A)$, together with our usual identifications, is just the inclusion $\iota\colon D\Pi_2\rightarrow\widetilde M\otimes\widetilde M$. Thus $\Hom_{\,\hyp A}(\id\otimes\mathrm{pr},A)$ equals $\iota\otimes\id$. Similarly $\Hom_{\,\hyp A}(\varepsilon,A)$ is just $\Psi$, so $\Hom(\varepsilon\otimes\id,A_A)$ becomes $\id\otimes\Psi$. As the composite $(\id\otimes\mathrm{pr})(\varepsilon\otimes\mathrm{id})$ is injective, we conclude that $(\id\otimes\Psi)(\iota\otimes\id)$ is surjective, as required.
\end{proof}

\section{Preprojective algebras of Dynkin type}

We finish by looking at preprojective algebras of Dynkin type. We thus have a (basic) finite dimensional hereditary algebra $\Lambda$ of Dynkin type, so representation finite and necessarily a tensor algebra $\Lambda=T_A(M)$, and we form its preprojective algebra $\Pi$.

As usual we have the associated symmetric bilinear form $(-,-)$ on $K_0(\Pi)\cong\bZ^n$, represented by the matrix $2\mat D-\mat B$. We can write this as $\mat D\mat C$, where $\mat C=2-\bar{\mat B}$ is the symmetrisable Cartan matrix of the same type as $\Lambda$ or $\Pi$. We then have the corresponding Weyl group $W\leq\Aut(\bZ^n)$ generated by the simple reflections $s_i(x)\coloneqq x-\frac1{d_i}(e_i,x)e_i$, and the root system $\Phi=\bigcup_iWe_i$. Note that both $W$ and $\Phi$ are finite sets. Finally, a Coxeter element $c\in W$ consists of the product of all simple reflections in some order. All such Coxeter elements are conjugate, and their common order $h$ is called the Coxeter number.

We begin with the well-known result that preprojective agebras of Dynkin type are self-injective. For completeness we include a proof here.

\begin{thm}\label{thm:self-inj}
Every preprojective algebra of Dynkin type is self-injective. In fact, if $\Pi$ is the preprojective algebra of the hereditary algebra $\Lambda$,  then $e_i\Pi\cong D(\Pi e_j)$ as right $\Pi$-modules if and only if the indecomposable $\Lambda$-modules $e_i\Lambda$ and $D(\Lambda e_j)$ lie in the same $\tau$-orbit.
\end{thm}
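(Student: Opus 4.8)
The plan is to show that for each vertex $i$ there is a vertex $j$ and an isomorphism of right $\Pi$-modules $e_i\Pi\cong D(\Pi e_j)$; self-injectivity follows at once, since then $\Pi_\Pi=\bigoplus_ie_i\Pi$ is a finite direct sum of injective modules. The vertex $j$ is forced: it must be the one for which the injective $I_j=D(\Lambda e_j)$ lies in the $\tau$-orbit of the projective $P_i=e_i\Lambda$. Such a $j$ exists and is unique because $\Lambda$ is representation-finite, so every indecomposable $\Lambda$-module lies in a unique $\tau$-orbit, and each orbit contains exactly one indecomposable projective (the module killed by $\tau$) and exactly one indecomposable injective (the module killed by $\tau^-$, using that $\tau^-$ vanishes precisely on injectives since $D$ sends injectives to projectives). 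This sets up a permutation $\nu$ of the vertices with $\nu(j)=i$ whenever $I_j$ lies in the orbit of $P_i$.

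First I would compute both sides as right $\Lambda$-modules. Unravelling the grading, $e_i\Pi=\bigoplus_{n\ge0}\tau^{-n}(e_i\Lambda)$ is the direct sum of the modules in the $\tau$-orbit of $P_i$, listed from $P_i$ up to $I_j=\tau^{-c}(P_i)$. On the other side $\Pi_ne_j\cong\tau^{-n}(\Lambda e_j)$ as left $\Lambda$-modules, so by the left-hand version of \intref{Corollary}{cor:dual-AR} each summand of $D(\Pi e_j)$ is $D\big(\tau^{-n}(\Lambda e_j)\big)\cong\tau^{n}\big(D(\Lambda e_j)\big)=\tau^n(I_j)$; hence $D(\Pi e_j)$ is the direct sum of the modules in the $\tau$-orbit of $I_j$, listed from $I_j$ down to $P_i$. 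These are the same indecomposables, so $e_i\Pi\cong D(\Pi e_j)$ already as right $\Lambda$-modules; and for any $j'\ne j$ the two modules have disjoint orbits, hence are not even isomorphic over $\Lambda$, which gives the ``only if'' direction. It therefore remains to upgrade the restriction-level isomorphism to one of $\Pi$-modules.

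For this I would use that $\Hom_\Pi(e_i\Pi,D(\Pi e_j))\cong D(e_i\Pi e_j)$, so a $\Pi$-map $\Phi\colon e_i\Pi\to D(\Pi e_j)$ amounts to a linear functional $\phi$ on $e_i\Pi e_j=\bigoplus_n\Hom_\Lambda(P_j,\tau^{-n}P_i)$. The top term is in degree $c$, where $\Hom_\Lambda(P_j,\tau^{-c}P_i)=\Hom_\Lambda(P_j,I_j)\cong I_je_j=D(e_j\Lambda e_j)=D(A_j)$; I take $\phi$ to be the projection onto this top term followed by the functional coming from the symmetrising element $\sigma$, so that $\Phi(x)(y)=\phi(xy)$ records ``multiply $x\in e_i\Pi$ by $y\in\Pi e_j$ into the top graded piece and read off the socle''. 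Since $\phi$ is non-degenerate on the one-dimensional space $e_i\Pi_ce_j=D(A_j)$, the map $\Phi$ is nonzero on any nonzero element of $\soc_\Lambda(I_j)\subseteq e_i\Pi_c$. If one knows that $\soc_\Pi(e_i\Pi)$ is the \emph{simple} module $S_j$ concentrated in degree $c$, then $\ker\Phi$, being a $\Pi$-submodule on which $\Phi$ would have to vanish, must meet this socle, which is impossible unless $\ker\Phi=0$; as both modules have the same finite dimension, $\Phi$ is then an isomorphism.

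So the whole result comes down to the single point: $\soc_\Pi(e_i\Pi)=S_j$. One inclusion is painless — a nonzero element of $e_i\Pi_c$ lying in $\soc_\Lambda(I_j)=S_j$ is killed by $\rad\Pi=J+\Pi_+$, since it is killed by $J$ and, being in top degree, by $\Pi_+$. The content is that there are no socle elements below degree $c$: if $0\ne x_m\in\soc_\Lambda(\tau^{-m}P_i)$ with $m<c$ and $x_m\Pi_1=0$, one must reach a contradiction. Here the tensor-algebra structure $\Pi=T_\Lambda(\Pi_1)$ is essential: it makes the multiplication maps $\Pi_m\otimes_\Lambda\Pi_1\xrightarrow{\sim}\Pi_{m+1}$ isomorphisms, so $x_m\Pi_1=0$ translates into the vanishing of $x_m\otimes_\Lambda\Pi_1$ in $\tau^{-m}(e_i\Lambda)\otimes_\Lambda\Pi_1$, which — since $x_m\Lambda$ is simple — is governed by the tops of the left modules $\tau^{-1}(\Lambda e_b)$. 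Checking that these tops cannot ``kill off'' a nonzero socle element of $\tau^{-m}P_i$ for $m<c$ is exactly where the combinatorics of the $\tau$-orbit enter (using that $\tau^{-m}P_i$ is non-injective for $m<c$ and that $\tau^-$ is faithful off injectives), and this is the step I expect to require the most work. Once it is done, running the argument over all $i$ exhibits $\Pi_\Pi$ as a sum of indecomposable injectives with $e_i\Pi\cong D(\Pi e_{\nu^{-1}(i)})$, completing the proof.
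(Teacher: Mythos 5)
Your framework is set up correctly: both $e_i\Pi$ and $D(\Pi e_j)$ restrict over $\Lambda$ to the direct sum of the indecomposables in the $\tau$-orbit of $P_i$, and the pairing $\Phi(x)(y)=\phi(xy)$ does define a $\Pi$-homomorphism $e_i\Pi\to D(\Pi e_j)$ that is non-zero on the top graded piece. The problem is that you have not proved your key claim, $\soc_\Pi(e_i\Pi)=S_j$. You write that this ``is exactly where the combinatorics of the $\tau$-orbit enter\ldots\ and this is the step I expect to require the most work,'' and then conclude ``once it is done\ldots'' --- but this claim \emph{is} essentially the content of the theorem: it says $e_i\Pi$ has simple socle $S_j$, so embeds in the injective envelope $D(\Pi e_j)$, after which the dimension count finishes. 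Without it there is no proof, and your sketch involving ``the tops of the left modules $\tau^{-1}(\Lambda e_b)$'' is not carried through to a conclusion.

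The paper sidesteps this difficulty by arguing from the injective side, showing that $D(\Pi e_j)$ is \emph{projective}. Treating $\Pi=T_\Lambda(\Pi_1)$-modules as pairs $(X,f\colon X\otimes_\Lambda\Pi_1\to X)$, the paper computes that $D(\Pi e_j)$ restricts over $\Lambda$ to $\bigoplus_{m}\tau^{m}I$ with $I=D(\Lambda e_j)$, and observes that for any non-zero $\theta\in D(\Pi_me_j)$ with $m\geq1$ there is some $\pi\in\Pi_1$ with $\theta\pi\neq0$: since $\Pi_m=\Pi_1\otimes_\Lambda\Pi_{m-1}$, the products $\pi x$ with $\pi\in\Pi_1$, $x\in\Pi_{m-1}e_j$ span $\Pi_me_j$, and $\theta$ is a non-zero functional on that space. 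Hence each action map $\tau^rI\otimes_\Lambda\Pi_1\to\tau^{r-1}I$ is non-zero, so an isomorphism (the endomorphism ring of $\tau^{r-1}I$ being a division ring), and this identifies $D(\Pi e_j)$ with $\tau^mI\otimes_\Lambda\Pi=e_i\Pi$. The instructive asymmetry is that ``no non-zero element of the dual is killed by $\Pi_1$'' drops out of the surjectivity $\Pi_1\cdot\Pi_{m-1}\twoheadrightarrow\Pi_m$ in one line, whereas the faithfulness you need --- ``no non-zero element of $e_i\Pi_m$ for $m<c$ is killed by $\Pi_1$'' --- is not a formal consequence of the tensor-algebra structure and is exactly the step you were unable to close.
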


\begin{proof}
We know that $\Pi=T_\Lambda(\Pi_1)$ is a tensor algebra, so $\Pi$-modules can be regarded as pairs $(X,f)$ consisting of a $\Lambda$-module $X$ together with a $\Lambda$-homomorphism $f\colon\tau^-X\to X$.

Consider now the indecomposable injective module $D(\Pi e_j)$, and the corresponding indecomposable injective $\Lambda$-module $I\coloneqq D(\Lambda e_j)$. As a $\Lambda$-module $D(\Pi e_j)$ decomposes into the direct sum of $D(\Pi_me_j)\cong D(\tau^{-m}\Lambda e_j)$, which by \intref{Corollary}{cor:dual-AR} is isomorphic to $\tau^mI$. Take $m$ maximal such that $\tau^mI$ is non-zero, so that $\tau^mI\cong e_i\Lambda$ is an indecomposable projective.

For $\theta\in D(\Pi_me_j)$ and $\pi\in\Pi_1$ we have that $\theta\pi\in D(\Pi_{m-1}e_j)$ is the map $x\mapsto\theta(\pi x)$. We deduce that if $\theta\neq0$ and $m\geq1$, then since $\Pi_m=\Pi_1\otimes_\Lambda\Pi_{m-1}$ we must have $\theta\pi\neq0$ for some $\pi\in\Pi_1$. Next, $\tau^rI\otimes_\Lambda\Pi_1\cong\tau^-(\tau^rI)\cong\tau^{r-1}I$, and $\End(\tau^{r-1}I)\cong\End(I)$ is a division ring. Thus the $\Pi$-action on $D(\Pi e_j)$ corresponds to a tuple of non-zero morphisms $\tau^rI\otimes\Pi_1\to\tau^{r-1}I$ for $1\leq r\leq m$, which must therefore be isomorphisms. Thus $D(\Pi e_j)\cong \tau^m I\otimes_\Lambda\Pi\cong e_i\Pi$ is projective.
\end{proof}

In general, for a self-injective algebra $\Gamma$, the map $\varrho$ satisfying $e_i\Gamma\cong D(\Gamma e_{\varrho(i)})$ is called the Nakayama permutation. The next well-known lemma exhibits the relationship between $\varrho$ and the inverse Nakayama functor $\nu^-=\Hom_\Gamma(D\Gamma,-)$.

\begin{lem}\label{lem:Nakayama}
Let $\Gamma$ be a self-injective algebra. Then $\nu^-(e_i\Gamma)\cong e_{\varrho(i)}\Gamma$ and  $\nu^-(S_i)\cong S_{\varrho(i)}$. Moreover, $\varrho$ extends to an automorphism of the ext-quiver of $\Gamma$.
\end{lem}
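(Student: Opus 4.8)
The plan is to derive everything from the single fact that, for a self-injective algebra, $\nu^-$ is an \emph{exact autoequivalence} of $\modcat\Gamma$, and then simply track how $\nu$ and $\nu^-$ act on the indecomposable projectives.

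First I would recall that on right modules the Nakayama functor is $\nu = D\Hom_\Gamma(-,\Gamma) \cong -\otimes_\Gamma D\Gamma$ (the two functors are right exact and agree on finitely generated projectives), with right adjoint $\nu^- = \Hom_\Gamma(D\Gamma,-)$. Since $\Gamma$ is self-injective, $\Gamma_\Gamma$ is injective, hence so is ${}_\Gamma\Gamma$, and applying the duality $D$ shows that $D\Gamma$ is projective both as a left and as a right $\Gamma$-module. Consequently $-\otimes_\Gamma D\Gamma$ and $\Hom_\Gamma(D\Gamma,-)$ are both exact, and the standard argument then shows that $(\nu,\nu^-)$ is a pair of mutually inverse equivalences of $\modcat\Gamma$. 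A direct computation with the projective $e_j\Gamma$ gives $\nu(e_j\Gamma) \cong D(\Gamma e_j)$ for every $j$.

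Now for the first assertion: by the definition of the Nakayama permutation we have $e_i\Gamma \cong D(\Gamma e_{\varrho(i)}) \cong \nu(e_{\varrho(i)}\Gamma)$, so applying $\nu^-$ yields $\nu^-(e_i\Gamma) \cong e_{\varrho(i)}\Gamma$. For the second: being an equivalence, $\nu^-$ sends simple modules to simple modules and projective covers to projective covers; applying it to the projective cover $e_i\Gamma\twoheadrightarrow S_i$ gives a projective cover $e_{\varrho(i)}\Gamma\cong\nu^-(e_i\Gamma)\twoheadrightarrow\nu^-(S_i)$ with simple target, whence $\nu^-(S_i)\cong S_{\varrho(i)}$. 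For the last statement, the equivalence $\nu^-$ induces for all $i,j$ isomorphisms $\Ext^1_\Gamma(S_i,S_j) \iso \Ext^1_\Gamma(S_{\varrho(i)},S_{\varrho(j)})$, compatible with the isomorphisms $\End_\Gamma(S_i)\iso\End_\Gamma(S_{\varrho(i)})$ it also induces; hence the vertex permutation $\varrho$, together with these isomorphisms, is an automorphism of the valued (modulated) ext-quiver of $\Gamma$.

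The only step that is not purely formal is the passage from ``self-injective'' --- which is a priori a one-sided hypothesis --- to ``$D\Gamma$ is projective on both sides'', which is exactly what makes both $\nu$ and $\nu^-$ exact and hence mutually inverse equivalences; once that is in place the remainder is bookkeeping with the Nakayama functor. I therefore expect the write-up to be short, with this exactness point the main thing worth stating carefully.
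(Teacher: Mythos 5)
Your proof is correct and follows essentially the same strategy as the paper: identify $\nu^-(e_i\Gamma)$ with $e_{\varrho(i)}\Gamma$ and then use that $\nu^-$ is an autoequivalence to transport simple tops and $\Ext^1$-spaces. The only cosmetic difference is that the paper computes $\nu^-(e_i\Gamma)\cong D(D\Gamma\cdot e_{\varrho(i)})\cong e_{\varrho(i)}\Gamma$ directly via the adjunction, whereas you compute $\nu(e_{\varrho(i)}\Gamma)\cong D(\Gamma e_{\varrho(i)})\cong e_i\Gamma$ and invert; you also flesh out the standard justification (projectivity of $D\Gamma$ on both sides) that the paper simply asserts.
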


\begin{proof}
Using $e_i\Gamma\cong D(\Gamma e_{\varrho(i)})$ we compute
\[ \nu^-(e_i\Gamma) \cong \Hom_\Gamma(D\Gamma,D(\Gamma e_{\varrho(i)})) \cong D(D\Gamma\cdot e_{\varrho(i)})
\cong D^2(e_{\varrho(i)}\Gamma) \cong e_{\varrho(i)}\Gamma. \]
Also, as $\Gamma$ is self-injective, $\nu^-$ is an autoequivalence. Thus the simple tops of these two modules are isomorphic, so $\nu^-(S_i)\cong S_{\varrho(i)}$. Similarly, $\Ext^1_\Gamma(S_i,S_j)\cong\Ext^1_\Gamma(S_{\varrho(i)},S_{\varrho(j)})$, so $\varrho$ extends to an automorphism of the ext-quiver.
\end{proof}

\begin{thm}\label{thm:ker-epsilon}
Let $\Pi$ be a preprojective algebra of Dynkin type. Then the third syzygy is naturally isomorphic to the inverse Nakayama functor.
\end{thm}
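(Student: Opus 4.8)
The plan is to realise the third syzygy $\Omega^3$ on $\Pi$-modules by continuing the standard projective presentation one step further, the Dynkin hypothesis entering only through the facts that $\Pi$ is then finite dimensional and, by \intref{Theorem}{thm:self-inj}, self-injective.

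Since $\Pi$ is self-injective, the syzygy functor $\Omega$ is an autoequivalence of the stable category $\underline{\modcat}\,\Pi$, so $\Omega^3$ is a well-defined endofunctor there; likewise $\nu^-=\Hom_\Pi(D\Pi,-)$ is an autoequivalence of $\modcat\Pi$ which preserves projectives, hence descends to $\underline{\modcat}\,\Pi$. Throughout, $D\Pi=\Hom_k(\Pi,k)$ should be kept bookkept as a $\Pi$-bimodule (finite dimensional, since $\Pi$ is): its right $\Pi$-module structure is the dual of the left regular module ${}_\Pi\Pi$, and its left $\Pi$-structure the dual of the right regular module.

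Now fix a finite dimensional right $\Pi$-module $X$ and take its standard projective presentation in the form of \intref{Corollary}{cor:LHS-tensor},
\[ X\otimes_A\Pi \xrightarrow{\,d_2\,} X\otimes_A(M\oplus DM)\otimes_A\Pi \xrightarrow{\,d_1\,} X\otimes_A\Pi \longrightarrow X \longrightarrow 0, \]
which is exact at the last three terms and functorial in $X$; thus $\operatorname{im}d_2=\ker d_1=\Omega^2X$, and $\ker d_2$ is a third syzygy of $X$ (equal to $\Omega^3X$ in $\underline{\modcat}\,\Pi$). I would compute $\ker d_2$ by invoking \intref{Proposition}{prop:ker-on-left-2} with $DY$ taken to be the left regular bimodule ${}_\Pi\Pi$ --- equivalently, $Y=D({}_\Pi\Pi)$, which is a finite dimensional right $\Pi$-module because $\Pi$ is finite dimensional --- and observing that the left hand map of the presentation of $X$, tensored over $\Pi$ with $DY={}_\Pi\Pi$, is exactly $d_2$. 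The proposition then yields $\ker d_2\cong\Hom_\Pi\big(D({}_\Pi\Pi),X\big)=\Hom_\Pi(D\Pi,X)=\nu^-X$, naturally in $X$. Splicing produces the exact sequence
\[ 0 \longrightarrow \nu^-X \longrightarrow X\otimes_A\Pi \longrightarrow X\otimes_A(M\oplus DM)\otimes_A\Pi \longrightarrow X\otimes_A\Pi \longrightarrow X \longrightarrow 0 \]
with the three leftmost terms projective, so $\Omega^3X\cong\nu^-X$ in $\underline{\modcat}\,\Pi$; as every construction used is natural in $X$, this is a natural isomorphism of functors.

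The step I expect to be the main obstacle is the identification $\ker d_2\cong\nu^-X$: one must check that the isomorphism furnished by \intref{Proposition}{prop:ker-on-left-2}, specialised to $Y=D\Pi$, is an isomorphism of right $\Pi$-modules, natural in $X$, and that its target genuinely carries the $\Pi$-module structure of $\nu^-X$. Concretely, one proves the $\Pi$-bimodule refinement of \intref{Proposition}{prop:ker-on-left-2} with $X=\Pi$ and $Y=D\Pi$, namely that $\ker\big(\Pi\otimes_A\Pi\to\Pi\otimes_A(M\oplus DM)\otimes_A\Pi\big)\cong\Hom_\Pi(D\Pi,\Pi)=\nu^-\Pi$ as $\Pi$-bimodules, and then tensors the resulting exact sequence of bimodules on the left by $X$ --- this preserves exactness since every term is projective as a left $\Pi$-module, so the sequence splits on the left. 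It is essential to treat $D\Pi$ genuinely as a bimodule here: although $D\Pi\cong\Pi$ as right $\Pi$-modules by self-injectivity, that identification ignores the left action, and substituting $\Pi$ for $D\Pi$ at the wrong moment would spuriously give $\Omega^3\cong\operatorname{id}$; retaining the Nakayama twist is exactly what makes $\nu^-$, and not the identity, appear. The remaining ingredients --- functoriality of the standard presentation and self-injectivity of $\Pi$ --- are already established.
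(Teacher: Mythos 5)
Your argument is essentially the paper's: apply Proposition~\ref{prop:ker-on-left-2} with $Y=D\Pi$ (valid because $\Lambda$ is a tensor algebra in Dynkin type and $\Pi$ is finite dimensional, so $Y$ is a finite dimensional right $\Pi$-module) so that tensoring with $DY\cong{}_\Pi\Pi$ is the identity, yielding $\ker d_2\cong\Hom_\Pi(D\Pi,X)=\nu^-X$. The bimodule refinement you sketch as a safety net for $\Pi$-linearity and naturality is not written out in the paper, which simply invokes the proposition and reads off the kernel.
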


\begin{proof}
We know that the hereditary algebra $\Lambda=T_A(M)$ is a tensor algebra, and that $\Pi$ is finite dimensional. Thus \intref{Proposition}{prop:ker-on-left-2} applies to show that for each right $\Pi$-module $X$, the left hand map in the standard projective presentation of $X$, namely
\[ X\otimes_A\Pi \to X\otimes_A(M\oplus DM)\otimes_A\Pi, \]
has kernel $\Hom_\Pi(D\Pi,X)=\nu^-X$.
\end{proof}

We next show that all projective $\Pi$-modules have the same Loewy length. In fact, this applies more generally to all graded self-injective algebras.

\begin{prop}
Let $\Gamma$ be a self-injective algebra. Assume further that $\Gamma=\Gamma_0\oplus\Gamma_1\oplus\cdots\oplus\Gamma_{\ell-1}$ is graded such that $\Gamma_0$ is semisimple and $\Gamma_1$ generates the radical. Then every projective module $P$ has Loewy length $\ell$, and $\soc^rP=\rad^{\ell-r}P$, so the socle and radical series coincide.
\end{prop}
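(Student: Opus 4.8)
The plan is to read the radical filtration of each indecomposable projective straight off the grading, use self-injectivity to identify its socle filtration, and then match the two; the one genuinely delicate point will be showing that all indecomposable projectives have the \emph{same} Loewy length. First I would record that, since $\Gamma_1$ generates $\rad\Gamma=\Gamma_+$, one has $(\rad\Gamma)^r=\Gamma_{\ge r}\coloneqq\bigoplus_{j\ge r}\Gamma_j$, so for $P=e_i\Gamma$ the radical series is simply $\rad^rP=\bigoplus_{j\ge r}e_i\Gamma_j$. In particular $\rad^\ell P=0$, every projective has Loewy length at most $\ell$, and $\Gamma$ itself has Loewy length exactly $\ell$ since $\Gamma_{\ell-1}\ne0$. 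I will also use the elementary identity $\soc^rM=\{m\in M:m(\rad\Gamma)^r=0\}$, valid for any module over an Artin algebra and proved by induction on $r$.

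Next I would bring in self-injectivity. As $\Gamma$ is self-injective, $P=e_i\Gamma$ is indecomposable injective, hence has simple socle. Writing $\ell_i+1$ for the Loewy length of $P$, the top nonzero graded piece $e_i\Gamma_{\ell_i}$ is annihilated by $\Gamma_+$ on the right, so it is a nonzero semisimple submodule of $P$; since $\soc P$ is simple this forces $\soc P=e_i\Gamma_{\ell_i}$, a simple module concentrated in the single degree $\ell_i$. The local consequence I will exploit is that every nonzero submodule of $P$ contains $\soc P$ (a submodule of an injective with simple socle has socle $\soc P$), so that for any homogeneous $0\ne y\in e_i\Gamma_j$ one has $\soc P\subseteq y\Gamma$ and therefore $y\,\Gamma_{\ell_i-j}\ne0$.

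The crux, and the step I expect to be the main obstacle, is to show $\ell_i=\ell-1$ for every $i$. Here I would use that for a self-injective algebra the left and right socles coincide, producing a graded two-sided ideal $\soc\Gamma$ whose degree-$(\ell-1)$ component is all of $\Gamma_{\ell-1}$; hence $\ell_i=\ell-1$ for at least one $i$. To propagate this to all $i$ I would invoke connectedness of the ext-quiver of $\Gamma$ --- a hypothesis that is implicit here and genuinely needed, since $k[x]/x^3\times k[y]/y^2$ satisfies everything else yet violates the conclusion, while in the intended application $\Gamma=\Pi$ is indecomposable. Concretely, a nonzero $u\in e_a\Gamma_1e_b$ gives a map of right modules $L_u\colon e_b\Gamma\to e_a\Gamma$, $y\mapsto uy$, which by the simple-socle dichotomy of the previous step is either injective --- forcing $\ell_a\ge\ell_b+1$ --- or annihilates $\soc(e_b\Gamma)$; running this along the quiver, together with the left/right symmetry of $\soc\Gamma$, forces the Loewy-length function to be constant. (When $\Gamma=\Pi$ the ext-quiver is the double of a connected diagram, so this propagation is automatic.)

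Finally, with $\ell_i=\ell-1$ established, I would conclude by matching the filtrations. Fix $P=e_i\Gamma$ and decompose $x=\sum_j x_j\in P$ into homogeneous parts. By the first step, $x(\rad\Gamma)^r=0$ holds iff $x_j\Gamma_{\ge r}=0$ for every $j$; by the local fact above, if some $x_j\ne0$ with $j\le\ell-1-r$ then $x_j\,\Gamma_{\ell-1-j}\ne0$ with $\ell-1-j\ge r$, a contradiction. Hence $x_j=0$ for $j<\ell-r$, that is $\soc^rP\subseteq\bigoplus_{j\ge\ell-r}e_i\Gamma_j=\rad^{\ell-r}P$, while the reverse inclusion is immediate from $\Gamma_{\ge\ell-r}\,\Gamma_{\ge r}\subseteq\Gamma_{\ge\ell}=0$. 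Thus $\soc^rP=\rad^{\ell-r}P$ for all $r$, and summing over the primitive idempotents $e_i$ gives the statement for every projective; the case $r=1$ re-confirms that each projective has Loewy length $\ell$.
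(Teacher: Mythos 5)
Your plan is correct in spirit and close to the paper's: both proofs read $\rad^r P = \bigoplus_{j\ge r}e_i\Gamma_j$ off the grading, identify $\soc(e_i\Gamma)=e_i\Gamma_{\ell_i}$ via self-injectivity (the indecomposable projective is indecomposable injective, so has simple socle, which must therefore coincide with the semisimple top graded piece), propagate constancy of $i\mapsto\ell_i$ over the ext-quiver, and finally match $\soc^r$ with $\rad^{\ell-r}$ by a degree-counting argument using the fact that every nonzero homogeneous $y\in e_i\Gamma_j$ satisfies $y\Gamma_{\ell_i-j}\neq0$. Your first two steps and your final matching step are essentially the paper's.

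Where you diverge is the propagation, and your sketch there contains one red herring and one hand-wave. The red herring: $L_u\colon e_b\Gamma\to e_a\Gamma$ can \emph{never} be injective, because $e_b\Gamma$ is injective, so an injective $L_u$ would split and exhibit $e_b\Gamma$ as a direct summand of the indecomposable $e_a\Gamma$, forcing $L_u$ to be an isomorphism --- impossible, as $\Ima L_u\subseteq e_a\Gamma_{\ge1}$. Hence $L_u$ always kills $\soc(e_b\Gamma)$, so its (nonzero) image lives in degrees $1,\dots,\ell_b$ yet must contain $\soc(e_a\Gamma)=e_a\Gamma_{\ell_a}$, giving $\ell_a\le\ell_b$ unconditionally for every arrow $a\to b$. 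The hand-wave is "running this along the quiver, together with the left/right symmetry of $\soc\Gamma$"; you need to say how the reverse inequality is produced. One clean way, close to what you seem to intend: the left-module analogue gives $\ell'_b\le\ell'_a$ (where $\ell'_j+1=\LL(\Gamma e_j)$), duality gives $\ell_j=\ell'_{\varrho(j)}$ with $\varrho$ the Nakayama permutation, and $\varrho$ permutes the arrows, so the two inequalities combine to force $\ell_a=\ell_b$ on every arrow and hence constancy by (undirected) connectedness. The paper instead shows paths in the ext-quiver are symmetric (via $\varrho$ and the observation that $e_i\Gamma_1e_j\Gamma_{\ell_i-1}e_{\varrho(i)}\neq0$), constructs from each arrow an abstract morphism $e_j\Gamma\to e_i\Gamma$ killing $\soc$ but not $\soc^2$ to get $\ell_j\le\ell_i$, and closes via cycles. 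Both work; your route is more concrete once the vacuous injective case is discarded. Finally, your observation that connectedness is genuinely needed --- with $k[x]/(x^3)\times k[y]/(y^2)$ as a counterexample --- is correct and worth flagging: the paper's statement and proof both leave this hypothesis implicit, and both require it.
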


\begin{proof}
Consider an indecomposable projective module $e_i\Gamma$, and take $m$ maximal such that $e_i\Gamma_{m-1}\neq0$. As $\Gamma_1$ generates the radical, $e_i\Gamma_{m-1}$ is contained in the socle. On the other hand, $\Gamma$ is self-injective, so $e_i\Gamma\cong D(\Gamma e_{\varrho(i)})$ has simple socle $S_{\varrho(i)}$. Thus $\soc(e_i\Gamma)=e_i\Gamma_{m-1}=\rad^{m-1}(e_i\Gamma)$.

Suppose by induction that $\soc^r(e_i\Gamma)=\rad^{m-r}(e_i\Gamma)$. Take $y\in e_i\Gamma$ and decompose it into its homogeneous pieces $y=\sum y_t$. Then $y\in\soc^{r+1}(e_i\Gamma)$ if and only if $y\Gamma_1\in\soc^r(e_i\Gamma)=\rad^{m-r}(e_i\Gamma)$, which is if and only if $y_t\Pi_1=0$ for all $t<m-r-1$. In other words, $y_t$ lies in the socle $e_i\Gamma_{m-1}$ for all such $t$, which happens if and only if $y_t=0$ for all $t<m-r-1$. We deduce that $\soc^{r+1}(e_i\Gamma)=\rad^{m-r-1}(e_i\Gamma)$.

We next show that there is a path from $i$ to $j$ in the ext-quiver of $\Gamma$ if and only if there exists a path from $j$ to $i$. By induction on the length of the path from $i$ to $j$ we reduce to the case when there is an arrow $i\to j$, equivalently $e_i\Gamma_1e_j\neq0$. Using that $e_i\Gamma$ has simple socle $S_{\varrho(i)}=\rad^{m-1}(e_i\Gamma)=e_i\Gamma_{m-1}$, we see that $e_i\Gamma_1e_j\Gamma_{m-2}e_{\varrho(i)}\neq0$, so there is a path (of length $m-2$) from $j$ to $\varrho(i)$. The same argument also shows that there are paths from $\varrho(i)$ to $\varrho^2(i)$, then to $\varrho^3(i)$, and so on. As $\varrho$ is an automorphism we conclude that there is a path going from $j$ back to $i$.

We continue with the assumption that there is an arrow $i\to j$. Then there is also an arrow $\varrho(i)\to\varrho(j)$, so a non-split extension $0\to S_{\varrho(j)}\to X\to S_{\varrho(i)}\to 0$. Using the injective envelope $D(\Gamma e_{\varrho(j)})$ we can express this as a pullback along some non-zero, and hence injective, morphism $S_{\varrho(i)}\rightarrowtail D(\Gamma e_{\varrho(j)})/S_{\varrho(j)}$. Using the injective envelope of $S_{\varrho(i)}$ we obtain a morphism $D(\Gamma e_{\varrho(j)})/S_{\varrho(j)}\to D(\Gamma e_{\varrho(i)})$ which is the identity on $S_{\varrho(i)}$. In other words we have a morphism $e_j\Gamma\to e_i\Gamma$ which kills $\soc(e_j\Gamma)$ but not $\soc^2(e_j\Gamma)$. The image thus has Loewy length $\LL(e_j\Gamma)-1$, and is a proper submodule of $e_i\Gamma$, showing that $\LL(e_j\Gamma)\leq\LL(e_i\Gamma)$. This holds for all arrows, and since every arrow lies on a cycle, we conclude that each indecomposable projective has the same Loewy length.

Finally, as $\Gamma_{\ell-1}\neq0$, we conclude that each projective has Loewy length $\ell$, and $\Gamma_{\ell-1}=\soc(\Gamma)$.
\end{proof}

We finish with the following theorem, showing how various properties of the Weyl group arise in the structure of the preprojective algebra. In particular, our proof of (4) can be seen as a categorification of the proof given in say Bourbaki \cite[VI \S1.11 Proposition 33]{Bourbaki}. As in the previous section, the statements in the theorem refer to the grading of $\Pi=T_A(M\oplus DM)/\langle c\rangle$ arising from the usual grading on $T_A(M\oplus DM)$, so where $\deg A=0$, $\deg M=1$ and $\deg DM=1$. In the proof, however, we will also need the grading coming from $\Pi=T_\Lambda(\tau^-\Lambda)$. For clarity we will denote the graded pieces by $\Pi_{(m)}$ for the former grading (tensor grading), and by $\Pi_m$ for the latter grading ($\Lambda$-grading), so that $\Pi_{(1)}=M\oplus DM$ whereas $\Pi_1=\tau^-\Lambda$.

\begin{thm}
Let $\Pi$ be an indecomposable preprojective algebra of Dynkin type. Set $n$ to be the rank of $\Pi$, and $h$ the corresponding Coxeter number.
\begin{enumerate}
\item The algebra $\Pi$ has Loewy length $h-1$.
\item There is an exact sequence of graded modules
\[ \begin{tikzcd}[column sep=20pt]
0 \arrow[r] & (\soc\Pi)(-h) \arrow[r] & \Pi(-2) \arrow[r,"\varepsilon"] & (\widetilde M\otimes\Pi)(-1) \arrow[r,"\delta"] & \Pi \arrow[r] & A \arrow[r] & 0.
\end{tikzcd} \]
\item For $m\leq h-1$ we have $(\dim e_i\Pi_{(m)}e_j)=\mat DV_m(\bar{\mat B})$. In particular, the $A$-module $\Pi_{(m)}$ is sincere for $m<h-1$, and $V_{h-1}(\bar{\mat B})=0$. Moreover, the Hilbert polynomial equals
\[ H(\Pi,t) = \mat D(1-\bar{\mat B}t+t^2)^{-1} - (d_i\delta_{i\varrho(i)})t^h. \]
\item If $c$ is any Coxeter element, then it acts on the set of roots $\Phi$ with $n$ orbits, each of size $h$. In particular, $|\Phi|=nh$.
\item The Nakayama permutation $\varrho$ has order one or two.
\end{enumerate}
\end{thm}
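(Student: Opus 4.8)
The plan is to read the whole theorem off the minimal graded projective resolution of $A$, combined with three facts already available: that $\Pi$ is finite dimensional and self-injective with Nakayama permutation $\varrho$ (\intref{Theorem}{thm:self-inj}), that the socle and radical series of $\Pi$ coincide (the preceding Proposition, applied with the tensor grading where $\Pi_{(0)}=A$ and $\Pi_{(1)}=\widetilde M\coloneqq M\oplus DM$ generates the radical), and that the kernel of the leftmost map $\varepsilon$ in the standard presentation of $A$ is the third syzygy $\Omega^3A\cong\nu^-A$ (\intref{Theorem}{thm:ker-epsilon}). Since $\nu^-A\cong\bigoplus_iS_{\varrho(i)}$ is semisimple with exactly $n$ simple summands, as is $\soc\big(\Pi(-2)\big)$ (the second projective in the minimal resolution being $P_2=\Pi(-2)$), and a semisimple submodule of the right dimension vector must be the full socle, the inclusion $\Omega^3A\hookrightarrow\Pi(-2)$ is an isomorphism onto $\soc\big(\Pi(-2)\big)$; this is concentrated in the single degree $(\LL-1)+2=\LL+1$, where $\LL=\LL(\Pi)$. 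That already yields the five-term exact sequence of graded modules in part (2), with leftmost term a copy of $\soc\Pi$ sitting in degree $\LL+1$, so part (2) follows once I know $\LL+1=h$.

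For parts (1) and (3) I would take homogeneous components. In degree $d\ge1$ the copy of $A$ disappears, and for $d\ne\LL+1$ so does $\Omega^3A$, leaving $0\to\Pi_{(d-2)}\to\widetilde M\otimes\Pi_{(d-1)}\to\Pi_{(d)}\to0$. As in the proof of \intref{Theorem}{thm:Koszul}, writing $L_r\coloneqq\mat D^{-1}(\dim e_i\Pi_{(r)}e_j)$ gives $L_0=1$, $L_1=\bar{\mat B}$ and $L_{r+1}+L_{r-1}=\bar{\mat B}L_r$ for $1\le r\le\LL$, hence $L_r=V_r(\bar{\mat B})$ for $0\le r\le\LL$; since $\Pi_{(r)}=0$ exactly for $r\ge\LL$, the Loewy length is the least $r\ge1$ with $V_r(\bar{\mat B})=0$. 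To pin this down I would invoke the classical spectral description of the Cartan matrix $\mat C=2I-\bar{\mat B}$: its eigenvalues are $2-2\cos(\pi m_j/h)$ for the exponents $m_j$, so the eigenvalues of $\bar{\mat B}$ are the $2\cos(\pi m_j/h)$; as $m_1=1$ and every $m_j\le h-1$, these all lie among the roots $2\cos(\pi k/h)$ ($1\le k\le h-1$) of $V_{h-1}$, so $V_{h-1}(\bar{\mat B})=0$, whereas $2\cos(\pi/h)$ is a root of $V_r$ only when $h\mid(r+1)$, so $V_r(\bar{\mat B})\ne0$ for $0<r<h-1$. Thus $\LL=h-1$, proving (1), making the degree in (2) equal to $h$, and giving the formula $(\dim e_i\Pi_{(m)}e_j)=\mat DV_m(\bar{\mat B})$ for $m\le h-1$ together with $V_{h-1}(\bar{\mat B})=0$. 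Sincereness of $\Pi_{(m)}$ for $m<h-1$ follows because it is the socle of $\Pi/\Pi_+^{m+1}$, which has the same simple top as each $e_i\Pi$ and hence nonzero socle in every vertex; the Hilbert series then comes from summing the graded identities, the single correction at degree $h$ being contributed by $\Omega^3A\cong\nu^-A$.

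For part (5) I would use that, by \intref{Lemma}{lem:Nakayama}, $\varrho$ extends to an automorphism of the valued graph of $\Pi$, which — $\Pi$ being indecomposable of Dynkin type — is a connected Dynkin diagram. Every automorphism of a connected Dynkin diagram has order $1$ or $2$ except in type $D_4$, whose automorphism group is $S_3$. For $D_4$ I would exploit \intref{Lemma}{lem:indep-orient}: $\Pi$ is independent of the orientation, so I may take the orientation with all three edges pointing at the central vertex; the evident $S_3$-action on that quiver induces $S_3\hookrightarrow\Aut(\Pi)$ fixing the vertex idempotents, hence commuting with $\varrho$, and since $\varrho$ fixes the unique trivalent vertex it lies in the centraliser of $S_3$ acting on the three leaves, which is trivial, so $\varrho=\id$. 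Hence $\varrho$ has order one or two in all cases. (Alternatively, the anti-automorphism of $\Pi$ exchanging $M$ and $DM$ fixes the idempotents and shows $\varrho=\varrho^{-1}$ directly.)

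Finally part (4). By the theorems of Gabriel and Dlab--Ringel the positive roots $\Phi^+$ are the dimension vectors of the indecomposable $\Lambda$-modules and $\Phi=\Phi^+\cup(-\Phi^+)$, and the Coxeter transformation of $\Lambda$ for an alternating orientation is already of the form $s_{i_1}\cdots s_{i_n}$, hence a Coxeter element $c$; all Coxeter elements being conjugate, it suffices to compute with this one. On dimension vectors $c^{-1}$ realises $\tau^-$ on non-injective indecomposables, with $c^{-1}[I_i]=-[P_i]$ and $c[P_i]=-[I_i]$. First, every $c$-orbit on $\Phi$ has size exactly $h$: the order of $c$ is $h$, so orbit sizes divide $h$, and for $0<e<h$ the matrix $c^e$ has no nonzero fixed vector because its eigenvalues $e^{2\pi i m_je/h}$ avoid $1$ (as $m_1=1$ and $h\nmid e$). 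Second, the $\tau$-chain $C_i=\{[P_i],[\tau^-P_i],\dots,[I_{\varrho(i)}]\}$ lies, with either sign, inside one $c$-orbit, and running $c^{-1}$ off the end of $+C_i$ lands on $-[P_{\varrho(i)}]$, then off the end of $-C_{\varrho(i)}$ lands on $[P_{\varrho^2(i)}]=[P_i]$ by part (5), so the $c$-orbit of $[P_i]$ is precisely $C_i\cup(-C_{\varrho(i)})$. Third, the $n$ indecomposable projectives lie in $n$ distinct $\tau$-orbits which exhaust all indecomposables in Dynkin type, so the $C_i$ exhaust $\Phi^+$; and since $\varrho$ pairs them involutively, these $n$ signed chains organise into exactly $n$ orbits. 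Hence $c$ has $n$ orbits on $\Phi$, each of size $h$, and $|\Phi|=nh$. The main obstacle is the equality $\LL(\Pi)=h-1$ in part (1): the resolution and syzygy description are formal, but turning ``$V_\ell(\bar{\mat B})=0$'' into ``$\ell=h-1$'' genuinely imports the classical spectral information about the Cartan matrix, equivalently the bipartite structure of the Dynkin diagram, which is exactly the ingredient used in Bourbaki's computation and which I do not see how to bypass.
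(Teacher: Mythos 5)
Your proposal for parts (2) and (3) is essentially the paper's, and the observation that the kernel of $\varepsilon$ is the semisimple module $\nu^-A\cong\soc\Pi$, forced by \intref{Theorem}{thm:ker-epsilon} to land in the socle of $\Pi(-2)$, matches the paper's reasoning. The real divergence is in parts (1) and (4), and you flag it yourself in your final paragraph: you identify the Loewy length and the orbit sizes by \emph{importing} the classical spectral description of the Cartan matrix (eigenvalues $2-2\cos(\pi m_j/h)$, exponents $m_j$, and the fact that $c^e$ has no fixed vectors for $0<e<h$). But these are precisely the classical facts the theorem is designed to \emph{re-derive} from the preprojective algebra, as the introduction makes explicit (``we have used the preprojective algebra to categorify the original, more combinatorial, proof''). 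So your argument, while correct as mathematics, is circular relative to the paper's intent: it recovers $V_{h-1}(\bar{\mat B})=0$ and $|\Phi|=nh$ from Bourbaki rather than reproving them.

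The paper does something genuinely different and more self-contained. It first uses \intref{Lemma}{lem:indep-orient} to pass to an alternating orientation, so $\Lambda$ is radical-squared zero and each nonsimple indecomposable has Loewy length two over $\Lambda$; counting in the $\Lambda$-grading and invoking Gabriel's theorem (bijection between indecomposables and positive roots, each uniquely $\tau^{-r}P_i$) gives, after showing all $e_i\Pi$ have the same Loewy length, the identity $\LL(\Pi)=|\Phi|/n-1$ directly --- no spectral input. It then sets $\ell=|\Phi|/n$, shows $c^{-\ell}[P_i]=[P_{\varrho^2(i)}]$, and uses only the elementary group-theoretic fact that the identity is the unique Weyl group element preserving the positive roots to conclude $c^{-\ell}=\id$ and $\varrho^2=\id$. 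Since $c^{-r}[P_i]$ is a projective class only when $\ell\mid r$, this pins down $\ell$ as the order $h$ of $c$, and gives the $n$ orbits of size $h$ at the same time. Thus (1), (4), (5) fall out of one representation-theoretic argument. Your part (5) via diagram automorphisms (with the $D_4$ centraliser trick) is a valid alternative route to that single statement, but the paper gets $\varrho^2=\id$ for free from the same calculation; your suggested anti-automorphism shortcut is not checked carefully (whether $c=\rho-\ell$ is sent to $\pm c$ under the order-reversing map exchanging $M$ and $DM$ needs a sign argument).

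In short: there is no outright error, but parts (1) and (4) are not proved in the paper's sense --- they are assumed. The missing ingredient is exactly the one you name: a way to get $\LL(\Pi)=h-1$ without the spectral theory, and the paper's mechanism for that is the combination of the alternating orientation, Gabriel's theorem, and the ``preserves positive roots $\Rightarrow$ identity'' lemma.
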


In the language of \cite{BBK}, (2) says that $\Pi$ is $(h-2,2)$-Koszul.

\begin{proof}
Recall from \intref{Lemma}{lem:indep-orient} that $\Pi$ is independent of the orientation of $\Lambda$, so we may choose an alternating orientation. We may thus assume that $\Lambda$ is a radical squared zero algebra and every simple is either projective or injective. We now have $M\cdot M=0$, and so also $DM\cdot DM=0$ in $\Pi$. Thus $\Pi$ is spanned by alternating products in $M$ and $DM$, and the Loewy length $\LL(\Pi)$ is one more than the maximal length of such an alternating product. Using the $\Lambda$-grading we can keep track of the number of copies of $DM$, so
\[ \LL(e_i\Pi) = \sum_r\LL_\Lambda(e_i\Pi_r) = \sum_r\LL_\Lambda(\tau^{-r}(e_i\Lambda)). \]
Let us write $m_i$ for the size of the $\tau$-orbit containing $e_i\Lambda$, so that $\tau^{1-m_i}(e_i\Lambda)$ is an indecomposable injective module. Using that each non-simple $\tau^{-r}(e_i\Lambda)$ has Loewy length two we see that $\LL(\Pi)$ is $2m_i$ minus the number of simples in the $\tau$-orbit. By Gabriel's Theorem there is a bijection between the isomorphism classes of indecomposable $\Lambda$-modules and the set of positive roots, and each indecomposble $\Lambda$-module is uniquely of the form $\tau^{-r}(e_i\Lambda)$. Thus $\sum_im_i=|\Phi|/2$ and, as there are $n$ simples and each projective has the same Loewy length, we get
\[ \LL(\Pi) = \frac{1}{n}\sum_i\LL(e_i\Pi) = \frac{|\Phi|}{n}-1. \]

Next, passing to the Grothendieck group $K_0(\Pi)$, the orientation of $\Lambda$ determines both the Euler form $\langle-,-\rangle$ and a Coxeter element $c$ satisfying $\langle x,c(y)\rangle=-\langle x,y\rangle$. It follows that $[\tau^-X]=c^-[X]$ for all indecomposable non-injective modules $X$, whereas $c^-[D(\Lambda e_j)]=-[e_j\Lambda]$ for all $j$. See for example \cite{HR}. Thus $c^{-m_i}[e_i\Lambda]=-[e_{\varrho(i)}\Lambda]$, so setting $\ell_i\coloneqq m_i+m_{\varrho(i)}$ gives $c^{-\ell_i}[e_i\Lambda]=[e_{\varrho^2(i)}]$.

We observe that each $\tau$-orbit contains either zero, one or two simples. Suppose first that the $\tau$-orbit of $e_i\Lambda$ contains just one simple module. Then $\LL(e_i\Pi)=2m_i-1$, so that $m_i=|\Phi|/2n$. Now each $\LL(e_j\Pi)$ must be odd, so each $\tau$-orbit contains precisely one simple, and $m_j=|\Phi|/2n$ for all $j$. Suppose instead that the orbit of $e_i\Lambda$ contains two simples, so $m_i=\frac12(1+|\Phi|/n)$. We must have that both $e_i\Lambda$ and $D(\Lambda e_{\varrho(i)})$ are simple, so $e_{\varrho(i)}\Lambda$ is not simple. It follows that the $\tau$-orbit of $e_{\varrho(i)}\Lambda$ contains no simples, and $m_{\varrho(i)}=m_i-1$. Now each projective module has even Loewy length, so each $\tau$-orbit either contains two simples and has length $m_i$, or else contains no simples and has length $m_i-1$. Also, $\varrho$ must swap between such orbits, showing that $\ell_j=2m_i-1=|\Phi|/n$ is independent of $j$.

Setting $\ell\coloneqq|\Phi|/n$ we always have $c^{-\ell}[e_i\Lambda]=[e_{\varrho^2(i)}\Lambda]$, and the $\tau$-orbits of $e_i\Lambda$ and $e_{\varrho^2(i)}\Lambda$ have the same size. We can once again use Gabriel's Theorem to deduce that $c^{-\ell}$ preserves the set of positive roots, so must be the identity. For, every positive root is uniquely of the form $c^{-r}[e_i\Lambda]=[\tau^{-r}e_i\Lambda]$, in which case applying $c^{-\ell}$ yields the positive root $c^{-r}[e_{\varrho^2(i)}\Lambda]=[\tau^{-r}e_{\varrho^2(i)}\Lambda]$. It follows that $\varrho^2=\id$. 

Finally, by construction $\ell$ is the first power of $c^-$ which sends the class of an indecomposable projective to the class of another indecomposable projective. Thus the order $h$ of $c$ is precisely $\ell=|\Phi|/n$. It now follows that $c$ has exactly $n$ orbits in $\Phi$, each of size $h$. This completes the proof of parts (1), (4) and (5).

(2) We recall from \intref{Theorem}{thm:ker-epsilon} that the kernel of $\varepsilon$ in the standard presentation of $A$ is $\nu^-A=\Hom_\Pi(D\Pi,A)$. We now compute
\[ \Hom_\Pi(D\Pi,A) \cong \Hom(D\Pi/\rad(D\Pi),A) \cong \Hom_\Pi(D(\soc({}_\Pi\Pi)),A) \cong \soc\Pi, \]
using that the left and right socles agree, both being given by $\Pi_{(h-1)}$. Thus the kernel on the left of the standard presentation for $A$ is the inclusion $\soc\Pi\rightarrowtail\Pi$. Keeping track of the gradings we obtain a linear presentation of $A$ whose kernel on the left becomes the inclusion $(\soc\Pi)(-h)\rightarrowtail\Pi(-2)$.

(3) This follows as in the proof of \intref{Theorem}{thm:Koszul}. More precisely, we use the linear presentation of $A$, which is exact in degrees $0\leq r\leq h$, to obtain the same recursion for the matrices $(\dim e_i\Pi_{(r)}e_j)$. It follows that $(\dim e_i\Pi_{(r)}e_j)=\mat DV_n(\bar{\mat B})$ for all $r\leq h-1$. The exactness of the linear presentation in degree $h-1$ now gives $\Pi_{h-3}\iso\widetilde M\otimes\Pi_{h-2}$, and hence $V_{h-1}(\bar{\mat B})=0$.

The Hilbert polynomial is therefore $H(\Pi,t)=\sum_{r=0}^{h-2}\mat DV_r(\bar{\mat B})t^r$, so multiplying by $(1-\bar{\mat B}t+t^2)$ and using that $V_{h-1}(\bar{\mat B})=0$ we obtain the matrix $1-\mat DV_{h-2}(\bar{\mat B})t^h$. It remains to note that $\mat DV_{h-2}(\bar{\mat B})=(\dim e_i(\soc\Pi)e_j)$, which as $\soc(e_i\Pi)=S_{\varrho(i)}$ equals $(d_i\delta_{i\varrho(i)})$.
\end{proof}

\end{document}